\newtheorem{theorem}{Theorem}
\newtheorem{lemma}[theorem]{Lemma}
\newtheorem{proposition}[theorem]{Proposition}
\newtheorem{corollary}[theorem]{Corollary}
\theoremstyle{definition}
\newtheorem{remark}{\it Remark}
\newtheorem{example}{Example}
\newcounter{paraga}[section]
\newtheorem{Main}{Theorem}
\def\MP{\,{<\hspace{-.5em}\cdot}\,}
\def\SP{\,{>\hspace{-.3em}\cdot}\,}
\def\PM{\,{\cdot\hspace{-.3em}<}\,}
\def\PS{\,{\cdot\hspace{-.3em}>}\,}
\def\EP{\,{=\hspace{-.2em}\cdot}\,}
\def\PE{\,{\cdot\hspace{-.2em}=}\,}
\def\N{\mathbb N}
\def\C{\mathbb C}
\def\Q{\mathbb Q}
\def\R{\mathbb R}
\def\T{\mathbb T}
\def\A{\mathbb A}
\def\Z{\mathbb Z}
\def\demi{\frac{1}{2}}
\begin{document}

\begin{titlepage}
  \title{\LARGE{\textbf{Hamiltonian perturbation theory for
        ultra-differentiable functions}}}%
  \author{Abed Bounemoura and Jacques F{\'e}joz\\
    PSL Research University\\
    (Universit{\'e} Paris-Dauphine and Observatoire de Paris)}
\end{titlepage}

\maketitle

\begin{abstract}
  Some scales of spaces of ultra-differentiable functions are
  introduced, having good stability properties with respect to
  infinitely many derivatives and compositions. They are well-suited
  for solving non-linear functional equations by means of hard
  implicit function theorems. They comprise Gevrey functions and thus,
  as a limiting case, analytic functions. Using majorizing series, we
  manage to characterize them in terms of a real sequence $M$ bounding
  the growth of derivatives.

  In this functional setting, we prove two fundamental results of
  Hamiltonian perturbation theory: the invariant torus theorem, where
  the invariant torus remains ultra-differentiable under the
  assumption that its frequency satisfies some arithmetic condition
  which we call BR$_M$, and which generalizes the Bruno-R{\"u}ssmann
  condition; and Nekhoroshev's theorem, where the stability time
  depends on the ultra-differentiable class of the pertubation,
  through the same sequence $M$. Our proof uses periodic averaging,
  while a substitute of the analyticity width allows us to bypass
  analytic smoothing.

We also prove converse statements on the destruction of invariant tori and on the existence of diffusing orbits with
  ultra-differentiable perturbations, by respectively mimicking a construction of Bessi (in the analytic category) and Marco-Sauzin (in the Gevrey non-analytic category). When the perturbation space
  satisfies some additional condition (we then call it
  \emph{matching}), we manage to narrow the gap between stability
  hypotheses (e.g. the BR$_M$ condition) and instability hypotheses,
  thus circumbscribing the stability threshold.

  The formulas relating the growth $M$ of derivatives of the
  perturbation on the one hand, and the arithmetics of robust
  frequencies or the stability time on the other hand,
  bring light to the competition between stability properties of
  nearly integrable systems and the distance to integrability. Due to
  our method of proof using width of regularity as a regularizing
  parameter, these formulas are closer to optimal as the the
  regularity tends to analyticity.
\end{abstract}

\clearpage
\tableofcontents

\section{Introduction}\label{sec:intro}

\subsection{High regularity in perturbation theory}
\label{sec:gintro}

The purpose of this article is two-fold. First, we introduce and study
the general properties of a class of ultra-differentiable functions
well-suited for solving non-linear functional equations with iterative
methods. Second, within this framework, we generalize the fundamental
results of Hamiltonian perturbation theory such as the KAM theorem on
the preservation of invariant tori or the Nekhoroshev theorem on the
long-time stability of solutions; we also prove ``converse" statements
such as the destruction of tori or the construction of unstable orbits
(the so-called Arnold diffusion).

\bigskip

First recall that real-analytic functions in $m$ variables are
characterized by a growth of their derivatives of order $s^{-|k|}|k|!$
for $k\in \N^m$ and some analyticity width $s>0$; in the periodic
case, this is equivalent to a decay of Fourier coefficients of order
$e^{-s|j|}$ for $j \in \Z^m$.

In his foundational paper~\cite{Kol54}, Kolmogorov proved that an
invariant torus in a real-analytic non-degenerate integrable
Hamiltonian system can be preserved, as a real-analytic embedded
torus, under an arbitrary small real-analytic perturbation, provided
the frequency vector satisfies a Diophantine condition. This theorem
of Kolmogorov has generated a tremendous amount of work, and new
proofs of more general statement appeared. Among them, still in the
analytic case, by using very accurate approximation results of
analytic functions by polynomials, R{\"u}ssmann (\cite{Rus01}) was able to
improve the arithmetic condition, and get what is now called the
Bruno-R{\"u}ssmann condition (BR-condition for short). This condition,
which was introduced earlier (in a different form) by Bruno
(\cite{Bru71},~\cite{Bru72}) in the Siegel linearization problem (a
different problem involving small divisors), is known to be optimal in
the Siegel problem in complex dimension one (this is a celebrated
result of Yoccoz~\cite{Yoc88},~\cite{Yoc95}). Whether BR is optimal in
higher dimension or in the Hamiltonian problem is open. A different
proof of the Kolmogorov theorem with the BR-condition was then given
in~\cite{BF13},~\cite{BF14}; instead of approximating the
real-analytic perturbation by a polynomial, the idea consists in
approximating the frequency vector by suitable periodic vectors. Let
us point out that all these proofs crucially use a complex extension
of the domain and of the Hamiltonian in order to use methods of
complex analysis. Also, even though it is not known if the
BR-condition is optimal, a construction of Bessi~\cite{Bes00},
inspired by the theory of Arnold diffusion, shows that BR cannot be
improved much, if at all.

A second fundamental result is due to Nekhoroshev, who showed
in~\cite{Nek77},~\cite{Nek79} that all solutions of a perturbed
``generic" real-analytic integrable Hamiltonian system are stable for
an exponentially long interval of time. This generic class of
integrable Hamiltonians, which are known as steep, includes as a
particular case the quasi-convex functions for which the proof of
Nekhoroshev theorem greatly simplifies. As a consequence, better
quantitative results were obtained
in~\cite{Loc92},~\cite{LN92},~\cite{LNN94} and in~\cite{Pos93} for the
quasi-convex case. Together with the improvements obtained
in~\cite{BM11} and in~\cite{ZZ17} and the examples of analytic Arnold
diffusion in~\cite{Bes96}, ~\cite{Bes97} and~\cite{Zha11}, all these
results determine precisely the time-scale of stability. The
quantitative result of the quasi-convex case was recently extended to
the general steep case in~\cite{GCB16}; however, no examples of Arnold
diffusion are known for steep non-convex integrable Hamiltonians. Let
us also remark that even though linear integrable Hamiltonians are
non-steep, they are stable for a long interval of time provided the
(constant) frequency is non-resonant; this is a well-known result
assuming the frequency to be Diophantine, but in the general case this
was observed in~\cite{Bou12} along with the fact that the stability
estimates thus obtained are optimal as examples of Arnold diffusion
with controlled speed are quite easily constructed in this context.

\bigskip

Next, given a real parameter $\alpha \geq 1$, allowing a growth of the
derivatives of order $s^{-|k|}|k|!^\alpha$ for $k \in \N^m$ or,
equivalently, a decay of Fourier coefficients of order
$e^{-(s|j|)^{1/\alpha}}$ for $j \in \Z^m$ in the periodic case, one is
led to consider $\alpha$-Gevrey functions of $m$ variables, which thus
reduce to real-analytic functions when $\alpha=1$. The ``width''
$s>0$ extends the significance of the analyticity width of analytic
functions. Since the introduction by Gevrey of the class of functions
now baring his name (\cite{Gev18}), there has been a huge amount of
works on Gevrey functions, mainly for PDEs, but also more recently in
other fields, including dynamical systems.

The theorem of Kolmogorov, under the classical Diophantine condition,
was extended to this Gevrey context by Popov~\cite{Pop04}. His proof,
which is based on approximation of Gevrey functions by real-analytic
ones, did not allow him to reach weaker arithmetic condition even
though one would expect the statement to be true under a condition
which generalizes the BR-condition. Recently, we introduced
in~\cite{BFa17} such a condition that we called the
BR$_\alpha$-condition and under which an invariant torus is preserved
as an $\alpha$-Gevrey torus for a non-degenerate close to integrable
$\alpha$-Gevrey system. The proof in~\cite{BFa17} was direct, in the
sense that in the real-analytic case it does not use any complex
extension, and in the Gevrey case it does not use analytic
approximation. Actually, one of the main difficulties was to obtain
functional estimates (and in particular product, derivatives and
composition estimates) which were not known and were clearly required
if one wants to implement an iterative scheme (typical of KAM) within
the $\alpha$-Gevrey category. Indeed, of particular importance was to
obtain a norm estimate for the composition of a function by a
diffeomorphism in which the loss of ``width" is arbitrarily small
provided the diffeomorphism is arbitrarily close to the identity;
such an estimate is needed if one insists on having some ``width''
parameter left after infinitely many compositions. These crucial
estimates were obtained in~\cite{BFa17}, and with further work using
the method introduced in~\cite{BF13},~\cite{BF14} the result was
achieved. Also we observed that Bessi's example could be modified to
give a necessary condition close to the sufficient
BR$_\alpha$-condition; in particular, invariant torus with a frequency
which satisfy the BR-condition can be destroyed by an arbitrarily
small perturbation which is $\alpha$-Gevrey for any $\alpha>1$, and
this is in sharp contrast with some other small divisors problem (as
for instance the formal Gevrey Siegel problem considered
in~\cite{CM00} and~\cite{Car03}).

As for the Nekhoroshev theorem, the extension to $\alpha$-Gevrey
quasi-convex Hamiltonians was achieved in~\cite{MS02}. Here precise
composition estimates were not needed (even though the authors were
aware that their composition result was not accurate). Indeed, it is
known since the work of Lochak that for quasi-convex Hamiltonians, one
only needs a normal form at periodic frequency and thus, from an
analytical point of view, one is dealing with the problem of
eliminating the time-dependence in a slow-fast periodic system. This
problem always has a formal solution which of course may be divergent:
in~\cite{RS96} it was proved that for an analytic system, the
associated formal series has a Gevrey character, and this was extended
(in a Hamiltonian setting) for an arbitrary $\alpha$-Gevrey
system. Once one knows the existence of a formal solution with such a
Gevrey Character, techniques of re-summation allow to find a Gevrey
(convergent) normal form with an exponentially small remainder, and
this is all what is needed to prove the Nekhoroshev theorem for
quasi-convex Hamiltonians. Examples of Arnold diffusion were also
obtained in~\cite{MS02} but in the Gevrey non-analytic case
$\alpha>1$, as the method uses the existence of bump
functions. However, even though the improvement of~\cite{BM11} does
extend to the more general Gevrey setting, for the moment this is not the case
of~\cite{ZZ17} and thus the optimal time-scale of stability in the
Gevrey case is not yet completely determined. Nekhoroshev estimates
have been obtained also in the general steep case for Gevrey
Hamiltonians (see~\cite{Bou11}), yet the quantitative result is very
poor compared to the analytic case studied in~\cite{GCB16}. Finally,
for Gevrey linear integrable Hamiltonians, precise and essentially
optimal stability and instability result are proved in~\cite{Bou13a}.

\bigskip

More generally, for a sequence of positive real numbers
$M=(M_l)_{l \in \N}$, we can consider the class of smooth functions
having a growth of derivatives of order $s^{-|k|}M_{|k|}$ for
$k \in \N^m$ and some $s>0$ which we still call a ``width". This
defines a class of ultra-differentiable functions associated to $M$,
or simply $M$-ultra-differentiable functions. The real-analytic and
$\alpha$-Gevrey regularity are nothing but the particular cases where
respectively $M_l=l!$ and $M_l=l!^\alpha$. In terms of Fourier
coefficients for periodic functions, this amounts to a decay of order
$e^{-\Omega(s|j|)}$ for $j \in \Z^m$, for some suitable positive
increasing function $\Omega$; again the real-analytic and
$\alpha$-Gevrey regularity correspond to respectively $\Omega(y)=y$
and $\Omega(y)=y^{1/\alpha}$. It is one of our goal here to introduce
a class of ultra-differentiable functions in which non-linear problems
can be solved by an iterative method within this class. To do so, we
will restrict the allowed sequence $M$ by requiring two conditions to
be satisfied (see \S~\ref{sec:udiffintro} for the precise definition
of $M$-ultra-differentiable functions and these two conditions). The
first of these conditions (which we will call \eqref{H1}) is a kind of
regularity condition and it is classical and very often satisfied. In
particular it implies stability by product and by composition; for the
composition a loss of ``width" occur so in order to be able to perform
infinitely many compositions, we will have to show that the loss can
be arbitrarily small when the composition involves a diffeormorphism
arbitrarily close to the identity. The second condition (which we will
call \eqref{H2}) is, however, more restrictive as it requires the sequence
$M$ not to grow too fast at infinity; it implies not only stability by
derivation but moreover that the loss of width can be taken
arbitrarily small, and thus one may say that this condition amounts to
stability under `infinitely many derivations". This growth restriction
is quantified by a function which we will call $C$, and in good cases
it will be related to the function $\Omega$ which controls the decay
of Fourier coefficients in the periodic case (see \S~\ref{sec:COmega}
for more information on these functions $C$ and $\Omega$). The
functional estimates we obtained in~\cite{BFa17} in the Gevrey setting
will be generalized to this $M$-ultra-differentiable setting where $M$
satisfies these two conditions \eqref{H1} and \eqref{H2}.

As a first application, we will give the proof of KAM type theorems within this class of $M$-ultra-differentiable functions (see \S~\ref{sec:KAMintro} for precise statements). We will introduce an arithmetic condition, the BR$_M$-condition, and under which an invariant torus is preserved as an $M$-ultra-differentiable torus for a non-degenerate close to integrable $M$-ultra-differentiable Hamiltonian system (see \S~\ref{sec:arithmetic} for the definition of this arithmetic condition). This BR$_M$-condition reduces to the BR$_\alpha$-condition when $M_l=l!^\alpha$ so this includes the main result of~\cite{BFa17} but it encompasses many more cases. Let us point out that this BR$_M$-condition does not depend directly on $M$ but rather on the associated function $C$ that was mentioned above. Again, Bessi's example could be modified to give a necessary condition for the preservation of an invariant torus, yet this necessary condition now involves the function $\Omega$ and not $C$ and thus only in the good cases (where these two functions are related; this will be called \emph{matching} in the sequel) this necessary condition shows that the sufficient condition cannot be improved too much, if any.

As a second application, we will extend Nekhoroshev theory to this
class of $M$-ultra-differentiable functions (see \S~\ref{sec:Nekintro}
for precise statements). As we already explained, in the analytic case
we have a good knowledge, the main issue being the Arnold
diffusion for steep non-convex systems. In the more general Gevrey
case, the time-scale of stability is not yet completely determined in
the quasi-convex case but also only poor quantitative statement are
known in the general steep case. In the ultra-differentiable setting,
basically we will be able to extend the knowledge we have for the
general Gevrey case but not the more precise knowledge of the analytic
case. First, for linear integrable Hamiltonians, we will give accurate
and essentially optimal stability and instability result, which
completely extends the analytic and Gevrey case; exactly like for the
KAM type results, the stability (that is, positive) result involves
the $C$ function whereas the diffusion (that is, negative) result
involves the $\Omega$ function and so one has to consider the result
to be accurate only for matching sequences. Then, for quasi-convex
integrable Hamiltonians, our stability result again will be a complete
generalization of the analytic and Gevrey case, however, exactly like
for the Gevrey case, the improvement of~\cite{ZZ17} cannot be reached
and the construction of Arnold diffusion, following~\cite{MS02},
requires bump function and thus a further assumption of
non-quasi-analyticity. Finally, in the steep case, a result
generalizing~\cite{Bou11} will be obtained but the quantitative result
will be again quite far from the more accurate analytic
case~\cite{GCB16}.
 
\subsection{Ultra-differentiable functions}\label{sec:udiffintro} 

Recall that $n\geq 1$ is an integer, $\T^n=\R^n / \Z^n$ and let
$D \subseteq \R^n$ be the open ball of radius one centered at the origin. For a small parameter $\varepsilon\geq 0$, we consider a Hamiltonian function
$H : \T^n \times D \rightarrow \R$ of the form
\begin{equation}\label{Ham1}
\begin{cases}\tag{$*$}
  H(\theta,I)= h(I) +\varepsilon f(\theta,I), \\
  \nabla h(0):=\omega_0 \in \R^n.
\end{cases}  
\end{equation}
We shall assume that the Hamiltonian $H$ is
\textit{ultra-differentiable} on $\T^n \times D$, in the following
sense. Let us fix a sequence $M:=(M_l)_{l \in \N}$ of positive real
numbers with $M_0=M_1=1$. The function $H$ is said to be
ultra-differentiable with respect to $M$ (or $M$-ultra-differentiable)
if $H$ is smooth and there exists $s>0$ such that, using multi-indices
notation (see \S~\ref{sec:udiff}),
\begin{equation}\label{defn}
  |H|_{M,s} := c\sup_{(\theta,I) \in \T^n \times
    D}\left(\sup_{k \in
      \N^{2n}}\frac{(|k|+1)^2{s}^{|k|}|\partial^k
      H(\theta,I)|}{M_{|k|}}\right)<\infty, \quad c:=4\pi^2/3.  
\end{equation}
The space of such Hamiltonians will be denoted by
$\mathcal{U}_{M,s}(\T^n \times D)$ or, more simply,
$\mathcal{U}_s(\T^n \times D)$. This definition can be extended to
vector-valued functions $X: \T^n \times D \rightarrow \R^p$ by setting
\begin{equation}\label{defn2}
  |X|_{M,s} := c\sup_{(\theta,I) \in \T^n \times
    D}\left(\sup_{k \in
      \N^{2n}}\frac{(|k|+1)^2{s}^{|k|}|\partial^k
      X(\theta,I)|_1}{M_{|k|}}\right)<\infty 
\end{equation}
where $|\,.\,|_1$ is the $l_1$-norm of vectors in $\R^p$, or the sum
of the absolute values of the components, thus defining the space
$\mathcal{U}_s(\T^n \times D,\R^p)$. As a rule, we will use the
$l_1$-norm for vectors, so for simplicity we shall write
$|\,.\,|_1=|\,.\,|$.

Condition~\eqref{defn} may be imaged as some modification of the
Taylor series of $H$ having radius of convergence at least $s$. Thus, the
parameter $s$ extends the concept of width of analyticity of an
analytic function (see example~\ref{exempleMalpha}), and will be
called the (ultra-differentiable) \emph{width}. To emphasize its role,
we shall also say that $H$ is $(M,s)$-ultra-differentiable
if~\eqref{defn} holds. These classes of ultra-differentiable functions
we are considering here are usually called Denjoy-Carleman classes (of
Roumieu type) in the literature; we refer to~\cite{Thi08} for a nice
survey.

General properties of these ultra-differentiable norms (under certain
assumptions we will introduce below) are described in
\S~\ref{sec:udiff}. In particular we explain there the (inessential)
role of the factor $(|k|+1)^2$ and the normalizing constant $c>0$
in~\eqref{defn}.

Let us now give the main example of ultra-differentiable class of
functions.

\begin{example}\label{exempleMalpha}
Given $\alpha \geq 1$, let us define the sequence 
\begin{equation}\label{Malpha}
M_\alpha:=(l!^\alpha)_{l \in \N}.
\end{equation}
Observe that a function is $M_\alpha$-ultra-differentiable if and only
if it is $\alpha$-Gevrey, and thus it is $M_1$-ultra-differentiable if
and only if it is real-analytic, in which case the parameter $s>0$ is
the width of analyticity. (The case when $0 \leq \alpha < 1$
corresponds to entire functions and will not be considered in this
work.)  More examples of sequences will be given in
\S~\ref{sec:arithmetic}.
\end{example}

Without further information on the sequence $M$, one cannot expect
much structure on the space of $M$-ultra-differentiable functions. For
our purposes here, we will have to make two assumptions. But first
associated to the sequence $M$ we define three other sequences
$\mu:=(\mu_l)_{l \in \N}$, $N:=(N_l)_{l \in \N}$ and
$\nu:=(\nu_l)_{l \in \N}$ by
\begin{equation}\label{sequences}
\mu_l:=\frac{M_{l+1}}{M_l}, \quad N_l:=\frac{M_l}{l!}, \quad \nu_l:=\frac{N_{l+1}}{N_l}=\frac{\mu_l}{l+1}. 
\end{equation} 
As $M_0=M_1=1$, we have $N_0=N_1=1$ and $\mu_0=\nu_0=1$. Clearly, the knowledge of one of these sequences determines the other three. The two assumptions we will require are the following:
\begin{align}
  &\mbox{The sequence $N$ is log-convex, i.e., the
    sequence $\nu$ is non-decreasing.} \tag{H1}\label{H1}\\
  &\mbox{The sequence $\mu$ is sub-exponential, i.e., }\lim_{l
    \rightarrow + \infty}l^{-1}\ln(\mu_l)=0. \tag{H2}\label{H2}
\end{align}
Let us briefly describe the role of these two assumptions.

The log-convexity condition~\eqref{H1} is a very classical assumption
in the literature, in particular it implies that the space of
$M$-ultra-differentiable function is closed under product and
composition; as a matter of fact, for this to be true slightly weaker
conditions are required (see Lemma~\ref{lemmeprodavant} and
Lemma~\ref{lemmecompavant} in respectively \S~\ref{sec:products} and
\S~\ref{sec:composition}). Clearly, if $\nu$ is increasing then so is
$\mu$ thus the log-convexity of $N$ implies the log-convexity of $M$
(but the reverse implication is not true in general). Let us also
point out that \eqref{H1} and the normalization $N_0=M_0=1$ imply
\[ N_l \geq 1, \quad M_l \geq l!, \quad l \in \N \]
with the limiting case where $N_l=1$, $M_l=l!$, corresponding to the space
of analytic functions. 

Up to our knowledge, the sub-exponential condition \eqref{H2} hasn't
been used so far in the literature. This condition implies that our
space of functions is closed under derivation, which is easily seen to
be equivalent to the condition
\[ \sup_{l \in \N} \mu_l^{\frac{1}{l}}=\sup_{l \in \N}
\left(\frac{M_{l+1}}{M_l}\right)^{\frac{1}{l}} < +\infty, \]
but it is definitely stronger: instead of just requiring that the
sequence $\left(\mu_l^{1/l}\right)_{l \in \N}$ is bounded, \eqref{H2}
demands that this sequence converges to $1$.

However, \eqref{H2} is not an inordinate property. Indeed, a condition
commonly assumed in the literature on ultra-differentiable functions
is the following, so-called \emph{moderate growth}:
\begin{equation}
  \label{eq:MG}%
  \sup_{l,j \in \N}
  \left(\frac{M_{l+j}}{M_lM_j}\right)^{\frac{1}{l+j}} < +\infty \tag{MG}
\end{equation}
As Lemma~\ref{lm:MG} shows (Appendix~\ref{sec:MG}), this condition is
stronger than our hypothesis \eqref{H2}.

Actually, one can image \eqref{H2} as the guarantee that our functional
space is ``stable by infinitely many derivations". Indeed, in view of
our choice of norms, the condition \eqref{H2} is exactly the one that
ensures we have an analogue of the Cauchy estimates for analytic
functions: if $f$ is $(M,s)$-ultra-differentiable, then its derivative
$f'$ is $(M,s')$-ultra-differentiable for any $s'<s$ and we can bound
the $(M,s')$-norm of $f'$ in terms of the $(M,s)$-norm of $f$ and a
function of $s-s'$, that we call $C(s-s')$ ($C$ stands for Cauchy). As
$s'$ converges to $s$, this function blows up and its asymptotics
plays an important role in the sequel. It will be defined and briefly
studied in the next section, along another function, which we call
$\Omega$ (this function was already mentioned in the introduction) and
which can actually be defined without any assumption on the sequence
$M$.

\subsection{Functions C and $\Omega$}\label{sec:COmega}

Given $0 < \sigma < 1$, we define the Cauchy function $C=C_M$ by
\begin{equation}\label{Cfunction}
  C(\sigma) := \sup_{l \in \N}\mu_l e^{-\sigma l} = \sup_{l \in
    \N}\frac{M_{l+1}}{M_l} e^{-\sigma l}
\end{equation}
where the sequence $\mu$ was defined in~\eqref{sequences}: that
$C(\sigma)$ is finite for all $\sigma>0$ follows from (and in fact is
equivalent to) \eqref{H2} which guarantees that $\mu$ has a sub-exponential
growth. Let us first remark that $\nu_l \geq \nu_0=1$ (this follows from \eqref{H1} and the normalization) and thus
\begin{equation}\label{Csigma}
  C(\sigma) = \sup_{l \in \N}(l+1)\nu_le^{-\sigma l} \geq \sup_{l \in
    \N}(l+1)e^{-\sigma l} \geq (e\sigma)^{-1}.  
\end{equation}
Clearly, the supremum in~\eqref{Cfunction} is in fact a maximum and
the function $C$ is thus continuous on $]0,1[$. Besides, there exists
some $0<\bar{\sigma}<1$ which depends only on the sequence $M$ such
that $C(\sigma)=1$ for any $\bar{\sigma} \leq \sigma <1$ (we will
assume that $\bar{\sigma}$ is minimal with this property); the
interesting limit is when $\sigma$ goes to zero (in the discussion
above, this corresponds to $s'=s(1-\sigma)$ and the limit
$s'\rightarrow s$ amounts to $\sigma \rightarrow 0$). For
$0 < \sigma \leq \bar{\sigma}$, the function $C$ is then decreasing,
and so the restriction
\[ C : \; ]0,\bar{\sigma}] \rightarrow [1,+\infty[ \]
is a homeomorphism, which has a continuous decreasing functionnal
inverse
\[ C^{-1} : [1,+\infty[ \rightarrow ]0,\bar{\sigma}]. \]
The behavior of $C(\sigma)$, as $\sigma$ tends to zero, translates
into the behavior of $C^{-1}(y)$, as $y$ tends to infinity.

Let us now introduce another function, this one classically attached
to the sequence $M$, which we will call $\Omega=\Omega_M$, and which
is defined by
\begin{equation}\label{Ofunction}
  \Omega(y):=\ln\left(\sup_{l \in \N}y^lM_l^{-1}\right) = \sup_{l \in
    \N}\left(l\ln y-\ln M_l\right). 
\end{equation}
One can show (see \cite{CC94} or~\cite{Thi03}) that this defines a
function
\[ \Omega : [0,+\infty[ \rightarrow [0,+\infty[ \]
which is continuous, constant equal to zero for $y \leq 1$ and
strictly increasing for $y \geq 1$. Clearly, $\Omega(y)$ grows faster
than $\ln\left(y^l\right)$ for any $l \in \N$ as $y$ goes to
infinity, but also
\begin{equation}\label{Omegalog}
  \lim_{y \rightarrow +\infty} \frac{\Omega(y)}{\ln(y)}=+\infty.
\end{equation}
For a periodic function $f \in \mathcal{U}_s(\T^n)$, letting
$f=\sum_{j \in \Z^n}f_je_j$ be its Fourier expansion, one easily
checks that $\Omega$ controls the decay of the Fourier coefficients in
the sense that
\begin{equation}\label{decay}
|f_j| \leq \exp(-\Omega(s|j|)), \quad j \in \Z^n.
\end{equation}
Here too, it is the asymptotic of the function $\Omega$ at infinity
which is
of interest to us, so to make things more precise, let us
introduce the following definition.

Let us say that two real-valued functions $f$ and $g$ of one variable,
defined in the neighborhood of $u$, $u \in \{0,+\infty\}$, are
\emph{equivalent up to scalings} (at $u$) if there exist positive
constants $a$ and $b$ such that
\[ f(a x) \leq g(x) \leq f(b x), \quad x \rightarrow
u.  \]
In the sequel we shall simply write $f(x) \asymp g(x)$ without referring
to the precise value $u$.

\begin{example}\label{exempleMalpha2}
Let us come back to the Example~\ref{exempleMalpha} where
\begin{equation*}
M_\alpha=(l!^\alpha)_{l \in \N}, \quad \mu_\alpha=((l+1)^\alpha)_{l \in \N}.   
\end{equation*}
Clearly, \eqref{H1} and \eqref{H2} are satisfied, and simple computations lead to
\[ C(\sigma) \asymp \sigma^{-\alpha}, \quad C^{-1}(y) \asymp y^{-{1/\alpha}}, \quad \Omega(y) \asymp y^{1/\alpha}. \]
\end{example} 

In the above example, one remarks that the inverse of $\Omega$ is
equivalent up to scalings (at infinity) to the functional inverse of
$C$. Let us give a name to sequences with such a property: a sequence
$M$ will be called \emph{matching} if
\begin{equation}\label{matching}
C^{-1} \asymp 1/\Omega
\end{equation}
where the functions $C$ and $\Omega$ have been defined respectively
in~\eqref{Cfunction} and~\eqref{Ofunction}. In the sequel, matching
sequences will play a special role: these will be the sequences for
which we will be able to compare in the sharpest way stability results
(KAM, Nekhoroshev) to instability results (destruction of tori, Arnold
diffusion).

\subsection{An arithmetic condition for ultra-differentiable functions}\label{sec:arithmetic}

Given $\omega_0 \in \R^n$, define the function
\begin{equation}\label{eqpsi}
\Psi_{\omega_0} : [1,+\infty) \rightarrow [\Psi_{\omega_0}(1),+\infty], \quad
  Q \mapsto \max \{|k\cdot\omega_0|^{-1}\; | \; k \in \Z^n, \; 0 <
  |k|\leq Q\}.
\end{equation}
This function $\Psi_{\omega_0}$ measures the size of the so-called small denominators. We will impose a condition that prevents $\Psi_{\omega_0}$ from growing too fast at infinity; the precise condition will actually depends on the sequence $M$ through $C^{-1}$, the inverse of the Cauchy function $C$ we introduced in the previous section.

But first let us recall that the function $\Psi_{\omega_0}$ is
non-decreasing (thus with a countable number of discontinuities) and
piecewise constant. In the sequel, it will be more convenient to work
with a continuous version of $\Psi_{\omega_0}$: it is not hard to
prove (see, for instance, Appendix A of~\cite{BF13}) that one can find
a continuous non-decreasing function
$\Psi : [1,\infty) \to [\Psi(1),+\infty)$ such that
$\Psi(1)=\Psi_{\omega_0}(1)$ and
\begin{equation}\label{foncpsi}
\Psi_{\omega_0}(Q) \leq \Psi(Q) \leq \Psi_{\omega_0}(Q+1), \quad Q \geq 1.
\end{equation}
Clearly, for all $k \in \Z^n\setminus\{0\}$, we still have
\[ |k\cdot \omega_0| \geq 1/\Psi(|k|) \]
and in the condition we will introduce below, one may use $\Psi$ instead of $\Psi_{\omega_0}$. 

Let us now define the function
$$\Delta : [1,+\infty) \to [\Psi(1),+\infty),
\quad Q \mapsto Q\Psi(Q)$$
which is continuous and increasing; $\Delta$ has therefore a functional inverse
$$\Delta^{-1} : [\Psi(1),+\infty) \to [1,+\infty), \quad \Delta^{-1} \circ
\Delta =\Delta \circ \Delta^{-1}= \mathrm{Id}$$
which is also continuous and increasing.

We are now ready to introduce our arithmetic condition adapted to
$M$-ultra-differentiable functions. We say that
$\omega_0 \in \mathrm{BR_M}$ if, for any $c>0$, 
\begin{equation}\label{BRM}
\int_{\Delta(1/c)}^{+\infty}C^{-1}(c\Delta^{-1}(x))\frac{dx}{x}<+\infty,
\tag{$\mathrm{BR_M}$} 
\end{equation}
where $C^{-1}$ is the functional inverse of the Cauchy function $C$
defined in~\eqref{Cfunction}. This definition is somewhat involved,
which calls for a few comments and examples.

First observe that the condition~\eqref{BRM} is unchanged if one
replaces $C^{-1}$ by a function which is equivalent to it up to
scalings. In particular, instead of using the function $\Psi$, we may
as well use $\Psi_{\omega_0}$ (the role of $\Delta^{-1}$ will then be
played by a generalized inverse of $Q \mapsto Q\Psi_{\omega_0}(Q)$) as
one can easily check from~\eqref{foncpsi} (see Appendix A
of~\cite{BF13} for more details). The set \ref{BRM} may be empty
(see Example~\ref{exempleexpracine} below) or not. Before giving
concrete examples, recall that the set $D_\tau$ of $\tau$-Diophantine
vectors ($\tau\geq n-1$) is the set of vectors for which there exists
$\gamma>0$ such that $\Psi_\omega(Q) \leq Q^\tau/\gamma$ for all
$Q\geq 1$. It is well-known and easy to prove that $D_\tau$ is
non-empty, and has full measure if $\tau>n-1$.

\begin{example}\label{exempleMalpha3}
Let us come back to the Examples~\ref{exempleMalpha} and~\ref{exempleMalpha2}, where
\begin{equation*}
M_\alpha=(l!^\alpha)_{l \in \N}, \quad C^{-1}(y) \asymp y^{-1/\alpha}.   
\end{equation*}
As we already pointed out, $M_\alpha$ is matching. In this case,
condition~\eqref{BRM} is satisfied if and only if
\begin{equation}
\int_{\Delta (1)}^{+\infty}\frac{1}{x(\Delta^{-1}(x))^{1/\alpha}}dx <
  \infty \tag{$\mathrm{A}_\alpha$}   
\end{equation}
which is a condition we introduced in~\cite{BFa17}, and where it is
shown to be equivalent to the $\alpha$-Bruno-R{\"u}ssmann condition
\begin{equation}\label{alphaBR}
\int_{1}^{+\infty}\frac{\ln(\Psi_{\omega_0}(Q))}{Q^{1+1/\alpha}}dQ <
\infty. \tag{$\mathrm{BR}_\alpha$}
\end{equation}
Let us denote by $\mathrm{BR}_\alpha$ the set of vectors satisfying this latter condition. It is not hard to check that $\mathrm{BR}_\alpha$ is strictly included in $\mathrm{BR}_{\alpha'}$ if $\alpha' < \alpha$ and 
\[\cap_{\alpha\geq 1} \mathrm{BR}_\alpha \setminus \cup_{\tau \geq n-1} D_\tau\]  
is non-empty.
\end{example} 

\begin{example}\label{exempleMalphabeta}
Given $\alpha \geq 1$ and $\beta \geq 0$, let us now consider more generally the sequence $M_{\alpha,\beta}$ defined by
\begin{equation*}
(M_{\alpha,\beta})_0=0, \quad (M_{\alpha,\beta})_l=l!^\alpha\prod_{i=0}^l(\ln(e-1+i))^{\beta}, \quad l \geq 1.   
\end{equation*}
Clearly, $M_{\alpha,0}=M_\alpha$ and \eqref{H1} and \eqref{H2} are satisfied. We have
\[ \mu_j=(j+1)^\alpha(\ln(e+j))^\beta \]
and elementary computations show that
\[ C(\sigma) \asymp \sigma^{-\alpha}(\ln(\sigma^{-1}))^\beta, \quad C^{-1}(y) \asymp y^{-1/\alpha}(\ln(y))^{\beta/\alpha}  \]
but also
\[ \Omega(y) \asymp y^{1/\alpha}(\ln(y))^{-\beta/\alpha} \]
and so $M_{\alpha,\beta}$ is matching. In this case,
condition~\eqref{BRM} is satisfied if and only if
\begin{equation}
  \int_{\Delta
    (1)}^{+\infty}\frac{(\ln(\Delta^{-1}(x)))^{\beta/\alpha}}{x
    (\Delta^{-1}(x))^{1/\alpha}}dx  <  \infty.
  \tag{$\mathrm{BR}_{\alpha,\beta}$} 
\end{equation}
By extension, let us denote by $\mathrm{BR}_{\alpha,\beta}$ the set of
vectors satisfying this latter condition, then
$\mathrm{BR}_{\alpha,\beta}$ is strictly included in
$\mathrm{BR}_{\alpha',\beta'}$ if $\alpha' < \alpha$ or
$\alpha'=\alpha$ and $\beta'<\beta$, and
\[\cap_{\alpha\geq 1,\; \beta\geq 0}
\mathrm{BR}_{\alpha,\beta}=\cap_{\alpha\geq 1} \mathrm{BR}_\alpha
\setminus \cup_{\tau \geq n-1} D_\tau.\]   
\end{example} 

\begin{example}\label{exempleexplog}
  Let us now give an example in which the growth of the sequence $M$
  is wilder. Equivalently, it is sufficient to prescribe the sequence
  $\mu$, and let us choose
  \[ \mu_l=\exp((\ln(l))^2), \quad M_l \sim \exp(l(\ln(l))^2).  \]
  As before, one easily check that \eqref{H1} and \eqref{H2} are satisfied. One
  can then computes
  \[ C(\sigma) \asymp
  \exp\left(\left(\ln\sigma^{-1}\right)^2 +
    \ln\sigma^{-1}\ln\ln\sigma^{-1}\right) \] 
  and
  \[ C^{-1}(y) \asymp \exp\left(-(\ln y -\ln\ln y)^{1/2}\right). \]
  Here, one can check that the sequence is not matching. But observing
  for instance that given any $d>1$, for $y$ large one has
  \[ C^{-1}(y) \leq (\ln y)^{-d} \]
  then one easily sees that the set of vectors satisfying
  condition~\eqref{BRM} contains all Diophantine vectors (actually the
  inclusion is strict, though it is cumbersome to make the
  condition~\eqref{BRM} explicit in this case).
\end{example} 

To conclude, let us give another example in which the growth is so
wild that no vectors satisfy the condition~\eqref{BRM}.
  
\begin{example}\label{exempleexpracine}
  Let 
  \[ \mu_l=\exp(l^{1/2}), \quad M_l \sim \exp(l^{3/2}).  \]
  Here too, \eqref{H1} and \eqref{H2} are satisfied but it is easy to compute
  \[ C(\sigma) \asymp \exp\left(\frac{1}{4\sigma}\right), \quad
  C^{-1}(y) \asymp \frac{1}{4\ln y}. \]
  Again, this sequence is not matching. Recalling that
  $\Delta^{-1}(x)$ grows at most like $x^{1/n}$ when $x$ goes to
  infinity (this is an easy consequence of Dirichlet's box principle),
  one verifies that the integral in~\eqref{BRM} is always divergent,
  and hence the set~\eqref{BRM} is empty.
\end{example}

\subsection{KAM type results}\label{sec:KAMintro}

Now let us come back to the Hamiltonian $H$ as in~\eqref{Ham1}, which we recall is given by
\begin{equation*}
\begin{cases}
  H(\theta,I)= h(I) +\varepsilon f(\theta,I), \quad (\theta,I) \in \T^n \times D \\
  \nabla h(0):=\omega_0 \in \R^n.
\end{cases}  
\end{equation*}
The integrable Hamiltonian $h$ is said to be \emph{non-degenerate} at $0 \in D$ if the Hessian matrix $\nabla^2 h(0) \in M_n(\R)$ has a non-zero determinant. The image of the map $\Theta_0 : \T^n \rightarrow \T^n \times B$, $\theta \mapsto (\theta,0)$, is an embedded torus invariant by the flow of $h$ carrying a quasi-periodic flow with frequency $\omega_0$. We shall prove that this quasi-periodic invariant embedded torus is preserved by an arbitrary small perturbation, provided $h$ is non-degenerate, $H$ is $M$-ultra-differentiable with $M$ satisfying \eqref{H1} and \eqref{H2}, and $\omega_0$ satisfies the $\mathrm{BR_M}$-condition.

\begin{Main}\label{classicalKAM}
Let $H$ be as in~\eqref{Ham1}, where $H$ is $(M,s)$-ultra-differentiable with $M$ satisfying \eqref{H1} and \eqref{H2}, $\omega_0\in \R^n$ satisfying~\eqref{BRM} and $h$ is non-degenerate. Then there exists $0<s'<s$ such that for all $\varepsilon$ small enough, there exists an $(M,s')$-ultra-differentiable torus embedding
$\Theta_{\omega_0} : \T^n \rightarrow \T^n \times D$ such that
$\Theta_{\omega_0}(\T^n)$ is invariant by the Hamiltonian flow of $H$ and quasi-periodic with frequency $\omega_0$.  Moreover,
$\Theta_{\omega_0}$ is close to
  $\Theta_0$ in the sense that
  \[ |\Theta_{\omega_0}-\Theta_0|_{M,s'} \leq
  c\sqrt{\varepsilon} \]
  for some constant $c \geq 1$ independent of $\varepsilon$.
\end{Main}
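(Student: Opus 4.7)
My plan is to extend the periodic-averaging Newton scheme of~\cite{BF13,BF14,BFa17} from the real-analytic and Gevrey settings to the general $M$-ultra-differentiable one. The idea is to avoid inverting cohomological equations with respect to $\omega_0$ directly, which would require $\omega_0$ to be Diophantine, and instead, at the $j$-th step, to approximate $\omega_0$ by a periodic vector $\omega^{(j)}$ of period $T_j$, perform a canonical averaging along the closed flow of $\omega^{(j)}\cdot I$, and then exploit the non-degeneracy of $h$ to translate in actions so as to restore the frequency $\omega_0$ at the origin. Condition~\eqref{BRM} will enter precisely as the summability of the losses of width $\sigma_j$ accumulated over infinitely many steps.

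\textbf{Iterative step.} Starting from $H_0:=H \in \mathcal{U}_{M,s}(\T^n\times D)$, I would inductively construct near-identity $M$-ultra-differentiable symplectomorphisms $\Phi_j$ such that
\[ H_{j+1}:= H_j\circ \Phi_j = h_{j+1} + \varepsilon_{j+1} f_{j+1}, \quad \nabla h_{j+1}(0) = \omega_0, \]
with $h_{j+1}$ still non-degenerate at $0$ and $\varepsilon_{j+1}$ essentially of order $\varepsilon_j^2$. The map $\Phi_j$ is built as the time-one flow of a generating Hamiltonian $F_j$ solving a linearized equation of the form
\[ \{h_j,F_j\} + \varepsilon_j f_j = \varepsilon_j [f_j]_{\omega^{(j)}}, \]
where $[\,\cdot\,]_{\omega^{(j)}}$ is the average along the closed flow of $\omega^{(j)}\cdot I$, followed by a small translation in actions restoring $\nabla h_{j+1}(0) = \omega_0$. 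Since $\omega^{(j)}$ is periodic, only Fourier modes $k \in \Z^n$ with $|k|\leq T_j$ contribute non-trivially to $F_j$, and the divisors $k\cdot \omega^{(j)}$ are bounded below in terms of $T_j$ alone, so no small-divisor pathology occurs at finite order.

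\textbf{Choice of parameters and convergence.} The new error $\varepsilon_{j+1}$ has two sources: a genuinely quadratic term of size $\varepsilon_j^2 \, C(\sigma_j)^{\kappa}$, produced by Cauchy-type estimates for the $\{F_j,\cdot\}$ corrections, and a linear term of order $\varepsilon_j \,|\omega_0-\omega^{(j)}|\,\Psi(T_j)$ coming from the substitution of $\omega^{(j)}$ for $\omega_0$ in the homological equation. I would pick $T_j$ so that $|\omega_0-\omega^{(j)}|\,\Psi(T_j)$ is comparable to $\sigma_j$ and choose $\sigma_j$ proportional to $C^{-1}\!\left(c\Delta^{-1}(T_j)\right)$ for a suitable $c>0$; the change of variables $x=\Delta(T)$ then turns the discrete summability $\sum_j \sigma_j < s-s'$ into
\[ \int_{\Delta(1/c)}^{+\infty} C^{-1}\!\left(c\Delta^{-1}(x)\right)\frac{dx}{x} < +\infty, \]
which is exactly~\eqref{BRM}. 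For $\varepsilon$ small enough the composition $\Phi_1\circ\cdots\circ\Phi_j$ converges in $\mathcal{U}_{M,s'}$ to a symplectic embedding whose restriction to $\{I=0\}$ is the sought $\Theta_{\omega_0}$, and the bound $|\Theta_{\omega_0}-\Theta_0|_{M,s'}\leq c\sqrt{\varepsilon}$ follows from the size $O(\sqrt{\varepsilon})$ of the first step together with the super-convergence of the subsequent ones.

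\textbf{Main obstacle.} The principal difficulty is not the Newton scheme itself, which is classical, but the requirement that every analytic operation be performed inside the $M$-ultra-differentiable category with losses of width truly governed by the Cauchy function $C$. In particular, the composition $H_j\circ\Phi_j$ must incur a width loss that can be made arbitrarily small when $\Phi_j$ is close to the identity, and the cohomological inversion of $\{h_j,\cdot\}$ on $\omega^{(j)}$-trigonometric polynomials of degree $\leq T_j$ must be controlled by a bound depending only on $\Psi(T_j)$ and not on $\omega_0$. These functional estimates --- products, derivatives, inverses, composition with arbitrarily small loss of width, and generating-function constructions under~\eqref{H1}--\eqref{H2} --- generalize those of~\cite{BFa17} and are set up in the earlier sections of the paper; once they are in hand, the iteration closes without further surprise.
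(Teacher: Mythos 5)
Your high-level framework is essentially the right one and matches the spirit of the paper: periodic averaging in place of truncated Fourier analysis, width losses $\sigma_j$ controlled by the Cauchy function $C$, and summability of $\sum_j\sigma_j$ enforced by~\eqref{BRM} through a change of variables of the form $x=\Delta(Q)$. You also correctly identify that the heavy lifting is in the ultra-differentiable functional estimates (products, derivatives, composition with arbitrarily small loss of width, inverses and flows), which the paper assembles in \S~\ref{sec:udiff}.

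There is, however, a real gap in the iterative step. You average along the closed flow of a \emph{single} periodic vector $\omega^{(j)}$ and claim that the new perturbation is of order $\varepsilon_j^2$ up to the correction terms you list. This is false: the quantity $[f_j]_{\omega^{(j)}}$ is the average over a one-dimensional closed orbit in $\T^n$, hence is a non-constant function of the $n-1$ transverse angles and remains of size $\varepsilon_j$, not $\varepsilon_j^2$. Removing only the $\omega^{(j)}$-oscillating part of $f_j$ does not reduce the perturbation; it merely re-expresses it. The device that makes the periodic-averaging KAM scheme of~\cite{BF13,BF14,BFa17} close is not one periodic approximation but $n$ of them: Proposition~\ref{dio} produces, for each $Q$, rational vectors $v_1,\dots,v_n$ of denominators $q_j\MP\Psi(Q)$ whose multiples $q_1v_1,\dots,q_nv_n$ form a $\Z$-basis of $\Z^n$, and Proposition~\ref{cordio} then gives $[\,\cdot\,]_{v_1,\dots,v_n}=[\,\cdot\,]$, the full space average. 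The elementary step of the paper (Proposition~\ref{kamstep}) therefore performs $n$ successive rational averagings inside a single KAM step, each with a width loss of order $\sigma$, and it is only after the $n$-th one that the residual angle-dependent part $A^+_{n+1}+B^+_{n+1}\cdot I$ acquires the crucial smallness factor $(Qs)^{-1}C(\sigma)$. To repair your argument you would either need to incorporate this $\Z$-basis averaging, or else estimate the defect $[f_j]_{\omega^{(j)}}-[f_j]$ through the decay of the resonant Fourier modes $k\cdot\omega^{(j)}=0$; the latter would reintroduce the function $\Omega$ rather than $C$ and is not what your write-up does. Your bookkeeping of $T_j$ and the identity $\sigma_j\propto C^{-1}(c\Delta^{-1}(T_j))$ is also slightly off (in the paper it is $\sigma_i=C^{-1}(c_2(1+\eta)^{-1}sQ_i)$ with $Q_i=\Delta^{-1}(2^i\Delta(Q_0))$, so the argument of $C^{-1}$ is the approximation parameter $Q_i$, not $\Delta^{-1}$ applied to the period), but this is a notational slip once the $n$-fold averaging is in place.

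A secondary, stylistic difference is that you run a direct Kolmogorov-type iteration, translating in actions at each step to restore $\nabla h_{j+1}(0)=\omega_0$ and tracking non-degeneracy of $h_j$ along the way, whereas the paper passes through the parametrized KAM Theorem~\ref{KAMparameter}: the frequency $\omega=\nabla h(I)$ is promoted to an independent parameter, the scheme conjugates to a Kolmogorov normal form $e_0^*+\omega_0\cdot I+R^*$ for some shifted parameter $\omega_0^*$, and non-degeneracy of $h$ is used once at the very end to invert $\nabla h$ and find the action level realising $\omega_0^*$. The parametrized route is cleaner because it isolates the analytic iteration from the geometric non-degeneracy, and it also yields Theorem~\ref{KAMvector} essentially for free; but your direct route would be workable once the averaging issue above is addressed.
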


Theorem~\ref{classicalKAM} is deduced from a KAM theorem for a
Hamiltonian with parameters, for which a quantitative statement is
given in \S~\ref{sec:KAMparam}. In the analytic case $M=M_{1}$, this
is a result of R{\"u}ssmann~\cite{Rus01} and in the Gevrey case
$M=M_\alpha$, $\alpha \geq 1$, this is a result we proved
in~\cite{BFa17}. Theorem~\ref{classicalKAM} is more general, and it
applies for instance to the family $M_{\alpha,\beta}$ given in
Example~\ref{exempleMalphabeta} or to the more rapidly growing
sequence described in Example~\ref{exempleexplog}. As
Example~\ref{exempleexpracine} shows, if the sequence $M$ grows too
fast, the above result is actually void; in the sequel when we will
refer to Theorem~\ref{classicalKAM} we will make the implicit
assumption that the $\mathrm{BR_M}$-condition is non-empty.

According to Theorem~\ref{classicalKAM}, the $\mathrm{BR_M}$-condition
is sufficient for the preservation of an invariant torus under an
$M$-ultra-differentiable perturbation. A natural question is: is it
necessary? To this question, here we only bring a partial
answer. Following Bessi~\cite{Bes00}, one can show that if
$\omega=\omega_0$ satisfies a condition (the
condition~\eqref{conddestruction}, which is defined below), the torus
can be destroyed. In general, this condition seems unrelated to the
$\mathrm{BR_M}$-condition. In the case of matching sequences
$M=(M_l)_{l \in \N}$ one can compare them easily. In the case of
moderate growth sequences $M$ one can also compare the two conditions,
albeit more loosely, since the sequence $M$ may then be upper and
lower bounded by matching sequences, as shown by
lemma~\ref{lm:MG}. Recall that associated to $M$ is a function
$\Omega$ which was defined in~\eqref{Ofunction}, and that by
definition $M$ is matching if
\begin{equation*}
C^{-1}(y) \asymp 1/\Omega(y), \quad y \sim +\infty.
\end{equation*}

\begin{Main}\label{destruction}
Given $\alpha \geq 1$, assume that the vector $\omega \in \R^n$ satisfies the following condition: there exists $c>0$ such that
\begin{equation}\label{conddestruction}\tag{$\mathrm{B_M}$}
\limsup_{Q \rightarrow +\infty} \frac{\ln(\Psi_\omega(Q))}{\Omega(cQ)}>0.
\end{equation}
Then an invariant torus with frequency $\omega$ can be destroyed by an
arbitrary small $M$-ultra-differentiable perturbation.
\end{Main}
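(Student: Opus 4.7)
The plan is to adapt Bessi's construction from~\cite{Bes00}, as previously implemented in the Gevrey setting in~\cite{BFa17}, to the $M$-ultra-differentiable framework. I work with a concrete integrable Hamiltonian, e.g.\ $h(I)=\omega\cdot I+\tfrac12|I|^2$, for which $\{I=0\}$ is the invariant torus of frequency $\omega$ we wish to destroy. For any prescribed threshold $\eta>0$, I produce a purely positional perturbation reduced to a single resonant Fourier monomial
\[ \varepsilon f(\theta)=a\cos(2\pi k_0\cdot\theta), \]
with $k_0\in\Z^n$ chosen according to the resonance structure of $\omega$, whose $(M,s)$-ultra-differentiable norm is less than $\eta$ and whose effect, by a variational argument of Bessi, prevents the invariant torus of frequency $\omega$ from surviving.

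The first step extracts from the hypothesis~\eqref{conddestruction} a positive constant $\delta$ and a sequence $(k_j)\subset\Z^n$ with $|k_j|\to+\infty$ and $|k_j\cdot\omega|\le e^{-\delta\Omega(c|k_j|)}$; this is a direct unpacking of~\eqref{eqpsi} together with the $\limsup$. The second step is the norm estimate: reading off~\eqref{defn} and the definition~\eqref{Ofunction} of $\Omega$, and absorbing the polynomial $(|k|+1)^2$ into a slight shrinkage of $s$, one gets for every $k\in\Z^n$ and $a\in\R$,
\[ |a\cos(2\pi k\cdot\theta)|_{M,s}\le c'\,|a|\exp\bigl(\Omega(2\pi s|k|)\bigr). \]

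The core geometric input is Bessi's obstruction lemma, which I shall re-prove following the analytic argument of~\cite{Bes00} (only $C^2$-smoothness is needed, so no ultra-differentiable refinement is required): in the Lagrangian formulation conjugate to $H=h+a\cos(2\pi k_0\cdot\theta)$, if $a\ge B(k_0\cdot\omega)^2$ for an absolute constant $B>0$, then the action on the homotopy class of the straight-line orbit $t\mapsto\omega t$ admits a strictly lower competitor, obtained by letting the trajectory spend time near the bottom of the well created by the resonant mode; this forbids the persistence of any Lagrangian graph invariant by the flow and carrying frequency $\omega$. Choosing $k_0=k_j$ and $a_j:=B(k_j\cdot\omega)^2\le Be^{-2\delta\Omega(c|k_j|)}$, the norm estimate above yields
\[ |a_j\cos(2\pi k_j\cdot\theta)|_{M,s}\lesssim \exp\bigl(-2\delta\Omega(c|k_j|)+\Omega(2\pi s|k_j|)\bigr), \]
which falls below $\eta$ for $j$ large enough, provided $s$ has been chosen small enough relative to $c$.

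The main obstacle is precisely this last calibration, namely making $\Omega(2\pi s|k_j|)<2\delta\,\Omega(c|k_j|)$ hold eventually in $j$. It is immediate in the matching case, where $\Omega$ behaves like a power and the ratio $\Omega(2\pi s|k_j|)/\Omega(c|k_j|)$ tends to a controllable constant; this is exactly why the two conditions~\eqref{BRM} and~\eqref{conddestruction} are comparable in a sharp way for matching sequences, and only loosely so otherwise. In the general case one exploits the convexity of $\ln y\mapsto\Omega(e^{\ln y})$ (a consequence of the Legendre-type formula~\eqref{Ofunction}) and~\eqref{Omegalog} to ensure that any prescribed ratio strictly less than $2\delta$ can be realized by taking $s$ small. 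Once $s$ is fixed, picking a single $j$ with the above estimate below $\eta$ completes the construction.
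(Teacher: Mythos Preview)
Your proposal has a structural gap: the single-mode perturbation $a\cos(2\pi k_0\cdot\theta)$ of $h(I)=\omega\cdot I+\tfrac12|I|^2$ yields a \emph{completely integrable} Hamiltonian and therefore cannot destroy any invariant torus with non-resonant frequency. After a linear symplectic change $\psi=A\theta$, $J=(A^T)^{-1}I$ with $A\in\mathrm{SL}(n,\Z)$ having first row $k_0$ (assumed primitive), the Hamiltonian becomes $\tfrac12 J^T(AA^T)J+(A\omega)\cdot J+a\cos(2\pi\psi_1)$; the momenta $J_2,\dots,J_n$ are conserved, the reduced $(\psi_1,J_1)$-motion is a shifted pendulum, and for any non-resonant $\omega$ one solves for the level of $(J_2,\dots,J_n)$ and selects the rotational circle of the pendulum with rotation number $k_0\cdot\omega$ to obtain an invariant Lagrangian graph torus of frequency exactly $\omega$. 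Your variational sentence---that the straight-line orbit $t\mapsto\omega t$ admits a lower-action competitor once $a\gtrsim(k_0\cdot\omega)^2$---only shows that this particular orbit is no longer action-minimizing; the minimizers sit on the deformed torus, which still exists. No ``Bessi obstruction lemma'' of this single-mode form appears in~\cite{Bes00}.

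This is precisely why the paper, following~\cite{Bes00} verbatim, uses the \emph{two-mode} Arnold-type family
\[
F^{M,j}_{\varepsilon,\mu}(\theta)=\varepsilon\,\nu_{M,j,s}\bigl(1-\cos(k_j\cdot\theta)\bigr)\bigl(1+\mu\,\tilde\nu_{M,j,s}\cos(\tilde k_j\cdot\theta)\bigr),
\]
with $k_j$ resonant (satisfying~\eqref{condBessiG}) and $\tilde k_j\perp k_j$, $|\tilde k_j|\le|k_j|$, $|\tilde k_j\cdot\omega|\gtrsim|\tilde k_j|$. The factor $(1-\cos(k_j\cdot\theta))$ creates a normally hyperbolic cylinder; the coupling $\mu\cos(\tilde k_j\cdot\theta)$ breaks integrability and produces the separatrix splitting on which Bessi's variational obstruction actually rests---with $\mu=0$ the obstruction evaporates. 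The only ultra-differentiable input is your norm bound $|\cos(k\cdot\theta)|_{M,s}\le c\,e^{\Omega(4s|k|)}$, used to set $\nu_{M,j,s}=e^{-\Omega(4s|k_j|)}$, $\tilde\nu_{M,j,s}=e^{-\Omega(4s|\tilde k_j|)}$; the analytic check needed is then $e^{\Omega(4s_0|k_j|)-\Omega(4s|k_j|)}\to+\infty$ for $s<s_0$, which follows from~\eqref{Omegalog}. Note also that the convexity of $u\mapsto\Omega(e^u)$ is \emph{not} enough for your calibration $\Omega(2\pi s|k_j|)<2\delta\,\Omega(c|k_j|)$ when $2\delta<1$: for the sequence of Example~\ref{exempleexpracine} one has $\Omega(y)\asymp(\ln y)^3$, so $\Omega(\lambda t)/\Omega(t)\to1$ for every fixed $\lambda>0$.
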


Thus the condition that $\omega_0$ does not
satisfy~\eqref{conddestruction}, namely 
\begin{equation}\label{coho}\tag{$\mathrm{R_M}$}
\forall\, c>0, \quad \lim_{Q \rightarrow +\infty}
\frac{\ln(\Psi_\omega(Q))}{\Omega(cQ)}=0, 
\end{equation}
is a necessary condition for Theorem~\ref{classicalKAM} to hold
true. Actually, we expect that this condition~\eqref{coho} is a
necessary and sufficient condition to solve the cohomological equation
associated to $\omega$ in the $M$-ultra-differentiable case (in the
case $M=M_1$, that this condition is sufficient was shown
in~\cite{Rus75}).

Now in the case where $M$ is matching, condition~\eqref{coho} is
equivalent to  
\begin{equation}\label{coho2}
 \forall\, c>0, \quad \lim_{Q \rightarrow +\infty}
 C^{-1}(cQ)\ln(\Psi_\omega(Q)). 
\end{equation}
It is not hard to check that in general, condition~\eqref{BRM}
implies~\eqref{coho2} but clearly they are not equivalent to one
another, so, in the matching case, there remains a gap between the
sufficient and necessary conditions.

Looking at the family $M_\alpha$, the necessary condition~\eqref{coho}
reads 
\[ \lim_{Q \rightarrow +\infty}
\frac{\ln(\Psi_\omega(Q))}{Q^{1/\alpha}}=0\]
which shows that one cannot improve the value of the exponent in the
sufficient condition
\begin{equation*}
\int_{1}^{+\infty}\frac{\ln(\Psi_{\omega_0}(Q))}{Q^{1+1/\alpha}}dQ <
\infty. 
\end{equation*}
More generally, looking at the family $M_{\alpha,\beta}$,
$\alpha \geq 1$, $\beta \geq 0$, the necessary condition~\eqref{coho}
reads
\[ \lim_{Q \rightarrow +\infty}
\frac{\ln(Q)^{\beta/\alpha}\ln(\Psi_\omega(Q))}{Q^{1/\alpha}}=0,\]
which also shows that one cannot improve the values of the exponents
in the sufficient condition
\begin{equation*}
  \int_{\Delta
    (1)}^{+\infty}\frac{(\ln
    (\Delta^{-1}(x)))^{\beta/\alpha}}{x(\Delta^{-1}(x))^{1/\alpha}}dx 
  < \infty.    
\end{equation*}
So the gap between the condition~\eqref{BRM} and
condition~\eqref{coho} is narrow; in turn, this means that
Theorem~\ref{classicalKAM} is quite accurate if one restricts
attention to matching sequences. For non-matching sequences,
Theorem~\ref{classicalKAM} still gives a non-trivial result (see the
Example~\ref{exempleexplog}), albeit not necessarily accurate.

\bigskip

We also have variants of Theorem~\ref{classicalKAM} in the case where
$h$ is iso-energetically non-degenerate or the perturbation depends
periodically on time.

We say that the integrable Hamiltonian $h$ is \emph{iso-energetically
  non-degenerate} at $0$ if the so-called bordered Hessian of $h$,
\[\begin{pmatrix} 
\nabla^2 h(0) & ^{t}\nabla h(0) \\
\nabla h(0) & 0 
\end{pmatrix},\]
has a non-zero determinant. Under this assumption, the unperturbed
torus $I=0$, with energy $h(0)$, can be continued to a torus with the
same energy but with a frequency of the form $\lambda\omega_0$ for
$\lambda$ close to one.

\begin{Main}\label{isoKAM}
  Let $H$ be as in~\eqref{Ham1}, where $H$ is
  $(M,s)$-ultra-differentiable with $M$ satisfying \eqref{H1} and \eqref{H2},
  $\omega_0\in \R^n$ satisfying~\eqref{BRM} and $h$ is
  iso-energetically non-degenerate. Then there exists $0<s'<s$ such
  that for all $\varepsilon$ small enough, there exist
  $\lambda \in \R^*$ and an $(M,s')$-ultra-differentiable torus
  embedding $\Theta_{\omega_0} : \T^n \rightarrow \T^n \times D$ such
  that $\Theta_{\omega_0}(\T^n)$ is invariant by the Hamiltonian flow
  of $H$, contained in $H^{-1}(h(0))$ and quasi-periodic with
  frequency $\lambda\omega_0$.  Moreover, $\lambda$ is close to one
  and $\Theta_{\omega_0}$ is close to $\Theta_0$ in the sense that
  \[ |\lambda-1| \leq c\sqrt{\varepsilon}, \quad
  |\Theta_{\omega_0}-\Theta_0|_{M,s'} \leq c\sqrt{\varepsilon} \]
  for some constant $c \geq 1$ independent of $\varepsilon$.
\end{Main}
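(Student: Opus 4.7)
The plan is to deduce Theorem~\ref{isoKAM} from Theorem~\ref{classicalKAM} (or, more precisely, from its parameter-dependent version announced in \S\ref{sec:KAMparam}) by a one-parameter rescaling trick in the frequency, the extra parameter being then fixed by an implicit function theorem using the iso-energetic non-degeneracy of $h$.

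\paragraph{Step 1: Scaling invariance of $\mathrm{BR}_M$.}
First I would observe that if $\omega_0$ satisfies \eqref{BRM}, then so does $\lambda\omega_0$ for every $\lambda$ in a neighborhood of $1$. Indeed, $\Psi_{\lambda\omega_0}(Q)=|\lambda|^{-1}\Psi_{\omega_0}(Q)$ hence $\Delta_{\lambda\omega_0}(Q)=|\lambda|^{-1}\Delta_{\omega_0}(Q)$, and the integral in \eqref{BRM} is unchanged up to replacing the arbitrary constant $c$ by $c/|\lambda|$. So each vector $\lambda\omega_0$ is admissible for the input of Theorem~\ref{classicalKAM}.

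\paragraph{Step 2: A one-parameter family of tori.} Next I would apply the parameter-dependent KAM theorem (\S\ref{sec:KAMparam}) to build, for every $\lambda$ in a fixed neighborhood $J$ of $1$ and every $\varepsilon$ small enough, an $(M,s')$-ultra-differentiable torus embedding $\Theta^\lambda : \T^n \to \T^n \times D$ whose image is invariant by the flow of $H$ and quasi-periodic of frequency $\lambda\omega_0$. The parameter-dependent statement has to be used in such a way that $\lambda \mapsto \Theta^\lambda$ is at least of class $C^1$, with
\[
|\Theta^\lambda-\Theta_0|_{M,s'}\le c\sqrt{\varepsilon},\qquad \Theta^1 = \Theta_{\omega_0}\text{ of Theorem~\ref{classicalKAM}},
\]
and with $\partial_\lambda \Theta^\lambda|_{\lambda=1,\varepsilon=0}$ equal to the unperturbed family, which points in the action direction with velocity $(\nabla^2 h(0))^{-1}\omega_0$ (the unperturbed torus of frequency $\lambda\omega_0$ is $\{I=I_0(\lambda)\}$ with $\nabla h(I_0(\lambda))=\lambda\omega_0$).

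\paragraph{Step 3: The energy function.} Since $H$ is an integral of motion, the function
\[
E(\lambda,\varepsilon) := H\circ \Theta^\lambda(\theta)
\]
does not depend on $\theta$, hence defines a smooth function of $\lambda$ (and of $\varepsilon$, by the regularity of Step 2). For $\varepsilon=0$ one has $E(\lambda,0)=h(I_0(\lambda))$, so that $E(1,0)=h(0)$ and, differentiating $\nabla h(I_0(\lambda))=\lambda\omega_0$,
\[
\partial_\lambda E(1,0)\;=\;\omega_0\cdot (\nabla^2 h(0))^{-1}\omega_0.
\]
The Schur complement identity applied to the bordered Hessian of $h$ shows that the iso-energetic non-degeneracy hypothesis is precisely equivalent to $\partial_\lambda E(1,0)\neq 0$.

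\paragraph{Step 4: Solving for $\lambda$.} By the implicit function theorem, for $\varepsilon$ small the equation $E(\lambda,\varepsilon)=h(0)$ has a unique solution $\lambda=\lambda(\varepsilon)$ close to $1$; the quantitative estimates of the parameter KAM theorem give $E(1,\varepsilon)-h(0)=O(\sqrt{\varepsilon})$ and $\partial_\lambda E=\omega_0\cdot(\nabla^2 h(0))^{-1}\omega_0 +O(\sqrt{\varepsilon})$, hence $|\lambda(\varepsilon)-1|\le c\sqrt{\varepsilon}$. The desired torus is then $\Theta_{\omega_0} := \Theta^{\lambda(\varepsilon)}$; it is $(M,s')$-ultra-differentiable, lies in $H^{-1}(h(0))$ by construction, is invariant by the flow of $H$ with frequency $\lambda(\varepsilon)\omega_0$, and satisfies $|\Theta_{\omega_0}-\Theta_0|_{M,s'}\le c\sqrt{\varepsilon}$ by a triangle inequality combining the estimate of Step 2 and $|\lambda(\varepsilon)-1|\le c\sqrt{\varepsilon}$.

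\paragraph{Main obstacle.} The substantive step is Step 2: the parameter-dependent KAM statement of \S\ref{sec:KAMparam} must be powerful enough to produce not only an invariant torus of frequency $\lambda\omega_0$ for each fixed $\lambda\in J$, but a family $\lambda\mapsto \Theta^\lambda$ that is differentiable in $\lambda$ with computable derivative at $\lambda=1$, $\varepsilon=0$. Once such a $C^1$ dependence is in hand, the rest is a routine implicit function argument. Everything else (scaling invariance of \eqref{BRM}, the Schur computation, the square-root estimates) is purely algebraic or is inherited directly from Theorem~\ref{classicalKAM}.
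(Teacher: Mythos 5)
Your plan is a clean and plausible alternative route, but it has two genuine gaps, one of which you acknowledge and one which you do not.

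The unacknowledged (and more fundamental) gap is in Steps 2--3: you take for granted that the map $\nabla h$ is a local diffeomorphism near $0$, both when you introduce the unperturbed family $\lambda\mapsto I_0(\lambda)$ via $\nabla h(I_0(\lambda))=\lambda\omega_0$ and when you write $\partial_\lambda E(1,0)=\omega_0\cdot(\nabla^2 h(0))^{-1}\omega_0$. Iso-energetic non-degeneracy does \emph{not} imply that $\nabla^2 h(0)$ is invertible. For instance, $h(I)=I_1+\tfrac12 I_2^2$ with $\omega_0=(1,0)$ has bordered Hessian of determinant $-1$ but singular Hessian, so $I_0(\lambda)$ is not defined for $\lambda\neq 1$. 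Consequently your ``Schur complement shows the iso-energetic hypothesis is precisely equivalent to $\partial_\lambda E(1,0)\neq 0$'' is only correct under the extra assumption $\det\nabla^2 h(0)\neq 0$, which is not part of the theorem. The correct structure is to solve the \emph{coupled} $(n{+}1)$-dimensional system $\nabla h(p_0)=\lambda\omega_0$, $h(p_0)=h(0)$ jointly for $(p_0,\lambda)$ near $(0,1)$: its Jacobian at $(0,1)$ is (up to row/column operations and sign) the bordered Hessian, so the iso-energetic non-degeneracy is exactly the IFT hypothesis for that system. This reorganizes Steps 2--4 rather than just patching a formula, because you no longer have a free one-parameter family of unperturbed tori to compare to; instead, the scalar $\lambda$ and the base point $p_0$ must be tuned simultaneously, for example by feeding $\lambda\omega_0$ as the target frequency into Theorem~\ref{KAMparameter} and matching $(\omega_0^*,\text{energy})$ against $(\nabla h(p_0),h(p_0))$.

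The gap you do flag, in Step 2, is also real: neither Theorem~\ref{classicalKAM} nor Theorem~\ref{KAMparameter} as stated in the paper asserts any regularity of the constructed torus as a function of the target frequency, so the $C^1$ (or even merely continuous) dependence $\lambda\mapsto\Theta^\lambda$ that you invoke in Steps 3--4 would have to be proved separately --- and while Lipschitz dependence on the frequency is standard KAM folklore, $C^1$ dependence is a nontrivial additional statement. The intermediate value theorem would let you get away with continuity, but even that requires re-examining the iteration and a uniqueness argument. By contrast, the paper side-steps all of this by declaring Theorem~\ref{isoKAM} ``essentially equivalent'' to Theorem~\ref{timeKAM} and to Theorem~\ref{classicalKAM}, deferring to Treschev's Chapter~2, where the deduction goes through a Poincar\'e section of the energy level (turning the $(l,m)$-free iso-energetic problem in $n$ degrees of freedom into a non-iso-energetic time-periodic problem in $n-1$ degrees of freedom), a route that avoids any parameter-regularity of the KAM output. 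Your Step~1 (scale invariance of \eqref{BRM} under $\omega_0\mapsto\lambda\omega_0$) is correct --- in fact after the change of variable $y=|\lambda|x$ the integral is literally unchanged --- and that observation would be needed in any scaling-based argument.
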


We can also look at the non-autonomous time-periodic version; we
consider a slightly different setting by looking at a Hamiltonian
function $\tilde{H} : \T^n \times D \times \T \rightarrow \R$ of the
form
\begin{equation}\label{HamT}
\begin{cases}\tag{$\tilde{*}$}
  \tilde{H}(\theta,I)= h(I) +\epsilon f(\theta,I,t), \\
  \nabla h(0):=\omega_0 \in \R^n.
\end{cases}  
\end{equation} 
It is better to consider the unperturbed torus $I=0$ as an invariant
torus for the integrable Hamiltonian
$\tilde{h}: D \times \R \rightarrow \R$ defined by
$\tilde{h}(I,J):=h(I)+J$: it is then quasi-periodic with frequency
$\tilde{\omega}_0:=(\omega_0,1)$, has dimension $n+1$ and is the image
of the trivial embedding
$\tilde{\Theta}_0 : \T^n \times \T \rightarrow \T^n \times D \times
\T$.

\begin{Main}\label{timeKAM}
  Let $\tilde{H}$ be as in~\eqref{HamT}, where $\tilde{H}$ is
  $(M,s)$-ultra-differentiable with $M$ satisfying \eqref{H1} and \eqref{H2},
  $\omega_0\in \R^n$ satisfying~\eqref{BRM} and $h$ is
  non-degenerate. Then there exists $0<s'<s$ such that for all
  $\varepsilon$ small enough, there exists an
  $(M,s')$-ultra-differentiable torus embedding
  $\tilde{\Theta}_{\omega_0} : \T^n \times \T \rightarrow \T^n \times
  D \times \T$
  such that $\tilde{\Theta}_{\omega_0}(\T^n \times \T)$ is invariant
  by the Hamiltonian flow of $\tilde{H}$ and quasi-periodic with
  frequency $\tilde{\omega}_0$.  Moreover, $\tilde{\Theta}_{\omega_0}$
  is close to $\tilde{\Theta}_0$ in the sense that
  \[ |\tilde{\Theta}_{\omega_0}-\tilde{\Theta}_0|_{M,s'} \leq
  c\sqrt{\varepsilon} \]
  for some constant $c \geq 1$ independent of $\varepsilon$.
\end{Main}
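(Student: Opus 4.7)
The plan is to reduce Theorem~\ref{timeKAM} to the autonomous iso-energetic result Theorem~\ref{isoKAM} by the classical trick of adding a conjugate momentum $J$ to the time variable $t$. Explicitly, consider the autonomous Hamiltonian
\[ \hat H(\theta,I,t,J):=h(I)+J+\varepsilon f(\theta,I,t) \]
on $\T^{n+1}\times (D\times\R)$; since $J$ is cyclic, projecting out $J$ recovers the original non-autonomous dynamics. The unperturbed torus $I=0$, $J=0$ is quasi-periodic with frequency $\tilde\omega_0=(\omega_0,1)$, and I would aim to preserve it with exactly this frequency.

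First I would check that $\tilde\omega_0=(\omega_0,1)\in\R^{n+1}$ still satisfies~\eqref{BRM}. The key point is that for $(k,k_{n+1})\in\Z^{n+1}\setminus\{0\}$, either $k=0$ and $|k_{n+1}|\geq 1$ gives no small denominator, or $k\neq 0$ and one can bound $|k\cdot\omega_0+k_{n+1}|$ from below by $\min_{p\in\Z}|k\cdot\omega_0-p|$, itself controlled by $|k\cdot\omega_0|$ modulo an absolute constant. Hence $\Psi_{\tilde\omega_0}$ and $\Psi_{\omega_0}$ are equivalent up to scalings, and the integrability condition~\eqref{BRM} transfers. Second, I would verify that $\tilde h(I,J)=h(I)+J$ is iso-energetically non-degenerate at $(0,0)$: expanding the bordered Hessian along the row and column attached to $J$ collapses it to $\pm\det(\nabla^2 h(0))\neq 0$, so the hypotheses of Theorem~\ref{isoKAM} hold for $\hat H$.

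Applying Theorem~\ref{isoKAM} to $\hat H$ on the energy level $\hat H=h(0)$ then produces an $(M,s')$-ultra-differentiable embedded torus $\hat\Theta:\T^{n+1}\to \T^n\times D\times\T\times\R$ invariant under the flow of $\hat H$ and quasi-periodic with frequency $\lambda\,\tilde\omega_0$ for some $\lambda$ close to one. To conclude I would use that the structure of $\hat H$ forces $\dot t=\partial_J\hat H=1$: parameterizing the torus so that the $\T$-factor in the target coincides with the $\psi$-coordinate of the source, the identity $1=\lambda\omega_0\cdot\nabla_\varphi\Theta^t+\lambda\,\partial_\psi\Theta^t$ forces $\lambda=1$. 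Projecting out the $J$-component yields the desired embedding $\tilde\Theta_{\omega_0}:\T^n\times\T\to\T^n\times D\times\T$ with frequency $\tilde\omega_0$, and the closeness estimate $|\tilde\Theta_{\omega_0}-\tilde\Theta_0|_{M,s'}\leq c\sqrt\varepsilon$ is inherited from Theorem~\ref{isoKAM}.

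The main obstacle I expect is pinning down $\lambda=1$ cleanly: the iso-energetic statement only fixes the \emph{direction} of the frequency on the fixed energy surface, and while the rigid constraint $\dot t=1$ in the extended system ought to force $\lambda=1$, making this argument rigorous may require either a slight reparameterization of the torus or, more cleanly, bypassing Theorem~\ref{isoKAM} and invoking the parametric KAM theorem of \S\ref{sec:KAMparam} directly, with the frequency $\tilde\omega_0$ imposed as a constraint and the initial action $(I_0,J_0)$ adjusted as the parameter; the non-degeneracy of $\nabla h$ then ensures that $I_0$ can be chosen to match the first $n$ components of $\tilde\omega_0$, while $J_0$ is a free shift and the last component $1$ is built into the structure of $\hat h$.
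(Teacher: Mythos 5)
The paper itself does not spell out a proof of Theorem~\ref{timeKAM}: it notes that Theorems~\ref{isoKAM} and~\ref{timeKAM} ``are essentially equivalent statements and can be easily deduced from Theorem~\ref{classicalKAM}'' and refers to Treschev's book for details. Your overall strategy --- adjoin a momentum $J$ conjugate to $t$, observe that $\tilde h(I,J)=h(I)+J$ is iso-energetically non-degenerate (the bordered Hessian does collapse to $\pm\det\nabla^2 h(0)$ as you say), apply Theorem~\ref{isoKAM}, then deduce $\lambda=1$ from the rigidity $\dot t=1$ --- is precisely this classical reduction, and the $\lambda=1$ step goes through once you average the identity $1=\lambda\omega_0\cdot\nabla_\varphi\Theta^t+\lambda\,\partial_\psi\Theta^t$ over $\T^{n+1}$ and use that $\Theta^t$ has degree $1$ in $\psi$ and degree $0$ in $\varphi$ (your phrasing in terms of a reparameterization would also need this topological input to be licit).

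The genuine gap is the arithmetic step. You claim that for $k\neq 0$, $\min_{p\in\Z}|k\cdot\omega_0-p|$ is ``controlled by $|k\cdot\omega_0|$ modulo an absolute constant,'' and conclude $\Psi_{\tilde\omega_0}\asymp\Psi_{\omega_0}$. This is false: the distance of $k\cdot\omega_0$ to the nearest integer can be arbitrarily small while $|k\cdot\omega_0|$ itself is of order $|k|$, so there is no lower bound of $\min_p|k\cdot\omega_0-p|$ by a constant multiple of $|k\cdot\omega_0|$. Concretely, for $n=1$ and any irrational $\omega_0$, $\Psi_{\omega_0}(Q)\equiv 1/|\omega_0|$ is bounded (so~\eqref{BRM} for $\omega_0$ is vacuous), whereas $\Psi_{(\omega_0,1)}(Q)$ grows unboundedly and encodes the genuine Diophantine properties of $\omega_0$. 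So ``\/$\omega_0$ satisfies~\eqref{BRM}\/'' and ``\/$(\omega_0,1)$ satisfies~\eqref{BRM}\/'' are not equivalent conditions; the one the reduction actually needs (for either Theorem~\ref{isoKAM} on the extended system, or the parametric theorem applied to $\hat H$) is the latter, since the cohomological equation in the time-periodic setting produces the denominators $|k\cdot\omega_0+j|$ with $k\neq 0$. Note that the paper's own statement of Theorem~\ref{timeKAM} arguably glosses over the same point; but your proof relies explicitly on the equivalence, and the justification you give for it is wrong. The fix is simply to impose the arithmetic hypothesis on the extended frequency $\tilde\omega_0=(\omega_0,1)$ from the start (equivalently, on $\Psi$ built from $|k\cdot\omega_0+j|$, $k\neq 0$), which is what the reduction hands to Theorem~\ref{isoKAM} in any case.
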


Theorem~\ref{isoKAM} and Theorem~\ref{timeKAM} are essentially equivalent statements and can be easily deduced from Theorem~\ref{classicalKAM}; in the analytic case details are given in~\cite{TreBook}, Chapter 2, but it is plain to observe that the arguments still work in the ultra-differentiable case. 

We can also state a discrete analogue of
Theorem~\ref{classicalKAM}. Given a function
$h : D \rightarrow \R$, we define the exact-symplectic map
\[ F_h : \T^n \times D \rightarrow \T^n \times D, \quad
(\theta,I) \mapsto (\theta+\nabla h (I),I).  \]

\begin{Main}\label{KAMmap1}
  Let $F : \T^n \times D \rightarrow \T^n \times D$ be an
  $(M,s)$-ultra-differentiable exact symplectic map, with $M$ satisfying \eqref{H1} and \eqref{H2} and such that
  \[ |F-F_h|_{\alpha,s_0} \leq \varepsilon. \]
Assume that $\omega_0=\nabla h(0)$ satisfies~\eqref{BRM} and that $h$ is non-degenerate. Then there exists $0<s'<s$ such that for all $\varepsilon$ small enough, there exists an $(M,s')$-ultra-differentiable torus embedding $\Theta_{\omega_0} : \T^n \rightarrow \T^n \times D$
  such that $\Theta_{\omega_0}(\T^n)$ is invariant by $F$ and
  $\Theta_{\omega_0}$ gives a conjugacy between the translation of
  vector $\omega_0$ on $\T^n$ and the restriction of $F$ to
  $\Theta_{\omega_0}(\T^n)$.  Moreover, $\Theta_{\omega_0}$ is close
  to $\Theta_0$ in the sense that
  \[ |\Theta_{\omega_0}-\Theta_0|_{M,s'} \leq
  c\sqrt{\epsilon} \]
  for some constant $c \geq 1$ independent of $\varepsilon$.
\end{Main}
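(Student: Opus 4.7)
The strategy is to reduce Theorem~\ref{KAMmap1} to the time-periodic Theorem~\ref{timeKAM}: realize $F$ as the time-$1$ map of a $1$-periodic $(M,s_3)$-ultra-differentiable Hamiltonian $\tilde{H} = h + \varepsilon\tilde{f}$ close to the integrable $h$, apply Theorem~\ref{timeKAM}, and restrict the resulting invariant $(n{+}1)$-torus to the section $\{t = 0\}$. This parallels the deduction of Theorems~\ref{isoKAM} and~\ref{timeKAM} from Theorem~\ref{classicalKAM} sketched after those statements; the new ingredient to verify is the suspension in the $M$-ultra-differentiable class.

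\textbf{Suspension.} Factor $F = F_h \circ G$ with $G := F_h^{-1} \circ F$, which by the composition estimates of \S\ref{sec:composition} (requiring \eqref{H1} and \eqref{H2}) is exact symplectic and $O(\varepsilon)$-close to the identity in some $(M,s_1)$-norm, $s_1 < s$. An exact symplectic map close to identity is the time-$1$ map of a uniquely defined small autonomous Hamiltonian: one obtains an $(M,s_2)$-ultra-differentiable $L$ with $|L|_{M,s_2} = O(\varepsilon)$ and time-$1$ flow equal to $G$ by inverting the exponential map of exact symplectomorphisms via a Lie series, whose convergence in the $(M,s_2)$-norm is ensured by the composition-with-arbitrarily-small-loss-of-width estimate and the Cauchy derivation estimate. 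Weave $L$ into a $1$-periodic Hamiltonian of the desired form by picking a smooth $1$-periodic non-negative bump $\rho : \R/\Z \to \R$ supported in a small subinterval of $(0,1)$ with $\int_0^1 \rho(u)\,du = 1$, letting $A(t) := \int_0^t \rho(u)\,du$, and setting
\[
\tilde{H}(\theta, I, t) := h(I) + \rho(t)\, L\bigl(\Phi^h_{-A(t)}(\theta, I)\bigr),
\]
where $\Phi^h_s$ denotes the flow of $h$. An interaction-picture computation shows the time-$1$ map of $\tilde{H}$ equals $F_h \circ G = F$, while periodicity in $t$ holds because $\rho$ vanishes near the endpoints, so the $L$-contribution switches off there and only the autonomous $h$ remains across the join. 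Writing $\tilde{f} := \varepsilon^{-1}(\tilde{H} - h)$, one has $|\tilde{f}|_{M, s_3} = O(1)$ for some $s_3 \leq s_2$ by the product, composition, and derivation estimates for $\mathcal{U}_{M, \cdot}$.

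\textbf{Applying Theorem~\ref{timeKAM} and descent.} The hypotheses of Theorem~\ref{timeKAM} are met by $\tilde{H}$: it is of the form $h + \varepsilon \tilde{f}$ with $\tilde{f}$ time-$1$-periodic and $(M,s_3)$-ultra-differentiable, $h$ is non-degenerate, and $\omega_0$ satisfies~\eqref{BRM}. Hence for $\varepsilon$ small enough there exist $s' < s_3$ and an $(M,s')$-ultra-differentiable embedding $\tilde{\Theta}_{\omega_0} : \T^n \times \T \to \T^n \times D \times \T$ invariant by the flow of $\tilde{H}$ with frequency $(\omega_0, 1)$ and satisfying $|\tilde{\Theta}_{\omega_0} - \tilde{\Theta}_0|_{M, s'} \leq c\sqrt{\varepsilon}$. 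Set $\Theta_{\omega_0}(\theta) := \pi \circ \tilde{\Theta}_{\omega_0}(\theta, 0)$, with $\pi$ the projection $\T^n \times D \times \T \to \T^n \times D$. By construction, the Poincar\'e return map to $\{t = 0\}$ for the flow of $\tilde{H}$ coincides with $F$; invariance of $\tilde{\Theta}_{\omega_0}(\T^n \times \T)$ under the translation by $(\omega_0, 1)$ therefore translates into $F \circ \Theta_{\omega_0}(\theta) = \Theta_{\omega_0}(\theta + \omega_0)$, and embedding, $(M,s')$-regularity, and the closeness estimate are inherited directly.

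\textbf{Main obstacle.} The only delicate point is the suspension: producing the autonomous Hamiltonian $L$ generating $G$ requires convergence of a Lie-series expansion in the $(M,s_2)$-norm, which in turn depends on the composition estimate with arbitrarily small loss of width for diffeomorphisms close to the identity -- the cornerstone estimate that the authors identify in \S\ref{sec:composition} as the very reason for imposing \eqref{H1} and \eqref{H2}. Weaving $L$ into a $1$-periodic $\tilde{H}$ via the flow $\Phi^h_t$ then uses the composition estimates once more, but in a routine way. Once the suspension is in hand, the application of Theorem~\ref{timeKAM} and the descent to the map's invariant torus are straightforward.
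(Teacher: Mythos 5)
Your overall strategy --- factor $F = F_h \circ (F_h^{-1}\circ F)$, suspend to a $1$-periodic Hamiltonian flow whose time-$1$ map is $F$, invoke Theorem~\ref{timeKAM}, and descend to the section $\{t=0\}$ --- is the natural route, and the paper itself only says Theorem~\ref{KAMmap1} ``can be deduced'' from the flow results without giving details, so there is no proof in the paper to compare against. However, your central interpolation step contains a genuine error.

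You claim that a close-to-identity exact symplectomorphism $G = F_h^{-1}\circ F$ is ``the time-$1$ map of a uniquely defined small \emph{autonomous} Hamiltonian'' $L$, obtained ``by inverting the exponential map of exact symplectomorphisms via a Lie series.'' This is false. The exponential map from autonomous Hamiltonians to exact symplectomorphisms is neither surjective nor invertible near the identity: a generic close-to-identity exact symplectic twist map has transverse homoclinic intersections and hence cannot be embedded in a smooth autonomous Hamiltonian flow (in two dimensions, the flow of an autonomous $L$ is completely integrable, preserving $L$). Correspondingly, the formal Baker--Campbell--Hausdorff-type series for $L$ diverges even for analytic $G$; the product and composition estimates of \S\ref{sec:composition} cannot rescue a series that already fails in the limiting case $M=M_1$. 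So this step is not a matter of convergence details --- the object you are constructing typically does not exist.

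The correct interpolation (Kuksin--P\"oschel/Douady) produces a \emph{time-dependent} Hamiltonian $L_t$, $t\in[0,1]$, with time-$1$ flow $G$. One extracts from exactness a generating function $\Phi(\theta,I')$ of $G$ (this uses Proposition~\ref{propomega} to invert the close-to-identity map $(\theta,I)\mapsto(\theta,I')$, hence stays $(M,s')$-ultra-differentiable with a controlled loss of width), defines the exact symplectic isotopy $G_t$ by scaling the generating function to $t\Phi$, and takes $L_t$ to be the Hamiltonian generating $\partial_t G_t$. Your ``weaving'' step then survives, but it must be written as $\tilde H(\theta,I,t)=h(I)+\rho(t)\,L_{A(t)}\bigl(\Phi^h_{-t}(\theta,I)\bigr)$ --- note $\Phi^h_{-t}$ rather than your $\Phi^h_{-A(t)}$; tracking the interaction-picture computation with $\Psi_t = \Phi^h_t\circ\Xi_t$ shows that your formula produces an interaction Hamiltonian $\rho(t)\,X_{L\circ\Phi^h_{t-A(t)}}$ whose time-$1$ flow is \emph{not} $G$. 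With the corrected $\tilde H$ (and $L_t$), $\Xi_1=G$, $\Psi_1=F_h\circ G=F$, and $\rho$ vanishing to infinite order at $t=0,1$ gives the $1$-periodic extension. Finally, note that $\Phi^h_{-t}$ is not close to the identity, so Proposition~\ref{composition} does not apply verbatim; you need a one-time composition estimate for a fixed map with bounded derivatives, which is straightforward but should be stated, since the loss of width there is not arbitrarily small (only incurred once, so harmless).
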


Let us recall that Theorem~\ref{classicalKAM} follows from a KAM theorem for a Hamiltonian with parameters; from this last theorem we will also deduce Arnold's normal form theorem for vector fields on the torus close to constant, in the ultra-differentiable setting.

\begin{Main}\label{KAMvector}
  Let $M$ be a sequence satisfying \eqref{H1} and \eqref{H2}, $\omega_0 \in \R^n$
  satisfying~\eqref{BRM} and $X \in \mathcal{U}_{M,s}(\T^n,\R^n)$ a
  vector field on $\T^n$ of the form
  \[X = \omega_0 + B, \quad |B|_{M,s} \leq \mu.\]
  Then, for $\mu$ sufficiently small, there exist a vector
  $\omega_0^* \in \R^n$ and an $(M,s/2)$-ultra-differentiable diffeomorphism
  $\Xi : \T^n \to \T^n$ such that $X+\omega_0^*-\omega_0$ is conjugate
  to $\omega_0$ via $\Xi$:
  \[ \Xi^*(X+\omega_0^*-\omega_0)=\omega_0.\]
  Moreover, we have the estimate
  \[ |\omega_0^*-\omega_0| \leq c\mu, \quad |\Xi-\mathrm{Id}|_{M,s/2}
  \leq c\mu \] for some constant $c\geq 1$ independent of $\mu$.
\end{Main}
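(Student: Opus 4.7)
The strategy is to deduce Theorem~\ref{KAMvector} from the parametric Hamiltonian KAM theorem of \S~\ref{sec:KAMparam} by a standard suspension of the torus vector field. To this end, I would associate to $X = \omega_0 + B$ the linear-in-action family
\[
H_\omega(\theta, I) := (\omega + B(\theta)) \cdot I, \qquad \omega \in \R^n,
\]
on $\T^n \times \R^n$, treated as a Hamiltonian with parameter $\omega$. Because $\nabla_I H_\omega = \omega + B(\theta)$ is independent of $I$, the zero section is invariant under the Hamiltonian flow, with angle-dynamics $\dot\theta = \omega + B(\theta)$. The theorem then reduces to the assertion that, for some $\omega_0^*$ close to $\omega_0$, this dynamics on $\T^n$ is conjugate to $\dot\theta = \omega_0$ by a near-identity $(M, s/2)$-ultra-differentiable diffeomorphism, which is precisely the content of the parametric KAM theorem when $\omega_0 \in \mathrm{BR}_M$.

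Invoking this parametric KAM theorem would yield a parameter value $\omega_0^*$ with $|\omega_0^* - \omega_0| \leq c\mu$ and an $(M, s/2)$-ultra-differentiable symplectic transformation $\Phi$ on $\T^n \times \R^n$, close to the identity, such that the zero section is $\Phi$-invariant with $\omega_0$-quasi-periodic dynamics. Writing the restriction of $\Phi$ to the zero section as $(\theta, 0) \mapsto (\Xi(\theta), 0)$, the invariance identity $D\Xi(\theta)\,\omega_0 = \nabla_I H_{\omega_0^*}(\Xi(\theta), 0) = \omega_0^* + B(\Xi(\theta))$ is exactly the required $\Xi^*(X + \omega_0^* - \omega_0) = \omega_0$, and the estimate $|\Xi - \mathrm{Id}|_{M, s/2} \leq c\mu$ follows from the $(M, s/2)$-closeness of $\Phi$ to the identity, which is itself governed by $|B|_{M, s} \leq \mu$.

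The main obstacle is that the integrable part $\omega \cdot I$ of our suspension is degenerate, which rules out a direct appeal to Theorem~\ref{classicalKAM} and makes the parametric (``counter-term'') version of KAM essential: the frequency shift $\omega_0^* - \omega_0$ replaces the action translation of the non-parametric version. Alternatively, a self-contained proof could proceed by a direct Newton iteration on the cohomological equation
\[
\omega_0 \cdot Du(\theta) = \omega_0^* - \omega_0 + B(\theta + u(\theta)),
\]
solving at each step the linearized equation $\omega_0 \cdot D v_k = g_k - [g_k]$ via the small-divisor estimates associated to \eqref{BRM} and adjusting $\omega_0^*$ so that $g_k$ has vanishing mean. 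Convergence within $\mathcal{U}_{M, s/2}(\T^n, \R^n)$ then relies critically on the composition estimate with arbitrarily small loss of width for near-identity diffeomorphisms from \S~\ref{sec:composition} combined with the stability by infinitely many derivations granted by \eqref{H2}; together they force the geometric width losses of the iterative scheme to sum to less than $s/2$, while the arithmetic input \eqref{BRM} controls the blow-up of the cohomological solver as the loss of width shrinks.
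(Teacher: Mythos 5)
Your proposal is correct and takes essentially the same route as the paper: you suspend $X$ to the parameter-dependent Hamiltonian $H_\omega(\theta,I)=(\omega+B(\theta))\cdot I$, which is of the form~\eqref{HAM} with $\varepsilon=\eta=0$, apply Theorem~\ref{KAMparameter}, and recover $\Xi=\mathrm{Id}+E^*$ from the angle component of $\Phi^*_{\omega_0}$ restricted to the zero section (which is preserved since $G^*=0$ when $\varepsilon=0$), with your invariance identity $D\Xi(\theta)\,\omega_0=\omega_0^*+B(\Xi(\theta))$ being exactly the paper's conclusion $\Xi^*(\hat X_{\omega_0^*})=\omega_0$. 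Your observation that the degeneracy of $\omega\cdot I$ forces the parametric (counter-term) version rather than Theorem~\ref{classicalKAM} is precisely the point; the alternative direct Newton iteration you sketch is not the paper's argument but is a standard independent route.
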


Observe that because of the shift of frequency $\omega_0^*-\omega_0$,
in general this result does not give any information on the vector
field $X$. Under some further assumption (for instance, if $\omega_0$
belongs to the rotation set of $X$, see~\cite{Kar16}), then this shift
vanishes and Theorem~\ref{KAMvector} implies that $X$ is conjugated to
$\omega_0$.

An even more restricted setting is when $X$ is proportional to
$\omega_0$ (so that the flow of $X$ is a re-parametrization of the
linear flow of frequency $\omega_0$ and thus $\omega_0$ is the unique
rotation vector of $X$); Theorem~\ref{KAMvector} applies in this case
to give a conjugacy to $\omega_0$, assuming that $\omega_0$
satisfies~\eqref{BRM}, but the proof is actually much simpler in this
case (it boils down to solve only once the cohomological equation) and
should require the weaker condition that $\omega_0$
satisfies~\eqref{coho}, as it is stated in the analytic case $M=M_1$
in~\cite{Fay02}. Still in~\cite{Fay02}, it is proved that for $M=M_1$,
if $\omega_0$ satisfies~\eqref{conddestruction}, then there is a dense
set of reparametrized linear flow which are weak-mixing (and so cannot
be conjugated to the linear flow); thus a necessary condition for
Theorem~\ref{KAMvector} to hold true is that $\omega_0$
satisfies~\eqref{coho} (and this is also a sufficient condition if we
impose that $X$ is proportional to $\omega_0$). Clearly, this should
extend to the general case of an matching sequence and
thus~\eqref{coho} is a necessary condition for Theorem~\ref{KAMvector}
to hold true, as in Theorem~\ref{classicalKAM}.

Finally, to conclude this section, let us a give the discrete version of Theorem~\ref{KAMvector}. Given $\omega_0 \in \R^n$, let $T_{\omega_0}$ be the translation of $\T^n$ of vector $\omega_0$:
\[ T_{\omega_0} : \T^n \rightarrow \T^n, \quad \theta \mapsto \theta+\omega_0. \]

\begin{Main}\label{KAMmap2}
Let $M$ be a sequence satisfying \eqref{H1} and \eqref{H2}, $\omega_0 \in \R^n$ satisfying~\eqref{BRM} and $T \in \mathcal{U}_{M,s}(\T^n,\T^n)$ a diffeomorphism of $\T^n$ of the form
\[T = T_{\omega_0} + B, \quad |B|_{M,s} \leq \mu.\]
Then, for $\mu$ sufficiently small, there exist a vector $\omega_0^* \in \R^n$ and an $(M,s/2)$-ultra-differentiable diffeomorphism
  $\Xi : \T^n \to \T^n$ such that $T+\omega_0^*-\omega_0$ is conjugate to $T_{\omega_0}$ via $\Xi$:
\[ \Xi^{-1} \circ (T+\omega_0^*-\omega_0) \circ \Xi=T_{\omega_0}.\] 
Moreover, we have the estimate
\[ |\omega_0^*-\omega_0| \leq c\mu, \quad |\Xi-\mathrm{Id}|_{M,s/2} \leq c\mu \]
for some constant $c\geq 1$ independent of $\mu$.
\end{Main}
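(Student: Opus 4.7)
The plan is to run a modified Newton scheme directly on $\mathrm{Diff}(\T^n)$, in parallel with the proof of Theorem~\ref{KAMvector}. Writing $\Xi=\id+U$ and $T=T_{\omega_0}+B$, the conjugacy identity $\Xi^{-1}\circ(T+\omega_0^*-\omega_0)\circ\Xi=T_{\omega_0}$ becomes the functional equation
$$U(\theta+\omega_0)-U(\theta)=B(\theta+U(\theta))+(\omega_0^*-\omega_0),$$
whose linearization at $U=0$ is the discrete cohomological equation $U\circ T_{\omega_0}-U=B+(\omega_0^*-\omega_0)$. The frequency shift is fixed by the solvability condition on $\T^n$, namely $\omega_0^*-\omega_0=-\int_{\T^n}B$, and the residual zero-mean equation is then inverted in Fourier by $\widehat{U}_k=\widehat{B}_k/(e^{2\pi i k\cdot\omega_0}-1)$ for $k\neq 0$.

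The key analytic step is to establish, within the $M$-ultra-differentiable scale, the cohomological estimate that is the discrete analogue of the one used for Theorem~\ref{KAMvector}. Combining the Fourier decay $|\widehat{B}_k|\leq|B|_{M,s}\,e^{-\Omega(s|k|)}$ coming from~\eqref{decay} with the lower bound $|e^{2\pi i k\cdot\omega_0}-1|^{-1}\lesssim\Psi(|k|)$ and truncation at a $\sigma$-dependent frequency, one obtains an estimate of the form
$$|U|_{M,s(1-\sigma)}\leq \Phi_{\omega_0}(\sigma)\,|B|_{M,s}$$
for some $\Phi_{\omega_0}$ expressible in terms of $\Psi$ and $C^{-1}$; the integrability of $\Phi_{\omega_0}$ along a geometric sequence of widths is exactly what the condition $\omega_0\in\mathrm{BR}_M$ provides.

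The Newton iteration proceeds as follows. Starting from $(\Xi^0,\omega_0^{*,0},B^0)=(\id,\omega_0,B)$, at step $k$ set $\omega_0^{*,k+1}=\omega_0^{*,k}-\overline{B^k}$, solve the linearized cohomological equation for $U^k$, and define $\Xi^{k+1}=\Xi^k\circ(\id+U^k)$ and $B^{k+1}$ as the residual error. Using the product, derivative and Taylor-remainder estimates for $M$-ultra-differentiable functions developed earlier in the paper, $B^{k+1}$ is quadratic in $|B^k|_{M,s_k}$, and for a suitable geometric schedule of widths $s_k$ decreasing to $s/2$ and shifts $\sigma_k$ tending to zero, one obtains $|B^k|\to 0$ superlinearly, $\omega_0^{*,k}$ converging to some $\omega_0^*$, and $\Xi^k$ to an $(M,s/2)$-ultra-differentiable diffeomorphism $\Xi$, with the quantitative bounds $|\omega_0^*-\omega_0|\leq c\mu$ and $|\Xi-\id|_{M,s/2}\leq c\mu$.

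The main obstacle is controlling the composition $\Xi^{k+1}=\Xi^k\circ(\id+U^k)$ through infinitely many iterations without exhausting the width budget $s-s/2$: a naive composition estimate loses a fixed fraction of $s$ at each step. The crucial ingredient is the refined composition estimate for $M$-ultra-differentiable functions in which the loss of width is made arbitrarily small whenever $\id+U^k$ is sufficiently close to $\id$; this is precisely one of the main functional-analytic contributions of the paper, generalizing the analogous Gevrey estimate of~\cite{BFa17}. Once it is in place, the scheme closes exactly as in the proof of Theorem~\ref{KAMvector}; alternatively, as for Theorem~\ref{timeKAM} in the continuous case, one could instead deduce Theorem~\ref{KAMmap2} from Theorem~\ref{KAMvector} by a suspension argument along the lines of~\cite{TreBook}.
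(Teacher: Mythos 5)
The paper itself does not give a detailed proof of Theorem~\ref{KAMmap2}: in the opening of \S\ref{sec:KAM} the authors explicitly state that, besides Theorems~\ref{classicalKAM}, \ref{KAMvector} and \ref{destruction}, ``all the other KAM theorems we stated can be deduced from Theorem~\ref{classicalKAM}, Theorem~\ref{KAMvector} or Theorem~\ref{KAMparameter} so we will not give details about their proof.'' Your second alternative --- a suspension deducing Theorem~\ref{KAMmap2} from Theorem~\ref{KAMvector} along the lines of the \cite{TreBook} argument cited for Theorem~\ref{timeKAM} --- is therefore exactly the route the paper has in mind. Your primary argument (a direct modified Newton scheme on $\mathrm{Diff}(\T^n)$) is a genuinely different, more self-contained strategy; the paper never runs a quadratic Newton iteration at all, it instead uses the linear scheme with rational approximations (Proposition~\ref{kamstep} and \S\ref{sec:iteration}), in which the errors decrease geometrically rather than quadratically. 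This difference is stylistic, but it is worth noting that you are not mirroring the proof of Theorem~\ref{KAMvector}, which goes through Theorem~\ref{KAMparameter} and the Hamiltonian-with-parameters machinery, not through a direct cohomological Newton scheme.

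There is, however, one genuine gap in your direct argument, and it is precisely what the suspension route resolves automatically. The small divisor in the discrete cohomological equation is $|e^{2\pi i\,k\cdot\omega_0}-1|=2|\sin(\pi\,k\cdot\omega_0)|$, which is comparable to the distance of $k\cdot\omega_0$ to the nearest integer, \emph{not} to $|k\cdot\omega_0|$. The function $\Psi_{\omega_0}$ defined in~\eqref{eqpsi} controls only $|k\cdot\omega_0|^{-1}$, so your asserted bound $|e^{2\pi i\,k\cdot\omega_0}-1|^{-1}\lesssim\Psi(|k|)$ does not hold as written (for instance, $|k\cdot\omega_0|$ may be uniformly bounded below while $k\cdot\omega_0$ is still arbitrarily close to $\Z$). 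The correct quantity is the continuous small divisor of the extended vector $(\omega_0,1)\in\R^{n+1}$, since $\min_{p\in\Z}|k\cdot\omega_0-p|=\min_{p}|(k,-p)\cdot(\omega_0,1)|$. Thus the arithmetic hypothesis your direct scheme actually uses is~\eqref{BRM} for $(\omega_0,1)$, and you should either impose that explicitly or revert to the suspension argument, where this replacement of $\omega_0$ by $(\omega_0,1)$ is built in. With that correction (and once the quadratic-error-versus-width-loss bookkeeping, which you describe only schematically, is carried out using the relevant propositions of \S\ref{sec:udiff}), your direct scheme does close and yields the stated estimates.
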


\subsection{Hamiltonian normal forms and Nekhoroshev type results}\label{sec:Nekintro}

We are still considering a Hamiltonian $H$ as in~\eqref{Ham1} but we write $\omega$ instead of $\omega_0$, that is
\begin{equation*}
\begin{cases}
  H(\theta,I)= h(I) +\varepsilon f(\theta,I), \quad (\theta,I) \in \T^n \times D \\
  \nabla h(0):=\omega_0=\omega \in \R^n \setminus\{0\}.
\end{cases}  
\end{equation*}

We do not assume that $\omega$ is non-resonant, but we assume at least it is non-zero; without loss of generality (up to a linear symplectic change of coordinates) it can always be written as 
\[ \omega=(\bar{\omega},0) \in \R^d \times \R^{n-d} \]
for some $1 \leq d \leq n$ and where $\bar{\omega} \in \R^d$ is non-resonant. One can still define $\Psi_\omega$, $\Delta_\omega$ and $\Delta_\omega^*$ by 
\[ \Psi_\omega(Q):=\Psi_{\bar{\omega}}(Q), \quad \Delta_\omega(Q):=Q\Psi_\omega(Q), \quad Q \geq 1 \]
and 
\[ \Delta_\omega^*(x):=\sup\{ Q \geq 1 \; | \; \Delta_\omega(Q)\leq x\}, \quad x \geq \Psi_\omega(1). \]
Alternatively, one may also use their continuous and equivalent versions $\Psi$, $\Delta$ and $\Delta^{-1}$. 

From now on, let us denote by $L_\omega$ the linear integrable Hamiltonian of constant frequency $\omega$, that is $L_\omega(I)=\omega\cdot I$. Our first result is a normal form statement up to a small remainder, in the special case where the integrable Hamiltonian is linear.

\begin{Main}\label{lineaire}
Let $H$ be as in~\eqref{Ham1}, where $H$ is $(M,s)$-ultra-differentiable with $M$ satisfying \eqref{H1} and \eqref{H2} and $h=L_\omega$ with $\omega=(\bar{\omega},0) \in \R^d \times \R^{n-d}$ and $\bar{\omega}$ non-resonant. For all $\varepsilon$ sufficiently small, there exists a $(M,s/2)$-ultra-differentiable symplectic transformation
\[ \Phi : \T^n \times D_{1/2} \rightarrow \T^n \times D \]
such that
\[ H\circ\Phi =h+\bar{f}+\hat{f} \]
where $\{\bar{f},h\}=0$ and with the estimates
\begin{equation*}
\begin{cases}
|\Phi-\mathrm{Id}|_{s/2}\leq c_1\Psi(\Delta^{-1}(c_2s\varepsilon^{-1}))\varepsilon, \quad |\bar{f}|_{s/2} \leq c_1\varepsilon, \\
|\hat{f}|_{s/2} \leq c_1\varepsilon\exp\left(-c_3\left(C^{-1}(c_4s\Delta^{-1}(c_2s\varepsilon^{-1}))\right)^{-1}\right)
\end{cases}
\end{equation*}
where $c_1>1$, $c_2<1$, $c_3<1$ and $c_4<1$ depend only on $n$, $\omega$ and the function $C$.
\end{Main}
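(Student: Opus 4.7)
The plan is a Lochak-style periodic averaging transplanted to the $M$-ultra-differentiable category, substituting the Cauchy function $C$ of \S\ref{sec:COmega} for the analytic loss of width and dispensing with analytic smoothing thanks to the composition and derivation estimates stemming from \eqref{H2}. The proof comprises three stages: (i) a periodic approximation of $\omega$, (ii) an iterative averaging removing the Fourier modes non-resonant for the approximating periodic frequency, and (iii) a Fourier-theoretic decomposition of the final normal form that sorts the $L_{\omega_\ast}$-resonant modes into those commuting with $L_\omega$ and a high-frequency tail absorbed into $\hat f$.

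For stage (i), Dirichlet simultaneous approximation applied to $\bar\omega\in\R^d$ yields, for every $Q\geq 1$, a $T$-periodic vector $\bar\omega_\ast=p/T$, $p\in\Z^d$, $1\leq T\leq Q$, with $|\bar\omega-\bar\omega_\ast|\leq 1/(T\Psi(T))$; choosing $Q:=c_2 s\varepsilon^{-1}$ gives $T=\Delta^{-1}(c_2 s\varepsilon^{-1})$ and $|\omega-\omega_\ast|\leq\varepsilon/(c_2 s)$, with $\omega_\ast:=(\bar\omega_\ast,0)$. Writing $H=L_{\omega_\ast}+g_0$, $g_0:=L_{\omega-\omega_\ast}+\varepsilon f$, $|g_0|_{M,s}\leq c\varepsilon$, transfers the problem to averaging with respect to the $T$-periodic flow of $L_{\omega_\ast}$. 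For stage (ii), one performs iterated one-step periodic averagings: given a perturbation $g$, decompose $g=\langle g\rangle+(g-\langle g\rangle)$ with $\langle g\rangle(\theta,I):=T^{-1}\int_0^T g(\theta+t\omega_\ast,I)\,dt$, and solve the cohomological equation $\{L_{\omega_\ast},\chi\}=\langle g\rangle-g$ by stratifying Fourier modes---using the small-divisor bound $|k\cdot\omega_\ast|\geq 1/(2\Psi(|k|))$ for $|k|\leq T$ (obtained by combining non-resonance of $\omega$ with $|\omega-\omega_\ast|\leq 1/(T\Psi(T))$) and absorbing the high-frequency tail via the Fourier decay \eqref{decay}---to obtain $|\chi|_{M,s}\lesssim \Psi(T)|g|_{M,s}$. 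The time-one flow $\Phi_1:=\exp X_\chi$ is $M$-ultra-differentiable by the composition estimate extending \cite{BFa17}, and a Taylor expansion with the ultra-differentiable Cauchy-type estimate (one derivative, loss of width $\delta$, factor $\sim C(\delta/s)/s$) yields $H\circ\Phi_1=L_{\omega_\ast}+\langle g\rangle+g^+$ with $|g^+|_{M,s-\delta}\lesssim \Psi(T)s^{-1}C(\delta/s)|g|_{M,s}^2$.

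Iterating $N$ times with per-step width loss $\sigma/N$ (total loss $\sigma=s/2$), each iteration multiplies the non-resonant remainder by $\kappa\sim\varepsilon\Psi(T)s^{-1}C(\sigma/(sN))$; enforcing $\kappa\leq 1/2$ and invoking $T\Psi(T)=c_2 s\varepsilon^{-1}$ selects $N\sim 1/C^{-1}(c_4 sT)$, so that the final non-resonant remainder is of order $\varepsilon\exp(-c_3/C^{-1}(c_4 s\Delta^{-1}(c_2 s\varepsilon^{-1})))$, as required, while the total transformation satisfies $|\Phi-\mathrm{Id}|_{M,s/2}\lesssim \Psi(T)\varepsilon$. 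For stage (iii), the aggregate $\bar g$ commutes with $L_{\omega_\ast}$, so its Fourier support lies in $\{k:k\cdot\omega_\ast=0\}$; split $\bar g=\bar f+\tilde g$, where $\bar f$ is supported on $\{k:k\cdot\omega=0\}$ and $\tilde g$ on the residual set $\{k:k\cdot\omega_\ast=0,\ k\cdot\omega\neq 0\}$. For $k$ in the residual set, the constraints $k\cdot\omega_\ast=0$, $|k\cdot\omega|\geq 1/\Psi(|k|)$ and $|\omega-\omega_\ast|\leq 1/(T\Psi(T))$ force $|k|\geq T$, so \eqref{decay} bounds $\tilde g$ by $\exp(-\Omega(sT))$, which under \eqref{H1}--\eqref{H2} is dominated by the main exponential bound and can be absorbed into $\hat f$. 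The resulting $\bar f$ automatically satisfies $\{\bar f,h\}=0$. The main technical obstacles are: propagating the $M$-ultra-differentiable norms through $N$ compositions of time-one flows (requiring the refined composition estimate of \cite{BFa17}, whose validity rests on \eqref{H2} ensuring that the loss of width per composition can be made arbitrarily small); and the arithmetic optimization bringing all parameters into the precise form $C^{-1}(c_4 s\Delta^{-1}(c_2 s\varepsilon^{-1}))$ announced in the statement.
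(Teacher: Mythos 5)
Your overall strategy---periodic approximation, iterated one-step averaging with width loss controlled by $C$, and a final extraction of the $L_\omega$-resonant part---is recognizably of the same Lochak/Neishtadt type as the paper's, and the choice $N\sim 1/C^{-1}(c_4 sT)$ of the number of iterations is exactly the paper's optimization in Proposition~\ref{propperiodic}. But the way you structure the resonant extraction differs from the paper's and, as written, contains a genuine gap.

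The paper does not use a single periodic vector: it applies Proposition~\ref{dio2} to find $d$ periodic vectors $v_1,\dots,v_d \in \R^d\times\{0\}$ of periods $T_1,\dots,T_d$, bounded by $\Psi(Q)$, whose integer multiples $T_1v_1,\dots,T_dv_d$ form a $\Z$-basis of $\Z^d\times\{0\}$, and it runs the periodic averaging of Proposition~\ref{propperiodic} successively with respect to each $v_j$ (Proposition~\ref{proplineaire}). The composite normal form then commutes with all $L_{v_j}$ simultaneously, hence with $L_\omega$ by Proposition~\ref{cordio2}; no further Fourier surgery is needed. Your single-vector version instead produces a $\bar g$ commuting only with $L_{\omega_\ast}$, and then splits $\bar g = \bar f + \tilde g$ along the constraints $k\cdot\omega=0$ versus $k\cdot\omega_\ast=0,\;k\cdot\omega\neq 0$. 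The paper's route is cleaner precisely because it sidesteps the estimate you must then prove for $\tilde g$.

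That estimate is where the gap lies. You correctly observe that any $k$ with $k\cdot\omega_\ast=0$ and $\Pi_d k\neq 0$ must satisfy $\Delta(|\Pi_d k|)\gtrsim T\Psi(T)$, hence $|k|\gtrsim T$. But you then invoke the Fourier decay~\eqref{decay} to bound $\tilde g$ by $\exp(-\Omega(sT))$ and assert that this is ``dominated by the main exponential bound'' $\exp\bigl(-c_3/C^{-1}(c_4sT)\bigr)$ ``under \eqref{H1}--\eqref{H2}.'' Two objections. First, \eqref{decay} bounds Fourier \emph{coefficients}, not the $(M,s')$-norm; resumming a high-frequency Fourier tail into a derivative-based ultra-differentiable norm loses further and is not immediate. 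Second, and more seriously, no inequality of the form $\Omega(y)\gtrsim 1/C^{-1}(y)$ is established anywhere in the paper, and by design the two functions are \emph{not} comparable in general: the paper distinguishes them precisely because only for \emph{matching} sequences is $C^{-1}\asymp 1/\Omega$. So the claimed absorption is unjustified for general $M$ satisfying \eqref{H1}--\eqref{H2}. The correct tool, if one insists on your splitting, would be a high-frequency truncation lemma phrased directly in terms of $C^{-1}$ (via iterated Cauchy estimates from Proposition~\ref{derivative}), which you would need to formulate and prove; the route through $\Omega$ does not close.

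A smaller slip: you set $Q:=c_2s\varepsilon^{-1}$ and claim $T=\Delta^{-1}(c_2s\varepsilon^{-1})$. In the paper it is $Q=\Delta^{-1}(c_2s\varepsilon^{-1})$, so that $Q\Psi(Q)\asymp s\varepsilon^{-1}$, and the period(s) satisfy $T\lesssim \Psi(Q)=\Psi(\Delta^{-1}(c_2s\varepsilon^{-1}))$; this is why the factor $\Psi(\Delta^{-1}(\cdots))$ appears in the bound on $|\Phi-\mathrm{Id}|$. As written you have conflated the approximation quality $Q$ with the period $T$.
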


This result was known in the analytic case (see~\cite{Fas90},~\cite{Pos93}) and in the Gevrey case (\cite{Bou13a}). Theorem~\ref{lineaire} implies the following ``partial" stability result, in which we denote by $\Pi_d : \R^n \rightarrow \R^d$ the projection onto the first $d$ components.

\begin{corollary}\label{corlineaire}
Under the assumptions of Theorem~\ref{lineaire}, for any $I_0 \in D_{1/8}$ and any $r>0$ such that 
\[ 2c_1\Psi(\Delta^{-1}(c_2s\varepsilon^{-1}))\varepsilon \leq r \leq 1/4 ,\] 
any solution $(\theta(t),I(t))$ of $H$ with $I(0)=I_0$ satisfy
\[ |\Pi_d (I(t)-I_0)| \leq 2r, \quad |t| \leq \tilde{c}_1rs\varepsilon^{-1}\exp\left(-c_3\left(C^{-1}(c_4s\Delta^{-1}(c_2s\varepsilon^{-1}))\right)^{-1}\right) \]
as long as $I(t) \in D_{1/4}$, where $c_1>1$, $\tilde{c}_1<1$ $c_2<1$, $c_3<1$ and $c_4<1$ depend only on $n$, $\omega$ and the function $C$.
\end{corollary}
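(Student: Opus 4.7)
First, I would apply Theorem~\ref{lineaire} to produce the symplectic diffeomorphism $\Phi \colon \T^n \times D_{1/2} \to \T^n \times D$ with $H \circ \Phi = h + \bar f + \hat f$ and $\{\bar f, h\} = 0$. Given an orbit $(\theta(t), I(t))$ of $H$ with $I_0 \in D_{1/8}$, its pullback $(\theta'(t), I'(t)) := \Phi^{-1}(\theta(t), I(t))$ is an orbit of the normal form Hamiltonian. The lower bound $r \geq 2c_1 \Psi(\Delta^{-1}(c_2 s \varepsilon^{-1}))\varepsilon$ ensures that $|I'(\tau) - I(\tau)| \leq r/2$ whenever $(\theta(\tau), I(\tau))$ lies in the image of $\Phi$; the hypothesis $I(\tau) \in D_{1/4}$ for $|\tau| \leq t$, together with $r \leq 1/4$, then forces $I'(\tau) \in D_{1/2}$ throughout, so that the normal form remains valid along the orbit.

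The key structural observation is that $\bar f$ cannot depend on the "fast" angles $\theta_1 := \Pi_d \theta$. Indeed, $\{\bar f, L_\omega\} = \omega \cdot \partial_\theta \bar f = 0$; expanding $\bar f$ in Fourier series in $\theta$ and using that $\omega = (\bar \omega, 0)$ with $\bar \omega$ non-resonant, every Fourier mode $k = (k_1, k_2)$ appearing in $\bar f$ must satisfy $k_1 \cdot \bar \omega = 0$, hence $k_1 = 0$. Together with the fact that $h$ depends on $I$ only, Hamilton's equation for $I'_1 := \Pi_d I'$ reduces to
\[ \dot I'_1 = -\partial_{\theta_1}\bigl(h + \bar f + \hat f\bigr) = -\partial_{\theta_1} \hat f. \]
The first-order Cauchy-type estimate obtained by specializing definition~\eqref{defn} to $|k| = 1$ (using $M_1 = 1$) yields $\sup |\partial_{\theta_1} \hat f| \leq c' s^{-1} |\hat f|_{s/2}$, so the exponentially small bound on $|\hat f|_{s/2}$ from Theorem~\ref{lineaire} translates into an exponentially small drift rate for $I'_1$.

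Integrating this drift equation between $0$ and $t$ and combining it with the triangle inequality
\[ |\Pi_d(I(t) - I_0)| \leq |\Pi_d(I(t) - I'(t))| + |\Pi_d(I'(t) - I'(0))| + |\Pi_d(I'(0) - I_0)| \]
finishes the argument: the first and third terms are each at most $c_1 \Psi(\Delta^{-1}(c_2 s \varepsilon^{-1}))\varepsilon \leq r/2$ by the closeness of $\Phi$ to the identity, while choosing the absorption constant $\tilde c_1$ small enough makes the middle term, bounded by $|t|$ times the drift rate, at most $r$ on the stated time interval, yielding the total bound $2r$. The real difficulty is packaged entirely upstream in Theorem~\ref{lineaire}; once the normal form is available, the corollary is a routine drift-plus-coordinate-change calculation, the only delicate ingredient being the first-order Cauchy inequality, which is supplied for free by hypothesis~\eqref{H2}.
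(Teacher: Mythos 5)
Your proof is correct and follows essentially the same route as the paper's: pass to the normal form from Theorem~\ref{lineaire}, use $\{\bar f,L_\omega\}=0$ together with the non-resonance of $\bar\omega$ to conclude $\partial_{\theta_j}\bar f=0$ for $j\le d$, bound the drift of $\Pi_d\tilde I$ by integrating $\partial_\theta\hat f$, and finish with the triangle inequality and the closeness of $\Phi$ to the identity. The one minor variation is that you extract the $C^0$ bound $\sup|\partial_\theta\hat f|\MP s^{-1}|\hat f|_{s/2}$ directly from the $|k|=1$ term of definition~\eqref{defn} (using $M_1=1$), whereas the paper invokes Corollary~\ref{corderivative}; both are valid, and yours is in fact marginally more economical since it avoids introducing the Cauchy function $C(\sigma)$ where it is not needed.
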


Let us point out that these stability estimates actually apply to any solution; indeed, if $I(0) \in D_{1-\delta}$ for an arbitrary but fixed $\delta>0$, one would obtain 
\[ |\Pi_d (I(t)-I(0))|<\delta \]
for the same interval of time as long as $I(t)$ stays in the domain, upon letting the constants and the smallness assumption depend on $\delta$. For convenience only, we sated the result for any solution with $I(0) \in D_{1/8}$ and the restriction on time was that $I(t)$ stays in $D_{1/4}$. In general, this result does not give a confinement of the action variables for a long interval of time because it may well happen that the orbit escape the domain very quickly; the only exception is when $d=n$, that is when $\omega$ is actually non-resonant. The next corollary follows at once from the preceding one.

\begin{corollary}\label{corlineaire2}
Under the assumptions of Theorem~\ref{lineaire} and in the case where $\omega=\bar{\omega} \in \R^n$ is non-resonant, for any $I_0 \in D_{1/8}$ and any $r>0$ such that 
\[ 2c_1\Psi(\Delta^{-1}(c_2s\varepsilon^{-1}))\varepsilon \leq r \leq 1/4 ,\] 
any solution $(\theta(t),I(t))$ of $H$ with $I(0)=I_0$ satisfy
\[ |(I(t)-I_0)| \leq 2r, \quad |t| \leq \tilde{c}_1rs\varepsilon^{-1}\exp\left(-c_3\left(C^{-1}(c_4s\Delta^{-1}(c_2s\varepsilon^{-1}))\right)^{-1}\right) \]
where $c_1>1$, $\tilde{c}_1<1$ $c_2<1$, $c_3<1$ and $c_4<1$ depend only on $n$, $\omega$ and the function $C$.
\end{corollary}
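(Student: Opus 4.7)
The plan is to derive Corollary~\ref{corlineaire2} as an essentially immediate specialization of Corollary~\ref{corlineaire} to the fully non-resonant case $d=n$, with a short bootstrap argument to remove the confinement caveat.

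First, since $\omega=\bar\omega\in\R^n$ is non-resonant, the decomposition $\omega=(\bar\omega,0)\in\R^d\times\R^{n-d}$ used in Theorem~\ref{lineaire} and Corollary~\ref{corlineaire} reduces to $d=n$, so the projection $\Pi_d:\R^n\to\R^d$ is the identity of $\R^n$. In particular, the quantities $\Psi_\omega$, $\Delta_\omega$ and $\Delta_\omega^{*}$ (or their continuous versions $\Psi$, $\Delta$, $\Delta^{-1}$) are the same as in the statement to be proved, and the hypotheses on $\omega$ and on $\varepsilon$ are unchanged. Applying Corollary~\ref{corlineaire} verbatim then gives, for any $I_0\in D_{1/8}$ and any $r$ with $2c_1\Psi(\Delta^{-1}(c_2s\varepsilon^{-1}))\varepsilon\le r\le 1/4$, the estimate
\[
 |I(t)-I_0|=|\Pi_d(I(t)-I_0)|\le 2r,
 \qquad |t|\le \tilde c_1\,r\,s\,\varepsilon^{-1}\exp\!\left(-c_3\bigl(C^{-1}(c_4 s\Delta^{-1}(c_2 s\varepsilon^{-1}))\bigr)^{-1}\right),
\]
valid as long as $I(t)\in D_{1/4}$, with the very same constants $c_1,\tilde c_1,c_2,c_3,c_4$.

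Next I would remove the qualifier \emph{as long as $I(t)\in D_{1/4}$} by a standard continuity argument. Let $T>0$ denote the supremum of times in the allowed window for which $I(t)\in D_{1/4}$. On $[0,T)$ the previous inequality holds, so $|I(t)-I_0|\le 2r\le 1/2$; combined with $I_0\in D_{1/8}$, this forces $I(t)$ to stay well inside $D_{1/4}$ (the triangle inequality gives a margin to the boundary, using that $r\le 1/4$ and $|I_0|$ is controlled by $1/8$). Consequently $T$ cannot be smaller than the exponential time given by Corollary~\ref{corlineaire}, and the estimate is established without any auxiliary confinement assumption. The argument for negative times is symmetric.

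There is essentially no obstacle here: the content is entirely in Theorem~\ref{lineaire} and Corollary~\ref{corlineaire}, and the only point to check is the continuity/bootstrap that turns the conditional confinement ``as long as $I(t)\in D_{1/4}$'' into an unconditional one, which works precisely because $d=n$ makes the normal-form estimate control \emph{all} action components simultaneously.
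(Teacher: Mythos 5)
Your overall plan is right — specialize Corollary~\ref{corlineaire} to $d=n$ and then remove the confinement caveat by a continuity/bootstrap argument — but the bootstrap you run does not close numerically. You write that on $[0,T)$ the inequality $|I(t)-I_0|\le 2r\le 1/2$, together with $I_0\in D_{1/8}$, forces $I(t)$ to stay inside $D_{1/4}$. That is false for the range of $r$ allowed by the hypothesis: with $|I_0|<1/8$ and $|I(t)-I_0|\le 2r$ one only gets $|I(t)|<1/8+2r$, and since $r$ may be as large as $1/4$ this bound is $5/8$, not $<1/4$. So the exit time from $D_{1/4}$ is not shown to exceed the exponential time, and the argument as written breaks exactly at the step where you claim a ``margin to the boundary.''

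The fix (which is what the paper has in mind when it says the corollary ``follows at once'') is to run the continuity argument in the normal-form coordinates rather than the original ones. Set $\tilde I(t)=\Phi^{-1}_I(\theta(t),I(t))$. From $|I_0|<1/8$ and $|\Phi-\mathrm{Id}|_{s/2}\le c_1\Psi(\Delta^{-1}(c_2s\varepsilon^{-1}))\varepsilon\le r/2\le 1/8$ one gets $\tilde I(0)\in D_{1/4}$. As long as $\tilde I(t)\in D_{1/2}$ (the domain where the bound on $\partial_\theta\hat f$ is available), the case $d=n$ gives $|\tilde I(t)-\tilde I(0)|=|\Pi_d(\tilde I(t)-\tilde I(0))|\le r\le 1/4$, hence $|\tilde I(t)|<1/4+1/4=1/2$ — a strict inequality, so by continuity $\tilde I(t)$ never reaches the boundary of $D_{1/2}$ for $|t|\le T$. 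Only then does one transport back and apply the triangle inequality to conclude $|I(t)-I_0|\le 2r$. In short, the radii are tuned so that the bootstrap closes in the $\tilde I$ variable (drift $\le r\le 1/4$ from $D_{1/4}$ stays in $D_{1/2}$); they do not close in the $I$ variable with the numerology of the statement. Your proof should be reformulated along these lines, otherwise there is a genuine gap.
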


A natural question is whether these last estimates can be improved in general. Without any assumption on the sequence $M=(M_l)_{l \in \N}$, we can always construct an example of unstable orbits with controlled speed in the special case where the integrable Hamiltonian $h$ is linear. In the statement below, for convenience we will measure the size of the perturbation $f$ not by its norm but rather by the norm of its Hamiltonian vector field $X_f$.

\begin{Main}\label{difflineaire}
There exist a sequence of Hamiltonians $H_j=h+f_j=L_\omega+f_j$ as in~\eqref{Ham1} where $\omega=(\bar{\omega},0) \in \R^d \times \R^{n-d}$ with $\bar{\omega}$ non-resonant and
\[ |X_{f_j}|_s \leq 2c\varepsilon_j, \quad \lim_{j \rightarrow +\infty}\varepsilon_j=0, \]
such that $H_j$ has a solution $(\theta_j(t),I_j(t))$ which is globally defined and satisfies
\[ |t|\varepsilon_j\exp\left(-\Omega(16\pi s\Delta_\omega^*(2\varepsilon_j^{-1}))\right) \leq |\Pi_d(I_j(t)-I_j(0))| \leq |t|\varepsilon_j\exp\left(-\Omega(8\pi s\Delta_\omega^*((2\varepsilon_j)^{-1}))\right).   \]
\end{Main}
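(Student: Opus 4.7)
The plan is a direct explicit construction, exploiting the observation that for a perturbation $f_j$ depending only on the resonant angles $\theta^{(2)}:=(\theta_{d+1},\dots,\theta_n)$, Hamilton's equations for $H_j=L_\omega+f_j$ decouple trivially: one has $\dot\theta=\omega$ exactly, and since $\omega^{(2)}=0$ the resonant angles $\theta^{(2)}(t)=\theta^{(2)}(0)$ remain frozen along the flow. Consequently $\dot I^{(2)}=-\partial_{\theta^{(2)}}f_j(\theta^{(2)}(0))$ is constant in time, so the resonant components of $I$ drift exactly linearly, which is precisely the behaviour demanded by the theorem.

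Setting $Q_j:=\Delta_\omega^*(2/\varepsilon_j)$, the construction I propose is
\[
  f_j(\theta):=\frac{\varepsilon_j}{2\pi}\,\exp\!\bigl(-\Omega(16\pi sQ_j)\bigr)\sin(2\pi\theta_{d+1}).
\]
Because $f_j$ is $I$-independent, $H_j$ extends trivially to $\T^n\times\R^n$ and the flow is globally defined. The only non-zero component of $X_{f_j}$ is $-\partial_{\theta_{d+1}}f_j=-\varepsilon_j\exp(-\Omega(16\pi sQ_j))\cos(2\pi\theta_{d+1})$; by~\eqref{defn2}, its ultra-differentiable norm equals $c\,\varepsilon_j\exp(-\Omega(16\pi sQ_j))\sup_l(l+1)^2(2\pi s)^l/M_l$. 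The elementary factorisation $(l+1)^2(2\pi s)^l/M_l=[(l+1)^2/8^l]\cdot(16\pi s)^l/M_l\le 1\cdot\exp(\Omega(16\pi s))$ (the first bracket peaks at $l=0$), combined with $\Omega(16\pi s)\le\Omega(16\pi sQ_j)$ valid for $Q_j\ge 1$, then yields $|X_{f_j}|_{M,s}\le c\varepsilon_j\le 2c\varepsilon_j$.

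Finally, with the initial condition $\theta_{d+1}(0)=0$ and any $I(0)\in D$, integration along the exact linear flow $\theta(t)=\theta(0)+\omega t$ gives $I_i(t)=I_i(0)$ for $i\ne d+1$ and $I_{d+1}(t)-I_{d+1}(0)=-\varepsilon_j\exp(-\Omega(16\pi sQ_j))\,t$, so $|\Pi_d(I_j(t)-I_j(0))|=\varepsilon_j|t|\exp(-\Omega(16\pi sQ_j))$, saturating the stated lower bound. The upper bound follows from monotonicity: since $\Delta_\omega^*((2\varepsilon_j)^{-1})\le\Delta_\omega^*(2/\varepsilon_j)=Q_j$, one has $8\pi s\Delta_\omega^*((2\varepsilon_j)^{-1})\le 16\pi sQ_j$ and hence $\exp(-\Omega(16\pi sQ_j))\le\exp(-\Omega(8\pi s\Delta_\omega^*((2\varepsilon_j)^{-1})))$. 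The only mild obstacle is the careful bookkeeping of constants in the norm estimate (with the Fourier convention $e^{2\pi ik\cdot\theta}$ on $\T^n=\R^n/\Z^n$); no substantive analytic difficulty remains.
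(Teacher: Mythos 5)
Your construction has a fatal flaw: the drift you produce lies entirely outside the range of $\Pi_d$. Recall that $\Pi_d:\R^n\to\R^d$ is the projection onto the \emph{first} $d$ components, precisely the directions in which $\omega$ is non-resonant, and the theorem demands $|\Pi_d(I_j(t)-I_j(0))|$ to grow linearly. Your perturbation $f_j(\theta)=\frac{\varepsilon_j}{2\pi}\exp(-\Omega(16\pi sQ_j))\sin(2\pi\theta_{d+1})$ depends only on $\theta_{d+1}$, so the only nonzero component of the force $-\partial_\theta f_j$ is in the $(d+1)$-st slot: you correctly observe $I_i(t)=I_i(0)$ for all $i\neq d+1$, but then you incorrectly identify the drift of $I_{d+1}$ with $|\Pi_d(I_j(t)-I_j(0))|$. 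In fact, in your construction $\Pi_d(I_j(t)-I_j(0))\equiv 0$, so you prove nothing at all about the quantity in the statement. The cheap trick of exploiting the exact resonance $\omega_{d+1}=0$ cannot work, precisely because it moves the actions only in the resonant block $\R^{n-d}$.

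To produce drift in the first $d$ components, one is forced to exploit a \emph{near-}resonance of the non-resonant vector $\bar{\omega}$, and that is where the arithmetic of $\omega$ must enter. The paper's construction does this: it picks a convergent $p_j/q_j$ of $\bar{\omega}_1$, sets $\varepsilon_j:=|\bar{\omega}_1-p_j/q_j|$ (so $\varepsilon_j$ is \emph{dictated} by the Diophantine properties of $\omega$, not freely chosen as in your proposal), and takes $k_j=(p_j,-q_j,0,\dots,0)$, a vector whose first two components are nonzero and which is nearly orthogonal to $\omega$. It then adds a first-order-in-$I$ correction $f_j^1(I)=(v_j-\omega)\cdot I$, shifting the effective frequency to the periodic vector $v_j=(1,p_j/q_j,\bar\omega_2,\dots,\bar\omega_{d-1},0)$, so that $k_j\cdot v_j=0$ exactly and the angular argument $k_j\cdot\theta(t)$ is frozen along the flow. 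This makes $\dot I$ constant, with nonzero components precisely in the first two (hence first $d$) coordinates, and the norm constraint then forces the amplitude of the trigonometric part to be $\varepsilon_j\mu_j$ with $\mu_j\sim\exp(-\Omega(8\pi|k_j|s))$, from which the two sides of the claimed inequality follow via $q_j\le|k_j|\le 2q_j$ and $\Delta_\omega^*((2\varepsilon_j)^{-1})\le q_j\le\Delta_\omega^*(2\varepsilon_j^{-1})$. Both the frequency shift and the tie between $\varepsilon_j$ and the convergents are essential and are absent from your proposal.

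Two further, more minor issues. First, your norm calculation of $X_{f_j}$ treats $\partial_{\theta_{d+1}}f_j$ as the only component and estimates it by $\sup_l (l+1)^2(2\pi s)^l/M_l$, but by the definition~\eqref{defn2} one must take the supremum over $k\in\N^{2n}$ of $\partial^k X$, and for a pure Fourier mode $\cos(2\pi\theta_{d+1})$ the relevant quantity is $\sup_l(l+1)^2(2\pi s)^l/M_l$, which is $c^{-1}\exp(\Omega(\cdot))$ only up to the factor $(l+1)^2\le 4^l$; your step replacing $(l+1)^2$ by $8^l$ gives $\exp(\Omega(16\pi s))$, but then the comparison with $\exp(\Omega(16\pi sQ_j))$ in the denominator is not obviously correct as you've arranged the exponentials, and in any case the amplitude you chose has no reason to match the bounds required. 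Second, even the claimed lower bound cannot be ``saturated'' by your one-mode perturbation, because the theorem requires both a lower \emph{and} an upper bound on $|\Pi_d(I_j(t)-I_j(0))|$ with different arguments of $\Omega$, which in the paper come precisely from the two-sided inequality $q_j\le |k_j|\le 2q_j$; there is no such structure in your construction.
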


In the analytic case, this theorem was proved in~\cite{Bou12} and then extended in~\cite{Bou13a} in the Gevrey case. 

In general, this last statement is not related to
Theorem~\ref{lineaire} or Corollary~\ref{corlineaire} because it is
the function $\Omega$, and not $C^{-1}$, which measures the speed of
instability. Yet for matching sequences, for which by definition
$1/\Omega$ is equivalent to $C^{-1}$, Theorem~\ref{difflineaire} shows
that in general one cannot improve the conclusions of both
Corollary~\ref{corlineaire} and Theorem~\ref{lineaire}.

\bigskip

Theorem~\ref{lineaire} gives a global normal form for a linear integrable Hamiltonian $h=L_\omega$ and Corollary~\ref{corlineaire} gives a global stability result. For an arbitrary nonlinear integrable Hamiltonian $h$, these global results translate into local results, valid on a small ball of radius $\rho>0$ around the origin. The only results we can obtain have an anistropic character, so for a given $\rho>0$, it is convenient to introduce the scalings
\[ \sigma_\rho(\theta,I):=(\theta,\rho I), \quad \sigma_\rho^{-1}(\theta,I):=(\theta,\rho^{-1} I).  \]
Clearly, $\sigma_\rho$ is a diffeomorphism between $\T^n \times D$ and $\T^n \times D_\rho$.

\begin{Main}\label{nonlineaire}
Let $H$ be as in~\eqref{Ham1}, where $H$ is $(M,s)$-ultra-differentiable with $M$ satisfying \eqref{H1} and \eqref{H2} and $\omega=(\bar{\omega},0) \in \R^d \times \R^{n-d}$ with $\bar{\omega}$ non-resonant. For all $\varepsilon$  and all $\rho$ sufficiently small with
\[ \sqrt{\varepsilon} \leq \rho \leq 1,\]
there exists a $(M,s/4)$-ultra-differentiable symplectic transformation
\[ \Phi : \T^n \times D_{\rho/2} \rightarrow \T^n \times D_{\rho} \]
such that
\[ H\circ\Phi =h+\bar{f}+\hat{f} \]
where $\{\bar{f},L_\omega\}=0$ and with the estimates
\begin{equation*}
\begin{cases}
|\sigma_\rho^{-1} \circ \Phi \circ \sigma_\rho-\mathrm{Id}|_{s/4}\leq c_1\Psi(\Delta^{-1}(c_2s\rho^{-1}))\rho^{-1}\varepsilon, \quad |\bar{f} \circ \sigma_\rho|_{s/4} \leq c_1\varepsilon, \\
|\hat{f} \circ \sigma_\rho|_{s/4} \leq c_1\varepsilon\exp\left(-c_3\left(C^{-1}(c_4s\Delta^{-1}(c_2s\rho^{-1}))\right)^{-1}\right)
\end{cases}
\end{equation*}
where $c_1>1$, $c_2<1$, $c_3<1$ and $c_4<1$ depend only on $n$, $\omega$, $|h|_s$ and the function $C$.
\end{Main}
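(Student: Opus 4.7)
The strategy is to deduce Theorem~\ref{nonlineaire} from Theorem~\ref{lineaire} through the anisotropic rescaling $\sigma_\rho$ in the action variable, which trades the nonlinearity of $h$ for a controlled perturbation of the truly linear Hamiltonian $L_\omega$. Taylor-expand $h(I)=h(0)+\omega\cdot I+h_2(I)$ with $h_2(I)=O(|I|^2)$, and introduce, on $\T^n\times D_1$,
\[
\tilde H(\theta,J):=\rho^{-1}\bigl(H(\theta,\rho J)-h(0)\bigr)=L_\omega(J)+\rho\,\tilde h_2(J)+\rho^{-1}\varepsilon\,\tilde f(\theta,J),
\]
with $\tilde h_2(J):=\rho^{-2}h_2(\rho J)$ and $\tilde f:=f\circ\sigma_\rho$. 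Since $\rho\leq 1$, the rescaling $\sigma_\rho$ does not inflate $M$-ultra-differentiable norms of width $s$: one checks that $|\tilde h_2|_{M,s}\leq c|h|_{s}$ uniformly in $\rho$ and $|\tilde f|_{M,s}\leq|f|_{M,s}$. Under the hypothesis $\sqrt{\varepsilon}\leq\rho$ one has $\rho^{-1}\varepsilon\leq\rho$, so the full non-linear correction to $L_\omega$ in $\tilde H$ is of size at most $c\rho$.

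Next I would apply Theorem~\ref{lineaire} (or rather its proof) to $\tilde H$ on $\T^n\times D_1$. The crucial point is that the extra term $\rho\tilde h_2$ depends only on $J$ and thus satisfies $\{\tilde h_2,L_\omega\}=0$: it is already in resonant normal form, and enters the periodic averaging only through the bracket with the successive generators. At each step one solves $\{L_\omega,\chi\}=\text{(truncated perturbation)}$ with $|\chi|\lesssim\rho^{-1}\varepsilon\,\Psi(Q)$ after a Fourier truncation at wavenumber $Q$; the new perturbation carries an additional term $\rho\{\chi,\tilde h_2\}$ of size $\rho\cdot|\partial_J\tilde h_2|\cdot|\partial_\theta\chi|\lesssim\varepsilon\,Q\Psi(Q)$, plus the quadratic self-interaction $\lesssim(\rho^{-1}\varepsilon)^2 Q\Psi(Q)$, which is smaller thanks to $\rho^{-1}\varepsilon\leq\rho$. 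Choosing the cutoff at the threshold $\Psi(Q)\sim\rho^{-1}$, i.e.\ $Q\sim\Delta^{-1}(c_2 s\rho^{-1})$, reproduces the quantitative statements of Theorem~\ref{lineaire}, but now with small parameter $\rho^{-1}\varepsilon$ for the norm bounds and $\rho^{-1}$ for the small-divisor cutoff. This yields a symplectic $\tilde\Phi:\T^n\times D_{1/2}\to\T^n\times D_1$ with $\tilde H\circ\tilde\Phi=L_\omega+\rho\tilde h_2+\tilde{\bar g}+\tilde{\hat g}$, $\{\tilde{\bar g},L_\omega\}=0$, $|\tilde{\bar g}|_{M,s/2}\lesssim\rho^{-1}\varepsilon$ and $|\tilde{\hat g}|_{M,s/2}$ exponentially small in $C^{-1}(c_4 s\Delta^{-1}(c_2 s\rho^{-1}))^{-1}$.

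Finally, undoing the scaling, set $\Phi:=\sigma_\rho\circ\tilde\Phi\circ\sigma_\rho^{-1}:\T^n\times D_{\rho/2}\to\T^n\times D_\rho$. This map is symplectic for the standard form, because the scaling of the action and its inverse combine to leave $dI\wedge d\theta$ invariant. Defining $\bar f\circ\sigma_\rho:=\rho\tilde{\bar g}$ and $\hat f\circ\sigma_\rho:=\rho\tilde{\hat g}$ one reads off $H\circ\Phi=h+\bar f+\hat f$ with $\{\bar f,L_\omega\}=0$, and pulling back by $\sigma_\rho$ translates the ultra-differentiable bounds on $\tilde\Phi-\mathrm{Id}$, $\tilde{\bar g}$, $\tilde{\hat g}$ into the announced estimates. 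The final shrinkage of the width from $s/2$ to $s/4$ absorbs the derivatives consumed in the composition via the Cauchy estimate provided by \eqref{H2}.

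The main obstacle is the adaptation of Theorem~\ref{lineaire} to the presence of the action-only correction $\rho\tilde h_2$: one must verify that at every iteration of the scheme the cross brackets $\rho\{\chi_k,\tilde h_2\}$ remain controllable at the \emph{same} cutoff $Q\sim\Delta^{-1}(cs\rho^{-1})$ as in the purely linear case, so that the effective perturbation size stays $\rho^{-1}\varepsilon$ (not inflated to $\rho$) and the exponential remainder governed by $C^{-1}$ comes out as stated. This is the standard observation that integrable nonlinear corrections of $h$ do not worsen Diophantine bookkeeping, transposed here to the $M$-ultra-differentiable framework built in Sections~\ref{sec:udiffintro}--\ref{sec:COmega}.
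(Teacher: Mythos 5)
Your proposal follows essentially the same path as the paper: rescale by $\sigma_\rho$, Taylor-expand $h$ to peel off $L_\omega$ plus an integrable remainder of size $O(\rho)$, observe that $\sqrt{\varepsilon}\le\rho$ keeps the rescaled perturbation at size $\rho^{-1}\varepsilon\le\rho$, apply the normal form from the linear case, and undo the scaling. The only stylistic difference is that the paper anticipates the ``main obstacle'' you flag by stating Proposition~\ref{proplineaire} from the start for Hamiltonians $L_\omega+R+F$ with an integrable $R$ satisfying $|\nabla R|_s\le\rho$, so that Theorem~\ref{nonlineaire} is a direct application of that proposition (with $R=\rho h'$) rather than a re-run of the iteration; your bookkeeping of the cross brackets $\rho\{\chi_k,\tilde h_2\}$ matches what that proposition already absorbs.
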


In the analytic or Gevrey case, a similar result but with weaker estimates was obtained in~\cite{Bou13a}. As before, Theorem~\ref{nonlineaire} implies the following ``partial" and local stability result.

\begin{corollary}\label{cornonlineaire}
Under the assumptions of Theorem~\ref{nonlineaire}, for any $I_0 \in D_{\rho/8}$ and any $r>0$ such that 
\[ 2c_1\Psi(\Delta^{-1}(c_2s\varepsilon^{-1}))\varepsilon \leq r \leq \rho/4 ,\] 
any solution $(\theta(t),I(t))$ of $H$ with $I(0)=I_0$ satisfy
\[ |I(t)-I_0| \leq 2r, \quad |t| \leq \tilde{c}_1rs\varepsilon^{-1}\exp\left(-c_3\left(C^{-1}(c_4s\Delta^{-1}(c_2s\rho^{-1}))\right)^{-1}\right) \]
as long as $I(t) \in D_{\rho/4}$, where $c_1>1$, $\tilde{c}_1<1$ $c_2<1$, $c_3<1$ and $c_4<1$ depend only on $n$, $\omega$, $|h|_s$ and the function $C$.
\end{corollary}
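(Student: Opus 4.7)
The plan is to deduce the corollary from Theorem \ref{nonlineaire} by the same scheme that yields Corollary \ref{corlineaire} from Theorem \ref{lineaire}, paying attention to the anisotropic rescaling $\sigma_\rho$ intrinsic to the nonlinear integrable case. First we would apply Theorem \ref{nonlineaire} at the given $\varepsilon$ and $\rho$ to produce the symplectic conjugacy $\Phi : \T^n \times D_{\rho/2} \to \T^n \times D_\rho$ with $H \circ \Phi = h + \bar f + \hat f$ and $\{\bar f, L_\omega\}=0$. Since $\omega=(\bar\omega,0)$ with $\bar\omega$ non-resonant in $\R^d$, the relation $\{\bar f, L_\omega\}=0$ forces $\bar f$ to be independent of the first $d$ angles, so in the normal-form action $I^* = \Phi^{-1}(I)$ the component of $I^*$ in the resonant direction is preserved by the flow of $h+\bar f$ and can only be moved by the remainder $\hat f$.

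Next we would estimate the drift of $I^*(t)$. Working in the rescaled variables $\sigma_\rho$, in which Theorem \ref{nonlineaire} supplies
\[ |\hat f \circ \sigma_\rho|_{M,s/4} \leq c_1 \varepsilon \exp\bigl(-c_3(C^{-1}(c_4 s \Delta^{-1}(c_2 s \rho^{-1})))^{-1}\bigr), \]
a Cauchy-type estimate in the $M$-ultra-differentiable setting (made available by hypothesis \eqref{H2}, which ensures that one derivation costs an arbitrarily small fraction of width) bounds $|\partial_\theta \hat f|$ by essentially the same quantity times $s^{-1}$. Integrating $\dot{I}^* = -\partial_\theta \hat f$ along any orbit that remains in the domain yields a linear-in-$t$ drift, and solving for the first time at which $|I^*(t)-I^*(0)|$ could reach $r$ produces exactly the exponentially long time-scale in the statement.

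The last step is to pass from the normal-form coordinates back to the original ones, using the estimate $|\sigma_\rho^{-1} \circ \Phi \circ \sigma_\rho - \mathrm{Id}|_{s/4} \leq c_1 \Psi(\Delta^{-1}(c_2 s \rho^{-1})) \rho^{-1} \varepsilon$ from Theorem \ref{nonlineaire}. After unscaling and invoking the lower bound $r \geq 2c_1 \Psi(\Delta^{-1}(c_2 s \varepsilon^{-1}))\varepsilon$ from the hypothesis of the corollary, the deformation of $\Phi$ is absorbed into the doubling from $r$ to $2r$; the confinement assumption $I(t) \in D_{\rho/4}$ is exactly what keeps the orbit inside the domain on which $\Phi$ is defined, so that the normal form remains applicable throughout the time interval under consideration.

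The main obstacle will be to execute the Cauchy estimate on $\hat f$ cleanly, extracting a $\theta$-derivative with a width loss small enough not to spoil the exponential factor. This is precisely the type of difficulty that hypothesis \eqref{H2} and the function $C$ are designed to handle, and it will rely on the general ultra-differentiable functional estimates developed earlier in the paper; once these are in hand, the remaining steps are routine bookkeeping of constants and of the effect of the $\sigma_\rho$ rescaling.
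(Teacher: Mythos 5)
Your proposal follows the paper's proof in all essentials: apply Theorem~\ref{nonlineaire} to get the normal form $H\circ\Phi = h+\bar f+\hat f$, use the commutation $\{\bar f, L_\omega\}=0$ with $\omega=(\bar\omega,0)$ to observe $\partial_{\theta_j}\bar f = 0$ for $1\leq j\leq d$, bound $\partial_\theta\hat f$ via a Cauchy estimate in the rescaled coordinates (exploiting $\partial_\theta(\hat f\circ\sigma_\rho)=(\partial_\theta\hat f)\circ\sigma_\rho$), integrate in time, and transfer back through $\Phi$ using the lower bound on $r$ to absorb the deformation. One minor imprecision: when you write $\dot I^* = -\partial_\theta\hat f$ and then speak of "the first time at which $|I^*(t)-I^*(0)|$ could reach $r$," the estimate really only controls the $\Pi_d$-projection (the first $d$ action components), since $\bar f$ still moves the remaining components; your earlier sentence about the resonant direction shows you understood this, and the paper's own proof concludes with $|\Pi_d(I(t)-I(0))|\leq 2r$, so this is just a notational slip, not a gap.
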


Again, we also have the special case where the frequency is non-resonant.

\begin{corollary}\label{cornonlineaire2}
Under the assumptions of Theorem~\ref{nonlineaire}  and in the case where $\omega=\bar{\omega} \in \R^n$ is non-resonant, for any $I_0 \in D_{\rho/8}$ and any $r>0$ such that 
\[ 2c_1\Psi(\Delta^{-1}(c_2s\varepsilon^{-1}))\varepsilon \leq r \leq \rho/4 ,\] 
any solution $(\theta(t),I(t))$ of $H$ with $I(0)=I_0$ satisfy
\[ |I(t)-I_0| \leq 2r, \quad |t| \leq \tilde{c}_1rs\varepsilon^{-1}\exp\left(-c_3\left(C^{-1}(c_4s\Delta^{-1}(c_2s\rho^{-1}))\right)^{-1}\right) \]
where $c_1>1$, $\tilde{c}_1<1$ $c_2<1$, $c_3<1$ and $c_4<1$ depend only on $n$, $\omega$, $|h|_s$ and the function $C$.
\end{corollary}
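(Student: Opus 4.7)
The plan is to deduce Corollary~\ref{cornonlineaire2} directly from Corollary~\ref{cornonlineaire} by specializing to the case $d=n$ and removing the conditional phrase ``as long as $I(t)\in D_{\rho/4}$'' via a straightforward bootstrap. Indeed, when $\omega=\bar{\omega}\in\R^n$ is fully non-resonant, the decomposition $\omega=(\bar{\omega},0)\in\R^d\times\R^{n-d}$ degenerates to $d=n$ and the projection $\Pi_d$ appearing in Corollary~\ref{cornonlineaire} is the identity map. Applying Corollary~\ref{cornonlineaire} verbatim, one gets that, as long as $I(t)\in D_{\rho/4}$, the full action vector satisfies $|I(t)-I_0|\leq 2r$ on the exponentially long time window controlled by $C^{-1}(c_4 s \Delta^{-1}(c_2 s\rho^{-1}))$.

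The remaining task is to show that the assumption $I(t)\in D_{\rho/4}$ is automatically maintained throughout that time window. First I would set
\[ t^* := \sup\{\,T>0 \;:\; I(t)\in D_{\rho/4} \text{ for all } |t|\leq T\,\}, \]
and note that by continuity of the flow and because $I_0\in D_{\rho/8}\subset D_{\rho/4}$, we have $t^*>0$. On the interval $[-t^*,t^*]$ the partial estimate applies, yielding $|I(t)-I_0|\leq 2r$, hence $|I(t)|\leq |I_0|+2r$. By picking the constants so that the admissible range of $r$ keeps $|I_0|+2r$ strictly less than $\rho/4$ (which amounts to combining $I_0\in D_{\rho/8}$ with $r$ a definite fraction of $\rho$, absorbing the resulting factor into $\tilde{c}_1$), the orbit cannot touch the boundary of $D_{\rho/4}$. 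Therefore $t^*$ coincides with the exponential time bound coming from Corollary~\ref{cornonlineaire}, and the same estimate $|I(t)-I_0|\leq 2r$ holds on the whole announced time interval without any side condition.

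I do not foresee any serious obstacle: all analytic content — the normal form $H\circ\Phi=h+\bar{f}+\hat{f}$, the bounds on $\bar{f}$ and $\hat{f}$ in terms of $C^{-1}\bigl(c_4 s\Delta^{-1}(c_2 s\rho^{-1})\bigr)$, and the small-divisor control via $\Psi$ and $\Delta$ in the non-resonant regime — is already encapsulated in Theorem~\ref{nonlineaire} and Corollary~\ref{cornonlineaire}. The only care required is the bookkeeping of the geometric constants to make the bootstrap closed; this is elementary, independent of the ultra-differentiable class, and can be reduced to the same constants $c_1,c_2,c_3,c_4$ and a fresh constant $\tilde{c}_1<1$ depending only on $n$, $\omega$, $|h|_s$ and the Cauchy function $C$.
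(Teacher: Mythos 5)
Your overall plan --- specialize to $d=n$ and remove the conditional clause by a bootstrap --- is exactly what the paper intends (Corollary~\ref{cornonlineaire2} is meant to follow ``at once'' from Corollary~\ref{cornonlineaire}). But carrying out the continuation argument in the original action variables $I$, as you do, does not close for the full admissible range of $r$. Your bootstrap needs $|I_0|+2r<\rho/4$; since $I_0$ may be arbitrarily close to the boundary of $D_{\rho/8}$, this forces $r<\rho/16$, strictly smaller than the $r\le\rho/4$ allowed in the statement. The proposed remedy of ``absorbing the resulting factor into $\tilde{c}_1$'' cannot work: $\tilde{c}_1$ multiplies the \emph{time} scale, not the upper bound on $r$, so adjusting it does not widen the set of admissible $r$.

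The fix, implicit in the paper's proof of Corollary~\ref{cornonlineaire}, is to run the bootstrap on the transformed action $\tilde{I}(t)$, i.e.\ the orbit of $H\circ\Phi$. After the near-identity change of coordinates $\Phi$ (which moves actions by at most $c_1\Psi(\Delta^{-1}(c_2s\varepsilon^{-1}))\varepsilon\le r/2\le\rho/8$, using the lower bound on $r$), the initial point satisfies $\tilde{I}(0)\in D_{\rho/4}$. In the non-resonant case $d=n$ one controls the full action increment: $|\tilde{I}(t)-\tilde{I}(0)|\le r\le\rho/4$, so $\tilde{I}(t)$ stays in $D_{\rho/2}$, which is precisely the domain on which the normal form $h+\bar{f}+\hat{f}$ and the estimate on $\partial_\theta\hat{f}$ are valid. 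It is the two built-in factors of $2$ --- initial ball $D_{\rho/8}\to D_{\rho/4}$ under $\Phi^{-1}$, then orbit domain $D_{\rho/4}\to D_{\rho/2}$ --- that make the bootstrap close for all $r\le\rho/4$. Only then does one transport back and get $|I(t)-I_0|\le 2r$ unconditionally. So your argument needs to be relocated to the transformed action variables, which means unpacking the \emph{proof} (not merely the statement) of Corollary~\ref{cornonlineaire}.
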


For an arbitrary non-linear integrable Hamiltonian, one can only obtain local and partial stability result; moreover we do not have an equivalent of Theorem~\ref{difflineaire} as Corollary~\ref{cornonlineaire} can be improved in many cases. 

In order to obtain global and full stability, one needs a further assumption on the integrable part. Let us say that $h : G \rightarrow \R$ is \emph{quasi-convex} on some domain $G \subseteq \R^n$ if there exist positive constants $l$ and $m$, such that for all $I \in G$, it satisfies the following two properties:
\begin{equation}\label{qc1}
|\nabla h(I)|\geq l 
\end{equation}
and at least of the inequalities
\begin{equation}\label{qc2}
|\nabla h(I)\cdot \xi|\geq l|\xi|, \quad |\nabla^2 h(I)\xi\cdot \xi| \geq m|\xi|^2 
\end{equation}
is satisfied for any $\xi \in \R^n$. To emphasize the role of the constants $l$ and $m$, we say that $h$ is $(l,m)$-quasi-convex. This definition is slightly more restrictive than the one used in~\cite{Pos93}, as~\eqref{qc1} is not assumed in the latter reference; we actually require $h$ not to have critical points just for convenience, in order to have a statement which is uniform in phase space (this is not the case in~\cite{Pos93}). Observe that if we assume~\eqref{qc1} and only the second part of~\eqref{qc2} for all $\xi \in \R^n$ orthogonal to $\nabla h(I)$, then~\eqref{qc2} is actually satisfied for a possibly smaller constant $l$ (see~\cite{MS02} for instance). 

Under this quasi-convexity assumption, we will prove the following result.

\begin{Main}\label{convex}
Let $H$ be as in~\eqref{Ham1}, where $H$ is $(M,s)$-ultra-differentiable with $M$ satisfying \eqref{H1} and \eqref{H2} and $h$ is $(l,m)$-quasi-convex. For all $\varepsilon$ sufficiently small, any $I_0 \in D_{1/2}$ and any solution $(\theta(t),I(t))$ of $H$ with $I(0)=I_0$, we have
\[ |I(t)-I_0| \leq c_1s(s^{-2}\varepsilon)^{\frac{1}{2n}}, \quad |t| \leq \tilde{c}_1s\exp\left(c_2\left(C^{-1}\left(c_3s(s^2\varepsilon^{-1})^{\frac{1}{2n}}\right)\right)^{-1}\right) \]
where $c_1>1$, $c_2<1$ and $c_3<1$ depend only on $n$, $|h|_s$, $l$, $m$ and the function $C$.
\end{Main}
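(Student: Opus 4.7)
The plan is to follow the Lochak--Pöschel strategy of periodic averaging, with Theorem~\ref{nonlineaire} playing the role of the local analytic normal form. The heart of the argument will be to reduce the global quasi-convex problem to a family of local analyses at each action $I_0$, in which $\nabla h(I_0)$ is approximated by a periodic frequency, so that the resulting resonant normal form is essentially trivial and the quasi-convexity of $h$ can be exploited to confine the actions.

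First, for any fixed $I_0\in D_{1/2}$, I will set $\omega^*:=\nabla h(I_0)$ and, by Dirichlet simultaneous approximation, choose a period $T\geq 1$ and a $T$-periodic vector $\omega_T$ (that is, $T\omega_T\in\Z^n$) such that $|\omega_T-\omega^*|\lesssim T^{-1/(n-1)}$ and $\Psi_{\omega_T}(Q)\leq T$ for $1\leq Q< T$. Writing $h(I)=h(I_0)+\omega^*\cdot(I-I_0)+O(|I-I_0|^2)$ and absorbing the linear correction $(\omega^*-\omega_T)\cdot(I-I_0)$ into the perturbation costs only $\rho\,|\omega^*-\omega_T|$ on a ball of radius $\rho$, which I will eventually take of order $\sqrt{\varepsilon}$. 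I can then invoke Theorem~\ref{nonlineaire} centered at $I_0$ with frequency $\omega_T$ to obtain an $(M,s/4)$-ultra-differentiable symplectic transformation $\Phi$ conjugating $H$ to $h+\bar f+\hat f$, where $\{\bar f,L_{\omega_T}\}=0$, $|\bar f\circ\sigma_\rho|_{s/4}\lesssim\varepsilon$, and $|\hat f\circ\sigma_\rho|_{s/4}$ is exponentially small in $(C^{-1}(c_4 s\Delta^{-1}(c_2 s/\rho)))^{-1}$, using that for a $T$-periodic vector $\Psi_{\omega_T}(Q)\leq T$ and hence $\Delta^{-1}(c_2 s/\rho)$ is controlled by $T$ once $\rho$ and $T$ are linked.

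Next comes the confinement step, which is Lochak's classical argument. Because $\omega_T$ has closed orbits of period $T$ on $\T^n$, the commutation $\{\bar f,L_{\omega_T}\}=0$ means that $\bar f$ depends on angles only through the average along the closed orbits of $L_{\omega_T}$. Averaging $h$ along the flow of $h+\bar f$ over one such period and combining with the exponentially small drift due to $\hat f$, quasi-convexity (used exactly as in~\cite{Pos93}, with the Gevrey adaptation of~\cite{MS02} as a guide for the non-analytic estimates) yields a confinement lemma: any orbit starting in a ball of radius $r$ around $I_0$ remains in a ball of radius $\sim r$ transverse to $\omega_T$ for a time at least $r^2/|\hat f|$, while the longitudinal component along $\omega_T$ is trapped by the closed-orbit level sets of $h$ itself, the mismatch $|\omega^*-\omega_T|$ producing only a controlled shift. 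Chaining neighboring periodic sheets via the usual covering by periodic neighborhoods then extends this local confinement to all $I_0\in D_{1/2}$.

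The final step is the optimization of $\rho$ and $T$. The Dirichlet constraint forces $r\gtrsim T^{-1}$ (after passing through quasi-convexity, which converts frequency error into an action drift of the same order), while the hypothesis of Theorem~\ref{nonlineaire} requires $\rho\geq\sqrt{\varepsilon}$. Setting $\rho\sim r$ and equating $T^{-1}\sim\sqrt{\varepsilon}/s$, together with the standard bookkeeping of the $s$-factors, produces $r\sim s(\varepsilon/s^2)^{1/(2n)}$, exactly the radius appearing in the statement; substituting this $\rho$ into the exponential bound on $|\hat f|$ yields the claimed stability time. The main obstacle, as I see it, is not the content of any single step but the careful tracking of the $s$-dependence and of the $C^{-1}$ factor through the optimization, since $C^{-1}$ enjoys only the qualitative control coming from~\eqref{H2} and no polynomial estimate is available. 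Fortunately, Theorem~\ref{nonlineaire} packages the $M$-dependence entirely through $C^{-1}$, so the optimization reduces to balancing purely algebraic relations between $\rho$, $T$ and $\varepsilon$, with $C^{-1}$ carried through monolithically.
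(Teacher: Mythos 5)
Your overall strategy is the right one — Lochak's scheme of periodic averaging combined with the Pöschel quasi-convexity trap is exactly what the paper does — but the specific reduction to Theorem~\ref{nonlineaire} does not work, and this is a genuine gap rather than a matter of bookkeeping.

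Theorem~\ref{nonlineaire} requires $\nabla h(I_0)$ to \emph{be} the resonant vector $\omega=(\bar\omega,0)$, whereas your $\nabla h(I_0)=\omega^*$ is generically an irrational vector that you can only \emph{approximate} by a $T$-periodic $\omega_T$. To apply Theorem~\ref{nonlineaire} you shift the frequency by putting $(\omega^*-\omega_T)\cdot(I-I_0)$ into the perturbation. But this term is integrable, and the machinery of Theorem~\ref{nonlineaire} (which bounds the averaged remainder $\hat f$ by the total perturbation norm) does not distinguish the integrable contribution. Dirichlet only gives $|\omega^*-\omega_T|\lesssim (TQ)^{-1}$ with $T$ ranging freely between $O(1)$ and $O(Q^{n-1})$; when $T=O(1)$ the mismatch is of order $Q^{-1}\sim(\varepsilon/s^2)^{1/(2n)}$, which for $n\geq2$ is \emph{much larger} than $\varepsilon$. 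So the hypothesis $\sqrt{\text{(perturbation)}}\leq\rho$ of Theorem~\ref{nonlineaire} cannot be met with $\rho\sim\sqrt\varepsilon$, and even if one inflates $\rho$ the resulting remainder bound scales with the mismatch rather than with $\varepsilon$, destroying both the drift estimate and the exponential time. The paper avoids this precisely because Proposition~\ref{propperiodic} is formulated for Hamiltonians of the shape $L_v+S+R+F$, where the mismatch $S=L_{\omega^*}-L_v$ and the quadratic part $R$ are kept as separate \emph{integrable} pieces: they enter only through their derivative bounds $\mu,\rho$ (via $\eta=\max\{\mu,\rho\}$), which govern the averaging time-step and hence the \emph{exponent}, while the prefactor of the exponentially small remainder is still the true angle-dependent perturbation norm $\nu$. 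Proposition~\ref{propconvex1} is nothing but this, rescaled and re-centered at $I_1$, and it is what you actually need instead of Theorem~\ref{nonlineaire}.

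Two smaller points. Your optimization is internally inconsistent: you set $T^{-1}\sim\sqrt\varepsilon/s$, but $T$ is not a free parameter — Dirichlet only gives $1\lesssim T\lesssim Q^{n-1}$, and the only thing you may tune is $Q$; the paper then obtains $\rho\sim sQ^{-1}T^{-1}\leq csQ^{-1}$ with $Q\sim(s^2/\varepsilon)^{1/(2n)}$, which is the confinement radius $s(\varepsilon/s^2)^{1/(2n)}$ in the statement, and the drift estimate is uniform in $T$ precisely because $T\geq c>0$. Finally, the ``chaining of neighboring periodic sheets via the usual covering'' is not needed in the quasi-convex case: for each $I_0$, a single periodic approximation and the energy trap of Proposition~\ref{propconvex2} already give confinement for the full exponential time, without any covering of the action space; chaining is the extra geometric ingredient of the steep argument (Theorem~\ref{steep}, via Proposition~\ref{nekho}), and importing it here is a symptom of the same confusion.
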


In the analytic case $M=M_1$, this theorem is due to Nekhoroshev (\cite{Nek77},\cite{Nek79}) with an exponent worse than $1/(2n)$; the acutal value $1/(2n)$ was later obtained in~\cite{Loc92}, \cite{LN92}, \cite{LNN94} and~\cite{Pos93}. The Gevrey case $M=M_\alpha$ for $\alpha \geq 1$ was established in~\cite{MS02}, and so our Theorem~\ref{convex} further extend these results for any sequence $M$ satisfying \eqref{H1} and \eqref{H2}.

As before, the natural question is whether these stability estimates can be improved in general, and to do so, one has to construct unstable orbits. As a matter of fact, the exponent $1/(2n)$ in the time-scale of stability in Theorem~\ref{convex} can be improved if one is willing to allows a drift arbitrarily close to one (that is, replacing the drift in the action variables of order $\varepsilon^{1/(2n)}$ by a drift of order $\varepsilon^b$ with $b$ arbitrarily close to zero). In the Gevrey case $M=M_\alpha$ for $\alpha \geq 1$ (thus including the analytic case), it was proved in~\cite{BM11} that one can get an exponent arbitrarily close to $1/(2(n-1))$ and in the analytic case, it was proved in~\cite{ZZ17} that one can get an exponent arbitrarily close to $1/(2(n-2))$.

For analytic Hamiltonians, this ends the question of optimality as examples of unstable orbits with exponent $1/(2(n-2))$ where constructed in~\cite{Bes96} for $n=3$, ~\cite{Bes97} for $n=4$ and~\cite{Zha11} for $n\geq 5$. For Gevrey non-analytic Hamiltonians $\alpha>1$ a similar result was obtained in~\cite{MS02}, but the latter crucially use the assumption that the space of $\alpha$-Gevrey functions contains bump functions.   

For a given sequence $M=(M_l)_{l \in \N}$, it is said to be non-quasi-analytic if the following condition is satisfied:
\begin{align}
\mbox{The sequence $(1/\mu_l)_{l \in \N}=(M_l/M_{l+1})_{l \in \N}$ is summable}, i.e., \sum_{l \in \N}M_l/M_{l+1}<+\infty.\tag{H3}\label{H3}
\end{align} 
Due to a famous theorem of Denjoy-Carleman, this condition exactly
characterizes classes which are non-quasi-analytic; we recall here
that a function (say, on the real line) is quasi-analytic if it
vanishes identically if and only if its formal Taylor series vanishes
at some point. Actually, non-quasi-analytic classes contain bump
functions (with arbitrarily small support) and this follows from the
proof of Denjoy-Carleman theorem.

As it turns out, we are not able to use the mechanism of~\cite{Bes96},
~\cite{Bes97} and~\cite{Zha11} but we can still use the construction
of~\cite{MS02}, for a sequence $M$ which satisfy~ \eqref{H1}, \eqref{H2} but also
(H3). Here is a precise statement.

\begin{Main}\label{diffconvex}
  Let $n \geq 3$. There exist a sequence of Hamiltonians $H_j=h+f_j$
  as in~\eqref{Ham1}, where $H_j$ is $(M,s)$-ultra-differentiable with
  $M$ satisfying \eqref{H1}, \eqref{H2} and (H3), $h$ satisfies~\eqref{qc1}
  and~\eqref{qc2} and
  \[ |f_j|_s \leq \varepsilon_j, \quad \lim_{j \rightarrow
    +\infty}\varepsilon_j=0, \]
  such that the following holds. The Hamiltonian $H_j$ has a solution
  $(\theta_j(t),I_j(t))$ which satisfies
  \[ |I_j(\tau_j)-I_j(0)| \geq 1 \]
  where $\tau_j>0$ satisfies, for $j$ large enough, the estimates
  \[
  \exp\left(2\Omega\left(c_1\varepsilon_j^{-\frac{1}{2(n-2)}}\right)\right)
  \leq \tau _j \leq
  \exp\left(2\Omega\left(c_2\varepsilon_j^{-\frac{1}{2(n-2)}}\right)\right)\]
  for positive constants $c_1<c_2<1$ which depend only $n$, $s$ and
  the sequence $M$.
\end{Main}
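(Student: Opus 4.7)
The construction follows the Marco--Sauzin mechanism of~\cite{MS02}, adapted to the $M$-ultra-differentiable setting. The first ingredient is the existence of $M$-ultra-differentiable bump functions with arbitrarily small support, which is precisely what the non-quasi-analyticity hypothesis \eqref{H3} guarantees, via the Denjoy--Carleman theorem. We will actually need the quantitative version: for every $\delta>0$ one can construct a bump $\chi$ supported in a ball of radius $\delta$, equal to one on a smaller concentric ball, whose $(M,s)$-norm is controlled by an explicit function of $\delta$ and the sequence $M$. The cost of such cut-offs will contribute to the fixed constants but not to the exponential scale of the final estimate.

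The model Hamiltonian will be of the form $H_j = h + f_j$, where $h$ is a quadratic $(l,m)$-quasi-convex integrable Hamiltonian, and $f_j$ is a perturbation in the Marco--Sauzin class: a localized ``pendulum'' term of the form $\chi(I)(\cos\theta_1 - 1)$, coupled with a small non-quasi-analytic perturbation $\mu_j g(\theta)$ whose Fourier modes are chosen to excite the appropriate double resonances. On a well-chosen double-resonance lattice in the $(n-2)$ transverse directions, this produces a chain of partially hyperbolic whiskered tori; consecutive tori are linked by heteroclinic orbits whose existence is obtained by a Melnikov-type splitting computation, and Sauzin's shadowing lemma yields a single orbit of $H_j$ whose action variables drift by a distance at least $1$.

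The key quantitative input is the size of the splitting between consecutive whiskered tori, and the corresponding transition time. Because the pendulum part is compactly supported and the coupling $g$ is $M$-ultra-differentiable with width~$s$, the Fourier coefficients of $g$ decay like $\exp(-\Omega(s|k|))$ by~\eqref{decay}; evaluating the Melnikov integrals along homoclinic orbits of the pendulum then produces a splitting of size essentially $\exp(-\Omega(c\,\varepsilon_j^{-1/(2(n-2))}))$, the exponent $1/(2(n-2))$ being dictated by the lattice geometry in the $(n-2)$ transverse directions exactly as in~\cite{MS02} and~\cite{Zha11}. Summing the transition times along the chain, each of order the inverse of the splitting, yields the announced two-sided bound
\[
\exp\!\bigl(2\Omega(c_1\varepsilon_j^{-\tfrac{1}{2(n-2)}})\bigr)\;\leq\;\tau_j\;\leq\;\exp\!\bigl(2\Omega(c_2\varepsilon_j^{-\tfrac{1}{2(n-2)}})\bigr).
\]

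The main obstacle will be to preserve the dichotomy present in~\cite{MS02}: the bump function $\chi$, imposed by \eqref{H3}, must contribute only a controlled (non-exponential) prefactor to the norms, so that the dominant exponential smallness of the Melnikov integral is still governed by $\Omega$ applied to the pendulum hyperbolicity scale. To achieve this, we choose $\chi$ with $M$-ultra-differentiable norm bounded by a polynomial in $\delta^{-1}$ and with width only slightly smaller than $s$, while the cosine part of $g$ contributes the full exponential decay through its Fourier transform. Once this separation of scales is established, the shadowing and estimates proceed exactly as in~\cite{MS02}, with the Gevrey exponential $\exp(c\,\varepsilon^{-1/(2\alpha(n-2))})$ replaced throughout by $\exp(2\Omega(c\,\varepsilon^{-1/(2(n-2))}))$, which recovers the Gevrey case when $M=M_\alpha$ since $\Omega(y)\asymp y^{1/\alpha}$.
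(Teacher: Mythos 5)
Your proposal describes a fundamentally different mechanism from the one the paper uses, and indeed describes the very approach the authors explicitly state they were unable to carry out. The paper's proof (Section~\ref{sec:diffconvex}) uses the Marco--Sauzin \emph{coupling lemma} (Lemma~\ref{coupling}): one starts from a ``standard-map-like'' twist $\psi_q$ on the single annulus $\A$, which drifts \emph{exactly} ($\psi_q^k(0,0)=(0,kq^{-1})$), and then embeds this map as the restriction of the $q$-th iterate of a near-integrable map $\Psi=\Phi^{f\otimes g}\circ(F\times G)$ on $\A^n$ by exploiting the synchronization conditions~\eqref{sync} at a $q$-periodic point of a pendulum factor $G_j$. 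There are no heteroclinic chains, no Melnikov integrals, no splitting estimates, and no shadowing lemma anywhere in the argument: the unstable orbit is written down explicitly. The exponential time scale $\exp(2\Omega(s'p_j))$ comes from the $\mathcal{U}_s$-norm of the trigonometric polynomial $\eta_{p}$ in Lemma~\ref{funct} (which is where $\Omega$ enters, via the direct estimate $|\eta_p|_s\leq c^2\exp(2\Omega(8\pi p s))$), and the exponent $1/(2(n-2))$ comes from the prime number theorem applied to the choice of $A_j=p_jA_j'$ with $A_j'$ a product of $n-2$ consecutive primes, followed by the suspension argument. The role of~\eqref{H3} is to produce the bump function $\varphi_{s,\vartheta}$ that localizes $g_j^{(2)}$ to the range of the inverse-time map $\tau_{B_j}$ of the pendulum, which is essential for the synchronization conditions to hold.

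By contrast, you propose an Arnold/Bessi/Zhang-style argument: a resonance lattice, whiskered tori, Melnikov splitting, and a shadowing lemma. The introduction says plainly: ``As it turns out, we are not able to use the mechanism of~\cite{Bes96}, \cite{Bes97} and~\cite{Zha11} but we can still use the construction of~\cite{MS02}.'' So you are proposing precisely the road the authors tried and abandoned. Beyond that, your sketch omits all the hard steps such a route would require: an exponentially small \emph{lower} bound for the Melnikov integral in the $M$-ultra-differentiable class (upper bounds via~\eqref{decay} are easy, lower bounds are not), transversality of the homoclinic intersections, and a shadowing result yielding unit drift with a controlled transition time. None of these is available in this setting. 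Finally, the logical role you assign to~\eqref{H3} is misplaced: bump functions are not needed for Bessi/Zhang-type splitting-and-shadowing constructions (those work in the analytic category, where~\eqref{H3} fails), whereas they \emph{are} the indispensable ingredient of the coupling lemma. You appear to have conflated the two instability mechanisms; a proof along your lines would need a genuinely new ultra-differentiable Melnikov theory, which this paper does not provide and which is not a ``minor modification'' of~\cite{MS02}.
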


Leaving aside the question of the exponent $1/(2(n-2))$ that we
already discussed, Theorem~\ref{diffconvex} says that
Theorem~\ref{convex} is essentially optimal, at least for matching
sequences which satisfy (H3).

\bigskip

To conclude, we recall that Nekhoroshev actually proved Theorem~\ref{convex} in the analytic case not only for quasi-convex functions, but for a much larger and generic class of functions that we now define.

We say that a differentiable function $h : G \rightarrow \R$ is \emph{steep} on some domain $G \subseteq \R^n$ if there exist positive constants $l,L,\delta,p_j$, for any integer $1 \leq j \leq n-1$, such that for all $I \in G$, we have $|\nabla h(I)| \geq l$ and, for all integer $1 \leq j \leq n-1$, for all vector space $\Lambda \in \R^n$ of dimension $j$, letting $\lambda=I+\Lambda$ the associated affine subspace passing through $I$ and $h_\lambda$ the restriction of $h$ to $\lambda$, the inequality
\[ \max_{0 \leq \eta \leq \xi}\;\min_{|I'-I|=\eta, \; I' \in \lambda}|\nabla h_\lambda(I')-\nabla h_\lambda(I)|>L\xi^{p_j} \]
holds true for all $0 < \xi \leq\delta$. As before, to emphasize the role of the constants we say that $h$ is $(l,L,\delta,(p_j)_{1 \leq j \leq n-1})$-steep on $G$ and, if all the $p_j=p$, we say that $h$ is $(l,L,\delta,p)$-steep on $G$.

One may easily check that quasi-convex functions are steep, and in fact they are the ``steepest" as one has $p=1$ in this case (in general $p_j$ are integers larger than one). But steep functions form a much larger class of functions and among sufficiently smooth functions they are generic in many senses (see~\cite{Nek79} or~\cite{BFN17}). 

The original proof of Nekhoroshev, in the analytic case, was dealing with steep integrable Hamiltonians but the values of the stability exponents, which depend on the steepness exponents $p_j$ for $1 \leq j \leq n$, were not very sharp and even restricted to the quasi-convex they would be of order $1/n^2$ which is obviously worse than the exponent $1/(2n)$ of Theorem~\ref{convex}. A different and somehow simpler proof was given in~\cite{BN09}, using only rough periodic approximations, but leading also to bad values for the exponents; the interest was that this method was flexible enough to give results in the Gevrey (and even finitely differentiable) case, see~\cite{Bou11}. Sharp exponents, generalizing those of the quasi-convex case, were finally obtained in~\cite{GCB16} but so far it is restricted to the analytic case. So in the more general setting of ultra-differentiable functions we are considering here, we were not able to extend the results of~\cite{GCB16}, but we will manage to use the method originally introduced in~\cite{BN09} to obtain the following result.  

\begin{Main}\label{steep}
Let $H$ be as in~\eqref{Ham1}, where $H$ is $(M,s)$-ultra-differentiable with $M$ satisfying \eqref{H1} and \eqref{H2} and $h$ is $(l,L,\delta,p)$-steep. For all $\varepsilon$ sufficiently small, any $I_0 \in D_{1/2}$ and any solution $(\theta(t),I(t))$ of $H$ with $I(0)=I_0$, we have
\[ |I(t)-I_0| \leq c_1 \varepsilon ^{\frac{1}{2na}}, \quad |t| \leq s\exp\left(c_2\left(C^{-1}\left(c_3s(\varepsilon^{-1})^{\frac{1}{2na}}\right)\right)^{-1}\right) \]
where $a:=(np)^{n-1}$ and $c_1>1$, $c_2<1$ and $c_3<1$ which depend only on $n$, $|h|_s$, $l$, $L$, $\delta$, $p$ and the function $C$.
\end{Main}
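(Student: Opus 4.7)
My plan is to follow the strategy of~\cite{BN09}, re-reading each of its analytic estimates through the ultra-differentiable normal form provided by Theorem~\ref{nonlineaire}. First, I would fix an arbitrary reference point $I_0 \in D_{1/2}$, set $\omega_0 = \nabla h(I_0)$, and approximate $\omega_0$ by a periodic vector $\omega^{\ast} = k/T$ whose resonance lattice has some rank, with $k \in \Z^n$ chosen via Dirichlet's box principle. Applying Theorem~\ref{nonlineaire} (in the appropriate coordinates, after translating $\omega_0$ to $\omega^{\ast}$) on a ball of radius $\rho > 0$ around $I_0$ then yields a symplectic change of coordinates putting $H$ in resonant normal form
\[
H \circ \Phi = h + \bar f + \hat f, \qquad \{\bar f, L_{\omega^{\ast}}\} = 0,
\]
with $\hat f$ bounded by the exponentially small quantity furnished by that theorem. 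Because the periodic denominator $T$ is dictated by Dirichlet, the arithmetic function $\Delta^{-1}$ entering Theorem~\ref{nonlineaire} is replaced by the universal bound $\Delta^{-1}(x) \asymp x^{1/n}$; this is what ultimately turns $\Delta^{-1}(c_2 s \rho^{-1})$ into a pure power of $\rho^{-1}$ in the final stability time.

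The second step is the classical Nekhoroshev confinement. Under the flow of $H \circ \Phi$, the component of $I(t) - I_0$ orthogonal to the resonant module spanned by $\omega^{\ast}$ drifts only by an amount controlled by $\hat f$, hence by an exponentially small amount for a time of order $\exp\left(c_2(C^{-1}(\ldots))^{-1}\right)$. Along the resonant directions only energy conservation is a priori available, and this is where steepness enters: the inequality defining $(l,L,\delta,p)$-steepness forces $\nabla h_\lambda$, restricted to any affine resonant subspace $\lambda$, to move away from its value at $I_0$ at a rate at least $L \xi^{p}$ on the scale $\xi$. This drives the orbit out of its current resonant block as soon as the action drift exceeds a threshold compatible both with energy conservation and steepness; the geometric mechanism is exactly the one of~\cite{BN09}, the only change being that the smallness of $\hat f$ is now measured through $C^{-1}$ rather than a pure power.

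The third step is the cascade over resonance ranks. I would cover $D_{1/2}$ by a hierarchy of resonant blocks, starting from the fully non-resonant case, where Corollary~\ref{cornonlineaire2} applies directly, and lifting stability to blocks of higher resonance rank by showing that escape from a block of a given rank forces entry into a block of strictly higher rank, where steepness takes over. Each level of the induction shrinks the admissible transverse width by a factor governed by $p$ and $n$, and after $n-1$ iterations one reaches the announced exponent $a = (np)^{n-1}$. Balancing the local radius $\rho$, the Dirichlet denominator, and the accepted drift $r$ leads to the choice $\rho \asymp \varepsilon^{1/(2na)}$, and substituting this into the bound for $\hat f$ yields the stability time $s \exp\left(c_2 (C^{-1}(c_3 s \varepsilon^{-1/(2na)}))^{-1}\right)$.

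The main difficulty will be the geometric bookkeeping of the cascade, that is, propagating confinement from one resonance rank to the next while keeping the scales $\rho$, $Q$ and $r$ mutually compatible with the steepness inequality at every level. The adaptation from the analytic case of~\cite{BN09} is largely transparent on the dynamical side, since only the form of the exponentially small remainder changes; what does require care is to check that the composition estimates of \S~\ref{sec:composition}, applied inductively within the proof of Theorem~\ref{nonlineaire}, still leave enough width to survive the finitely many compositions of the cascade. Hypothesis~\eqref{H2} is precisely what ensures that the loss of width at each step can be made arbitrarily small, and hence what makes the ultra-differentiable version of the argument go through.
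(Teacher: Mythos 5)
Your plan correctly identifies the high-level strategy of~\cite{BN09} that the paper follows: periodic Dirichlet approximations, resonant normal forms, the dichotomy between confinement and transversal drift, the role of steepness (Proposition~\ref{nekho}) in forcing escape to higher resonance rank, the cascade over ranks, and the balance of $\rho$, $Q$, $K$ giving the exponent $a=(np)^{n-1}$. But the choice of normal form tool is a genuine gap.

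You propose to plug the Dirichlet bound into Theorem~\ref{nonlineaire}. That theorem, however, is a normal form around the \emph{fixed} frequency $\omega=(\bar\omega,0)$ of the unperturbed system, with $\bar\omega$ non-resonant and the normal form commuting with $L_\omega$ only; the periodic vectors in its proof (Proposition~\ref{proplineaire}) are approximations of $\bar\omega$ governed by the arithmetic function $\Psi$ attached to it. The steep case is different: at each step of the cascade one picks a new base point $I_j$ on the trajectory, approximates $\nabla h(I_j)$ by a new periodic vector $v_j$ via Dirichlet (Lemma~\ref{dirichlet}), and the crucial analytical object is a normal form that commutes \emph{simultaneously} with $L_{v_1},\dots,L_{v_j}$ — a growing family of periodic vectors whose linear independence is certified by steepness, not by any arithmetic hypothesis. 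This multi-vector normal form is not Theorem~\ref{nonlineaire}; the paper must build it from scratch as Propositions~\ref{propMS}--\ref{propMS3} (with Proposition~\ref{propMS2} being the inductive simultaneous commutation and Proposition~\ref{propMS3} the rescaled version), all resting on the periodic averaging of Proposition~\ref{propperiodic}. Your heuristic ``replace $\Delta^{-1}$ by $x^{1/n}$'' is sensible for the rank-one step, but it does not produce the nested rank-$j$ normal form, and your proposal does not address how the $j$ periodic vectors are kept compatible (the conditions $|v_j - v_{j-1}| \lesssim \rho_{j-1}$, $\rho_j \lesssim \rho_{j-1}$ in~\eqref{seuilMS3}) as the base point moves. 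Relatedly, the cascade does not start from a ``fully non-resonant case'' where Corollary~\ref{cornonlineaire2} applies — that corollary again presumes a fixed non-resonant $\omega_0$ with a known $\Psi$; the algorithm starts at rank~1 from a Dirichlet approximation and terminates at rank~$n$ because the rank-$n$ normal form is integrable (Proposition~\ref{dicho}, $j=n$). So while your outline reads plausibly, the step ``apply Theorem~\ref{nonlineaire}'' would fail, and the construction of the nested normal form is precisely the missing analytical work that \S~\ref{sec:steep} supplies.
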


As we already explained before, the value of the exponent $1/(2na)$ is
very bad here and one should expect better exponents such as those
obtained in~\cite{GCB16}. However unlike the quasi-convex
Theorem~\ref{convex} which comes with Theorem~\ref{diffconvex} to test
the optimality of the exponents, we do not have such instability
result in the steep non-quasi-convex case; this is actually an open
problem in any regularity, including the analytic one.

\section{Estimates on ultra-differentiable functions}\label{sec:udiff}

Let us start by recalling some notations and definitions. Given an
integer $m \geq 1$ and $k=(k_1,\dots,k_m) \in \N^m$, we define
\[ |k|=\sum_{i=1}^m k_i, \quad k!=\prod_{i=1}^m k_i!. \]
Given $x \in \R^m$, we set
\[ x^k=\prod_{i=1}^m x_i^{k_i} \]
and for a smooth function $f : B \rightarrow \R$ defined on some open
ball $B \subset \R^m$ (see Remark~\ref{rmk:Tn} at the end of
\S~\ref{sec:majorant1} for functions defined on an open set of
$\R^m \times \T^n$) and $a \in B$, we set
\[ \partial^k
f(a)=\partial_{x_1}^{k_1}\cdots \partial_{x_m}^{k_m}f(a). \]
Given a sequence $M=(M_l)_{l \in \N}$ with $M_0=M_1=1$ and $s>0$,
recall from \S~\ref{sec:udiffintro} that the function $f$ is said to be
$(M,s)$-ultra-differentiable if
\begin{equation}\label{defnorm}
|f|_{M,s}:=c\sup_{a \in B}\left(\sup_{k \in \N^m}\frac{(|k|+1)^2s^{|k|}|\partial^k f(a)|}{M_{|k|}}\right)<\infty, \quad c:=4\pi^2/3.
\end{equation}
The space of such functions will be denoted by $\mathcal{U}_{M,s}(B)$,
and, equipped with the above norm, it is a Banach space. On the one
hand, the role of the factor $(|k|+1)^2$ is to make easier the
estimates for the product and for the composition of
ultra-differentiable functions (see respectively
Lemmas~\ref{lemmeprod} and~\ref{lemmecomp}). On the other hand, the
role of the normalizing constant $c>0$ in the definition is to ensure
that $\mathcal{U}_{M,s}(B)$ is a Banach algebra with respect to
multiplication (see Lemma~\ref{lemmeprod}). The factor $(|k|+1)^2$ is
only here for convenience and it can be removed (see
\S~\ref{sec:products} and \S~\ref{sec:composition}); then one has to
choose a different normalizing constant to turn $\mathcal{U}_{M,s}(B)$
into a Banach algebra.

The above definition can be extended to vector-valued functions
$f=(f_i)_{1 \leq i \leq p} : K \rightarrow \R^p$ for $p \geq 1$ by
setting
\[ |\partial^k f(a)|:=\sum_{1 \leq i \leq p}|\partial^k f_i(a)|, \quad
a \in K \]
in~\eqref{defnorm}. The space of such vector-valued functions is still
a Banach space with the above norm, and it will be denoted by
$\mathcal{U}_{M,s}(B,\R^p)$. The case of matrix-valued functions, say
with values in the space $M_{m,p}(\R)$ of matrices with $m$ rows and
$p$ columns, is reduced to the case of vector-valued functions by
simply identifying $M_{m,p}(\R)$ to $\R^{mp}$.

In the sequel, to lighten the notations, we will simply write
$|\,.\,|_{s}$ (respectively $\mathcal{U}_{s}(B)$ and
$\mathcal{U}_{s}(B,\R^p)$) instead of $|\,.\,|_{M,s}$ (respectively
$\mathcal{U}_{M,s}(B)$ and $\mathcal{U}_{M,s}(B,\R^p)$).

\subsection{Majorant series and ultra-differentiable functions}
\label{sec:majorant1}

The definition of ultra-differentiable functions can be conveniently
reformulated in terms of majorant series with one variable (see
\cite{Kom79}, \cite{Kom80} and also \cite{SCK03}).

But first let us consider a formal power series in $m$ variables
$X=(X_1,\dots,X_m)$ with coefficients in a normed real vector space
$(E,|\,.\,|_E)$, which is a formal sum of the form
\[ A(X)=\sum_{k \in \N^m}A_k X^k, \quad A_k=A_{k_1,\dots,k_m} \in
E.  \]
Such a formal series is said to be majorized by another formal power
series with real non-negative coefficients
\[ B(X)=\sum_{k \in \N^m}B_k X^k, \quad B_k=B_{k_1,\dots,k_m} \in \R^+,  \]
and we write $A \ll B$, if
\begin{equation}\label{major1}
|A_k|_E \leq B_k, \quad \forall \, k \in \N^m.
\end{equation}
Next, following~\cite{SCK03}, we introduce a notion of a smooth
function being majorized by a formal power series in one variable. So
let $f : B \rightarrow \R^p$ be a smooth function defined on some open
ball $B \subset \R^m$, and $F$ be a formal power series in one
variable with non-negative coefficients, that we shall write as
\[ F(X)=\sum_{l=0}^{+\infty}\frac{F_l}{l!}X^l. \]
We will say that $f$ is majorized by $F$ on $B$, and we will write
$f \ll_B F$ (or $f(x) \ll_B F(X)$), if for all $a \in B$ and all
$k \in \N^m$, we have
\begin{equation}\label{major2}
|\partial^k f(a)| \leq F_{|k|}. 
\end{equation}  
It will be usefull to rephrase this relation in terms of the formal
Taylor series of $f$ at points $a \in B$ (taking values in $\R^p$),
defined by
\[ T_af(X):=\sum_{k \in \N^m}\frac{\partial^kf(a)}{k!} X^k.\]
To the formal series $F$ in one variable, one can associate a formal
series $\hat{F}$ in $m$ variables simply by setting
\[ \hat{F}(X_1,\dots,X_m):=F(X_1+\cdots+X_m). \]
Then
\[f \ll_B F \quad \mbox{in the sense of~\eqref{major2}}
\Leftrightarrow \forall a \in B \quad T_af \ll \hat{F} \quad \mbox{in
  the sense of~\eqref{major1}}\]
(with $E=\R^p$ and $|\,.\,|_E = |\,.\,|_{l^1}$).

Now, given a sequence $M$ and $s>0$, let us define the following
formal power series in one variable
\begin{equation}\label{defM}
  U_{s}(X) 
  :=c^{-1}\sum_{l=0}^{+\infty}\frac{1}{(l+1)^2}\frac{M_l}{l!}
  \left(\frac{X}{s}\right)^l =
  c^{-1}\sum_{l=0}^{+\infty}\frac{N_l}{(l+1)^2}\left(\frac{X}{s}\right)^l, 
  \quad c=4\pi^2/3
\end{equation}
(recall from~\eqref{sequences} that we have defined
$N_l:=M_l/l!$). The following characterization of ultra-differentiable
functions is evident from the definitions~\eqref{defnorm}
and~\eqref{major2}.

\begin{proposition}\label{propnorme}
 If $f : B \to \R^p$ is a smooth function and $s>0$, then
  \[|f|_{s} = \inf \left\{ C \in [0,+\infty]\, | \; f \ll_B C
    U_{s}\right\}\]
  and
  \[f \in \mathcal{U}_s(B,\R^p) \Leftrightarrow |f|_s < \infty.\]
\end{proposition}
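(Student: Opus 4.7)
The plan is short because the proposition is essentially a dictionary translating the norm \eqref{defnorm} into the majorant-series formalism. The first step is to read off the coefficients of $U_s$: writing the series in the standard form $U_s(X) = \sum_{l \geq 0} (U_s)_l X^l / l!$ used in \eqref{major2} and comparing with \eqref{defM}, one obtains
\[(U_s)_l = c^{-1} \frac{M_l}{(l+1)^2\, s^l}.\]

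Next, I would unpack what $f \ll_B C U_s$ means for a constant $C \in [0,+\infty]$. By the definition \eqref{major2}, this relation is exactly the collection of inequalities $|\partial^k f(a)| \leq C (U_s)_{|k|}$ for all $a \in B$ and all $k \in \N^m$. Substituting the closed form for $(U_s)_{|k|}$ and rearranging, this is equivalent to
\[c\, \frac{(|k|+1)^2\, s^{|k|}\, |\partial^k f(a)|}{M_{|k|}} \leq C \quad \text{for all } a \in B,\; k \in \N^m.\]
Taking the supremum over $a$ and $k$ and comparing with \eqref{defnorm}, this holds if and only if $|f|_s \leq C$.

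From this I would conclude that the set $\{C \in [0,+\infty] : f \ll_B C U_s\}$ coincides with the interval $[|f|_s, +\infty]$, whose infimum is $|f|_s$; this is the first assertion. The second equivalence is then immediate, since by definition $f \in \mathcal{U}_s(B,\R^p)$ means $|f|_s < \infty$, and by the first part this is the same as saying there exists a finite $C$ with $f \ll_B C U_s$.

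There is no real obstacle here: the whole argument is a one-line rearrangement of inequalities. The only points requiring care are the bookkeeping of the normalizing constant $c$ and of the factor $(|k|+1)^2$, and using consistently the convention $F(X)=\sum_l F_l X^l/l!$ when reading off the coefficients of $U_s$.
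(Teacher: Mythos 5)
Your proof is correct and is precisely the (one-line) verification that the paper declares ``evident from the definitions~\eqref{defnorm} and~\eqref{major2}'': you read off the coefficients $(U_s)_l = c^{-1}M_l/((l+1)^2 s^l)$, observe that $f\ll_B C\,U_s$ is the conjunction of the inequalities $c\,(|k|+1)^2 s^{|k|}|\partial^k f(a)|/M_{|k|}\le C$, and take the supremum. Nothing more is needed.
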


\begin{remark}
  \label{rmk:Tn}%
  Note that being an ultra-differentiable function is a local
  property. So the above immediately translates for functions defined
  on an open subet of the product of $\T^n$ and $\R^m$, by merely
  considering any local chart at any point of the torus. These
  definitions thus encompass the space
  $\mathcal{U}_s(\T^n \times D,\R^p)$ introduced in
  \S~\ref{sec:udiffintro}.
\end{remark}

\subsection{Properties of majorant series}\label{sec:majorant2}

We collect here some properties of majorant series that will be used later on. It is clear how to define the derivatives of a formal power series in one variable, and also a linear combination and the product of two such formal power series. We then have the following lemma.

\begin{lemma}\label{properties} 
Let $f,g : B \rightarrow \R^p$ be smooth functions, $F,G$ be two formal power series in one variable, and assume that
\[ f \ll_B F, \quad g \ll_B G.  \]
Then
\begin{equation}\label{deriv}
\partial^k f \ll_B \partial^{|k|} F, \quad k \in \N^m,
\end{equation}
\begin{equation}\label{somme}
\lambda f + \mu g \ll_B |\lambda|F+|\mu|G, \quad \lambda \in \R, \; \mu \in \R.
\end{equation}
Moreover, the scalar product $f\cdot g = \sum_{i=1}^pf_ig_i : B
\rightarrow \R$ satisfies
\begin{equation}\label{prod}
f\cdot g \ll_B FG.
\end{equation}
\end{lemma}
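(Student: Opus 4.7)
The plan is to verify each of the three claims directly from the definitions, exploiting the elementary structure of majorant series in one variable. Recall that if $F(X)=\sum_l \frac{F_l}{l!}X^l$ then the $l$-th formal coefficient of $F^{(k)}$ (in the $l!$-normalization) is $F_{l+k}$, and the $l$-th coefficient of $FG$ is $\sum_{i=0}^{l}\binom{l}{i}F_i G_{l-i}$.

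The claim \eqref{deriv} is immediate: for any $j\in\N^m$ and $a\in B$,
\[ |\partial^{j}(\partial^{k}f)(a)| = |\partial^{j+k}f(a)| \leq F_{|j+k|} = F_{|j|+|k|}, \]
which is exactly the $|j|$-th coefficient of $\partial^{|k|}F$. The claim \eqref{somme} is just the triangle inequality applied coefficient by coefficient.

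The main content is the product estimate \eqref{prod}. I would apply the Leibniz rule to each scalar component $f_i g_i$ and then the triangle inequality, to get
\[ |\partial^{k}(f\cdot g)(a)| \leq \sum_{i=1}^{p}\sum_{j\leq k}\binom{k}{j}\,|\partial^{j}f_i(a)|\,|\partial^{k-j}g_i(a)|. \]
Switching the order of summation and using the elementary inequality $\sum_i |x_i y_i|\leq(\sum_i|x_i|)(\sum_i|y_i|)$, the hypothesis $f \ll_B F$, $g\ll_B G$ yields
\[ |\partial^{k}(f\cdot g)(a)| \leq \sum_{j\leq k}\binom{k}{j}\,F_{|j|}\,G_{|k|-|j|}. \]

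It then remains to recognize the right-hand side as $(FG)_{|k|}$. Grouping the sum over $j\in\N^m$ with $j\leq k$ by the value of $|j|$, this reduces to the multinomial identity
\[ \sum_{\substack{j\in\N^m,\,j\leq k\\ |j|=i}} \binom{k}{j} = \binom{|k|}{i},\qquad 0\leq i\leq |k|, \]
which is a standard Vandermonde-type identity (choose $i$ elements in a set of cardinality $|k|$ split into blocks of sizes $k_1,\dots,k_m$). Feeding this back, one gets $\sum_{j\leq k}\binom{k}{j}F_{|j|}G_{|k|-|j|}=\sum_{i=0}^{|k|}\binom{|k|}{i}F_iG_{|k|-i}=(FG)_{|k|}$, which is exactly what $f\cdot g\ll_B FG$ demands. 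No step poses any real obstacle; the only slight care needed is to use the $\ell^1$ structure on $\R^p$ consistently and to invoke the multinomial identity to pass from multi-indices to scalar indices.
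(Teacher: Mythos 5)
Your proof is correct. The paper does not give a self-contained argument here; it refers to Lemma~2.2 of the cited reference \cite{SCK03} for the scalar case $p=1$ and remarks that $p\geq 1$ is ``entirely similar''. Your write-up supplies exactly that argument: Leibniz plus the multinomial Vandermonde identity $\sum_{|j|=i,\,j\leq k}\binom{k}{j}=\binom{|k|}{i}$ for the product, the identity $(\partial^{|k|}F)_l=F_{l+|k|}$ for the derivative, and the triangle inequality for linear combinations; the vector-valued case is reduced to the scalar one via $\sum_i|x_iy_i|\leq(\sum_i|x_i|)(\sum_i|y_i|)$, which is the right way to handle the $\ell^1$ norm used in the paper's definition~\eqref{major2}. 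All steps check out, so this is a valid, self-contained substitute for the reference the paper invokes.
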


For a proof, we refer to~\cite{SCK03}, Lemma $2.2$, in which the case $p=1$ is considered; but the general case $p \geq 1$ is entirely similar.

Given two formal power series in one variable $F$ and $G$, we define the composition $F \odot G$ of $F$ and $G$ by
\[ F \odot G(X):=\sum_{l=0}^{+\infty}\frac{F_l}{l!}\left(G(X)-G(0)\right)^l. \]

\begin{lemma}\label{comp}
Let $f : B \rightarrow \R^p$, $g : D \rightarrow \R^m$ be smooth functions where $B$ and $D$ are open balls of $\R^m$ such that $g(D)\subseteq B$, and assume that
\[ f \ll_B F, \quad g \ll_{D} G. \]
Then
\[ f \circ g \ll_{D} F \odot G.  \]
\end{lemma}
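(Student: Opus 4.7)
My strategy is to translate the lemma into the language of formal Taylor series and reduce everything to a single substitution principle for formal power series. Recall the characterization already recorded in the text: $f \ll_B F$ is equivalent to $T_a f \ll \hat{F}$ at every $a \in B$, where $\hat{F}(X_1,\dots,X_m) := F(X_1 + \cdots + X_m)$. Hence the goal is to show that $T_a(f \circ g) \ll \widehat{F \odot G}$ for every $a \in D$.

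The starting point is the formal identity
$$T_a(f \circ g)(X) = T_{g(a)} f \bigl(T_a g(X) - g(a)\bigr),$$
which is just Taylor expansion of $f(g(a+X))$ carried out in two steps and which makes sense formally because $T_a g(X) - g(a)$ has no constant term. Set $R(X) := T_a g(X) - g(a)$, an $\R^m$-valued formal series in $X$. Let $|R|$ denote the $\R^m$-valued series whose $i$-th component has coefficients equal to the absolute values of those of $R_i$, and set $S := \sum_{i=1}^m |R|_i$, a scalar series with non-negative coefficients.

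I would then establish the following substitution principle: if $P$ is an $\R^p$-valued and $Q$ an $\R^+$-valued formal series in $m$ variables with $P \ll Q$, and if $R$ is any $\R^m$-valued formal series with zero constant term, then $P(R) \ll Q(|R|)$. This is immediate term-by-term: for each multi-index $K$, the coefficient of $X^K$ in $P(R)$ equals $\sum_j p_j \,[X^K](R^j)$; taking $l^1$-norms and using both $|p_j| \leq q_j$ and the coefficient-wise estimate $|[X^K](R^j)| \leq [X^K](|R|^j)$ (itself immediate from the multinomial expansion of $R^j = \prod_i R_i^{j_i}$), one bounds this by $\sum_j q_j\,[X^K](|R|^j) = [X^K]\,Q(|R|)$. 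Applied to $P = T_{g(a)} f$ and $Q = \hat{F}$, and using $\hat{F}(Y) = F(Y_1 + \cdots + Y_m)$, this yields $T_a(f \circ g) \ll \hat{F}(|R|) = F(S)$.

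Finally, the hypothesis $g \ll_D G$ reads $|\partial^k g(a)| = \sum_i |\partial^k g_i(a)| \leq G_{|k|}$ for all $k \in \N^m$, which translates to the coefficient-wise bound $S \ll \hat{G} - \hat{G}(0)$. Because $F$ has non-negative coefficients, substitution into $F$ is monotone on non-negative series, so $F(S) \ll F(\hat{G} - \hat{G}(0)) = \widehat{F \odot G}$. Chaining the two inequalities gives $T_a(f \circ g) \ll \widehat{F \odot G}$, as desired. The delicate point is that replacing a general vector-valued inner series by its componentwise absolute value could naively produce an enormous majorant; what makes the estimate tight enough is the very specific shape $\hat{F}(Y) = F(Y_1 + \cdots + Y_m)$, which collapses the $m$ substitutions $Y_i \mapsto |R|_i$ into the single scalar $S = \sum_i |R|_i$ controlled by $\hat{G} - \hat{G}(0)$.
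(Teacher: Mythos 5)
Your proof is correct. Since the paper does not give an argument for this lemma but only cites~\cite{SCK03}, Lemma~2.3, there is no paper-proof to compare against directly; your self-contained derivation --- formal chain rule $T_a(f\circ g)=T_{g(a)}f\bigl(T_ag-g(a)\bigr)$, the coefficient-wise substitution principle $P(R)\ll Q(|R|)$, the collapse $\hat F(|R|)=F(S)$ with $S=\sum_i|R|_i$, and then $S\ll\hat G-\hat G(0)$ followed by monotonicity of substitution into the non-negative series $F$ --- is the standard route for such majorant estimates and almost certainly matches the cited source. Two small points worth making explicit if you were to write this up: (i) the identity $T_a(f\circ g)=T_{g(a)}f\circ R$ holds for smooth (not only analytic) $f,g$ because it is a purely algebraic consequence of Fa\`a di Bruno applied derivative by derivative, so no convergence is being used; and (ii) in verifying $S\ll\hat G-\hat G(0)$ you implicitly compute $[X^k]\hat G=G_{|k|}/k!$, which relies on the multinomial identity $(X_1+\cdots+X_m)^l=\sum_{|k|=l}\frac{l!}{k!}X^k$ --- this is exactly the computation that makes the $l^1$-norm in~\eqref{defn2} the natural choice in the vector-valued setting.
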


Once again, for a proof we refer to~\cite{SCK03}, Lemma $2.3$.

\subsection{Derivatives}\label{sec:derivatives}

In this section, we will show that derivatives of a $M$-ultra-differentiable function are still $M$-ultra-differentiable, at the expense of reducing the width parameter $s>0$; these are analogues of Cauchy estimates for analytic functions. It is precisely at this point that the assumption \eqref{H2} on the sequence $M$ is needed and that the Cauchy function $C$ comes into play.

\begin{proposition}\label{derivative}
Let $f \in \mathcal{U}_{s}(B,\R^p)$ and $0<\sigma<1$. Then for any $k \in \N^m$ with $|k|=1$, $\partial^k f \in G_{s(1-\sigma)}(B,\R^p)$ and we have
\[ |\partial^k f|_{s(1-\sigma)} \leq s^{-1}C(\sigma) |f|_{s}. \] 
\end{proposition}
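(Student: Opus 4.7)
The plan is to work entirely on the level of majorant series in one variable and invoke Proposition~\ref{propnorme} at the beginning and at the end. By Proposition~\ref{propnorme}, $f \ll_B |f|_{s} U_{s}$. Since $|k|=1$, the derivation rule~\eqref{deriv} in Lemma~\ref{properties} gives
\[ \partial^k f \ll_B |f|_{s}\, U_s', \]
where $U_s'$ denotes the (formal) derivative of $U_s$. By Proposition~\ref{propnorme} again, it will therefore be enough to establish the majorization of formal series
\[ U_s' \ll s^{-1}C(\sigma)\, U_{s(1-\sigma)}. \]

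To prove this, I compute coefficients. From the definition~\eqref{defM},
\[ U_s'(X) = c^{-1}\sum_{j=0}^{+\infty}\frac{(j+1)N_{j+1}}{(j+2)^2\,s^{j+1}} X^j, \qquad s^{-1}C(\sigma)\,U_{s(1-\sigma)}(X) = c^{-1}\sum_{j=0}^{+\infty}\frac{C(\sigma)\,N_j}{(j+1)^2\,s^{j+1}(1-\sigma)^j} X^j, \]
so the required inequality on the $j$-th coefficient reads, after rearranging and using $N_{j+1}/N_j=\nu_j=\mu_j/(j+1)$,
\[ \frac{(j+1)^2}{(j+2)^2}\,\mu_j(1-\sigma)^j \leq C(\sigma). \]

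The factor $(j+1)^2/(j+2)^2$ is at most $1$, and the elementary bound $1-\sigma \leq e^{-\sigma}$ (valid for $\sigma \in (0,1)$) yields $\mu_j(1-\sigma)^j \leq \mu_j e^{-\sigma j}$. By the very definition~\eqref{Cfunction} of the Cauchy function, $\mu_j e^{-\sigma j} \leq \sup_{l\in\N}\mu_l e^{-\sigma l} = C(\sigma)$, which closes the estimate. Hence $U_s' \ll s^{-1}C(\sigma)\,U_{s(1-\sigma)}$, combined with the preceding majorizations and Proposition~\ref{propnorme}, gives $|\partial^k f|_{s(1-\sigma)} \leq s^{-1}C(\sigma)|f|_s$.

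There is no substantive obstacle: the whole content reduces to the one-line comparison $\mu_j(1-\sigma)^j \leq C(\sigma)$, which is exactly the reason the function $C$ was designed the way it was in~\eqref{Cfunction}. The only thing requiring attention is the bookkeeping of the shift of indices $j \leftrightarrow j+1$ coming from the derivation, and the fact that the spurious factor $(j+1)^2/(j+2)^2 \leq 1$ introduced by the weights $(|k|+1)^{-2}$ in the definition of the norm works in our favor rather than against.
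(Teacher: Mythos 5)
Your proof is correct and follows essentially the same route as the paper: reduce via Proposition~\ref{propnorme} and \eqref{deriv} to the one-variable majorization $\partial^1 U_s \ll s^{-1}C(\sigma)U_{s(1-\sigma)}$, compare coefficients, drop the harmless factor $(j+1)^2/(j+2)^2 \leq 1$, and invoke $1-\sigma\leq e^{-\sigma}$ together with the definition~\eqref{Cfunction} of $C$. The only cosmetic difference is that you track the $(j+1)^2/(j+2)^2$ ratio explicitly before discarding it, whereas the paper absorbs that step silently into "it is sufficient that...".
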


We have stated the above proposition only for $k \in \N^m$ with $|k|=1$ as this is the only case we shall need; but clearly one could obtain an estimate for any $k \in \N^m$ by a straightforward induction. 

\begin{proof}
From Proposition~\ref{propnorme} and~\eqref{deriv} of Lemma~\ref{properties}, it is sufficient to prove that
\begin{equation}\label{aprouver}
\partial^1 U_s \ll s^{-1}C(\sigma)U_{s(1-\sigma)}
\end{equation}
where $U_s$ is the formal power series defined in~\eqref{defM}. We have
\[ \partial^1 U_{s}(X)=c^{-1}\sum_{l=1}^{+\infty}\frac{1}{(l+1)^2}\frac{M_l}{(l-1)!} \frac{X^{l-1}}{s^l}=c^{-1}\sum_{l=0}^{+\infty}\frac{1}{(l+2)^2}\frac{M_{l+1}}{l!} \frac{X^{l}}{s^{l+1}} \]
and hence~\eqref{aprouver} is true if, for all $l \in \N$,
\begin{equation*}
\frac{M_{l+1}}{M_l}(1-\sigma)^l \leq C(\sigma)
\end{equation*}
which is trivially true by the definition of the function C in~\eqref{Cfunction} and since $1-\sigma \leq e^{-\sigma}$ for $0 \leq \sigma \leq 1$.
\end{proof}

For $f : B \rightarrow \R$, let $\nabla f : B \rightarrow \R^m$ be the vector-valued function formed by the partial derivatives of $f$ of order one, and more generally, for $f : B \rightarrow \R^p$, we let $\nabla f : B \rightarrow M_{m,p}(\R) \simeq \R^{mp}$ be the matrix-valued function whose columns are given by $\nabla f_i$ where $f=(f_i)_{1 \leq i \leq p}$. Then we have the following obvious corollary of Proposition~\ref{derivative}.

\begin{corollary}\label{corderivative}
Let $0<\sigma<1$. If $f \in \mathcal{U}_{s}(B,\R)$, then $\nabla f \in \mathcal{U}_{s(1-\sigma)}(B,\R^m)$ and
\[ |\nabla f|_{s(1-\sigma)} \leq ms^{-1}C(\sigma)|f|_{s}\]
and if $f \in \mathcal{U}_{s}(B,\R^p)$, then $\nabla f \in \mathcal{U}_{s(1-\sigma)}(B,\R^{mp})$ and
\[ |\nabla f|_{s(1-\sigma)} \leq mps^{-1}C(\sigma)|f|_{s}.\] 
\end{corollary}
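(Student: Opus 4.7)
The plan is to derive this corollary directly from Proposition \ref{derivative} by summing the bounds over the $m$ partial derivatives, using that the norm on vector-valued (or matrix-valued) functions is built from the $\ell_1$ norm on the target.

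First I would handle the scalar case. Writing $\nabla f = (\partial_1 f, \ldots, \partial_m f)$, I would observe that for any multi-index $\ell \in \N^m$ and any $a \in B$ one has $|\partial^\ell \nabla f(a)|_1 = \sum_{i=1}^m |\partial^\ell \partial_i f(a)|$. Plugging this into the definition of the $(M, s(1-\sigma))$-norm and using the elementary inequality $\sup(A_1+\cdots+A_m) \leq \sup A_1 + \cdots + \sup A_m$, I would get
\[ |\nabla f|_{s(1-\sigma)} \leq \sum_{i=1}^m |\partial_i f|_{s(1-\sigma)}. \]
Each term on the right is controlled by Proposition \ref{derivative} (applied to $k = e_i$, the $i$-th standard basis vector, so that $|k|=1$), giving $|\partial_i f|_{s(1-\sigma)} \leq s^{-1} C(\sigma) |f|_s$. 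Summing produces the asserted bound $|\nabla f|_{s(1-\sigma)} \leq m s^{-1} C(\sigma) |f|_s$.

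For the vector-valued case $f = (f_1, \ldots, f_p) : B \to \R^p$, the matrix $\nabla f$ is identified with an element of $\R^{mp}$, so its $\ell_1$ norm at a point is $\sum_{i,j} |\partial_i f_j(a)|$. I would simply apply the scalar estimate component-wise to each $f_j$:
\[ |\nabla f|_{s(1-\sigma)} \leq \sum_{j=1}^p |\nabla f_j|_{s(1-\sigma)} \leq m s^{-1} C(\sigma) \sum_{j=1}^p |f_j|_s, \]
and then observe that $|f_j|_s \leq |f|_s$ for each $j$ (immediate from the definition of the norm, since the sum defining $|f|_s$ contains the $j$-th term), whence $\sum_j |f_j|_s \leq p |f|_s$, yielding the announced bound $mp s^{-1} C(\sigma) |f|_s$.

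There is no real obstacle here; the statement is purely a bookkeeping consequence of Proposition \ref{derivative} and the sub-additivity of the $\ell_1$ norm used in the definition of $|\cdot|_{M,s}$. The only step requiring care is keeping track of how the $(|k|+1)^2$ weight and the normalization constant $c$ commute with the finite sum over components, but this is routine since neither depends on the component index.
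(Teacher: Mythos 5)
Your proof is correct and is precisely the ``obvious'' bookkeeping argument the paper has in mind (the paper states the corollary without a proof): the $\ell_1$ structure of the norm gives sub\-additivity over the $m$ (respectively $mp$) scalar components of $\nabla f$, and Proposition~\ref{derivative} with $|k|=1$ controls each. As a side remark, in the vector-valued case you could apply Proposition~\ref{derivative} directly to $f\in\mathcal{U}_s(B,\R^p)$ for each $k=e_i$ and obtain the sharper constant $m\,s^{-1}C(\sigma)|f|_s$ instead of $mp\,s^{-1}C(\sigma)|f|_s$, but your bound matches the one stated in the corollary and suffices.
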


\subsection{Products}\label{sec:products}

In this section, we shall prove that the space of $M$-ultra-differentiable functions is stable under multiplication, provided \eqref{H1} is satisfied.

\begin{proposition}\label{produit}
Let $f,g \in \mathcal{U}_{s}(B,\R^p)$, with $M$ satisfying \eqref{H1}. Then $f\cdot g \in \mathcal{U}_{s}(B,\R)$ and we have
\[ |f\cdot g|_{s} \leq |f|_{s}|g|_{s}. \]
\end{proposition}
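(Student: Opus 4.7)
My plan is to prove the inequality by reducing to a pure majorant series estimate on one variable, using the characterization of the norm in Proposition~\ref{propnorme} and the product property of Lemma~\ref{properties}. By Proposition~\ref{propnorme}, we may assume $|f|_s$ and $|g|_s$ are finite, and we have $f \ll_B |f|_s U_s$ and $g \ll_B |g|_s U_s$. By~\eqref{prod} in Lemma~\ref{properties}, this gives
\[ f \cdot g \ll_B |f|_s\,|g|_s\, U_s^2. \]
Thus, again by Proposition~\ref{propnorme}, it suffices to establish the key coefficient-wise inequality
\[ U_s^2 \ll U_s. \]

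To prove this, I would compare the coefficients term by term. Writing out the Cauchy product
\[ U_s(X)^2 = c^{-2}\sum_{l\geq 0}\left(\sum_{j+k=l}\frac{N_j N_k}{(j+1)^2(k+1)^2}\right)\left(\frac{X}{s}\right)^l, \]
and comparing with the $l$-th coefficient of $U_s$, the required inequality reduces to showing, for every $l\in\N$,
\[ (l+1)^2 \sum_{j+k=l}\frac{N_j N_k}{(j+1)^2(k+1)^2} \leq c\, N_l. \]
Here is where \eqref{H1} enters: log-convexity of $N$ combined with the normalization $N_0=1$ implies super-multiplicativity $N_j N_k \leq N_{j+k}=N_l$ for all $j+k=l$ (since the partial differences $\log N_{i+1}-\log N_i=\log\nu_i$ are non-decreasing, and a telescoping argument starting from $\log N_0=0$ gives $\log N_j+\log N_k\leq \log N_{j+k}$). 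After pulling out $N_l$, the estimate reduces to the purely numerical inequality
\[ (l+1)^2\sum_{j=0}^{l}\frac{1}{(j+1)^2(l-j+1)^2} \leq c=\frac{4\pi^2}{3}. \]

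The main (and only non-trivial) point is to verify this last numerical bound uniformly in $l$. The natural approach is the partial-fraction identity
\[ \frac{1}{(j+1)(l-j+1)} = \frac{1}{l+2}\left(\frac{1}{j+1}+\frac{1}{l-j+1}\right), \]
which, upon squaring and summing from $j=0$ to $l$, bounds the sum by
\[ \frac{1}{(l+2)^2}\left(2\sum_{j=0}^{l}\frac{1}{(j+1)^2}+\frac{4H_{l+1}}{l+2}\right) \leq \frac{1}{(l+2)^2}\left(\frac{\pi^2}{3}+\frac{4H_{l+1}}{l+2}\right), \]
where $H_{l+1}$ is a harmonic number. Since $(l+1)^2/(l+2)^2\leq 1$ and $4H_{l+1}/(l+2)$ is bounded uniformly in $l$ (with its maximum very small for small $l$ and tending to $0$), a routine case-check confirms the bound by $4\pi^2/3$; this choice of $c$ is precisely calibrated to make the estimate work. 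With the coefficient inequality established, one gets $U_s^2\ll U_s$, hence $f\cdot g\ll_B |f|_s|g|_s U_s$, and Proposition~\ref{propnorme} yields $|f\cdot g|_s\leq |f|_s|g|_s$, completing the proof. The main obstacle is really just the bookkeeping for the numerical constant, which explains the presence of the $(|k|+1)^2$ weighting in the definition of the norm.
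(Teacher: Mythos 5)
Your proof is correct and follows the same overall structure as the paper's: reduce to $U_s^2 \ll U_s$ via Proposition~\ref{propnorme} and~\eqref{prod}, invoke super-multiplicativity $N_jN_k\leq N_{j+k}$ from \eqref{H1} (this is exactly Lemma~\ref{lemmeprodavant}), and finish with a purely numerical bound on $(l+1)^2\sum_{j=0}^l (j+1)^{-2}(l-j+1)^{-2}$. The only divergence is in that last arithmetic step: the paper exploits the symmetry $j\mapsto l-j$ together with the crude bound $(l-j+1)^2\geq (l+1)^2/4$ for $j\leq l/2$, which costs a factor $8$ against the Basel sum $\pi^2/6$ and lands exactly on $4\pi^2/3$; you instead use the partial-fraction split of $\frac{1}{(j+1)(l-j+1)}$, yielding the sharper asymptotic bound $\pi^2/3+\frac{4H_{l+1}}{l+2}$, with the maximum of the harmonic-number term being $2$ (at $l=0,1$), well inside the budget $4\pi^2/3$. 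Both are elementary and valid; the paper's argument is marginally shorter, while yours shows the constant $4\pi^2/3$ is actually generous rather than calibrated to be tight (your estimate never exceeds roughly $\pi^2/3+2$).
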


Once again, in view of Proposition~\ref{propnorme} and~\eqref{prod} of Proposition~\ref{properties}, Proposition~\ref{produit} is a direct consequence of the following lemma.

\begin{lemma}\label{lemmeprod}
We have
\[ U_s^2 \ll U_s. \]
\end{lemma}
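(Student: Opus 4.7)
The plan is to reduce $U_s^2 \ll U_s$ to a coefficientwise numerical inequality and dispatch it by combining hypothesis \eqref{H1} with a standard computation involving $\sum 1/j^2$.

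First, from the definition~\eqref{defM}, the coefficient of $(X/s)^l$ in $U_s$ is $c^{-1} N_l/(l+1)^2$, while multiplying the series out by Cauchy product yields
\[
U_s^2(X) = c^{-2}\sum_{l=0}^{+\infty}\left(\sum_{k=0}^{l}\frac{N_k N_{l-k}}{(k+1)^2(l-k+1)^2}\right)\left(\frac{X}{s}\right)^l.
\]
So $U_s^2 \ll U_s$ amounts to showing, for every $l \in \N$,
\[
\sum_{k=0}^{l}\frac{N_k N_{l-k}}{(k+1)^2(l-k+1)^2} \leq c\,\frac{N_l}{(l+1)^2}.
\]

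Next I would factor out the sequence $N$. By \eqref{H1} the ratios $\nu_j = N_{j+1}/N_j$ are non-decreasing, and since $N_0=1$ an immediate telescoping shows that, for $0 \leq k \leq l$,
\[
\frac{N_l}{N_{l-k}} = \prod_{j=l-k}^{l-1}\nu_j \;\geq\; \prod_{j=0}^{k-1}\nu_j = N_k,
\]
i.e.\ $N_k N_{l-k}\leq N_l$. Hence it suffices to establish the purely numerical inequality
\[
S_l := \sum_{k=0}^{l}\frac{(l+1)^2}{(k+1)^2(l-k+1)^2} \;\leq\; c = \frac{4\pi^2}{3}.
\]

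For this last step, I would use the partial-fraction identity
\[
\frac{1}{(k+1)(l-k+1)} = \frac{1}{l+2}\left(\frac{1}{k+1}+\frac{1}{l-k+1}\right),
\]
together with $(a+b)^2 \leq 2(a^2+b^2)$ and $(l+1)^2 \leq (l+2)^2$, to bound
\[
\frac{(l+1)^2}{(k+1)^2(l-k+1)^2} \leq 2\left(\frac{1}{(k+1)^2}+\frac{1}{(l-k+1)^2}\right).
\]
Summing over $k=0,\dots,l$ gives $S_l \leq 4\sum_{j=1}^{l+1} j^{-2} \leq 4\cdot\pi^2/6 = 2\pi^2/3 \leq c$, which closes the proof.

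The argument is essentially routine: hypothesis \eqref{H1} performs the real work (making the sequence $N$ behave as if it were ``almost geometric with respect to convolution''), and the numerical factor $(|k|+1)^{-2}$ that was built into the norm~\eqref{defnorm} precisely compensates the Cauchy product. The only mild obstacle is to land below the normalization constant $c = 4\pi^2/3$ rather than merely some absolute constant; the estimate $(a+b)^2\leq 2(a^2+b^2)$ already provides a factor-of-two cushion, explaining the choice $c = 4\pi^2/3 = 2\cdot(2\pi^2/3)$.
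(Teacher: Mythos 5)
Your proof is correct and follows essentially the same strategy as the paper's: both reduce $U_s^2 \ll U_s$ to the coefficientwise inequality via the Cauchy product, both invoke \eqref{H1} (through Lemma~\ref{lemmeprodavant}) to get $N_k N_{l-k} \leq N_l$, and both then bound the purely numerical sum $S_l = \sum_{k=0}^l (l+1)^2/\bigl((k+1)^2(l-k+1)^2\bigr)$ by an absolute constant. The only difference is in this last step, where you use the partial-fraction identity together with $(a+b)^2 \leq 2(a^2+b^2)$, whereas the paper exploits the $k \leftrightarrow l-k$ symmetry and the bound $(l-k+1)^2 \geq (l+1)^2/4$ for $k \leq l/2$; your variant in fact gives the slightly sharper bound $S_l \leq 2\pi^2/3$, showing the normalization $c = 4\pi^2/3$ has a factor-of-two cushion built in.
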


To prove this lemma, we will need another elementary lemma which makes use of the condition \eqref{H1}.

\begin{lemma}\label{lemmeprodavant}
If $M$ satisfies \eqref{H1}, then 
\[ N_lN_k \leq N_{l+k}, \quad (l,k)\in \N^2. \]
\end{lemma}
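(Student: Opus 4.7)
The plan is to prove the multiplicative inequality $N_l N_k \leq N_{l+k}$ by writing both sides as telescoping products of the auxiliary sequence $\nu$ and then invoking the monotonicity of $\nu$ guaranteed by \eqref{H1}.

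First I would observe that, since $N_0 = 1$, for any $k \in \N$ one has the telescoping identity
\[
N_k = \frac{N_k}{N_0} = \prod_{j=0}^{k-1} \frac{N_{j+1}}{N_j} = \prod_{j=0}^{k-1} \nu_j,
\]
with the usual convention that the empty product equals $1$ (which handles the base case $k=0$). Similarly,
\[
\frac{N_{l+k}}{N_l} = \prod_{j=0}^{k-1} \frac{N_{l+j+1}}{N_{l+j}} = \prod_{j=0}^{k-1} \nu_{l+j}.
\]

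The key step is then to compare these two products term by term. Since \eqref{H1} asserts that the sequence $\nu$ is non-decreasing, we have $\nu_{l+j} \geq \nu_j$ for every $j \in \N$ (and every fixed $l \in \N$). Taking the product over $j = 0, \ldots, k-1$ yields
\[
\frac{N_{l+k}}{N_l} = \prod_{j=0}^{k-1}\nu_{l+j} \;\geq\; \prod_{j=0}^{k-1}\nu_j = N_k,
\]
i.e.\ $N_{l+k} \geq N_l N_k$, as claimed. I do not foresee any real obstacle: the argument is essentially a one-line telescoping computation once one rewrites the normalized sequence $N$ in terms of the ratios $\nu$, and the role of the normalization $N_0 = N_1 = 1$ is precisely to make the product representation of $N_k$ start at $j=0$. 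The only minor point worth noting is that $l$ and $k$ play symmetric roles in the conclusion but not in the proof; symmetry follows a posteriori from the symmetry of $N_l N_k$ in $(l,k)$.
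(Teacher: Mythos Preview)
Your proof is correct and follows exactly the same approach as the paper: both write $N_l$ and $N_k$ as telescoping products of the ratios $\nu_j$ and compare term by term using the monotonicity of $\nu$ from \eqref{H1}. The paper's version is the one-line inequality $N_lN_k=\nu_0\cdots\nu_{l-1}\,\nu_0\cdots\nu_{k-1}\leq\nu_0\cdots\nu_{l-1}\,\nu_l\cdots\nu_{l+k-1}=N_{l+k}$, which is precisely your argument.
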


\begin{proof}
Since the sequence $\nu$ is increasing and $N_0=1$, we have
\[ N_lN_k=\nu_0\dots\nu_{l-1}\nu_0\dots\nu_{k-1}\leq\nu_0\dots\nu_{l-1}\nu_l\dots\nu_{l+k-1}=N_{l+k}.  \]
\end{proof}

The proof of Lemma~\ref{lemmeprod} given below follows~\cite{Lax53}. Observe that we only need the conclusion of Lemma~\ref{lemmeprodavant}, which is thus weaker than the condition \eqref{H1}. It is this latter lemma that motivates the introduction of the normalizing constant in $U_s$ (and thus in the ultra-differentiable norm); without this constant one would have $U_s^2 \ll c U_s$. Let us point out that the proof given below is elementary thanks to the factor $(|k|+1)^2$ in the definition of the norm~\eqref{defnorm} (which gives the factor $(l+1)^2$ in the defintion of $U_s$); without this factor, the statement is true (with a different normalizing constant) but the proof is more involved (see Lemma $2.7$ of~\cite{SCK03} for the sequence $M_\alpha$, $\alpha \geq 1$). 

\begin{proof}[Proof of Lemma~\ref{lemmeprod}]
Recall that
\[ U_{s}(X)=c^{-1}\sum_{l=0}^{+\infty}\frac{1}{(l+1)^2}\frac{M_l}{l!} \left(\frac{X}{s}\right)^l=c^{-1}\sum_{l=0}^{+\infty}\frac{N_l}{(l+1)^2}\left(\frac{X}{s}\right)^l\]
and so the assertion of the lemma amounts to prove that for all $l \in \N$,
\begin{equation}\label{toprove}
\sum_{j=0}^l\frac{N_jN_{l-j}}{(j+1)^2(l-j+1)^2} \leq c \frac{N_l}{(l+1)^2}, \quad c=\frac{4\pi^2}{3}. 
\end{equation}
Observe that the sum in the left-hand side of~\eqref{toprove} is symmetric with respect to $j \mapsto l-j$, and that from Lemma~\ref{lemmeprodavant}, $N_jN_{l-j} \leq N_l$ for all $l \in \N$ and all $0 \leq j \leq l$. Hence,
\[  \sum_{j=0}^l\frac{N_jN_{l-j}}{(j+1)^2(l-j+1)^2} \leq 2\sum_{j=0}^{l/2}\frac{N_jN_{l-j}}{(j+1)^2(l-j+1)^2} \leq 2N_l \sum_{j=0}^{l/2}\frac{1}{(j+1)^2(l-j+1)^2}.  \]
Then for any $0 \leq j \leq l/2$, $(l-j+1)^2 \geq (l/2+1)^2 \geq (l+1)^2/4$, and therefore 
\[  \sum_{j=0}^l\frac{N_jN_{l-j}}{(j+1)^2(l-j+1)^2} \leq \frac{8N_l}{(l+1)^2}\sum_{j=0}^{l/2}\frac{1}{(j+1)^2} \leq \frac{8N_l}{(l+1)^2}\sum_{j=0}^{+\infty}\frac{1}{(j+1)^2}=\frac{4\pi^2}{3}\frac{N_l}{(l+1)^2} \]
which is the inequality we wanted to prove.
\end{proof}

Proposition~\ref{produit} can be extended to matrix-valued
functions. More precisely, given $f : B \rightarrow M_{m,p}(\R)$ and
$g : B \rightarrow M_{p,q}(\R)$ where
$f=(f_{i,j})_{1 \leq i \leq m, \; 1 \leq j \leq p}$ and
$g=(g_{j,k})_{1 \leq j \leq p, \; 1 \leq k \leq q}$, we define
$f \cdot g : B \rightarrow M_{m,q}(\R)$ by
$f\cdot g:=((f\cdot g)_{i,k})_{1 \leq i \leq m, \; 1 \leq k \leq q}$
where
\[ (f\cdot g)_{i,k}:=\sum_{j=1}^p f_{i,j}g_{j,k}. \]
Then the following statement is an obvious corollary of
Proposition~\ref{produit}.

\begin{corollary}\label{corproduit}
  Let $f \in \mathcal{U}_{s}(B,M_{m,p}(\R))$,
  $g \in \mathcal{U}_{s}(B,M_{p,q}(\R))$. Then
  $f\cdot g \in \mathcal{U}_{s}(B,M_{m,q}(\R))$ and we have
\[ |f\cdot g|_{s} \leq |f|_{s}|g|_{s}. \]
\end{corollary}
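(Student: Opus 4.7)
The plan is to reduce Corollary~\ref{corproduit} to the majorant-series machinery used in the proof of Proposition~\ref{produit}, since the authors call it an obvious corollary. The only new ingredient required is submultiplicativity of the $l^1$-norm under matrix multiplication: for $A \in M_{m,p}(\R)$ and $B \in M_{p,q}(\R)$, identified with vectors in $\R^{mp}$ and $\R^{pq}$ respectively, one has
\[
|AB|_1 \;=\; \sum_{i,k}\Bigl|\sum_j A_{i,j}B_{j,k}\Bigr| \;\leq\; \sum_j\Bigl(\sum_i |A_{i,j}|\Bigr)\Bigl(\sum_k |B_{j,k}|\Bigr) \;\leq\; |A|_1\,|B|_1,
\]
where the last step uses the elementary inequality $\sum_j \alpha_j\beta_j \leq (\sum_j\alpha_j)(\sum_j\beta_j)$, valid for non-negative sequences.

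Combined with Leibniz' rule, this immediately yields the matrix-valued analogue of the product majoration \eqref{prod} of Lemma~\ref{properties}: if $f\ll_B F$ and $g\ll_B G$, with $F,G$ formal series in one variable with non-negative coefficients and $f,g$ matrix-valued (sizes compatible for multiplication), then $fg\ll_B FG$. Indeed, for any $a\in B$ and $k\in\N^m$, the submultiplicativity displayed above gives
\[
|\partial^k(fg)(a)|_1 \;\leq\; \sum_{l\leq k}\binom{k}{l}|\partial^l f(a)|_1\,|\partial^{k-l}g(a)|_1,
\]
and the right-hand side is precisely bounded by the coefficient of the corresponding monomial in the product series $FG$. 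The verification is word-for-word the one given for scalar products of vector-valued functions.

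Applying this to $F:=|f|_s U_s$ and $G:=|g|_s U_s$ (where $U_s$ is the majorant series \eqref{defM}), and invoking Lemma~\ref{lemmeprod} ($U_s^2\ll U_s$), we conclude that $fg \ll_B |f|_s|g|_s\, U_s$. Proposition~\ref{propnorme} then gives the desired estimate $|fg|_s \leq |f|_s|g|_s$.

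The main, and in fact only, technical point is the $l^1$-submultiplicativity of matrix multiplication; once that is in hand, the argument is a mechanical transposition of the proofs of Lemma~\ref{lemmeprod} and Proposition~\ref{produit}. There is no need to invoke \eqref{H1} again, as it is already encoded in Lemma~\ref{lemmeprod}.
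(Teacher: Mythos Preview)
Your proof is correct and is precisely the argument the paper has in mind: the authors give no proof beyond calling it ``an obvious corollary of Proposition~\ref{produit}'', and your reduction via the $l^1$-submultiplicativity of matrix multiplication together with the majorant-series machinery (Lemma~\ref{properties}, Lemma~\ref{lemmeprod}, Proposition~\ref{propnorme}) is exactly the intended route. There is nothing to add.
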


\subsection{Compositions}\label{sec:composition}

Our goal here is to prove that the space of $M$-ultra-differentiable
functions is stable under composition, provided $M$ satisfies \eqref{H1}, at
the expense of reducing the width $s>0$ as with derivatives. The main
point is that we will be able to reduce the width by a factor
arbitrarily close to one provided the composition involves a map
arbitrarily close to the identity.

But first we need to define two additional formal power series
associated to $U_s$, the latter being defined in~\eqref{defM}. So we
define $\bar{U}_s$ by
\begin{equation}\label{defbarM}
\bar{U}_s(X):=U_s(X)-U_s(0)=c^{-1}\sum_{l=1}^{+\infty}\frac{N_l}{(l+1)^2}
\left(\frac{X}{s}\right)^l 
\end{equation}
and $\tilde{U}_s$ by
\begin{equation}\label{deftildeM}
\tilde{U}_s(X):=c^{-1}\sum_{l=0}^{+\infty}\frac{N_{l+1}}{(l+2)^2}
\left(\frac{X}{s}\right)^l. 
\end{equation}
It is clear that
\begin{equation}\label{barMtildeM}
s^{-1}X\tilde{U}_s(X)=\bar{U}_s(X).
\end{equation}

\begin{lemma}\label{lemmecomp}
We have
\[ \tilde{U}_s^2 \ll \tilde{U}_s, \quad \tilde{U}_s\bar{U}_s \ll  \bar{U}_s. \]
\end{lemma}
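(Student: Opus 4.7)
The plan is first to reduce the two majorations to a single one. From the identity~\eqref{barMtildeM}, namely $\bar{U}_s = s^{-1}X\tilde{U}_s$, we obtain
\[ \tilde{U}_s\bar{U}_s = s^{-1}X\tilde{U}_s^2, \quad \bar{U}_s = s^{-1}X\tilde{U}_s, \]
and since multiplying a majoration of formal series by the positive factor $s^{-1}$ and by $X$ (which is simply an index shift) preserves the relation $\ll$, the second assertion $\tilde{U}_s\bar{U}_s \ll \bar{U}_s$ is equivalent to the first one $\tilde{U}_s^2 \ll \tilde{U}_s$. Hence it is enough to prove $\tilde{U}_s^2 \ll \tilde{U}_s$.

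Writing out coefficients from the definition~\eqref{deftildeM}, the inequality $\tilde{U}_s^2 \ll \tilde{U}_s$ is equivalent to showing that, for every $l \in \N$,
\[ \sum_{j=0}^{l}\frac{N_{j+1}N_{l-j+1}}{(j+2)^2(l-j+2)^2} \leq c\,\frac{N_{l+1}}{(l+2)^2}, \quad c=\frac{4\pi^2}{3}. \]
The main step is the following strengthening of Lemma~\ref{lemmeprodavant}, tailored to these indices: for all $0 \leq j \leq l$,
\[ N_{j+1}N_{l-j+1} \leq N_{l+1}. \]
To prove this, by the symmetry $j \leftrightarrow l-j$ we may assume $j \leq l-j$. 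Writing $N_{l+1} = N_{l-j+1}\,\nu_{l-j+1}\cdots\nu_l$, this amounts to
\[ \frac{N_{j+1}}{\nu_{l-j+1}\cdots\nu_l} = \nu_0\prod_{i=1}^{j}\frac{\nu_i}{\nu_{i+l-j}} \leq 1. \]
The equality holds since $N_{j+1}=\nu_0\nu_1\cdots\nu_j$, while the inequality is immediate: $\nu_0=1$ (from the normalization $N_0=N_1=1$), and each ratio $\nu_i/\nu_{i+l-j}\leq 1$ by hypothesis~\eqref{H1}, which says exactly that $\nu$ is non-decreasing, together with $l-j\geq 0$.

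Once this is established, the remainder of the argument is the same splitting trick already used in the proof of Lemma~\ref{lemmeprod}. Using the symmetry $j\leftrightarrow l-j$ we reduce to $0\leq j\leq l/2$, on which $l-j+2\geq (l+2)/2$, hence $(l-j+2)^2 \geq (l+2)^2/4$. This yields
\[ \sum_{j=0}^{l}\frac{1}{(j+2)^2(l-j+2)^2} \leq \frac{8}{(l+2)^2}\sum_{j=0}^{+\infty}\frac{1}{(j+2)^2} \leq \frac{8}{(l+2)^2}\cdot\frac{\pi^2}{6} = \frac{c}{(l+2)^2}, \]
and combining this with $N_{j+1}N_{l-j+1}\leq N_{l+1}$ gives the required estimate. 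The only non-routine point is the refined monomial bound $N_{j+1}N_{l-j+1}\leq N_{l+1}$, which is stronger than the $N_{l+2}$ bound one would obtain by a naive application of Lemma~\ref{lemmeprodavant} and is precisely what makes the indices match between the two sides of the target majoration.
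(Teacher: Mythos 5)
Your proof is correct and follows essentially the same route as the paper: the same reduction of the second majoration to the first via $\bar{U}_s=s^{-1}X\tilde{U}_s$, the same coefficient inequality, and the same symmetrization-plus-splitting trick. The only difference is presentational: your inline verification of $N_{j+1}N_{l-j+1}\leq N_{l+1}$ is precisely the content of the paper's Lemma~\ref{lemmecompavant} (with $(l,k)\mapsto(j+1,\,l-j+1)$), and your closing remark about the shift by one index versus the naive $N_{l+2}$ bound correctly identifies why that auxiliary lemma is needed.
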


As for Lemma~\ref{lemmeprod}, the factor $(l+1)^2$ in the definition of $U_s$ makes the proof simple, but the statement is still true with this factor (see Lemma $2.4$ of~\cite{SCK03} for the sequence $M_\alpha$, $\alpha \geq 1$). To prove Lemma~\ref{lemmecomp}, we shall need the following preliminary lemma, in which we use the assumption \eqref{H1}.

\begin{lemma}\label{lemmecompavant}
If $M$ satisfies \eqref{H1}, then 
\[ N_lN_k \leq N_{l+k-1}, \quad (l,k)\in \N^2\setminus\{0\}. \]
\end{lemma}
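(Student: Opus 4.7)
The plan is to adapt the proof of Lemma~\ref{lemmeprodavant}, gaining one factor on the right-hand side by exploiting the extra normalization $N_0 = N_1 = 1$, which forces $\nu_0 = 1$. Note that $\N^2 \setminus \{0\}$ must be read as $(\N\setminus\{0\})^2$: if $l=0$ and $k\geq 2$ the inequality would demand $N_k \leq N_{k-1}$, contradicting the monotonicity of $N$ implied by \eqref{H1}.

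By the symmetry of the claim in $(l,k)$, I would assume without loss of generality that $1 \leq l \leq k$. Writing the telescoping products $N_l = \nu_0 \nu_1 \cdots \nu_{l-1}$ and $N_k = \nu_0 \nu_1 \cdots \nu_{k-1}$, and using $\nu_0=1$ to drop one factor from the first product, gives
\[ N_l N_k = (\nu_1 \cdots \nu_{l-1}) \cdot (\nu_0 \nu_1 \cdots \nu_{k-1}). \]
This is the key step: absorbing the $\nu_0 = 1$ is precisely what makes the product shorter by one compared to the bound in Lemma~\ref{lemmeprodavant}.

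Next I would shift the first block of indices upwards using the monotonicity of $\nu$ guaranteed by \eqref{H1}: since $k \geq 1$, we have $\nu_i \leq \nu_{i+k-1}$ for each $i = 1, \ldots, l-1$, hence
\[ \nu_1 \cdots \nu_{l-1} \leq \nu_k \nu_{k+1} \cdots \nu_{l+k-2}. \]
Substituting this back yields $N_l N_k \leq \nu_0 \nu_1 \cdots \nu_{l+k-2} = N_{l+k-1}$, concluding the proof. (The corner case $l=1$ is trivial, as the shifted block is empty and the bound reduces to $N_k \leq N_k$.)

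There is no real obstacle here — the argument is a direct variant of Lemma~\ref{lemmeprodavant}. The only thing to pay attention to is the bookkeeping of indices and the crucial role of the normalization $\nu_0 = 1$, which is precisely what upgrades the bound $N_l N_k \leq N_{l+k}$ to $N_l N_k \leq N_{l+k-1}$. This refinement is what will later allow the simpler composition estimate for $\tilde U_s$ and $\bar U_s$ in Lemma~\ref{lemmecomp}.
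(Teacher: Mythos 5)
Your proof is correct and essentially the same as the paper's: telescope $N_l$ and $N_k$ into products of $\nu_i$'s, absorb the normalization $\nu_0=1$ to save one factor, and shift one block of indices upward using the monotonicity of $\nu$ guaranteed by \eqref{H1} — whether you shift the first block by $k-1$ (your version) or the second block by $l-1$ (the paper's) is immaterial. Your preliminary remark that the quantifier $\N^2\setminus\{0\}$ should be read as $(\N\setminus\{0\})^2$ is also correct (for $l=0$, $k\geq 2$ the claimed bound would give the false $N_k\leq N_{k-1}$), though this is harmless since Lemma~\ref{lemmecomp} only invokes the inequality with both indices $\geq 1$.
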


\begin{proof}
We assume, without loss of generality, that $l \geq 1$. Since the sequence $\nu$ is increasing and $N_0=1$, we have
\[ N_lN_k=\nu_0\dots\nu_{l-1}\nu_0\nu_1\dots\nu_{k-1}\leq\nu_0\dots\nu_{l-1}\nu_0\nu_l\dots\nu_{l+k-2}=\nu_0N_{l+k-1} \]
and the result follows since we normalized $\nu_0=N_1/N_0=1$.
\end{proof}

Observe that exactly as for Lemma~\ref{lemmeprod}, we only need the conclusion of Lemma~\ref{lemmecompavant} to hold true, which is thus weaker than the condition \eqref{H1}.

\begin{proof}[Proof of Lemma~\ref{lemmecomp}]
  It is enough to prove the first part of the statement, as the second
  part of the statement follows from it; indeed, if
  $\tilde{U}_s^2 \ll \tilde{U}_s$, then using~\eqref{barMtildeM} we
  have
  \[ \tilde{U}_s(X)\bar{U}_s(X)=s^{-1}X\tilde{U}_s(X)\tilde{U}_s(X)
  \ll s^{-1}X\tilde{U}_s(X)=\bar{U}_s(X). \]
  As in Lemma~\ref{lemmeprod}, to prove that
  $\tilde{U}_s^2 \ll \tilde{U}_s$ one needs to show
  \begin{equation}\label{toprove2}
    \sum_{j=0}^l\frac{N_{j+1}N_{l-j+1}}{(j+2)^2(l-j+2)^2} \leq c
    \frac{N_{l+1}}{(l+2)^2}, \quad c=\frac{4\pi^2}{3}.  
  \end{equation}
  The sum in the left-hand side of~\eqref{toprove2} is still symmetric
  with respect to $j \mapsto l-j$, and for any $l \in \N$ and any
  $0 \leq j \leq l$, we have $N_{j+1}N_{l-j+1} \leq N_{l+1}$ by
  Lemma~\ref{lemmecompavant}. Therefore
\[
\sum_{j=0}^l\frac{N_{j+1}N_{l-j+1}}{(j+2)^2(l-j+2)^2}
\leq 2\sum_{j=0}^{l/2}\frac{N_{j+1}N_{l-j+1}}{(j+2)^2(l-j+2)^2} \leq 
2N_{l+1}\sum_{j=0}^{l/2}\frac{1}{(j+2)^2(l-j+2)^2}.  \]
Moreover, as in Lemma~\ref{lemmeprod}, for any $0 \leq j \leq l/2$, $(l-j+2)^2 \geq (l/2+2)^2 \geq (l+2)^2/4$, and so the last inequality leads to
\[  \sum_{j=0}^l\frac{N_{j+1}N_{l-j+1}}{(j+2)^2(l-j+2)^2} \leq \frac{8N_{l+1}}{(l+2)^2}\sum_{j=0}^{l/2}\frac{1}{(j+2)^2} \leq \frac{4\pi^2}{3} \frac{N_{l+1}}{(l+2)^2}\]
which concludes the proof.
\end{proof}

\begin{proposition}\label{composition}
  Let $f \in \mathcal{U}_{s}(B,\R^p)$, $0 < \sigma < 1$,
  $g \in \mathcal{U}_{s(1-\sigma)}(D,\R^p)$ where $B$ and $D$ are open
  balls of $\R^m$ such that $g(D)\subseteq B$. If
  \begin{equation}\label{small}
    |g-\mathrm{Id}|_{s(1-\sigma)} \leq s\sigma,
  \end{equation}  
  then $f \circ g \in \mathcal{U}_{s(1-\sigma)}(D,\R^p)$ and
 \[ |f \circ g|_{s(1-\sigma)} \leq |f|_{s}. \]
\end{proposition}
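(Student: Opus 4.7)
My plan is to reduce the proposition to a purely algebraic majorization between formal power series in one variable, within the framework of majorants developed in sections~\ref{sec:majorant1} and~\ref{sec:majorant2}, using Lemma~\ref{lemmecomp} as the key algebraic input. By Proposition~\ref{propnorme}, it suffices to establish $f \circ g \ll_D |f|_s U_{s(1-\sigma)}$. Since $f \ll_B |f|_s U_s$, Lemma~\ref{comp} will deliver this as soon as I produce a suitable majorant $G$ of $g$. Writing $g = \mathrm{Id} + r$ with $r := g - \mathrm{Id}$, the hypothesis $|r|_{s(1-\sigma)} \leq s\sigma$, combined with the observation that $\partial^k \mathrm{Id}$ has $l^1$-norm $1$ for $|k|=1$ and vanishes for $|k| \geq 2$, lets me take $G$ such that
$$ G(X) - G(0) = X + s\sigma\, \bar{U}_{s(1-\sigma)}(X) $$
(the actual value of $G(0)$ plays no role in the composition formula). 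Lemma~\ref{comp} then reduces the proof to the algebraic majorization $U_s(X + s\sigma\, \bar{U}_{s(1-\sigma)}(X)) \ll U_{s(1-\sigma)}(X)$ between formal power series in $X$.

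To prove this, I would set $s' := s(1-\sigma)$ and use the identity~\eqref{barMtildeM} to factor the argument as $W(X) = X\bigl(1 + \tfrac{\sigma}{1-\sigma}\tilde{U}_{s'}(X)\bigr)$. Expanding $(W/s)^l$ by the binomial formula and using $\tilde{U}_{s'}^{\,j} \ll \tilde{U}_{s'}$ for every $j \geq 1$ (an immediate induction from $\tilde{U}_{s'}^{\,2} \ll \tilde{U}_{s'}$ in Lemma~\ref{lemmecomp}), a direct rearrangement and re-summation give
$$ U_s(W) \;\ll\; U_s(X) + \tilde{U}_{s'}(X)\bigl(U_{s'}(X) - U_s(X)\bigr). $$
Since $V := U_{s'} - U_s = \bar{U}_{s'} - \bar{U}_s$ has non-negative coefficients (as $s' < s$), the problem reduces to the sharper majorization $\tilde{U}_{s'}\,V \ll V$.

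Comparing coefficients of $X^l$ in this last inequality, and invoking $N_{k+1}N_{l-k} \leq N_l$ from Lemma~\ref{lemmecompavant} together with the monotonicity $1-(1-\sigma)^{l-k} \leq 1-(1-\sigma)^l$, I am left with the purely combinatorial estimate
$$ \sum_{k=0}^{l-1} \frac{1}{(k+2)^2\,(l-k+1)^2} \;\leq\; \frac{c}{(l+1)^2}, \qquad c = \tfrac{4\pi^2}{3}, $$
which I would establish by the partial-fraction identity $\tfrac{1}{(k+2)(l-k+1)} = \tfrac{1}{l+3}\bigl(\tfrac{1}{k+2} + \tfrac{1}{l-k+1}\bigr)$ together with $\sum_{m\geq 2} m^{-2} = \pi^2/6 - 1$, exactly in the spirit of the proof of Lemma~\ref{lemmecomp}. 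The main obstacle will be securing the final constant $1$ rather than a larger one: a crude application of $\tilde{U}_{s'}\bar{U}_{s'}\ll \bar{U}_{s'}$ alone only yields $U_s(W) \ll 2\,U_{s'}$, so the decisive point is to route the perturbation $s\sigma\, \bar{U}_{s'}$ through the difference $U_{s'} - U_s$ rather than through $\bar{U}_{s'}$ itself, and to exploit the full precision of the normalizing weight $(l+1)^{-2}$ in $U_s$---which is exactly what the factor $(|k|+1)^2$ in the definition~\eqref{defn} of the ultra-differentiable norm is tuned to enable.
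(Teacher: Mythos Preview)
Your proof is correct and follows the same overall strategy as the paper: reduce to a majorization between one-variable formal series via Proposition~\ref{propnorme} and Lemma~\ref{comp}, factor the argument using~\eqref{barMtildeM}, expand binomially, and invoke the sub-multiplicativity $\tilde U_{s'}^{\,j}\ll\tilde U_{s'}$ from Lemma~\ref{lemmecomp}.

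The one genuine difference is your treatment of the $j=0$ term in the binomial expansion. The paper asserts $\tilde U_{s'}^{\,j}\ll\tilde U_{s'}$ ``for any $j\in\N$'' and uses this to bound $\bigl(1+\beta\tilde U_{s'}\bigr)^l\ll\tilde U_{s'}(1+\beta)^l$ directly; but $\tilde U_{s'}^{\,0}=1\not\ll\tilde U_{s'}$ since $[\tilde U_{s'}]_0=c^{-1}/4<1$, so this step as written does not hold. Your device of separating off $j=0$ yields the sharper intermediate bound $U_s(W)\ll U_s(X)+\tilde U_{s'}(X)\,V(X)$ with $V=U_{s'}-U_s$, and the needed estimate $\tilde U_{s'}V\ll V$ follows exactly as you indicate (indeed your combinatorial sum is term-by-term dominated by the one already handled in the proof of Lemma~\ref{lemmeprod}). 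The paper reaches the same final constant~$1$ via $\tilde U_{s'}\bar U_{s'}\ll\bar U_{s'}$, but your route is the rigorous one; it patches a small oversight while staying entirely within the paper's framework.
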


As it will be clear in the proof, the conclusions of
Proposition~\ref{composition} holds true under the slightly weaker
assumption that
\[ |u|_{s(1-\sigma)}-\sup_{x \in B}|u(x)| \leq s\sigma \]
but this will not be needed.

\begin{proof}
  Let $u=g-\mathrm{Id} \in \mathcal{U}_{s(1-\sigma)}(D,\R^p)$ and
  \[ a:=|f|_{s}, \quad b:=|u|_{s(1-\sigma)} \]
  so that, from Proposition~\ref{propnorme},
  \[ f(x) \ll_B aU_s(X), \quad u(x) \ll_D bU_{s(1-\sigma)}(X) \]
  and consequently
  \[ f(x) \ll_B aU_s(X), \quad g(x) \ll_D X+bU_{s(1-\sigma)}(X). \]
  We now apply Lemma~\ref{comp} and, recalling the definition of
  $\bar{U}_s$ and $\tilde{U}_s$ given respectively in~\eqref{defbarM}
  and~\eqref{deftildeM}, we obtain
\begin{eqnarray}
f(g(x)) & \ll_B & aU_s\left(X+b\bar{U}_{s(1-\sigma)}(X)\right) \nonumber \\ \nonumber
& = & aU_s(0)+a\bar{U}_s\left(X+b\bar{U}_{s(1-\sigma)}(X)\right) \\ \nonumber
& = & aU_s(0)+a\bar{U}_s\left(X+b(s(1-\sigma))^{-1}X\tilde{U}_{s(1-\sigma)}(X)\right) \\ \nonumber
& = & aU_s(0)+a\sum_{l=1}^{+\infty}\frac{N_l}{(l+1)^2} \left(\frac{X+b(s(1-\sigma))^{-1}X\tilde{U}_{s(1-\sigma)}(X)}{s}\right)^l \\ \label{estder}
& = & aU_s(0)+a\sum_{l=1}^{+\infty}\frac{N_l}{(l+1)^2} \left(\frac{X}{s}\right)^l\left(1+b(s(1-\sigma))^{-1}\tilde{U}_{s(1-\sigma)}(X)\right)^l.
\end{eqnarray}
From the first part of Lemma~\ref{lemmecomp}, for any $j \in \N$, we have
\[ \tilde{U}_{s(1-\sigma)}^j \ll \tilde{U}_{s(1-\sigma)} \]
and therefore
\begin{eqnarray*}
\left(1+b(s(1-\sigma))^{-1}\tilde{U}_{s(1-\sigma)}(X)\right)^l
& = & \sum_{j=0}^l \binom{l}{j}b^j(s(1-\sigma))^{-j}\tilde{U}_{s(1-\sigma)}(X)^j \\
& \ll & \tilde{U}_{s(1-\sigma)}(X)\sum_{j=0}^l \binom{l}{j}b^j(s(1-\sigma))^{-j} \\
& = & \tilde{U}_{s(1-\sigma)}(X)\left(1+b(s(1-\sigma))^{-1}\right)^l.
\end{eqnarray*}
Now, from~\eqref{small} we get
\[ b=|u|_{s(1-\sigma)} \leq s\sigma \]
and thus
\[ 1+ b(s(1-\sigma))^{-1} \leq 1+\sigma(1-\sigma)^{-1}=(1-\sigma)^{-1}.  \]
This gives
\[ \left(1+b(s(1-\sigma))^{-1}\tilde{U}_{s(1-\sigma)}(X)\right)^l \ll \tilde{U}_{s(1-\sigma)}(X)(1-\sigma)^{-l}  \]
which, together with~\eqref{estder}, yields
\begin{eqnarray*}
f(g(x)) & \ll_D & aU_s(0)+a\tilde{U}_{s(1-\sigma)}(X)\sum_{l=1}^{+\infty}\frac{N_l}{(l+1)^2} \left(\frac{X}{(s(1-\sigma))}\right)^l \\
& = & aU_s(0)+a\tilde{U}_{s(1-\sigma)}(X)\bar{U}_{s(1-\sigma)}(X).
\end{eqnarray*}
Using the second part of Lemma~\ref{lemmecomp}, this gives
\[ f(g(x))  \ll_D aU_s(0)+a\bar{U}_{s(1-\sigma)}(X)\]
and since $U_s(0)=U_{s(1-\sigma)}(0)$, we arrive at
\[ f(g(x))  \ll_D a(U_{s(1-\sigma)}(0)+\bar{U}_{s(1-\sigma)}(X))=aU_{s(1-\sigma)}(X).\]
Using Proposition~\ref{propnorme}, we eventually obtain
\[ |f \circ g|_{s(1-\sigma)} \leq a= |f|_{s} \]
and this concludes the proof.
\end{proof}

\subsection{Flows}\label{sec:flows}

The goal of this section is to show that the flow of a $M$-ultra-differentiable vector field is $M$-ultra-differentiable, under the assumptions \eqref{H1} and \eqref{H2}, and to obtain a norm estimate on this flow in terms of the norm of the vector field; to prove this we will use the estimates on derivatives, products and composition we already obtained in \S~\ref{sec:derivatives}, \S~\ref{sec:products} and \S~\ref{sec:composition} and the classical contraction fixed point theorem. 

Here we will consider a situation adapted to the applications in \S~\ref{sec:KAM} and \S~\ref{sec:Nek}; that is we consider functions $H=H(\theta,I,\omega)$ which are defined and ultra-differentiable on a domain of the form
\[ \T^n \times D_r \times D_h(\omega_0) \subseteq \T^n \times \R^n \times \R^n \]
where $D_r$ is the open ball of radius $r>0$ centered at the origin and $D_h(\omega_0)$ is the open ball of radius $h>0$ centered at a vector $\omega_0 \in \R^n$. In the statements below, the variables $\omega \in D_h(\omega_0)$ play the role of a fixed parameter, hence to simplify the notations we will explicitly suppress the dependence on $\omega \in D_h(\omega_0)$. 

Let us first start with a vector-valued function $D: \T^n \rightarrow \R^n$ which depends only on $\theta \in \T^n$, and that we shall considered as a vector field on $\T^n$.

\begin{lemma}\label{flot}
Given $D \in \mathcal{U}_{s}(\T^n,\R^n)$, let $0 < \sigma < 1$ and assume that
\begin{equation}\label{small2}
|D|_{s} \leq s\sigma.
\end{equation}
Then for any $t \in [0,1]$, the time-$t$ map $D^t$ of the flow of $D$ belongs to $\mathcal{U}_{s(1-\sigma)}(\T^n,\T^n)$ and we have the estimate
\begin{equation}\label{estflot}
|D^t-\mathrm{Id}|_{s(1-\sigma)^2} \leq t|D|_{s}.
\end{equation} 
\end{lemma}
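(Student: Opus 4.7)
The plan is to construct $D^t$ as the limit of Picard iterates for the integral form of the flow equation. Define recursively $g_0(t,\theta) := \theta$ and
\[ g_{k+1}(t,\theta) := \theta + \int_0^t D(g_k(\tau,\theta))\, d\tau, \quad t \in [0,1], \; k \geq 0. \]

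First, by induction on $k$, I would show that for every $t \in [0,1]$ the displacement $g_k(t,\cdot)-\mathrm{Id}$ lies in $\mathcal{U}_{s(1-\sigma)}(\T^n, \R^n)$ and satisfies
\[ |g_k(t,\cdot) - \mathrm{Id}|_{s(1-\sigma)} \leq t|D|_s. \]
This is the heart of the matter: the smallness hypothesis~\eqref{small2} gives $t|D|_s \leq s\sigma$, which is exactly the condition~\eqref{small} needed to apply Proposition~\ref{composition} to $D \circ g_k(\tau,\cdot)$. That proposition yields $|D \circ g_k(\tau,\cdot)|_{s(1-\sigma)} \leq |D|_s$, and linearity of the norm under the $\tau$-integral closes the induction.

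Next I would establish convergence of $(g_k)$ in the Banach space $\mathcal{U}_{s(1-\sigma)^2}$. Via the integral form of the mean value theorem,
\[ D \circ g_k - D \circ g_{k-1} = \left(\int_0^1 \nabla D\bigl((1-\lambda)g_{k-1} + \lambda g_k\bigr)\, d\lambda\right) \cdot (g_k - g_{k-1}), \]
one combines Corollary~\ref{corderivative} (to bound $\nabla D$ at width $s(1-\sigma)$), Proposition~\ref{composition} again (to compose $\nabla D$ with the convex combination at width $s(1-\sigma)^2$, the required smallness being furnished by step one), and the product estimate Proposition~\ref{produit}. For $t$ small enough this provides a contraction; to reach the whole interval $[0,1]$ one subdivides $[0,1]$ into finitely many pieces and concatenates via the semigroup property of the flow. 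By uniqueness of smooth ODE solutions, the limit coincides with $D^t$, and the uniform bound of step one passes through the $\mathcal{U}_{s(1-\sigma)^2}$-limit to yield~\eqref{estflot}.

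The main obstacle will be the contraction step: Corollary~\ref{corderivative} introduces a factor $C(\sigma)$ that diverges as $\sigma \to 0$, and this constant must be absorbed by choosing small enough time-steps (then concatenating) so that the final bound keeps the clean linear-in-$t$ form $t|D|_s$, with no residual $C(\sigma)$. The log-convexity assumption~\eqref{H1} silently enters through the product and composition estimates that underlie every step.
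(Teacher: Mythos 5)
Your overall strategy (Picard iteration, with Proposition~\ref{composition} as the engine of the uniform a priori bound) is the same as the paper's, and your step one is exactly the key estimate that makes everything work: the hypothesis $|D|_s\leq s\sigma$ feeds into Proposition~\ref{composition} to give $|D\circ g_k|_{s(1-\sigma)}\leq |D|_s$, and the induction closes. The divergence is in how you promote this to actual convergence, and there the proposal has a genuine gap.

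You correctly identify that the Lipschitz constant of the Picard operator is $\kappa = n^2 s^{-1}C(\sigma)|D|_s$, which need not be $<1$. Your fix is to take small time-steps and ``concatenate via the semigroup property.'' The problem is that concatenation is a composition: $D^{h+s}=D^s\circ D^h$, and each such composition costs a further factor $(1-\tau)$ of width through Proposition~\ref{composition}. If you cut $[0,1]$ into $N\sim\kappa$ pieces, you pay that price $N$ times, so the final width is $s(1-\sigma)^2(1-\tau)^N$, not $s(1-\sigma)^2$. Since $\kappa$ contains $C(\sigma)$, the number $N$ of subdivisions blows up as $\sigma\to 0$, and whether the cumulative loss $(1-\tau)^N$ stays bounded depends on delicate bookkeeping of $\tau$ against $N$ that your outline does not carry out; moreover, you would also need to re-verify the smallness condition~\eqref{small} of Proposition~\ref{composition} at each concatenation, since $|D^h-\mathrm{Id}|$ is controlled at width $s(1-\sigma)^2$, not at the progressively shrinking widths that arise. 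The paper sidesteps all of this with the classical trick of showing that a \emph{high iterate} $P^j$ of the Picard operator is a contraction on the fixed Banach space $C([0,1],\mathcal{U}_{s(1-\sigma)^2})$, with Lipschitz constant $\kappa^j/j!\to 0$ regardless of the size of $\kappa$. This never subdivides, never re-composes, and hence never loses any further width.

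It is worth noting that your step one is, in fact, essentially sufficient on its own if one borrows the soft ingredient the paper itself cites: the flow $D^t$ exists, is smooth, and is the $C^\infty$-limit of the Picard iterates by the classical ODE theorem on the compact manifold $\T^n$. Since the ultra-differentiable norm is a supremum of continuous functionals of the derivatives, it is lower semi-continuous under pointwise convergence of all derivatives, so the uniform bound $|g_k(t,\cdot)-\mathrm{Id}|_{s(1-\sigma)}\leq t|D|_s$ passes directly to $D^t$. This yields~\eqref{estflot} (even at the slightly stronger width $s(1-\sigma)$) with no contraction argument and no subdivision at all. Your instinct that ``the uniform bound of step one passes through the limit'' is exactly right; what you should drop is the attempt to build that limit by concatenating short-time contractions.
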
 

Observe that the width decreases from $s$ to $s(1-\sigma)^2$ because
we will have to apply consecutively Propositions~\ref{composition}
and~\ref{derivative} (or, more precisely,
Corollary~\ref{corderivative}), and each application induces a
decrease of $s$ by a factor $1-\sigma$.

\begin{proof}
The fact that $D^t$ is smooth and defined for all $t \in [0,1]$ (in fact, for all $t \in \R$) follows from the compactness of $\T^n$ and the classical result on the existence and uniqueness of solutions of differential equations (even though this will essentially be re-proved below); the only thing we need to prove is the estimate~\eqref{estflot}, and clearly it is sufficient to prove that for all $t \in [0,1]$,
\begin{equation*}
|D^t-\mathrm{Id}|_{s(1-\sigma)^2} \leq |D|_{s}.
\end{equation*} 
So let us consider the space $V:=C([0,1],\mathcal{U}_{s(1-\sigma)^2}(\T^n,\T^n))$ of continuous map from $[0,1]$ to $\mathcal{U}_{s(1-\sigma)^2}(\T^n,\T^n)$: given an element $\Phi \in V$ and $t \in [0,1]$, we shall write $\Phi^t:=\Phi(t)$ and consequently $\Phi=(\Phi^t)_{t \in [0,1]}$. We equip $V$ with the following norm:
\[ ||\Phi||:=\max_{t \in [0,1]}|\Phi^t|_{s(1-\sigma)^2} \]
which makes it a Banach space, and if we set $\rho:=|D|_{s}$, we define
\[ B_\rho V:=\{ \Phi \in V \; | \; ||\Phi-\mathrm{Id}|| \leq \rho \}.  \]
We can eventually define a Picard operator $P$ associated to $D$ by
\[ P : B_\rho V \rightarrow V, \quad \Phi \mapsto P(\Phi) \]
where $P(\Phi)=(P(\Phi)^t)_{t \in [0,1]}$ is defined by
\[ P(\Phi)^t:=\mathrm{Id}+\int_0^t D \circ \Phi^\tau d\tau. \]
To prove the lemma, it is sufficient to prove that $P$ has a unique fixed point $\Phi_* \in B_\rho V$, as necessarily $(\Phi_*^t)_{t \in [0,1]}=(D^t)_{t \in [0,1]}$. Therefore it is sufficient to prove that $P$ sends $B_\rho V$ into itself and that one of its iterate is a contraction, as $B_\rho V$ is a complete subset of the Banach space $V$.

First we need to show that $P$ maps $B_\rho V$ into itself. So assume $\Phi \in B_\rho V$, using~\eqref{small2} this implies that for all $t \in [0,1]$, 
\[ |\Phi^t-\mathrm{Id}|_{s(1-\sigma)^2} \leq \rho \leq s\sigma. \]
Letting $\sigma^*$ be such that $(1-\sigma)^2=1-\sigma^*$, we have $\sigma<\sigma^*$ and therefore
\[ |\Phi^t-\mathrm{Id}|_{s(1-\sigma^*)}=|\Phi^t-\mathrm{Id}|_{s(1-\sigma)^2} \leq s\sigma \leq s\sigma^*   \]
so that~\eqref{small} of Proposition~\ref{composition} is satisfied (with $f=D$ and $g=\Phi^t$ for any $t \in [0,1]$) and the latter proposition applies: this gives
\[ |D \circ \Phi^t|_{s(1-\sigma)^2}=|D \circ \Phi^t|_{s(1-\sigma^*)} \leq |D|_{s}=\rho, \quad t \in [0,1] \]
hence
\[ \left|P(\Phi)^t-\mathrm{Id}\right|_{s(1-\sigma)^2}=\left|\int_0^t D \circ \Phi^\tau d\tau\right|_{s(1-\sigma)^2} \leq t\rho \leq \rho, \quad t \in [0,1] \]
and therefore
\[ ||P(\Phi)-\mathrm{Id}|| \leq \rho. \]
This proves that $P$ maps $B_\rho V$ into itself.

It remains to show that some iterate of $P$ is a contraction. So let $\Phi_1,\Phi_2 \in B_\rho V$, then for any $t \in [0,1]$,
\begin{eqnarray*}
 P(\Phi_1)^t-P(\Phi_2)^t & = & \int_0^t \left(D \circ \Phi_1^\tau-D \circ \Phi_2^\tau \right)d\tau \\
& = & \int_0^t \left(\int_0^1 \nabla D \circ (s\Phi_1^\tau+(1-s)\Phi_2^\tau)ds \right) \cdot (\Phi_1^\tau-\Phi_2^\tau) d\tau \\
& = & \int_0^t Y^\tau  \cdot (\Phi_1^\tau-\Phi_2^\tau) d\tau. 
 \end{eqnarray*}
Using~\eqref{small}, we can apply Proposition~\ref{composition}, and together with Corollary~\ref{corderivative}, we obtain, for any $0 \leq \tau \leq t$,
\[ |Y^\tau|_{s(1-\sigma)^2} \leq |\nabla D|_{s(1-\sigma)} \leq n^2 s^{-1}C(\sigma)|D|_{s}.   \]
Using Corollary~\ref{corproduit}, this gives, for any $t \in [0,1]$,
\[ |P(\Phi_1)^t-P(\Phi_2)^t|_{s(1-\sigma)^2} \leq t n^2 s^{-1}C(\sigma)|D|_{s}\max_{0 \leq \tau \leq t}|\Phi_1^\tau-\Phi_2^\tau|_{s(1-\sigma)^2}  \]
and hence, setting $\kappa:=n^2s^{-1}C(\sigma)|D|_s$,
\[ \max_{0 \leq \tau \leq t}|P(\Phi_1)^\tau-P(\Phi_2)^\tau|_{s(1-\sigma)^2} \leq \kappa t\max_{0 \leq \tau \leq t}|\Phi_1^\tau-\Phi_2^\tau|_{s(1-\sigma)^2}\]
A well-known induction then gives, for any $j \in \N$,
\[ \max_{0 \leq \tau \leq t}|P^j(\Phi_1)^\tau-P^j(\Phi_2)^\tau|_{s(1-\sigma)^2} \leq \frac{(\kappa t)^j}{j!}\max_{0 \leq \tau \leq t}|\Phi_1^\tau-\Phi_2^\tau|_{s(1-\sigma)^2}\]
and hence setting $t=1$
\[ ||P^j(\Phi_1)-P^j(\Phi_2)||\leq \frac{\kappa^j}{j!}||\Phi_1-\Phi_2||. \]
This proves that some iterate of $P$ is a contraction, which concludes the proof.  
\end{proof}

Now let us consider a Hamiltonian function $X$ on $\T^n \times D_r$, of the form
\begin{equation}\label{HamX}
X(\theta,I):=C(\theta)+D(\theta) \cdot I, \quad  C:\T^n \rightarrow \R, \quad D : \T^n \rightarrow \R^n.
\end{equation}
The Hamiltonian equations associated to $X$ are given by:
\[ 
\begin{cases}
\dot{\theta}(t)=\nabla_I X(\theta(t),I(t))=D(\theta(t)), \\
\dot{I}(t)=-\nabla_\theta X(\theta(t),I(t))=-\nabla C(\theta(t))-\nabla D(\theta(t))\cdot I.
\end{cases}
\]
The equations for $\theta$ are uncoupled from the equations of $I$ (and hence can be integrated independently), while the equations for $I$ are affine in $I$; it is well-known that these facts lead to a simple form of the Hamiltonian flow associated to $X$ (see, for instance,~\cite{Vil08}).

\begin{proposition}\label{flotX}
Let $X$ be as in~\eqref{HamX} with $C \in \mathcal{U}_{s}(\T^n,\R)$ and $D \in \mathcal{U}_{s}(\T^n,\R^n)$. Let $0 < \sigma < 1$ and assume that
\begin{equation}\label{smallX}
|D|_{s} \leq s\sigma.
\end{equation}
Then for any $t \in [0,1]$, the time-$t$ map $X^t$ of the Hamiltonian flow of $X$ is of the form
\[ X^t(\theta,I)=(\theta+E^t(\theta),I+F^t(\theta)\cdot I+G^t(\theta)) \]
where $E^t \in \mathcal{U}_{s(1-\sigma)^2}(\T^n,\R^n)$, $F^t \in \mathcal{U}_{s(1-\sigma)^2}(\T^n,\R^{n^2})$ and $G^t \in \mathcal{U}_{s(1-\sigma)^2}(\T^n,\R^n)$ with the estimates
\begin{equation}\label{estflotX}
\begin{cases}
|E^t|_{s(1-\sigma)^2} \leq t|D|_{s}, \\
|F^t|_{s(1-\sigma)^2} \leq n^2 s^{-1}C(\sigma)|D|_{s}\exp(t n^2 s^{-1}C(\sigma)|D|_{s}), \\ 
|G^t|_{s(1-\sigma)^2} \leq n s^{-1}C(\sigma)|C|_{s}\exp(t n^2 s^{-1}C(\sigma)|D|_{s}). 
\end{cases}
\end{equation} 
As a consequence, given $0<\delta<r$, if we further assume that 
\begin{equation}\label{smallXX}
ns^{-1}C(\sigma)\exp(t n^2 s^{-1}C(\sigma)|D|_{s})(rn|D|_s+|C|_s) \leq \delta 
\end{equation}
then $X^t$ maps $\T^n \times D_{r-\delta}$ into $\T^n \times D_{r}$. 
\end{proposition}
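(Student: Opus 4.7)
The plan is to exploit the special structure of $X(\theta,I)=C(\theta)+D(\theta)\cdot I$: the Hamilton equations
\[ \dot\theta=D(\theta),\qquad \dot I=-\nabla C(\theta)-\nabla D(\theta)\cdot I \]
decouple, the first being independent of $I$ and the second affine in $I$. I would first apply Lemma~\ref{flot} to the vector field $D$ on $\T^n$ to obtain $D^t=\mathrm{Id}+E^t$ with $|E^t|_{s(1-\sigma)^2}\leq t|D|_s$, which disposes of the $\theta$-component of $X^t$ and of the first inequality of~\eqref{estflotX}. By linearity in $I$, one necessarily has
\[ X^t(\theta_0,I_0)=\bigl(D^t(\theta_0),\,(\mathrm{Id}+F^t(\theta_0))\cdot I_0+G^t(\theta_0)\bigr), \]
where the matrix $F^t$ and the vector $G^t$ satisfy the linear non-autonomous variational equations
\[ \dot F^t=-(\nabla D\circ D^t)\cdot(\mathrm{Id}+F^t),\ F^0=0;\qquad \dot G^t=-(\nabla D\circ D^t)\cdot G^t-\nabla C\circ D^t,\ G^0=0. \]

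To bound $F^t$ and $G^t$ in the ultra-differentiable norm of width $s(1-\sigma)^2$, I would run a Picard argument paralleling that of Lemma~\ref{flot}. Corollary~\ref{corderivative} gives $|\nabla D|_{s(1-\sigma)}\leq n^2 s^{-1}C(\sigma)|D|_s$ and $|\nabla C|_{s(1-\sigma)}\leq n s^{-1}C(\sigma)|C|_s$; Proposition~\ref{composition}, used with the same width-splitting trick as in the proof of Lemma~\ref{flot} (based on $|D^\tau-\mathrm{Id}|_{s(1-\sigma)^2}\leq s\sigma$), then yields $|(\nabla D)\circ D^\tau|_{s(1-\sigma)^2}\leq\alpha:=n^2 s^{-1}C(\sigma)|D|_s$ and $|(\nabla C)\circ D^\tau|_{s(1-\sigma)^2}\leq\beta:=n s^{-1}C(\sigma)|C|_s$ uniformly in $\tau\in[0,1]$, while Corollary~\ref{corproduit} controls the matrix-vector products. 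Writing $\varphi(t):=|F^t|_{s(1-\sigma)^2}$ and $\psi(t):=|G^t|_{s(1-\sigma)^2}$, the integral forms of the variational equations give
\[ \varphi(t)\leq t\alpha+\alpha\int_0^t\varphi(\tau)\,d\tau,\qquad \psi(t)\leq t\beta+\alpha\int_0^t\psi(\tau)\,d\tau, \]
whence Gronwall's inequality (together with the elementary estimate $e^x-1\leq xe^x$ for $x\geq 0$ and the fact that $t\leq 1$) produces $\varphi(t)\leq\alpha e^{t\alpha}$ and $\psi(t)\leq\beta e^{t\alpha}$, which are precisely the two remaining bounds of~\eqref{estflotX}.

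For the last claim, the $k=0$ term in the definition of $|\cdot|_{s(1-\sigma)^2}$ already gives $\sup_\theta|f(\theta)|\leq|f|_{s(1-\sigma)^2}$, so for $|I_0|\leq r-\delta$ the $l_1$-norm of the $I$-component of $X^t(\theta_0,I_0)$ is bounded by
\[ |I_0|+|F^t(\theta_0)\cdot I_0|+|G^t(\theta_0)|\leq(r-\delta)+\alpha re^{t\alpha}+\beta e^{t\alpha}=(r-\delta)+ns^{-1}C(\sigma)e^{tn^2 s^{-1}C(\sigma)|D|_s}(rn|D|_s+|C|_s), \]
which is at most $r$ by~\eqref{smallXX}. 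The main technical obstacle lies in the bookkeeping of widths when composing the first-order derivatives of $C$ and $D$, naturally defined at width $s(1-\sigma)$, with the flow $D^\tau$, which is close to the identity only at the smaller width $s(1-\sigma)^2$; once this is resolved by the same slight width redistribution used in the proof of Lemma~\ref{flot}, the rest is a routine combination of the product, derivative, composition and Gronwall estimates already established.
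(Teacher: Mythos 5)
Your proof is correct and follows essentially the same route as the paper: you both exploit the affine-in-$I$ structure to reduce the $\theta$-component to Lemma~\ref{flot}, write the linear variational equations for $F^t,G^t$, bound the coefficients $\nabla D\circ D^\tau$ and $\nabla C\circ D^\tau$ via Corollary~\ref{corderivative} and Proposition~\ref{composition}, and conclude by Gronwall, and you supply the (short) verification of the domain-mapping claim that the paper merely asserts.  The only point left slightly implicit — as in the paper — is the width bookkeeping when composing $\nabla D\in\mathcal U_{s(1-\sigma)}$ with $D^\tau$ down to width $s(1-\sigma)^2$, which strictly speaking needs the small-loss remark after Proposition~\ref{composition} or a harmless adjustment of $\sigma$, not literally the $\sigma^*$-trick of Lemma~\ref{flot} (that trick applies to $f$ starting at width $s$, not $s(1-\sigma)$); you correctly flag this as the one delicate step.
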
 

\begin{proof}
The second part of the statement clearly follows from the first part, so let us prove the latter. From the specific form of the Hamiltonian equations associated to $X$, one has, for any $t \in [0,1]$,
\[ X^t(\theta,I)=(\theta+E^t(\theta),I+F^t(\theta)\cdot I+G^t(\theta)) \]
with
\[  
\begin{cases}
E^t(\theta)=\int_{0}^t D(\theta+E^\tau(\theta))d\tau, \\
F^t(\theta)=-\int_{0}^t \nabla D(\theta+E^\tau(\theta))d\tau-\int_{0}^t \nabla D(\theta+E^\tau(\theta))\cdot F^\tau(\theta)d\tau \\
G^t(\theta)=-\int_{0}^t \nabla C(\theta+E^\tau(\theta))d\tau-\int_{0}^t \nabla D(\theta+E^\tau(\theta))\cdot G^\tau(\theta)d\tau. 
\end{cases}
\]
Because of~\eqref{smallX}, Lemma~\ref{flot} applies and the flow $D^t(\theta)=\theta+E^t(\theta)$ satisfies~\eqref{estflot}, and therefore 
\[|E^t|_{s(1-\sigma)^2}=|D^t-\mathrm{Id}|_{s(1-\sigma)^2} \leq t|D|_{s}\]
which gives the first estimate of~\eqref{estflotX}. Using this estimate and~\eqref{smallX}, we can apply Proposition~\ref{composition} and Corollary~\ref{corderivative} to obtain, for any $0 \leq \tau \leq  t \leq 1$,
\[ |\nabla D \circ D^\tau|_{s(1-\sigma)^2} \leq |\nabla D|_{s(1-\sigma)} \leq n^2 s^{-1}C(\sigma)|D|_{s}.   \]
Looking at the expression of $F^t$, this gives
\[ |F^t|_{s(1-\sigma)^2} \leq n^2 s^{-1}C(\sigma)|D|_{s}\left(1+\int_0^t |F^\tau|_{s(1-\sigma)^2} d\tau \right) \]
which, by Gronwall's inequality, implies that for all $t \in [0,1]$,
\[ |F^t|_{s(1-\sigma)^2} \leq n^2 s^{-1}C(\sigma)|D|_{s}\exp(t n^2 s^{-1}C(\sigma)|D|_{s}) \]
which gives the second estimate of~\eqref{estflotX}. For the third estimate of~\eqref{estflotX}, observe that the same argument yields
\[ |G^t|_{s(1-\sigma)^2} \leq n s^{-1}C(\sigma)|C|_{s}+n^2 s^{-1}C(\sigma)|D|_{s}\int_0^t |G^\tau|_{s(1-\sigma)^2} d\tau  \]
and again, by Gronwall's inequality, for all $t \in [0,1]$ we have
\[ |G^t|_{s(1-\sigma)^2} \leq n s^{-1}C(\sigma)|C|_{s}\exp(t n^2 s^{-1}C(\sigma)|D|_{s}) . \]
This concludes the proof.
\end{proof}

The last lemma gives estimates on the Hamiltonian flow associated to a
Hamiltonian $X$ on $\T^n \times D_r$ which is affine in $I$; the flow
has then a special form and precise estimates can be obtained (they
are needed in \S~\ref{sec:KAM}). On the other hand, in
\S~\ref{sec:Nek}, we will need estimates valid for an arbitrary
Hamiltonian $Y$ on $\T^n \times D_r$; the lemma below gives rougher
estimates in this general case but they will prove sufficient for our
purpose.

\begin{proposition}\label{flotF}
Given $Y \in \mathcal{U}_{s}(\T^n \times D_r)$, let $0 < \sigma < 1$ and assume that
\begin{equation}\label{smallF}
|Y|_{s} \leq (2n)^{-1}s^2\sigma C(\sigma)^{-1}.
\end{equation}
Then for any $t \in [0,1]$, the time-$t$ map $Y^t$ of the Hamiltonian flow of $Y$ belongs to $\mathcal{U}_{s(1-\sigma)^3}(\T^n \times D_{r(1-\sigma)^3},\T^n \times D_{r(1-\sigma)^2})$ and we have the estimate
\begin{equation}\label{estflotF}
|Y^t-\mathrm{Id}|_{s(1-\sigma)^3} \leq t|Y|_{s}.
\end{equation} 
\end{proposition}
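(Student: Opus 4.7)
The strategy parallels that of Lemma~\ref{flot} and Proposition~\ref{flotX}, now applied to the full Hamiltonian vector field $X_Y=J\nabla Y$ on $\T^n\times D_r\subset\R^{2n}$, where $J$ denotes the standard symplectic matrix. The three factors of $(1-\sigma)$ in the final width arise, in order, from differentiating $Y$ once to obtain $X_Y$ (Corollary~\ref{corderivative}), composing $X_Y$ with a near-identity map in the Picard iteration (Proposition~\ref{composition}), and leaving an additional buffer so that the smallness hypothesis of that composition lemma is satisfied uniformly in $t\in[0,1]$.

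First I would pass from $Y$ to its Hamiltonian vector field. Viewing $Y$ as an ultra-differentiable function on the open subset $\T^n\times D_r$ of $\R^{2n}$, Corollary~\ref{corderivative} yields
\[
|X_Y|_{s(1-\sigma)}\leq 2n\,s^{-1}C(\sigma)|Y|_s\leq s\sigma,
\]
the last inequality being exactly the smallness hypothesis~\eqref{smallF}. This is the single place where \eqref{smallF} is invoked.

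Next I would set up a Picard fixed-point problem as in the proof of Lemma~\ref{flot}. Introduce the Banach space
\[
V=C\bigl([0,1],\mathcal{U}_{s(1-\sigma)^3}(\T^n\times D_{r(1-\sigma)^3},\T^n\times D_{r(1-\sigma)^2})\bigr)
\]
equipped with $\|\Phi\|=\max_{t\in[0,1]}|\Phi^t|_{s(1-\sigma)^3}$, together with the closed ball $B_\rho V=\{\Phi\in V:\|\Phi-\mathrm{Id}\|\leq\rho\}$ where $\rho=|Y|_s$, and the Picard operator
\[
P(\Phi)^t=\mathrm{Id}+\int_0^t X_Y\circ\Phi^\tau\,d\tau.
\]
Any fixed point of $P$ must coincide with the Hamiltonian flow $(Y^t)_{t\in[0,1]}$. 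To show that $P$ sends $B_\rho V$ into itself, let $\sigma^*$ be defined by $(1-\sigma^*)=(1-\sigma)^2$, so that $\sigma^*>\sigma$. For $\Phi\in B_\rho V$, the inequalities $\rho\leq s\sigma\leq s\sigma^*$, together with the relation $s(1-\sigma)(1-\sigma^*)=s(1-\sigma)^3$, exactly match the hypothesis~\eqref{small} of Proposition~\ref{composition} applied with $f=X_Y$ at width $s(1-\sigma)$ and $g=\Phi^\tau$ at width $s(1-\sigma)^3$; moreover the displacement $\rho$ is small enough that the image of $\Phi^\tau$ stays in $\T^n\times D_{r(1-\sigma)^2}\subseteq\T^n\times D_r$, where $X_Y$ is defined. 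Proposition~\ref{composition} therefore yields $|X_Y\circ\Phi^\tau|_{s(1-\sigma)^3}\leq|X_Y|_{s(1-\sigma)}$, and integrating in $\tau$ provides the desired bound~\eqref{estflotF}, hence the self-invariance of $B_\rho V$.

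Finally I would establish a contraction property by writing
\[
P(\Phi_1)^t-P(\Phi_2)^t=\int_0^t\!\!\int_0^1 \nabla X_Y\circ\bigl(u\Phi_1^\tau+(1-u)\Phi_2^\tau\bigr)\cdot(\Phi_1^\tau-\Phi_2^\tau)\,du\,d\tau,
\]
and bounding the integrand through a second application of Corollary~\ref{corderivative} (now to $X_Y$), Proposition~\ref{composition} applied to the convex combination (which still lies in $B_\rho V$), and Corollary~\ref{corproduit} for the matrix--vector product. A standard induction then yields
\[
\|P^j(\Phi_1)-P^j(\Phi_2)\|\leq\frac{\kappa^j}{j!}\|\Phi_1-\Phi_2\|,
\]
with $\kappa$ controlled by $n$, $s$, $\sigma$, $C(\sigma)$ and $|Y|_s$; some iterate of $P$ is therefore a contraction, which gives existence and uniqueness of the fixed point $\Phi_*\in B_\rho V$, necessarily $(Y^t)_{t\in[0,1]}$. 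The main obstacle is not a single deep estimate but rather the careful bookkeeping of the three successive width losses, ensuring that the single hypothesis~\eqref{smallF} simultaneously implies $|X_Y|_{s(1-\sigma)}\leq s\sigma$, the smallness required by Proposition~\ref{composition} uniformly on $B_\rho V$, and the geometric condition that the image of $\Phi^\tau$ remains inside the domain where $X_Y$ is ultra-differentiable.
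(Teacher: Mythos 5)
Your approach matches the paper's: differentiate once to pass from $Y$ to $X_Y$ (losing one factor of $1-\sigma$), then run the Picard fixed-point argument of Lemma~\ref{flot} at the reduced width $s(1-\sigma)$ with $X_Y$ in place of $D$ (losing two more factors), using the smallness hypothesis~\eqref{smallF} both to feed the composition lemma and to guarantee the domain inclusions. The structure, the role of each width loss, and the bookkeeping are all the ones the authors intend.

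There is, however, a step that does not go through as written, and you should note that the paper's own one-line proof shares it. In your Picard set-up you take $\rho=|Y|_s$, and the self-invariance of $B_\rho V$ requires
\[
\bigl|P(\Phi)^t-\mathrm{Id}\bigr|_{s(1-\sigma)^3}\leq t\,|X_Y|_{s(1-\sigma)}\ \leq\ \rho=|Y|_s.
\]
But Corollary~\ref{corderivative} only gives $|X_Y|_{s(1-\sigma)}\leq 2n\,s^{-1}C(\sigma)\,|Y|_s$, and the factor $2n\,s^{-1}C(\sigma)\geq 2n/(es\sigma)$ is typically much larger than $1$ (in the applications $\sigma$ is small and $s\leq 1$). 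Hypothesis~\eqref{smallF} constrains $|Y|_s$, not the ratio $|X_Y|_{s(1-\sigma)}/|Y|_s$, so one cannot conclude $|X_Y|_{s(1-\sigma)}\leq |Y|_s$. What the argument actually yields is $|Y^t-\mathrm{Id}|_{s(1-\sigma)^3}\leq t\,|X_Y|_{s(1-\sigma)}$; to reach the literal bound $t|Y|_s$ one would need to take $\rho:=|X_Y|_{s(1-\sigma)}$ and restate the conclusion accordingly (or accept the extra factor $2n\,s^{-1}C(\sigma)$), which then requires re-checking that $\rho\leq s(1-\sigma)\sigma^*$ so that Proposition~\ref{composition} still applies at the correct width. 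This is not a flaw in your understanding of the mechanism, but the phrase ``integrating in $\tau$ provides the desired bound~\eqref{estflotF}'' hides the missing comparison between $|X_Y|_{s(1-\sigma)}$ and $|Y|_s$.
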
 

\begin{proof}
The proof is just a variation of the proof of Lemma~\ref{flot}, so we just point out the differences. As before, we know that the flow $Y^t$ exists and is smooth, so we only need to prove that $Y^t$ maps $\T^n \times D_{r(1-\sigma)^3}$ into $\T^n \times D_{r(1-\sigma)^2}$ and that the estimate
\begin{equation}\label{estflotFF}
|Y^t-\mathrm{Id}|_{s(1-\sigma)^3} \leq |Y|_{s}
\end{equation} 
holds true. Let $X_Y$ be the Hamiltonian vector field: it follows from Corollary~\ref{corderivative} and~\eqref{smallF} that
\begin{equation}\label{trucF}
|X_Y|_{s(1-\sigma)} \leq 2ns^{-1}C(\sigma)|Y|_s \leq s\sigma  
\end{equation}
and thus $Y^t$ maps $\T^n \times D_{r(1-\sigma)^i}$ into $\T^n \times D_{r(1-\sigma)^{i-1}}$ for $i=1,2,3$. To prove the estimate~\eqref{estflotFF}, at this point one can just repeat the proof of Lemma~\ref{flot}, with $s(1-\sigma)$ instead of $s$, $X_Y$ instead of $D$; the required condition~\eqref{small2} translates into $|X_Y|_{s(1-\sigma)} \leq s\sigma$ which is nothing but~\eqref{trucF}.
\end{proof}

\subsection{Inverse functions}\label{sec:inverse}

In this last section, we shall prove that if an
$M$-ultra-differentiable map is sufficiently close to the identity,
then its local inverse is still $M$-ultra-differentiable, provided $M$
satisfies \eqref{H1} and \eqref{H2}. Exactly like in \S~\ref{sec:flows}, we will
use the estimates on derivatives, products and composition we obtained
in \S~\ref{sec:derivatives}, \S~\ref{sec:products} and
\S~\ref{sec:composition} and the classical contraction fixed point
theorem.

Again, to prove this in a setting adapted to \S~\ref{sec:KAM}, let us consider a map $\phi$ which depends only on $\omega \in D_h(\omega_0)$, that
is $\phi : D_h(\omega_0) \rightarrow \R^n$.

\begin{proposition}\label{propomega}
  Given $\phi \in \mathcal{U}_{s}(D_h(\omega_0),\R^n)$, let $0 < \sigma < 1$ and
  assume that
  \begin{equation}\label{smallomega}
    |\phi-\mathrm{Id}|_{s} < n^{-2}sC(\sigma)^{-1}, \quad
    |\phi-\mathrm{Id}|_{s} \leq h/2.  
  \end{equation}
  Then there exists a unique
  $\varphi \in G_{s(1-\sigma)^2}(D_{h/2}(\omega_0),D_h(\omega_0))$ such that
  $\phi \circ \varphi=\mathrm{Id}$ and
  \begin{equation}\label{estomega}
    |\varphi-\mathrm{Id}|_{s(1-\sigma)^2} \leq |\phi-\mathrm{Id}|_{s}. 
  \end{equation}
\end{proposition}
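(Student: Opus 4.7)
The plan is to imitate the Picard iteration argument already used for Lemma~\ref{flot} and Proposition~\ref{flotF}, writing $\phi = \mathrm{Id} + u$ with $u := \phi - \mathrm{Id} \in \mathcal{U}_{s}(D_h(\omega_0),\R^n)$ and observing that $\phi \circ \varphi = \mathrm{Id}$ is equivalent to the fixed point equation $\varphi = \mathrm{Id} - u \circ \varphi$. Accordingly, I introduce the Picard operator
\[ T(\psi) := \mathrm{Id} - u \circ \psi \]
on the closed ball
\[ B_\rho := \{\psi \in \mathcal{U}_{s(1-\sigma)^2}(D_{h/2}(\omega_0),D_h(\omega_0)) \,:\, |\psi - \mathrm{Id}|_{s(1-\sigma)^2} \leq \rho\}, \qquad \rho := |u|_s, \]
which is a complete subset of a Banach space. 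The choice of final width $s(1-\sigma)^2$ anticipates two losses of a factor $(1-\sigma)$: one coming from the composition estimate, the other from the Cauchy estimate needed for the contraction step.

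First I would check that $T$ maps $B_\rho$ into itself. The second inequality of~\eqref{smallomega} together with $|\psi - \mathrm{Id}|_{s(1-\sigma)^2} \leq \rho \leq h/2$ guarantees that $\psi(D_{h/2}(\omega_0)) \subseteq D_h(\omega_0)$, so that $u \circ \psi$ is well-defined. Setting $\sigma^{*}$ by $(1-\sigma^{*}) = (1-\sigma)^2$, so that $\sigma^{*} > \sigma$, the first inequality of~\eqref{smallomega} combined with $C(\sigma) \geq (e\sigma)^{-1}$ from~\eqref{Csigma} yields
\[ |\psi - \mathrm{Id}|_{s(1-\sigma^{*})} \leq \rho < n^{-2} s C(\sigma)^{-1} \leq s \sigma^{*}, \]
after adjusting constants if necessary. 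Proposition~\ref{composition} (applied with parameter $\sigma^{*}$) then delivers $|u \circ \psi|_{s(1-\sigma)^2} \leq |u|_s = \rho$, i.e.\ $T(\psi) \in B_\rho$.

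Next I would verify that some iterate of $T$ is a contraction. For $\psi_1, \psi_2 \in B_\rho$, write
\[ T(\psi_1) - T(\psi_2) = -\int_0^1 \nabla u \circ \bigl(\tau \psi_1 + (1-\tau)\psi_2\bigr)\, d\tau \cdot (\psi_1 - \psi_2). \]
Corollary~\ref{corderivative} gives $|\nabla u|_{s(1-\sigma)} \leq n^2 s^{-1} C(\sigma) |u|_s$; Proposition~\ref{composition} bounds the composition of $\nabla u$ with the convex combination (which lies in $B_\rho$); and Corollary~\ref{corproduit} handles the product. The outcome is
\[ |T(\psi_1) - T(\psi_2)|_{s(1-\sigma)^2} \leq n^2 s^{-1} C(\sigma) |u|_s \cdot |\psi_1 - \psi_2|_{s(1-\sigma)^2}, \]
and the first inequality of~\eqref{smallomega} is designed precisely to make the prefactor strictly less than $1$. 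By the standard induction, some iterate of $T$ is a contraction, so there exists a unique fixed point $\varphi \in B_\rho$, and this $\varphi$ satisfies $\phi \circ \varphi = \mathrm{Id}$ as well as the desired estimate~\eqref{estomega}.

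The delicate step is the bookkeeping between the two widths $s$ and $s(1-\sigma)^2$: Proposition~\ref{composition} is phrased relative to the larger width $s(1-\sigma)$, so one must verify that the smallness condition $|u|_s < n^{-2} s C(\sigma)^{-1}$ in~\eqref{smallomega} implies both $|\psi - \mathrm{Id}|_{s(1-\sigma^{*})} \leq s\sigma^{*}$ for the stability of $B_\rho$ under $T$ and the contraction factor strictly less than $1$ on a single step. This is where the explicit lower bound $C(\sigma) \geq (e\sigma)^{-1}$ from~\eqref{Csigma} comes into play; once this inequality has been used, the rest of the argument follows the same template as the proofs in \S~\ref{sec:flows} with no further subtlety.
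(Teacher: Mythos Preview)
Your proposal is correct and follows essentially the same approach as the paper: both set up the Picard operator $T(\psi)=\mathrm{Id}-(\phi-\mathrm{Id})\circ\psi$ on the closed ball $B_\rho$ in $\mathcal{U}_{s(1-\sigma)^2}$, use the second part of~\eqref{smallomega} for the domain inclusion and Proposition~\ref{composition} (via $C(\sigma)\geq(e\sigma)^{-1}$ and the auxiliary $\sigma^*$) for the stability of $B_\rho$, and then combine Corollary~\ref{corderivative}, Proposition~\ref{composition} and Corollary~\ref{corproduit} to get the Lipschitz constant $n^2 s^{-1}C(\sigma)|\phi-\mathrm{Id}|_s<1$. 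One minor point: since that constant is already strictly less than $1$ by the first inequality in~\eqref{smallomega}, $T$ itself is a contraction and your detour through ``some iterate'' is unnecessary.
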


\begin{proof}
  Let us define $V:=\mathcal{U}_{s(1-\sigma)^2}(D_{h/2}(\omega_0),\R^n)$, which is a
  Banach space with the norm $||\,.\,||=|\,.\,|_{s(1-\sigma)^2}$,
  and for $\rho:=|\phi-\mathrm{Id}|_{s}$, we set
\[ B_\rho V:=\{\psi \in V \; | \; ||\psi-\mathrm{Id}|| \leq \rho\}. \]
Let us define the following Picard operator $P$ associated to $\phi$:
\[ P : B_\rho V \rightarrow V, \quad \psi \mapsto P(\psi)=\mathrm{Id}-(\phi-\mathrm{Id})\circ\psi. \]
It is clear that $\phi \circ \varphi=\mathrm{Id}$ if and only if $\varphi$ is a fixed point of $P$, and therefore the proposition will be proved once we have shown that $P$ has a unique fixed point in $B_\rho V$, and to do this it is enough to prove that $P$ is a well-defined contraction of $B_\rho V$.

First let us prove that $P$ maps $B_\rho V$ into itself. So let $\psi \in B_\rho V$, and using the second part of~\eqref{smallomega}, observe that since
\[ \sup_{\omega \in D_{h/2}(\omega_0)}|\psi(\omega)-\omega|\leq ||\psi-\mathrm{Id}||\leq \rho \leq h/2  \]
then $\psi$ maps $D_{h/2}(\omega_0)$ into $D_h(\omega_0)$. Now recall that for any $0<\sigma<1$, $C(\sigma) \geq (e\sigma)^{-1}$ so the first part of~\eqref{smallomega} gives in particular
\[ ||\psi-\mathrm{Id}||\leq \rho \leq s\sigma \]
and allows us to apply Proposition~\ref{composition} (with $\sigma^*>\sigma$ defined by $(1-\sigma)^2=(1-\sigma^*)$) to get
\[ ||(\phi-\mathrm{Id})\circ\psi||=|(\phi-\mathrm{Id})\circ\psi|_{s(1-\sigma)^2} \leq |\phi-\mathrm{Id}|_{s}=\rho \]
and thus 
\[ ||P(\psi)-\mathrm{Id}||=||(\phi-\mathrm{Id})\circ\psi|| \leq \rho, \]
that is, $P$ maps $B_\rho V$ into itself. To show that $P$ is a contraction, using Corollary~\ref{corderivative}, Corollary~\ref{corproduit} and Proposition~\ref{composition} one gets (exactly as in the proof of Lemma~\ref{flot}) for any $\psi_1,\psi_2 \in B_\rho V$:
\[ ||P(\psi_1)-P(\psi_2)||=||(\phi-\mathrm{Id})\circ\psi_1-(\phi-\mathrm{Id})\circ\psi_2|| \leq n^2s^{-1}C(\sigma)|\phi-\mathrm{Id}|_{\alpha,s} ||\psi_1-\psi_2|| \] 
and thus the fact that $P$ is a contraction follows from the first part of~\eqref{smallomega}. This ends the proof.  
\end{proof}

\section{Application to KAM theory}\label{sec:KAM}

In this section, we give applications to KAM theory, namely we prove Theorem~\ref{classicalKAM}, Theorem~\ref{KAMvector} and Theorem~\ref{destruction}; Theorem~\ref{classicalKAM} and Theorem~\ref{KAMvector} will be deduced from a KAM theorem with parameters (Theorem~\ref{KAMparameter} below) and Theorem~\ref{destruction} is an adaptation of a result of~\cite{Bes00}. All the other KAM theorems we stated can be deduced from Theorem~\ref{classicalKAM}, Theorem~\ref{KAMvector} or Theorem~\ref{KAMparameter} so we will not give details about their proof.    

In the special case where the Hamiltonians are Gevrey regular, that is $M=M_\alpha$ for $\alpha \geq 1$, these results are contained in~\cite{BFa17} and so it is our purpose here to extend them to the more general setting we are considering. 

\subsection{Statement of the KAM theorem with parameters}
\label{sec:KAMparam}

Let us now consider the following setting. Fix
$\omega_0 \in \R^n \setminus \{0\}$. Re-ordering the components of $\omega_0$ and re-scaling the Hamiltonian allow us to assume without loss of generality that
\[\omega_0=(1,\bar{\omega}_0) \in \R^n, \quad \bar{\omega}_0 \in
[-1,1]^{n-1}.\]
Given real numbers $r>0$ and $h>0$, we let
\[ D_r:=\{I \in \R^n \; | \; |I| < r \}, \quad D_h(\omega_0):=\{\omega \in
\R^n \; | \; |\omega-\omega_0| < h \}. \]
Since $\omega_0$ is fixed, for simplicity we shall remove it from the notation write $D_h=D_h(\omega_0)$, and we set
\[ D_{r,h}:=D_r \times
D_h.  \]
Our Hamiltonians will be defined on $\T^n \times D_{r,h}$, a
neighborhood of $\T^n \times \{0\} \times \{\omega_0\}$ in
$\T^n \times \R^n \times \R^n$.

Let $s>0$, $\eta \geq 0$ a fixed parameter,
$\varepsilon \geq 0$ and $\mu \geq 0$ two small parameters. We
consider a function $H \in \mathcal{U}_{s}(\T^n \times D_{r,h})$ of the
form
\begin{equation}\label{HAM}
\begin{cases}
  H(\theta,I,\omega) = \underbrace{e(\omega)+\omega \cdot
    I}_{N(I,\omega)} +
  \underbrace{A(\theta,\omega)+B(\theta,\omega)\cdot
    I}_{P(\theta,I,\omega)}+ 
  \underbrace{M(\theta,I,\omega) \cdot I^2}_{R(\theta,I,\omega)} \\
|A|_{s}\leq \varepsilon, \quad |B|_{s} \leq \mu, \quad
  |\nabla_I^2 R|_{s} \leq \eta
\end{cases}     
  \tag{$**$}
\end{equation}
where the notation $M(\theta,I,\omega) \cdot I^2$ stands for the vector $I$
given twice as an argument to the symmetric bilinear form
$M(\theta,I,\omega)$. Observe that
$A : \T^n \times D_h \rightarrow \R$,
$B : \T^n \times D_h \rightarrow \R^n$ whereas
$M : \T^n \times D_{r,h} \rightarrow M_n(\R)$ with
$M_n(\R)$ the ring of real square matrices of size $n$.

The function $H$ in~\eqref{HAM} should be considered as an ultra-differentiable
Hamiltonian on $\T^n \times D_r$, depending on a parameter
$\omega \in D_h$; for a fixed parameter $\omega \in D_h$, when
convenient, we will write
\[ H_\omega (\theta,I)=H(\theta,I,\omega), \quad
N_\omega(I)=N(I,\omega), \quad P_\omega(\theta,I)=P(\theta,I,\omega),
\quad R_\omega(\theta,I)=R(\theta,I,\omega). \]
The image of the map $\Phi_0 : \T^n \rightarrow \T^n \times D_r$,
$\theta \mapsto (\theta,0)$ is an ultra-differentiable embedded torus in
$\T^n \times D_r$, invariant by the Hamiltonian flow of
$N_{\omega_0}+R_{\omega_0}$ and quasi-periodic with frequency
$\omega_0$. The next theorem asserts that this quasi-periodic torus
will persist, being only slightly deformed, as an invariant torus not
for the Hamiltonian flow of $H_{\omega_0}$ but for the Hamiltonian
flow of $H_{\omega_0^*}$, where $\omega_0^*$ is a parameter close to
$\omega_0$, provided $\varepsilon$ and $\mu$ are sufficiently small
and $\omega_0$ satisfies the $\mathrm{BR_M}$-condition. Here
is the precise statement.

\begin{Main}\label{KAMparameter}
 Let $H$ be as in~\eqref{HAM}, with $M=(M_l)_{l \in \N}$ satisfying \eqref{H1} and \eqref{H2}, and $\omega_0$ satisfying~\eqref{BRM}.
There exist positive constants $c_1 \leq 1$, $c_2\leq 1$ and $c_3 \geq 1$ depending only on $n$ such that if
  \begin{equation}\label{cond0}
    \sqrt{\varepsilon} \leq \mu \leq h/2, \quad \sqrt{\varepsilon} \leq r, \quad h \leq c_1 (Q_0\Psi(Q_0))^{-1}
  \end{equation}
  where $Q_0 \geq n+2$ is sufficiently large so that
  \begin{equation}\label{eqQ0}
C^{-1}(c_2(1+\eta)^{-1}sQ_0)+(\ln 2)^{-1}\int_{\Delta(Q_0)}^{+\infty}C^{-1}(c_2(1+\eta)^{-1}s\Delta^{-1}(x))\frac{dx}{x} \leq \ln 2(4n+2)^{-1}
  \end{equation}
then the following holds true. There exist a vector $\omega_0^* \in \R^n$ and an
  $(M,s/2)$-ultra-differentiable embedding
  \[ \Phi^*_{\omega_0} : \T^n \times D_{r/2} \rightarrow \T^n \times D_r \] 
  of the form
  \[ \Phi^*_{\omega_0}(\theta,I)=(\theta+E^*(\theta),I+F^*(\theta)\cdot I+G^*(\theta)) \]
  with the estimates
  \begin{equation}\label{estim0}
    |\omega_0^*-\omega_0| \leq c_3 \mu, \quad |E^*|_{s/2} \leq
    c_3 \Psi(Q_0)\mu, \quad |F^*|_{s/2}\leq c_3
    \Delta(Q_0)\mu,   \quad |G^*|_{s/2}\leq c_3
    \Delta(Q_0)\varepsilon   
  \end{equation}
  and  such that 
  \[ H_{\omega_0^*} \circ \Phi_{\omega_0}^*(\theta,I)=e_0^*+\omega_0 \cdot I
  + R^*(\theta,I), \quad R^*(\theta,I) = M^*(\theta,I) \cdot I^2,\]
  with the estimates
  \[ |e_0^*-e_{\omega_0^*}| \leq c_3\varepsilon, \quad |\nabla_I^2 R^*-
  \nabla_I^2 R_{\omega_0^*}|_{s/2} \leq c_3\eta \Delta(Q_0)\mu.\]
\end{Main}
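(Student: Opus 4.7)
I will prove Theorem~\ref{KAMparameter} by a Newton-type iterative scheme, following the method of~\cite{BF13,BF14} extended to the Gevrey category in~\cite{BFa17}. Instead of approximating the perturbation by polynomials (R\"ussmann's route), one approximates the target frequency $\omega_0$ by a sequence of \emph{periodic} vectors $\omega^{(j)} = k_j/T_j$, with $k_j \in \Z^n$ and periods $T_j$ comparable to $\Delta(Q_j) = Q_j \Psi(Q_j)$ for an increasing sequence $Q_j \to \infty$. Averaging along the closed orbit of a periodic vector has no small-denominator issue of its own; the arithmetic cost is entirely absorbed in the approximation error $|\omega^{(j)}-\omega_0|$. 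At each step one performs a symplectic conjugacy together with a translation of the parameter $\omega$ (Moser's counter-term trick) which eliminates Fourier modes up to $|k| \leq Q_j$, quadratically improves the perturbation, and costs a controlled amount of width.

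\textbf{Setup of the iteration.} Fix an increasing sequence $(Q_j)_{j \geq 0}$ starting at $Q_0$ given by the hypothesis, say $Q_{j+1} = 2Q_j$, and widths $s_j$ decreasing by
\[ s_j - s_{j+1} \asymp s_j\, C^{-1}\bigl( c_2 (1+\eta)^{-1} s_j \Delta^{-1}(Q_{j+1}) \bigr). \]
Hypothesis~\eqref{eqQ0} is precisely the discrete form of the $\mathrm{BR_M}$ integral and guarantees $\sum_{j \geq 0}(s_j - s_{j+1}) \leq s/2$, hence $s_j \geq s/2$ for all $j$. Companion sequences $(\varepsilon_j, \mu_j)$ are chosen to follow Newton's quadratic improvement $\varepsilon_{j+1} \leq c\,\Delta(Q_j) \varepsilon_j \mu_j / s_j$ with $\mu_{j+1} \leq \mu_j / 2$; the Fourier-tail error of size $\varepsilon_j \exp(-\Omega(s_j Q_j))$ is super-exponentially small and is absorbed into $\varepsilon_{j+1}$.

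\textbf{The KAM step.} Given $H_j$ of the form~\eqref{HAM} at width $s_j$ with parameters $(\varepsilon_j, \mu_j, \eta)$, split the affine part $P_j = A_j + B_j \cdot I$ according to its Fourier truncation at $|k| \leq Q_j$. One solves the cohomological equation $\{F_j, \omega^{(j)} \cdot I\} + P_j^{\leq Q_j} = \langle P_j^{\leq Q_j}\rangle_{\omega^{(j)}}$ along the periodic orbit of $\omega^{(j)}$; nonzero denominators $k \cdot \omega^{(j)}$ are bounded below by $1/T_j$, giving $|F_j|_{s_j} \lesssim \Psi(Q_j)\varepsilon_j$. Since $F_j$ is affine in $I$, Proposition~\ref{flotX} applies and its time-one flow $\Phi_j$ has exactly the required form $(\theta + E_j(\theta), I + F_j(\theta)\cdot I + G_j(\theta))$. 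Choose the parameter shift $\omega_{j+1}^* = \omega_j^* + \delta_j$ to absorb the resonant mean of $P_j^{\leq Q_j}$ and realign the frequency with $\omega_0$; the conjugated Hamiltonian $H_{\omega_{j+1}^*} \circ \Phi_j$ is again of the form~\eqref{HAM} at width $s_{j+1}$ with parameters $(\varepsilon_{j+1}, \mu_{j+1}, \eta_{j+1})$. All bookkeeping is controlled by the product, derivative, composition, and flow estimates of \S\ref{sec:udiff}.

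\textbf{Limits and main obstacle.} Passing to the limit $\Phi^* = \lim_N \Phi_0 \circ \cdots \circ \Phi_N$ via iterated application of Proposition~\ref{composition} produces an $(M,s/2)$-ultra-differentiable embedding: the smallness hypothesis~\eqref{small} holds at each step because $|\Phi_j - \mathrm{Id}|$ is bounded by $\mu_j$ times an $s_j^{-1} C(\cdot)$ factor which, by the calibration above, is dominated by the allotted width loss $s_j - s_{j+1}$. The parameter sum $\omega_0^* - \omega_0 = \sum_j \delta_j$ converges geometrically, yielding $|\omega_0^* - \omega_0| \leq c_3 \mu$; the bounds~\eqref{estim0} on $E^*, F^*, G^*$ and on $e_0^* - e_{\omega_0^*}$, $\nabla_I^2 R^* - \nabla_I^2 R_{\omega_0^*}$ follow by telescoping the per-step estimates of Proposition~\ref{flotX}. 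The chief difficulty is the simultaneous balancing of three constraints: the step loss of width must match $C^{-1}$ of the correct argument, the generating function must obey the small-denominator bound controlled by $\Psi(Q_j)$, and Proposition~\ref{composition} must remain applicable at every step. Matching these so that the cumulative loss of width equals exactly the integral in~\eqref{eqQ0} --- so that the $\mathrm{BR_M}$ hypothesis is precisely what is needed for convergence --- is the technical heart of the proof, carried out in~\cite{BFa17} for Gevrey classes and extended here to abstract sequences $M$ purely through the functional estimates of \S\ref{sec:udiff} and the functions $C$ and $\Omega$.
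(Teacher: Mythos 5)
Your overall strategy --- periodic approximation of $\omega_0$, averaging, symplectic flows of Hamiltonians affine in $I$, a parameter translation in Moser's style, and calibration of the cumulative width loss against the $\mathrm{BR_M}$ integral --- is the same as the paper's, and the paper indeed follows the $\cite{BF13,BF14,BFa17}$ route you name. However, your description of the KAM step contains a genuine gap, and your iteration scheme deviates from the paper's in a way worth flagging.

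\textbf{The gap.} You approximate $\omega_0$ by a \emph{single} periodic vector $\omega^{(j)}=k_j/T_j$ at each step and solve $\{F_j,\omega^{(j)}\!\cdot\! I\} + P_j^{\leq Q_j} = \langle P_j^{\leq Q_j}\rangle_{\omega^{(j)}}$. The right-hand side $\langle P_j^{\leq Q_j}\rangle_{\omega^{(j)}}$ is the average along the closed orbit of $\omega^{(j)}$, not the full space average $[P_j]$: it retains all Fourier modes $k$ with $k\cdot\omega^{(j)}=0$, which form a sublattice of rank $n-1$ and carry a part of $P_j$ of the same order as $P_j$ itself. This $\theta$-dependent residue cannot be ``absorbed by the parameter shift $\delta_j$'', which is a constant; you are left with a piece that is neither killed nor small, and the scheme does not close. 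The paper avoids this by the device of Proposition~\ref{dio}: it produces \emph{$n$ linearly independent} rational vectors $v_1,\dots,v_n$ with $q_1v_1,\dots,q_nv_n$ a $\Z$-basis of $\Z^n$, all with $q_j\lesssim\Psi(Q)$, and performs $n$ successive averagings at each step. By Proposition~\ref{cordio}, $[\,\cdot\,]_{v_1,\dots,v_n}=[\,\cdot\,]$, so after the $n$ compositions the angular dependence of the affine part is \emph{entirely} removed, with no Fourier truncation and no exponentially small tail to absorb. The width loss per step is then $2n+1$ factors of $(1-\sigma_i)$, which is exactly what propagates into the constant $(4n+2)^{-1}$ on the right-hand side of~\eqref{eqQ0}. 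Your single-vector version has no mechanism replacing this.

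\textbf{The iteration scheme.} You impose a Newton-type quadratic improvement $\varepsilon_{j+1}\lesssim\Delta(Q_j)\varepsilon_j\mu_j/s_j$ with $Q_{j+1}=2Q_j$. The paper instead runs a \emph{linear} scheme: $\varepsilon_i=16^{-i}\varepsilon$, $\mu_i=4^{-i}\mu$, and crucially $\Delta(Q_i)=2^i\Delta(Q_0)$, hence $Q_i=\Delta^{-1}(2^i\Delta(Q_0))$, not $Q_i=2^iQ_0$. The geometric rate $1/16$, $1/4$ is \emph{produced} by the choice of $Q_i$ through condition~\eqref{eq:cond6} ($(1+\eta)\lesssim Q_isC(\sigma_i)^{-1}$); there is no quadratic remainder to track, which keeps the bookkeeping simple and makes the per-step width loss $\sigma_i=C^{-1}(c_2(1+\eta)^{-1}sQ_i)$ literally the increments of the $\mathrm{BR_M}$ integral. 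Your formula $s_j-s_{j+1}\asymp s_jC^{-1}(c_2(1+\eta)^{-1}s_j\Delta^{-1}(Q_{j+1}))$ is dimensionally off ($\Delta^{-1}$ should be fed quantities in the range of $\Delta$, not raw $Q$-values) and is inconsistent with $Q_{j+1}=2Q_j$; to reproduce the paper's $\sigma_i$ you would need $\Delta(Q_{i+1})\sim 2\Delta(Q_i)$. A Newton scheme can in principle be made to work, but it requires re-deriving the discrete version of the $\mathrm{BR_M}$ integral for the different growth of $Q_j$, and you have not done that calibration.

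To repair the argument, replace the single periodic averaging and Fourier truncation by the $n$-fold rational averaging along a $\Z$-basis (Propositions~\ref{dio} and~\ref{cordio}), and align the sequences $Q_i,\sigma_i,\varepsilon_i,\mu_i$ with a geometric scheme in which $\Delta(Q_i)$ --- not $Q_i$ --- doubles.
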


Theorem~\ref{classicalKAM} follows quite directly from
Theorem~\ref{KAMparameter}, introducing the frequencies
$\omega = \nabla h (I)$ as independent parameters, taking
$\mu=\sqrt{\varepsilon}$, and tuning the shift of frequency
$\omega_0^*-\omega_0$ using the non-degeneracy assumption on the
unperturbed Hamiltonian. Theorem~\ref{KAMvector} follows also from
Theorem~\ref{KAMparameter} by realizing $X$ as the restriction of a
Hamiltonian vector field on an invariant torus, setting
$\varepsilon=\eta=0$ and letting $\mu$ be the only small
parameter. These arguments are made precise in \S~\ref{sec:deduc1} -
\S~\ref{sec:deduc2}, but we first concentrate on the proof of
Theorem~\ref{KAMparameter}.

In all this section, we do not pay attention to how constants depend on the
dimension $n$ as it is fixed. Hence
in the sequel we shall write
$$u \MP v \quad (\mbox{respectively } u \PM v, \quad u \EP v, \quad u \PE v)$$
if, for some constant $c\geq 1$ depending only on $n$, we
have 
$$u \leq c v \quad (\mbox{respectively } cu \leq v, \quad u =c v, \quad cu = v).$$

\subsection{Approximation by rational vectors}\label{sec:approx}

Recall that we have written $\omega_0=(1,\bar{\omega}_0) \in \R^n$
with $\bar{\omega}_0 \in [-1,1]^{n-1}$. For a given $Q\geq 1$, it is
always possible to find a rational vector $v=(1,p/q) \in \Q^n$, with
$p \in \Z^{n-1}$ and $q \in \N$, which is a $Q$-approximation in the
sense that $|q\omega_0 - qv|\leq Q^{-1}$, and for which the
denominator $q$ satisfies the upper bound $q \leq Q^{n-1}$: this is
essentially the content of Dirichlet's theorem on simultaneous
rational approximations, and it holds true without any assumption on
$\omega_0$. In our situation, since we have assumed that $\omega_0$ is
non-resonant, there exist not only one, but $n$ linearly independent
rational vectors in $\Q^n$ which are $Q$-approximations. Moreover, one
can obtain not only linearly independent vectors, but rational vectors
$v_1,\dots,v_n$ of denominators $q_1, \dots,q_n$ such that the
associated integer vectors $q_1v_1,\dots,q_nv_n$ form a $\Z$-basis of
$\Z^n$. However, the upper bound on the corresponding denominators
$q_1, \dots, q_n$ is necessarily larger than $Q^{n-1}$, and is given
by a function of $Q$ that we can call here $\Psi_{\omega_0}'$
(see \cite{BF13} for more precise and general information, but note
that in this reference, $\Psi_{\omega_0}'$ was denoted by $\Psi_{\omega_0}$ and $\Psi_{\omega_0}$,
which we defined in~\eqref{eqpsi}, was denoted by $\Psi_{\omega_0}'$). A consequence of the main Diophantine result of \cite{BF13} is that this function $\Psi_{\omega_0}'$ is in fact essentially equivalent to the function $\Psi_{\omega_0}$.  

\begin{proposition}\label{dio}
Let $\omega_0=(1,\bar{\omega}_0) \in \R^n$ be a non-resonant vector, with $\bar{\omega}_0 \in [-1,1]^{n-1}$. For any $Q\geq n+2$, there exist $n$ rational vectors $v_1, \dots, v_n$, of denominators $q_1, \dots, q_n$, such that $q_1v_1, \dots, q_nv_n$ form a $\Z$-basis of $\Z^n$ and for $j\in\{1,\dots,n\}$,
\[ |\omega_0-v_j|\MP(q_j Q)^{-1}, \quad 1 \leq q_j \MP \Psi(Q).\]
\end{proposition}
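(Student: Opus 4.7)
The plan is to reduce the statement to the main Diophantine result of~\cite{BF13} and then pass from the discontinuous $\Psi_{\omega_0}$ to its continuous surrogate $\Psi$. The core of the argument is purely Diophantine and does not involve any of the ultra-differentiable machinery developed in the earlier sections; the statement is essentially a restatement of a theorem already proved in~\cite{BF13}, adapted to our continuous function $\Psi$.

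First I would introduce the auxiliary function $\Psi'_{\omega_0}(Q)$ defined as the infimum of $\max_{j} q_j$ taken over all $n$-tuples of rational vectors $v_1,\dots,v_n$ of denominators $q_1,\dots,q_n$ such that $|\omega_0-v_j|\leq (q_jQ)^{-1}$ and $q_1v_1,\dots,q_nv_n$ form a $\Z$-basis of $\Z^n$. By construction, a control of the form $\Psi'_{\omega_0}(Q) \MP \Psi_{\omega_0}(Q)$ would immediately produce the rational vectors claimed in the proposition, with the desired estimates $1\leq q_j \MP \Psi_{\omega_0}(Q)\leq \Psi(Q)$ and $|\omega_0-v_j|\leq(q_jQ)^{-1}$.

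The heart of the argument is then to establish the equivalence $\Psi'_{\omega_0}(Q)\asymp\Psi_{\omega_0}(Q)$ up to constants depending only on~$n$, which is exactly the main Diophantine result of~\cite{BF13}. The easy inequality $\Psi_{\omega_0}\MP\Psi'_{\omega_0}$ follows at once from the fact that any rational approximation with small denominator produces a small $|k\cdot\omega_0|$. The non-trivial reverse inequality is proved in~\cite{BF13} via the geometry of numbers: one applies Minkowski's second theorem to a suitable convex body in $\R^{n+1}$ associated with $\omega_0$ and the parameter $Q$, and shows that the successive minima of this body can all be controlled in terms of $\Psi_{\omega_0}(Q)$; the $n$ approximating vectors are then extracted from the corresponding lattice basis, and the condition $Q \geq n+2$ appears precisely as the threshold beyond which the volume estimate in Minkowski's theorem is non-degenerate. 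The fact that the extracted $q_jv_j$ form a $\Z$-basis (and not merely a linearly independent system) is the key subtlety, handled by a careful choice of the lattice and an elementary unimodular manipulation.

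Finally, to phrase the conclusion in terms of the continuous function $\Psi$ rather than $\Psi_{\omega_0}$, I would invoke~\eqref{foncpsi}, which gives $\Psi_{\omega_0}(Q)\leq\Psi(Q)$ on $[1,+\infty)$, so that the bound $q_j \MP \Psi_{\omega_0}(Q)$ immediately upgrades to $q_j \MP \Psi(Q)$. The main obstacle, as already indicated, is the non-trivial direction of the equivalence $\Psi'_{\omega_0}\asymp\Psi_{\omega_0}$; since this is fully carried out in~\cite{BF13}, the proof here will consist of a brief recollection of the statement and a reference to that paper, rather than a from-scratch derivation.
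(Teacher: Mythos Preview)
Your proposal is correct and follows essentially the same approach as the paper: the paper simply refers to \cite{BF13}, Theorem~2.1 and Proposition~2.3 for the statement with $\Psi_{\omega_0}$ in place of $\Psi$, and then invokes~\eqref{foncpsi} to replace $\Psi_{\omega_0}$ by $\Psi$. Your additional sketch of how \cite{BF13} obtains the equivalence $\Psi'_{\omega_0}\asymp\Psi_{\omega_0}$ via Minkowski's second theorem is more detailed than what the paper provides, but the overall strategy is identical.
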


For a proof of the above proposition with $\Psi_{\omega_0}$ instead of $\Psi$, we refer to \cite{BF13}, Theorem 2.1 and Proposition $2.3$; now by~\eqref{foncpsi}, $\Psi_{\omega_0} \leq \Psi$ and so one may replace $\Psi_{\omega_0}$ by $\Psi$. 

Now given a $q$-rational vector $v$ and a smooth function $H$ defined on $\T^n \times D_{r,h}$, we define
\begin{equation}\label{moyv}
[H]_v(\theta,I,\omega)= \int^{1}_{0} H(\theta+tqv,I,\omega)dt.
\end{equation}
Given $n$ rational vectors $v_1,\dots,v_n$, we let $[H]_{v_1,\dots,v_d}=[\cdots[H]_{v_1}\cdots]_{v_d}$. Finally we define
\begin{equation}\label{moy}
[H](I,\omega)=\int_{\T^n}H(\theta,I,\omega)d\theta.
\end{equation}
The following proposition is a consequence of the fact that the vectors $q_1v_1, \dots, q_nv_n$ form a $\Z$-basis of $\Z^n$. 

\begin{proposition}[{\cite[Corollary 6]{Bou13}}]\label{cordio}
Let $v_1,\dots,v_n$ be rational vectors, of denominators $q_1,\dots,q_n$, such that $q_1v_1, \dots, q_nv_n$ form a $\Z$-basis of $\Z^n$, and $H$ a function defined on $\T^n \times D_{r,h}$. Then
\[ [H]_{v_1,\dots,v_n}=[H]. \]  
\end{proposition}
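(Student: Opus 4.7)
The plan is to expand $H$ in Fourier series with respect to the angle variable and show that each partial averaging operator $[\cdot]_{v_j}$ annihilates certain Fourier modes, so that after $n$ such operations only the mean value survives.

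First, write the Fourier expansion
\[ H(\theta, I,\omega) = \sum_{k \in \Z^n} \hat{H}_k(I,\omega)\,e^{2\pi i\, k\cdot\theta}, \]
which converges (together with all derivatives) on $\T^n\times D_{r,h}$ since $H$ is smooth. Now fix a rational vector $v$ with denominator $q$, so that $qv \in \Z^n$. Then formally exchanging the sum and the integral in~\eqref{moyv} gives
\[ [H]_v(\theta,I,\omega) = \sum_{k \in \Z^n} \hat{H}_k(I,\omega)\,e^{2\pi i\, k\cdot\theta} \int_0^1 e^{2\pi i\, t\, k\cdot(qv)}\,dt. \]
Since $k\cdot(qv)$ is an integer, the last integral equals $1$ if $k\cdot(qv)=0$ and $0$ otherwise. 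Thus $[H]_v$ retains exactly those Fourier modes $k$ lying in the hyperplane $(qv)^{\perp} = \{k\in\Z^n \mid k\cdot (qv)=0\}$.

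Iterating this observation, $[H]_{v_1,\dots,v_n}$ retains the Fourier mode indexed by $k\in\Z^n$ if and only if $k\cdot(q_jv_j)=0$ for every $j\in\{1,\dots,n\}$. Because $q_1v_1,\dots,q_nv_n$ form a $\Z$-basis of $\Z^n$, they span $\R^n$ as a real vector space, so the only $k\in\Z^n$ orthogonal to all of them is $k=0$. Therefore
\[ [H]_{v_1,\dots,v_n}(\theta,I,\omega) = \hat{H}_0(I,\omega) = \int_{\T^n} H(\theta,I,\omega)\,d\theta = [H](I,\omega), \]
which is the claimed equality. The only non-routine point is justifying the term-by-term integration, which is immediate from the smoothness of $H$ and the uniform rapid decay of its Fourier coefficients on $\T^n$; no serious obstacle arises, and in fact this proposition is purely algebraic once the Fourier picture is set up.
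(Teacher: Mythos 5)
Your Fourier-mode argument is correct. The key observation is exactly right: $[\cdot]_{v_j}$ projects onto the modes $k\in\Z^n$ with $k\cdot(q_j v_j)=0$, the averaging operators commute (so iterating simply intersects these orthogonality conditions), and since $q_1 v_1,\dots,q_n v_n$ span $\R^n$ (a consequence of being a $\Z$-basis) the only surviving mode is $k=0$, whose coefficient is $[H]$. The paper does not supply its own proof of this proposition (it is cited verbatim from~\cite{Bou13}, Corollary~6, with only the remark that it follows from the $\Z$-basis hypothesis), so there is no in-text argument to compare against; but the Fourier picture you set up is the standard and essentially forced way to see it, and the term-by-term integration is indeed justified by the rapid decay of the Fourier coefficients of a smooth periodic function. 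No gap.
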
 

\subsection{KAM step}\label{sec:step}

Now we describe an elementary step of our iterative procedure. Such a
step consists in pulling back the Hamiltonian $H$ by a transformation
of the form
\[ \mathcal{F}=(\Phi,\varphi): (\theta,I,\omega) \mapsto
(\Phi(\theta,I,\omega),\varphi(\omega));\]
$\Phi$ is a parameter-depending change of coordinates and $\varphi$ a
change of parameters. Moreover, our change of coordinates will be of
the form
\[\Phi(\theta,I,\omega)=
\Phi_\omega(\theta,I)=(\theta+E(\theta,\omega),I+F(\theta,\omega)\cdot
I+G(\theta,\omega)) \] with
\[ E : \T^n \times D_h \rightarrow \R^n, \quad F : \T^n \times D_h
\rightarrow M_n(\R), \quad G : \T^n \times D_h \rightarrow \R^n \]
and for each fixed parameter $\omega$, $\Phi_\omega$ will be
symplectic. For simplicity, we shall write $\Phi=(E,F,G)$; the
composition of such transformations
$\mathcal{F}=(\Phi,\varphi)=(E,F,G,\varphi)$ is again a transformation
of the same form, and we shall denote by $\mathcal{G}$ the groupoid of
such transformations.

\begin{proposition}\label{kamstep}
  Let $H$ be as in~\eqref{HAM}, with $M=(M_l)_{l \in \N}$ satisfying \eqref{H1} and \eqref{H2} and
  $\omega_0=(1,\bar{\omega}_0) \in \R^n$ non-resonant. Consider
  $0<\sigma< 1$, $0<\delta<r$, $Q \geq n+2$, and assume that
  \begin{equation}\label{cond1}
    \sqrt{\varepsilon} \leq \mu \leq h/2, \quad
    \sqrt{\varepsilon} \leq r, \quad h \PM (Q\Psi(Q))^{-1}, \quad r\mu \PM \delta(Q\Psi(Q))^{-1},
    \quad (1+\eta) \PM QsC(\sigma)^{-1}. 
  \end{equation}
  Then there exists an $(M,s(1-\sigma)^{2n+1})$-ultra-differentiable 
  transformation
  \[ \mathcal{F}=(\Phi,\varphi)=(E,F,G,\varphi) : \T^n \times
  D_{r-\delta,h/2} \rightarrow \T^n \times D_{r,h} \in \mathcal{G}, \]
  with the estimates
  \begin{equation}\label{stepest1}
    \begin{cases}
|E|_{s(1-\sigma)^{2n}} \MP \Psi(Q)\mu, \quad |\nabla E|_{s(1-\sigma)^{2n+1}} \MP s^{-1}C(\sigma)\Psi(Q)\mu, \\
|F|_{s(1-\sigma)^{2n}} \MP s^{-1}C(\sigma)\Psi(Q)\mu, \quad |\nabla F|_{s(1-\sigma)^{2n+1}} \MP s^{-2}C(\sigma)^2\Psi(Q)\mu, \\
|G|_{s(1-\sigma)^{2n}} \MP s^{-1}C(\sigma)\Psi(Q)\varepsilon, \quad |\nabla G|_{s(1-\sigma)^{2n+1}} \MP s^{-2}C(\sigma)^2\Psi(Q)\varepsilon,   \\
|\varphi-\mathrm{Id}|_{s(1-\sigma)^{2n}} \leq \mu, \quad |\nabla \varphi-\mathrm{Id}|_{s(1-\sigma)^{2n+1}} \MP s^{-1}C(\sigma) \mu
\end{cases}
  \end{equation}
  such that 
  \[H \circ \mathcal{F}(\theta,I,\omega) =
  \underbrace{e^+(\omega)+\omega\cdot I }_{N^+(I,\omega)} +
  \underbrace{A^+(\theta)+B^+(\theta)\cdot I}_{P^+(\theta,I,\omega)} +
  \underbrace{M^+(\theta,I,\omega)\cdot I^2}_{R^+(\theta,I,\omega)}, 
  \]
  with the estimates
  \begin{equation}\label{stepest2}
    \begin{cases}
      |A^+|_{s(1-\sigma)^{2n+1}} \leq \varepsilon/16, \quad
      |B^+|_{s(1-\sigma)^{2n+1}} \leq \mu/4, \\
      |e^+-e \circ \varphi|_{s(1-\sigma)^{2n+1}} \leq |A|_{s},
      \quad |\nabla_I^2 R^+- \nabla_I^2 R \circ
      \mathcal{F}|_{s(1-\sigma)^{2n+1}}\MP \eta |F|_{s(1-\sigma)^{2n}}.
    \end{cases}
  \end{equation}
\end{proposition}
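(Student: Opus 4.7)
The plan is to implement one KAM step by periodic averaging along rational approximations of $\omega_0$, following the scheme of~\cite{BF13} and~\cite{BFa17}, with all functional estimates drawn from Section~\ref{sec:udiff}. First I would invoke Proposition~\ref{dio} to obtain $n$ rational vectors $v_1,\ldots,v_n$ of denominators $q_1,\ldots,q_n\MP \Psi(Q)$, forming with $q_1 v_1, \dots, q_n v_n$ a $\Z$-basis of $\Z^n$ and satisfying $|\omega_0-v_j|\MP (q_jQ)^{-1}$. I will then construct $\mathcal{F}$ as a composition of $n$ parameter-dependent symplectic transformations $\Phi_1,\ldots,\Phi_n$, each erasing the part of the current affine-in-$I$ perturbation which is not invariant under the $v_j$-flow, followed by a parameter change $\varphi$ to absorb the surviving resonant linear term.

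The $j$-th averaging step is the heart of the construction. Writing the running Hamiltonian as $N+P_{j-1}+R_{j-1}$ with $P_{j-1}=A_{j-1}+B_{j-1}\cdot I$, I would solve the homological equation
\[ v_j\cdot\nabla_\theta F_j=P_{j-1}-[P_{j-1}]_{v_j} \]
with $F_j$ affine in $I$ and of zero $v_j$-mean. The right-hand side has no Fourier mode $k$ with $q_j v_j\cdot k=0$, so the equation is solved by Fourier inversion with small divisors bounded below by $1/q_j\PM 1/\Psi(Q)$, yielding $|F_j|\MP \Psi(Q)(|A_{j-1}|+r|B_{j-1}|)$ in the ultra-differentiable norm. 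Setting $\Phi_j$ to be the time-$1$ map of the Hamiltonian flow of $F_j$, which is affine in $I$, Proposition~\ref{flotX} applies directly and produces the structured form $\Phi_j(\theta,I)=(\theta+E_j(\theta),I+\tilde F_j(\theta)\cdot I+G_j(\theta))$ together with explicit bounds involving $s^{-1}C(\sigma)\Psi(Q)\mu$ and $s^{-1}C(\sigma)\Psi(Q)\varepsilon$; the smallness hypotheses of~\eqref{cond1}, especially $h\PM(Q\Psi(Q))^{-1}$ and $(1+\eta)\PM QsC(\sigma)^{-1}$, are exactly what is needed to verify the hypotheses of that proposition and to keep the actions inside $D_r$ throughout.

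After $n$ such steps, Proposition~\ref{cordio} identifies the remaining affine-in-$I$ part with the full spatial average $[P](I,\omega)=[A](\omega)+[B](\omega)\cdot I$, up to contributions quadratic in $F_j$. The scalar part $[A]$ is folded into the $\omega$-dependent constant $e^+$; the linear part $[B](\omega)\cdot I$ is erased by taking $\varphi$ to be the local inverse of $\omega\mapsto\omega-[B](\omega)$ on $D_{h/2}$, whose existence and estimate are provided by Proposition~\ref{propomega} thanks to $\mu\le h/2$ and the hypothesis linking $s$, $C(\sigma)$ and $\mu$. This delivers a $\mathcal{G}$-transformation $\mathcal{F}=(E,F,G,\varphi)$ satisfying~\eqref{stepest1}, once one bookkeeps the width losses: each elementary averaging costs a factor $(1-\sigma)^2$ (one from the flow in Proposition~\ref{flotX}, one from composition in Proposition~\ref{composition}), so after $n$ steps the available width is $s(1-\sigma)^{2n}$, and a further factor $(1-\sigma)$ appears when bounding the gradients through Corollary~\ref{corderivative}.

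The main obstacle is the control of the new perturbation $P^+=A^++B^+\cdot I$. This quantity does \emph{not} come from surviving first-order terms, which have been killed by averaging and by $\varphi$, but from the second-order Lie series contributions $\tfrac12\{F_j,\{F_j,N\}+P_{j-1}\}$ together with $\{F_j,R_{j-1}\}$, propagated through the remaining transformations. Schematically these are of size $s^{-1}C(\sigma)\Psi(Q)\mu^2$ and $s^{-1}C(\sigma)\Psi(Q)\mu\varepsilon$, and the composite smallness $r\mu\PM\delta(Q\Psi(Q))^{-1}$ together with $\sqrt\varepsilon\le\mu$ are precisely calibrated so that they are absorbed into $\varepsilon/16$ and $\mu/4$ respectively, which is the content of~\eqref{stepest2}. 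The estimate for $\nabla_I^2 R^+$ finally follows from the composition estimate of Proposition~\ref{composition} applied to $\nabla_I^2 R\circ\mathcal{F}$, with the factor $\eta\,|F|_{s(1-\sigma)^{2n}}$ coming from differentiating along the $I$-deformation $F^*$ of $\Phi^*_{\omega_0}$.
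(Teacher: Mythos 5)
Your plan is essentially the paper's: rational approximations via Proposition~\ref{dio}, successive averagings along periodic vectors using the affine-in-$I$ flow of Proposition~\ref{flotX}, identification of the surviving affine part with the full space average via Proposition~\ref{cordio}, and a parameter change via Proposition~\ref{propomega} (with $\phi(\omega)=\omega+[B](\omega)$, not $\omega-[B](\omega)$). The homological equation is solved in the paper not by Fourier inversion but by the explicit integral $C_j(\theta)=q_j\int_0^1(A_j-A_{j+1})(\theta+tq_jv_j)\,t\,dt$, which avoids any discussion of small divisors and costs no width; your Fourier route would need extra care to show it respects the $\sup$-type ultra-differentiable norm, so the integral formula is the right tool here.

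One attribution is wrong and would stop a literal execution of your plan. You claim that $r\mu\PM\delta(Q\Psi(Q))^{-1}$ and $\sqrt\varepsilon\le\mu$ are what give the factors $\varepsilon/16$ and $\mu/4$ in~\eqref{stepest2}. They are not: that pair controls the action-domain shrinkage through \eqref{smallXX} of Proposition~\ref{flotX}. After the Taylor-remainder bookkeeping the new perturbation has size $\MP(1+\eta)(Qs)^{-1}C(\sigma)\varepsilon$ (resp.\ $\mu$), and it is the last hypothesis $(1+\eta)\PM QsC(\sigma)^{-1}$ — with an implicit constant chosen independently of all the others — that converts this into $\varepsilon/16$ and $\mu/4$. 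You list this hypothesis but only as input to Proposition~\ref{flotX}, so when you reach the closing step the needed conclusion is unreachable unless you invoke it here. Finally, the Hessian estimate in~\eqref{stepest2} comes from the Hessian-of-composition formula applied to $\nabla_I^2(H\circ\Phi)$ inside the KAM step, with $F=\nabla_I\Phi-\mathrm{Id}$; the object $F^*$ you cite belongs to the iteration-limit stage in \S~\ref{sec:iteration}, not to this proposition.
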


\begin{proof}
  We divide the proof of the KAM step into five small steps. Except
  for the last one, the parameter $\omega \in D_h$ will be fixed, so
  for simplicity, in the first four steps we will drop the dependence
  on the parameter $\omega \in D_h$. Let us first notice
  that~\eqref{cond1} clearly implies the following seven inequalities:
  \begin{align}
    &h \PM (Q\Psi(Q))^{-1} \label{eq:cond1}\\ 
    &\Psi(Q)\mu \PM sC(\sigma)^{-1} \label{eq:cond2}\\ 
    &\varepsilon \leq r\mu \label{eq:cond3}\\
    &r\mu s^{-1}C(\sigma)\Psi(Q) \PM \delta  \label{eq:cond4}\\ 
    &\mu \PM (Q\Psi(Q))^{-1} \label{eq:cond5}\\ 
    &(1+\eta) \PM QsC(\sigma)^{-1} \label{eq:cond6}\\ 
    &\mu \leq h/2. \label{eq:cond7}
  \end{align}
It is also important to notice that the implicit constant appearing in~\eqref{eq:cond6} is independent of the other ones; we may choose it as large as we want without affecting the other implicit constants. In the first three steps, the term $R$ which contains terms of order at least $2$ in $I$ will be ignored, that is we will only consider $\hat{H}=H-R=N+P$.

\medskip

\textit{1. Rational approximations of $\omega_0$ and $\omega \in D_h$}

\medskip

Since $\omega_0$ is non-resonant, given $Q \geq n+2$, we can apply Proposition~\ref{dio}: there exist $n$ rational vectors $v_1, \dots, v_n$, of denominators $q_1, \dots, q_n$, such that $q_1v_1, \dots, q_nv_n$ form a $\Z$-basis of $\Z^n$ and for $j\in\{1,\dots,n\}$,
\[ |\omega_0-v_j|\MP(q_j Q)^{-1}, \quad 1 \leq q_j \MP \Psi(Q).\]
For any $\omega \in D_h$, using~\eqref{eq:cond1} and
$q_j \MP \Psi(Q)$, we have
\begin{equation}\label{shift}
|\omega-v_j| \leq |\omega-\omega_0|+|\omega_0-v_j| \MP h + (q_j Q)^{-1} \MP (Q\Psi(Q))^{-1} + (q_j Q)^{-1} \MP (q_j Q)^{-1}.
\end{equation}

\medskip

\textit{2. Successive rational averagings}

\medskip

Let us set $A_1:=A$, $B_1:=B$ so that
$P_1(\theta,I):=A_1(\theta)+B_1(\theta) \cdot I$ satisfies
$P_1=P$. Recalling that $[\,.\,]_v$ denotes the averaging along the
periodic flow associated to a periodic vector $v \in \R^n$
(see~\eqref{moyv}), we define inductively, for $1 \leq j \leq n$,
\[ A_{j+1}:=[A_j]_{v_j}, \quad B_{j+1}:=[B_j]_{v_j}, \quad
P_{j+1}:=[P_j]_{v_j} \]
so in particular $P_j(\theta,I)=A_j(\theta)+B_j(\theta)\cdot I$ for
$1 \leq j \leq n$. Let us also define $X_j$, for $1 \leq j \leq n$, by
\[ X_j(\theta,I):=C_j(\theta)+D_j(\theta)\cdot I \]
where
\[ C_j(\theta)=q_j \int_0^1 (A_j-A_{j+1})(\theta+tq_jv_j)tdt, \quad
D_j(\theta)=q_j \int_0^1 (B_j-B_{j+1})(\theta+tq_jv_j)tdt. \] 
If we further define $N_j$ by $N_j(I)=e(\omega)+v_j \cdot I$, it is
then easy to check, by a simple integration by parts, that the
equations 
\begin{equation}\label{eqhomo}
\{C_j,N_j\}=A_j-A_{j+1}, \quad \{D_j,N_j\}=B_j-B_{j+1},  \quad 1 \leq
j \leq n, 
\end{equation}
are satisfied and then
\begin{equation}\label{eqhomo2}
\{X_j,N_j\}=P_j-P_{j+1}, \quad 1 \leq j \leq n,
\end{equation}
are also satisfied, where $\{\,.\,,\,.\,\}$ denotes the usual Poisson
bracket. Moreover, we have the estimates
\begin{equation}\label{Estim0}
|A_j|_{s} \leq |A|_{s} \leq \varepsilon, \quad
|B_j|_{s} \leq |B|_{s} \leq \mu,  
\end{equation}
and then
\begin{equation}\label{Estim00}
|C_j|_{s} \leq q_j|A_j|_{s} \leq q_j\varepsilon \MP \Psi(Q)\varepsilon, \quad |D_j|_{s} \leq q_j|B_j|_{s} \leq q_j \mu \MP \Psi(Q)\mu. 
\end{equation}
Next, for any $0 \leq j \leq n$, define 
\[ r_j:=r-n^{-1}j\delta, \quad \hat{s}_j:=s(1-\sigma)^{2j}, \quad s_j=s(1-\sigma)^{2j+1}. \]  
Let $X_j^t$ be the
time-$t$ map of the Hamiltonian flow of $X_j$. Using~\eqref{Estim00},
together with inequalities~\eqref{eq:cond2}, \eqref{eq:cond3}, \eqref{eq:cond4} and the fact that $C(\sigma)^{-1} \MP \sigma$ (this is~\eqref{Csigma}), the condition~\eqref{smallX} and~\eqref{smallXX} of
Proposition~\ref{flotX}, \S~\ref{sec:udiff}, are satisfied, so the
latter proposition can be applied: for $1 \leq j \leq n$, $X_j^t$ is $(M,\hat{s}_j)$-ultra-differentiable, it maps
$\T^n \times D_{r_{j}}$ into $\T^n \times D_{r_{j-1}}$ for all
$t \in [0,1]$ and it is of the form
\[ X_j^t(\theta,I)=(\theta+E_j^t(\theta),I+F_j^t(\theta)\cdot
I+G_j^t(\theta)) \] 
with
\begin{equation}\label{estflot1}
\begin{cases}
|E_j^t|_{\hat{s}_j} \leq |D_j|_{s_{j-1}} \MP \Psi(Q)\mu, \quad |\nabla E_j^t|_{s_j} \MP s^{-1}C(\sigma)\Psi(Q)\mu   \\  
|F_j^t|_{\hat{s}_j} \MP s^{-1}C(\sigma)|D_j|_{s_{j-1}} \MP
s^{-1}C(\sigma)\Psi(Q)\mu, \quad |\nabla F_j^t|_{s_j} \MP s^{-2}C(\sigma)^2\Psi(Q)\mu \\ 
|G_j^t|_{\hat{s}_j} \MP s^{-1}C(\sigma)|C_j|_{s_{j-1}} \MP
s^{-1}C(\sigma)\Psi(Q)\varepsilon, \quad |\nabla G_j^t|_{s_j} \MP s^{-2}C(\sigma)^2\Psi(Q)\varepsilon.
\end{cases}
\end{equation} 
Now we define $\Phi^0:=\mathrm{Id}$ to be the identity and inductively
$\Phi^{j}:=\Phi^{j-1} \circ X_{j}^1$ for $1 \leq j \leq n$. Then
$\Phi^{j}$ maps $\T^n \times D_{r_{j}}$ into $\T^n \times D_{r}$ and
one easily checks, by induction using the estimates~\eqref{estflot1},
that $\Phi^{j}$ is still of the form
\[ \Phi^j(\theta,I)=(\theta+E^j(\theta),I+F^j(\theta)\cdot I+G^j(\theta)) \]
with the estimates, for $j=1,...,n$,
\begin{equation}\label{estflot2}
\begin{cases}
  |E^j|_{\hat{s}_j}  \MP \Psi(Q)\mu, \quad |\nabla E^j|_{s_j}  \MP s^{-1}C(\sigma)\Psi(Q)\mu,\\
  |F^j|_{\hat{s}_j} \MP
  s^{-1}C(\sigma)\Psi(Q)\mu, \quad  |\nabla F^j|_{s_j} \MP
  s^{-2}C(\sigma)^2\Psi(Q)\mu,\\
  |G^j|_{\hat{s}_j} \MP
  s^{-1}C(\sigma)\Psi(Q)\varepsilon, \quad |\nabla G^j|_{s_j} \MP
  s^{-2}C(\sigma)^2\Psi(Q)\varepsilon. 
  \end{cases}
\end{equation} 

\medskip

\textit{3. New Hamiltonian}

\medskip

Let us come back to the Hamiltonian $\hat{H}=H-R=N+P=N+P_1$. We claim
that for all $0 \leq j \leq n$, we have
\[ \hat{H} \circ \Phi^j= N+P_{j+1}+P_{j+1}^+, \quad P_{j+1}^+(\theta,I)=A_{j+1}^+(\theta)+B_{j+1}^+(\theta)\cdot I \] 
with the estimates
\begin{equation}\label{estAB}
|A_{j+1}^+|_{\hat{s}_j} \MP (Qs)^{-1}C(\sigma)\varepsilon, \quad
|B_{j+1}^+|_{\hat{s}_j} \MP (Qs)^{-1}C(\sigma)\mu. 
\end{equation}
Let us prove the claim by induction on $0 \leq j \leq n$. For $j=0$,
we may set $P_1^+:=0$ and there is nothing to prove. So let us assume
that the claim is true for some $j-1 \geq 0$, and we need to show it
is still true for $j \geq 1$. By this inductive assumption, we have
\[ \hat{H} \circ \Phi^j=\hat{H} \circ \Phi^{j-1} \circ X_j^1=(N+P_j+P_j^+) \circ X_j^1 \]
with 
\begin{equation}\label{estABind}
  |A_{j}^+|_{\hat{s}_{j-1}} \MP (Qs)^{-1}C(\sigma)\varepsilon,
  \quad |B_{j}^+|_{\hat{s}_{j-1}} \MP (Qs)^{-1}C(\sigma)\mu. 
\end{equation}
Let $S_j=\omega\cdot I - v_j\cdot I$ so that $N=N_j+S_j$ and thus
\[ \hat{H} \circ \Phi^j=(N_j+S_j+P_j+P_j^+) \circ X_j^1=(N_j+S_j+P_j)
\circ X_j^1+P_j^+ \circ X_j^1. \]
Let us consider the first summand of the last sum. Using the
equality~\eqref{eqhomo2}, a standard computation based on Taylor's
formula with integral remainder gives
\[ (N_j+S_j+P_j) \circ X_j^1 = N+[P_j]_{v_j} +\tilde{P}_{j+1}= N+P_{j+1}+\tilde{P}_{j+1} \]
with
\[ \tilde{P}_{j+1}=\int_0^1 W_{j+1}^t \circ X_{j}^t dt, \quad W_{j+1}^t:=\{ (1-t)P_{j+1}+t P_j +S_j, X_j \}.  \] 
Clearly, $W_{j+1}^t$ is still of the form
\[ W_{j+1}^{t}(\theta,I)=W_{j+1}^{t}(\theta,0)+\nabla_I
W_{j+1}^t(\theta,0)\cdot I \]
as this is true for $P_j$, $S_j$, $X_j$ and that this form is
preserved under Poisson bracket. Using the estimates for
$P_j(\theta,0)$, $\nabla_I P_j(\theta,0)$, $X_j(\theta,0)$,
$\nabla_I X_j(\theta,0)$ (given respectively in~\eqref{Estim0} and
in~\eqref{Estim00}), the fact that
\[ S_j(\theta,0)=0, \quad \nabla_IS_j(\theta,0)=\omega-v_j \]
with the inequality~\eqref{shift}, and the estimates for the
derivatives and the product of Gevrey functions (given respectively in
Proposition~\ref{derivative}, Corollary~\ref{corderivative} and
Proposition~\ref{produit}, Corollary~\ref{corproduit},
\S~\ref{sec:udiff}), one finds, for all $t \in [0,1]$
\begin{eqnarray*}
|W_{j+1}^t(\theta,0)|_{\hat{s}_j} & \MP &
(s^{-1}C(\sigma) q_j\varepsilon \mu+s^{-1}C(\sigma)q_j\varepsilon
\mu+s^{-1}C(\sigma)q_j\varepsilon(q_jQ)^{-1}) \\
& \MP & s^{-1}C(\sigma)q_j\varepsilon \mu
+(Qs)^{-1}C(\sigma)\varepsilon.
\end{eqnarray*}
Since $q_j \MP \Psi(Q)$, using~\eqref{eq:cond5} the latter estimate reduces to
\[ |W_{j+1}^t(\theta,0)|_{\hat{s}_j} \MP (Qs)^{-1}C(\sigma)\varepsilon.  \]
Similarly, one obtains
\[ |\nabla_I W_{j+1}^t(\theta,0)|_{\hat{s}_j} \MP
(Qs)^{-1}C(\sigma)\mu.  \]
Then, using the expression of $X_j^t$ and the associated
estimates~\eqref{estflot1}, a direct computation, still
using~\eqref{eq:cond5}, gives
\[ |\tilde{P}_{j+1}(\theta,0)|_{\hat{s}_j} \MP (Qs)^{-1}C(\sigma)\varepsilon \]
and
\[ |\nabla_I\tilde{P}_{j+1}(\theta,0)|_{\hat{s}_j} \MP
(Qs)^{-1}C(\sigma)\mu.  \]
Using again the estimates of $X_j^t$ given by~\eqref{estflot1},
and the inductive assumption~\eqref{estABind}, we also find
\[ |P_j^+ \circ X_j^1(\theta,0)|_{\hat{s}_j} \MP (Qs)^{-1}C(\sigma)\varepsilon \]
and
\[ |\nabla_I(P_j^+ \circ X_j^1)(\theta,0)|_{\hat{s}_j} \MP (Qs)^{-1}C(\sigma)\mu.  \]
Eventually, we may define
\[ P_{j+1}^+:=\tilde{P}_{j+1}+P_j^+ \circ X_j^1 \]
so that
\[ \hat{H} \circ \Phi^j= N+P_{j+1}+P_{j+1}^+, \quad P_{j+1}^+(\theta,I)=A_{j+1}^+(\theta)+B_{j+1}^+(\theta)\cdot I \] 
and these last estimates imply that
\begin{equation*}
|A_{j+1}^+|_{\hat{s}_j} \MP (Qs)^{-1}C(\sigma)\varepsilon, \quad |B_{j+1}^+|_{\hat{s}_j} \MP (Qs)^{-1}C(\sigma)\mu.
\end{equation*}
The claim is proved. So we may set 
\[ \Phi:=\Phi^n, \quad (E,F,G):=(E^n,F^n,G^n), \]
with, as~\eqref{estflot2} tells us with $j=n$,
\begin{equation}\label{estflot3}
\begin{cases}
  |E|_{\hat{s}_n}  \MP \Psi(Q)\mu, \quad |\nabla E|_{s_n}  \MP s^{-1}C(\sigma)\Psi(Q)\mu \\
  |F|_{\hat{s}_n} \MP s^{-1}C(\sigma)\Psi(Q)\mu, \quad |\nabla F|_{s_n} \MP s^{-2}C(\sigma)^2\Psi(Q)\mu \\
  |G|_{\hat{s}_n} \MP s^{-1}C(\sigma)\Psi(Q)\varepsilon, \quad |\nabla G|_{s_n} \MP s^{-2}C(\sigma)^2\Psi(Q)\varepsilon.  
  \end{cases}
\end{equation} 
Observe that $P_{n+1}=[\cdots[P]_{v_1}\cdots]_{v_n}=[P]_{v_1,\dots,v_n}$, and thus by Proposition~\ref{cordio}, $P_{n+1}=[P]$, and as a consequence
\[ \hat{H} \circ \Phi(\theta,I)=e+\omega\cdot I+[A]+[B]\cdot I+A^+_{n+1}(\theta)+B^+_{n+1}(\theta)\cdot I \]
with the estimates
\begin{equation}\label{estAB2}
|A_{n+1}^+|_{\hat{s}_n} \MP (Qs)^{-1}C(\sigma)\varepsilon, \quad |B_{n+1}^+|_{\hat{s}_n} \MP (Qs)^{-1}C(\sigma)\mu.
\end{equation}

\medskip

\textit{4. Estimate of the remainder}

\medskip

Now we take into account the remainder term $R$ that we previously ignored: we have $H=\hat{H}+R$, and therefore
\[ H \circ \Phi(\theta,I)=e+\omega\cdot I+[A]+[B]\cdot
I+A^+_{n+1}(\theta)+B^+_{n+1}(\theta)\cdot I+R \circ
\Phi(\theta,I).  \] Let us decompose
\[ R\circ \Phi(\theta,I)=
\underbrace{R\circ\Phi(\theta,0)}_{R_0(\theta)} +
\underbrace{\nabla_I(R \circ \Phi)(\theta,0)}_{R_1(\theta)} \cdot I
+\tilde{R}(\theta,I)\]
and let us define
\[ \tilde{A}:=A^+_{n+1}+R_0, \quad
\tilde{B}:=B^+_{n+1}+R_1. \]
We have $R(\theta,I)=M(\theta,I) \cdot I^2$ and as $H$ and $R$ differ only by terms of order at most one in $I$, $\nabla_I^2 H=\nabla_I^2 R$ so
\[ M(\theta,I)=\int_0^1(1-t)\nabla_I^2 H(\theta,tI)dt=\int_0^1(1-t)\nabla_I^2 R(\theta,tI)dt \]
and therefore $|M|_{s}\leq \eta$. Then, as $\Phi(\theta,0)=(\theta+E(\theta),G(\theta))$, we have the expression
\[ R_0(\theta)=R(\Phi(\theta,0))=M(\theta+E(\theta),G(\theta))\cdot
G(\theta)^2 \]
and so using the above estimate on $M$, together with the estimates on
$E$, $G$ and the estimates for the product and compostion of Gevrey
functions (given respectively in Proposition~\ref{produit} and
Proposition~\ref{composition}, \S~\ref{sec:udiff}), we find
\[ |R_0|_{\hat{s}_n} \MP \eta |G|^2_{\hat{s}_n} \MP
\eta(s^{-1}C(\sigma)\Psi(Q)\mu)^2 \varepsilon \MP \eta
(Qs)^{-2}C(\sigma)^2\varepsilon \MP \eta
(Qs)^{-1}C(\sigma)\varepsilon. \] 
Then, we have $\nabla_I R(\theta,I)=\hat{M}(\theta,I)\cdot I^2$ with 
\[ \hat{M}(\theta,I)=\int_0^1 \nabla_I^2 H(\theta,tI)dt=\int_0^1\nabla_I^2 R(\theta,tI)dt \]
and hence $|\hat{M}|_{s}\leq \eta$ also. Since 
\begin{equation}\label{F}
|\nabla_I \Phi-\mathrm{Id}|_{\hat{s}_n}=
|F|_{\hat{s}_n} \MP s^{-1}C(\sigma)\Psi(Q)\mu\PM 1  
\end{equation}
we obtain, using the fact that
$\varepsilon \leq \mu^2$ and proceeding as before,
\[ |R_1|_{\hat{s}_n} \MP \eta|G|_{\hat{s}_n} \MP
\eta s^{-1}C(\sigma)\Psi(Q)\varepsilon \MP\eta (s^{-1}C(\sigma)\Psi(Q)\mu)\mu
\MP\eta (Qs)^{-1}C(\sigma)\mu. \] 
These last estimates on $R_0$ and $R_1$, together with~\eqref{estAB2}, imply
\begin{equation*}
|\tilde{A}|_{\hat{s}_n} \MP(1+\eta)
(Qs)^{-1}C(\sigma)\varepsilon, \quad
|\tilde{B}|_{\hat{s}_n} \MP(1+\eta) (Qs)^{-1}C(\sigma)\mu. 
\end{equation*}
We can finally now use~\eqref{eq:cond6} to ensure that
\begin{equation}\label{estABF}
|\tilde{A}|_{\hat{s}_n} \leq \varepsilon/16, \quad
|\tilde{B}|_{\hat{s}_n} \leq \mu/4. 
\end{equation}
It is important to recall here that we may choose the implicit constant in~\eqref{eq:cond6} as large as we want (in order to achieve~\eqref{estABF}) without affecting any of the other implicit constants. Then observe also that $H \circ \Phi$ and $\tilde{R}$ differ only by terms of order at most one in $I$, so
\[   \nabla_I^2 (H\circ \Phi)=\nabla_I^2 \tilde{R}, \quad \nabla_I^2 H=\nabla_I^2 R 
\]
and therefore using the formula for the Hessian of a composition,~\eqref{F} and the fact that $\nabla_I^2 \Phi$ is identically zero, one finds
\begin{equation}\label{estM}
|\nabla_I^2 \tilde{R}-\nabla_I^2 R \circ \Phi|_{\hat{s}_n} \MP \eta |F|_{s}.
\end{equation}
We also set $\tilde{e}:=e+[A]$ and observe that
\begin{equation}\label{este}
|\tilde{e}-e|_{\hat{s}_n} \leq |[A]|_{s} \leq |A|_{s}.
\end{equation}

\medskip

\textit{5. Change of frequencies and final estimates}

\medskip

Let us now write explicitly the dependence on the parameter $\omega \in D_h$: we have 
\[ H \circ \Phi(\theta,I,\omega)=\tilde{e}(\omega)+(\omega+[B](\omega))\cdot I+\tilde{A}(\theta,\omega)+\tilde{B}(\theta,\omega)\cdot I+ \tilde{R}(\theta,I,\omega).\]
Consider the map $\phi(\omega):=\omega+[B(\omega)]$, it satisfies
\[ |\phi-\mathrm{Id}|_{s} \leq |[B]|_{s} \leq
|B|_{s} \leq \mu \]
and therefore the conditions~\eqref{smallomega} of
Proposition~\ref{propomega} are satisfied: the first condition
of~\eqref{smallomega} follows, from instance, from
condition~\eqref{eq:cond2} and the fact that $\Psi(Q)\geq Q \geq 1$,
whereas the second condition of~\eqref{smallomega} is implied by
condition~\eqref{eq:cond7}. Hence Proposition~\ref{propomega} applies
and one finds a unique
$\varphi \in G_{\hat{s}_n}(D_{h/2},D_h)$ such that
$\phi \circ \varphi=\mathrm{Id}$ and
\begin{equation}\label{estom}
|\varphi-\mathrm{Id}|_{\hat{s}_n} \leq |\phi-\mathrm{Id}|_{s} \leq \mu.
\end{equation}  
We do have $\varphi(\omega)+[B(\varphi(\omega))]=\omega$ and thus,
setting $\mathcal{F}:=(\Phi,\varphi)$, this implies that
\[ H \circ \mathcal{F}(\theta,I,\omega)= H \circ
\Phi(\theta,I,\varphi(\omega)) =
\tilde{e}(\varphi(\omega))+\omega\cdot
I+\tilde{A}(\theta,\varphi(\omega)) +
\tilde{B}(\theta,\varphi(\omega))\cdot I+
\tilde{R}(\theta,I,\varphi(\omega))\] and at the end we set
\[ e^+:=\tilde{e}\circ \varphi, \quad A^+:=\tilde{A} \circ \varphi,
\quad B^+:=\tilde{B} \circ \varphi, \quad R^+:=\tilde{R} \circ
\varphi. \] 
Using once again Proposition~\ref{composition}, the
inequalities~\eqref{estABF},~\eqref{estM} and~\eqref{este} imply
\begin{equation*}
\begin{cases}
|A^+|_{s_n}=|\tilde{A} \circ \varphi|_{s_n}
\leq |\tilde{A}|_{\hat{s}_n} \leq \varepsilon/16, \\  
|B^+|_{s_n}=|\tilde{B} \circ \varphi|_{s_n}
\leq |\tilde{B}|_{\hat{s}_n} \leq \mu/4, \\  
|e^+-e\circ \varphi|_{s_n}=|(\tilde{e}-e)\circ
\varphi|_{s_n} \leq |\tilde{e}-e|_{\hat{s}_n} \leq
|A|_{s},  
\\ 
|\nabla_I^2 R^+-\nabla_I^2 R \circ \mathcal{F}|_{s_n}=|(\nabla_I^2 \tilde{R}-\nabla_I^2 R \circ \Phi)\circ \varphi|_{s_n}
\leq |\nabla_I^2 \tilde{R}-\nabla_I^2 R \circ \Phi|_{\hat{s}_n} \MP \eta |F|_{\hat{s}_n},
\end{cases}
\end{equation*}
which were the estimates~\eqref{stepest2} we needed to prove. The
transformation
$\mathcal{F}=(\Phi,\varphi)=(E,F,G,\varphi) \in \mathcal{G}$ maps
$\T^n \times D_{r-\delta,h/2}$ into $\T^n \times D_{r,h}$ and it
follows from~\eqref{estflot3} and~\eqref{estom} that
\begin{equation*}
\begin{cases}
|E|_{\hat{s}_n} \MP \Psi(Q)\mu, \quad |\nabla E|_{s_n} \MP s^{-1}C(\sigma)\Psi(Q)\mu, \\
|F|_{\hat{s}_n} \MP s^{-1}C(\sigma)\Psi(Q)\mu, \quad |\nabla F|_{s_n} \MP s^{-2}C(\sigma)^2\Psi(Q)\mu, \\
|G|_{\hat{s}_n} \MP \sigma^{-\alpha}\Psi(Q)\varepsilon, \quad |\nabla G|_{s_n} \MP s^{-2}C(\sigma)^2\Psi(Q)\varepsilon,   \\
|\varphi-\mathrm{Id}|_{\hat{s}_n} \leq \mu, \quad |\nabla \varphi-\mathrm{Id}|_{s_n} \MP s^{-1}C(\sigma) \mu
\end{cases}
\end{equation*}
which were the wanted estimates~\eqref{stepest1}. This concludes the proof. 
\end{proof}

\subsection{Iterations and convergence}\label{sec:iteration}

We now define, for $i \in \N$, the following decreasing geometric sequences:
\begin{equation}\label{defsuite1}
\varepsilon_i:=16^{-i} \varepsilon, \quad \mu_i:=4^{-i}\mu, \quad \delta_i:=2^{-i-2}r, \quad h_i=2^{-i} h.
\end{equation}
Next, for a constant $Q_0$ to be chosen below, we define $\Delta_i$ and $Q_i$, $i\in \N$, by 
\begin{equation}\label{defsuite2}
\Delta_i=2^i \Delta(Q_0), \quad Q_i=\Delta^{-1}(\Delta_i)=\Delta^{-1}(2^i \Delta(Q_0)).
\end{equation}
Let us now choose $0 < c_2\leq 1$, which depends only on $n$, for which the last inequality of~\eqref{cond1} is satisfied with implicit constant $2c_2$, that is
\begin{equation}\label{lastcond1}
(1+\eta)(2c_2)^{-1} \leq QsC(\sigma)^{-1}.
\end{equation}
We can now define $\sigma_i$, $i\in \N$, by
\begin{equation}\label{defsuite3}
\sigma_i= C^{-1}(c_2(1+\eta)^{-1}sQ_i)
\end{equation}
and finally, we define $\hat{s}_i$, $s_i$ and $r_i$, $i\in\N$, by 
\begin{equation}\label{defsuite4}
\hat{s}_0=s_0=s, \quad \hat{s}_{i+1}=\hat{s}_i(1-\sigma_i)^{2n}, \quad s_{i+1}=s_i(1-\sigma_i)^{2n+1}, \quad r_0=r, \quad r_{i+1}=r_i-\delta_i.  
\end{equation}
Obviously, we have
\[ \lim_{i \rightarrow +\infty} r_i=r-\sum_{i \in \N}\delta_i=r/2. \]
We claim that, assuming that $\omega_0$ satisfies~\eqref{BRM}, if we choose $Q_0$ as in~\eqref{eqQ0} we have
\begin{equation}\label{sigmai}
\lim_{i \rightarrow +\infty} s_i\geq s/2 \Longleftrightarrow \prod_{i \in \N} (1-\sigma_i)^{2n+1} \geq 1/2.
\end{equation}
Indeed, we have
\begin{eqnarray*}
\sum_{i \in \N}\sigma_i & = & \sigma_0 + \sum_{i \geq 1}\sigma_i \\
& = & C^{-1}(c_2(1+\eta)^{-1}sQ_0)+ \sum_{i \geq 1}C^{-1}(c_2(1+\eta)^{-1}s\Delta^{-1}(2^i\Delta(Q_0))) \\
& \leq & C^{-1}(c_2(1+\eta)^{-1}sQ_0)+ \int_{0}^{+\infty}C^{-1}(c_2(1+\eta)^{-1}s\Delta^{-1}(2^y\Delta(Q_0)))dy \\  
& = & C^{-1}(c_2(1+\eta)^{-1}sQ_0)+ (\ln 2)^{-1}\int_{\Delta(Q_0)}^{+\infty}C^{-1}(c_2(1+\eta)^{-1}s\Delta^{-1}(x))\frac{dx}{x} \\  
& \leq & \ln 2(4n+2)^{-1}
\end{eqnarray*}
where we used~\eqref{eqQ0} in the last line. In particular, for any $i \in \N$ we have $\sigma_i \leq \ln 2(4n+2)^{-1}$ which implies
\[ \ln (1-\sigma_i)^{2n+1}=(2n+1)\ln(1-\sigma_i) \geq -(4n+2)\sigma_i \]
and thus
\[ \sum_{i \in \N}\ln (1-\sigma_i)^{2n+1} \geq -(4n+2)\sum_{i \in \N} \sigma_i \geq -\ln 2 \]
and finally
\[ \prod_{i \in \N}(1-\sigma_i)^{2n+1}=\exp \left(\sum_{i \in \N}\ln (1-\sigma_i)^{2n+1}\right) \geq 1/2. \]
This shows that~\eqref{sigmai} holds true with our choice of $\sigma_i$. Applying inductively Proposition~\ref{kamstep} we will easily obtain the following proposition.

\begin{proposition}\label{kamiter}
Let $H$ be as in~\eqref{HAM}, with $M=(M_l)_{l \in \N}$ satisfying \eqref{H1} and \eqref{H2} and $\omega_0$ satisfying~\eqref{BRM}. Fix $Q_0 \geq n+2$ so that~\eqref{eqQ0} is satisfied, and assume that
\begin{equation}\label{cond2}
\sqrt{\varepsilon} \leq \mu \leq h/2, \quad \sqrt{\varepsilon} \leq r, \quad h \PM \Delta(Q_0)^{-1}.
\end{equation}
Then, for each $i\in \N$, there exists an $(M,s_i)$-ultra-differentiable transformation 
\[ \mathcal{F}^i=(\Phi^i,\varphi^i)=(E^i,F^i,G^i,\varphi^i) : \T^n \times D_{r_i,h_i} \rightarrow \T^n \times D_{r,h} \in \mathcal{G}, \]
such that $\mathcal{F}^{i+1}=\mathcal{F}^{i} \circ \mathcal{F}_{i+1}$, with
\[ \mathcal{F}_{i+1}=(\Phi_{i+1},\varphi_{i+1})=(E_{i+1},F_{i+1},G_{i+1},\varphi_{i+1}) : \T^n \times D_{r_{i+1},h_{i+1}} \rightarrow \T^n \times D_{r_{i},h_{i}} \in \mathcal{G}, \]
satisfying the following estimates
\begin{equation}\label{stepest1i}
\begin{cases}
|E_{i+1}|_{\hat{s}_{i+1}} \MP \Psi(Q_{i})\mu_{i}, \quad |\nabla E_{i+1}|_{s_{i+1}} \MP s_i^{-1}C(\sigma_i)\Psi(Q_i)\mu_i, \\ 
|F_{i+1}|_{\hat{s}_{i+1}} \MP s_i^{-1}C(\sigma_i)\Psi(Q_i)\mu_i, \quad |\nabla F_{i+1}|_{s_{i+1}} \MP s_i^{-2}C(\sigma_i)^2\Psi(Q_i)\mu_i, \\ 
|G_{i+1}|_{\hat{s}_{i+1}}\MP s_i^{-1}C(\sigma_i)\Psi(Q_i)\varepsilon_i, \quad |\nabla G_{i+1}|_{s_{i+1}} \MP s_i^{-2}C(\sigma_i)^2\Psi(Q_i)\varepsilon_i, \\ 
|\varphi_{i+1}-\mathrm{Id}|_{\hat{s}_{i+1}} \leq \mu_i, \quad  |\nabla \varphi_{i+1}-\mathrm{Id}|_{s_{i+1}} \MP s_i^{-1}C(\sigma_i) \mu_i
\end{cases}
\end{equation}
and such that 
\[ 
  H \circ \mathcal{F}^i(\theta,I,\omega) =
  \underbrace{e^i(\omega)+\omega\cdot I}_{N^i(I,\omega)} +
  \underbrace{A^i(\theta)+B^i(\theta)\cdot I}_{P^i(\theta,I,\omega)} +
  \underbrace{M^i(\theta,I,\omega)\cdot I^2}_{R^i(\theta,I,\omega)} 
\]
with the estimates
\begin{equation}\label{stepest2i}
\begin{cases}
  |A^i|_{s_{i+1}} \leq \varepsilon_i, \quad |B^i|_{s_{i+1}} \leq \mu_i, \\
  |e^{i+1}-e^{i} \circ \varphi_{i+1}|_{s_{i+1}} \leq |A^{i}|_{\alpha,s_{i}}, \\
  |\nabla_I^2 R^{i+1}-\nabla_I^2 R^{i} \circ \mathcal{F}_{i+1}|_{s_{i+1}}\MP
  \eta|F_{i+1}|_{\hat{s}_{i+1}}. 
\end{cases}
\end{equation}
\end{proposition}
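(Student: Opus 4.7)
The plan is to prove Proposition~\ref{kamiter} by induction on $i$, applying Proposition~\ref{kamstep} at each stage. The parameter sequences $\varepsilon_i$, $\mu_i$, $\delta_i$, $h_i$, $s_i$, $Q_i$, $\sigma_i$ defined in~\eqref{defsuite1}--\eqref{defsuite4} are tuned so that the hypotheses of the KAM step are satisfied at every iteration, while the widths $s_i$ remain bounded below by $s/2$ thanks to the computation already displayed just before~\eqref{sigmai}.

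For the base case $i=0$, set $\mathcal{F}^0 := \mathrm{Id}$, so that $H \circ \mathcal{F}^0 = H$ has $e^0 = e$, $A^0 = A$, $B^0 = B$, $R^0 = R$ and the estimates~\eqref{stepest2i} follow from the hypotheses of the theorem. For the inductive step, assume that $\mathcal{F}^i \in \mathcal{G}$ has been constructed on $\T^n \times D_{r_i,h_i}$ with the stated normal form and estimates. Then apply Proposition~\ref{kamstep} to $H \circ \mathcal{F}^i$, viewed as an ultra-differentiable family parameterized by $\omega \in D_{h_i}$ on $\T^n \times D_{r_i}$, with the choice of parameters $\varepsilon_i$, $\mu_i$, $\delta_i$, $h_i$, $s_i$, $\sigma_i$, $Q_i$. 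This produces a transformation $\mathcal{F}_{i+1} : \T^n \times D_{r_{i+1},h_{i+1}} \to \T^n \times D_{r_i,h_i}$ of the required form, and we then set $\mathcal{F}^{i+1} := \mathcal{F}^i \circ \mathcal{F}_{i+1}$.

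The bulk of the work is then to check that the five conditions of~\eqref{cond1} hold at level $i$. The first three inequalities $\sqrt{\varepsilon_i} \leq \mu_i \leq h_i/2$ and $\sqrt{\varepsilon_i} \leq r_i$ follow immediately from~\eqref{cond2} and the decay rates $\sqrt{\varepsilon_i} = 4^{-i}\sqrt{\varepsilon}$, $\mu_i = 4^{-i}\mu$, $h_i = 2^{-i}h$, together with $r_i \geq r/2$. The condition $h_i \PM (Q_i\Psi(Q_i))^{-1}$ is a conservation law of the scheme, since the definitions of $h_i$ and $Q_i$ yield $h_i\Delta(Q_i) = h\Delta(Q_0)$, which is controlled by~\eqref{cond2}. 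Similarly, $r_i\mu_i \PM \delta_i(Q_i\Psi(Q_i))^{-1}$ reduces, thanks to $\mu_i\Delta(Q_i) = 2^{-i}\mu\Delta(Q_0)$ and $\delta_i = 2^{-i-2}r$, to the bound $\mu\Delta(Q_0) \PM 1$, again ensured by~\eqref{cond2}. The last condition $(1+\eta) \PM Q_is_iC(\sigma_i)^{-1}$ is precisely what~\eqref{lastcond1} and the definition~\eqref{defsuite3} of $\sigma_i$ guarantee, provided that $s_i \geq s/2$. Once these conditions are in place, the estimates~\eqref{stepest1i} and~\eqref{stepest2i} follow verbatim from the conclusions~\eqref{stepest1} and~\eqref{stepest2} of the KAM step at level $i$, with the critical quadratic-type improvement $|A^{i+1}|_{s_{i+1}} \leq \varepsilon_i/16 = \varepsilon_{i+1}$ and $|B^{i+1}|_{s_{i+1}} \leq \mu_i/4 = \mu_{i+1}$ that sustains the iteration.

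The main difficulty therefore concentrates in the uniform lower bound $s_i \geq s/2$, equivalently $\prod_{i \in \N}(1-\sigma_i)^{2n+1} \geq 1/2$. This is precisely where the arithmetic hypothesis~\eqref{BRM} is used: the choice~\eqref{eqQ0} of $Q_0$ allows one to compare the sum $\sum_{i \in \N} \sigma_i$ with the integral $(\ln 2)^{-1}\int_{\Delta(Q_0)}^{+\infty} C^{-1}(c_2(1+\eta)^{-1}s\Delta^{-1}(x))\,\frac{dx}{x}$, which is finite by~\eqref{BRM} and can be made smaller than $\ln 2/(4n+2)$ by taking $Q_0$ large. The elementary inequality $\ln(1-\sigma_i)^{2n+1} \geq -(4n+2)\sigma_i$ valid for $\sigma_i \leq \ln 2/(4n+2)$ then delivers the desired product estimate, as was carried out just before~\eqref{sigmai}. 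Without this ultra-differentiable substitute of the Bruno--R\"ussmann condition, the widths would collapse to $0$ and the inductive scheme could not be iterated indefinitely.
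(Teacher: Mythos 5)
Your proof is correct and follows essentially the same approach as the paper's own: a straightforward induction on $i$ in which, at each stage, Proposition~\ref{kamstep} is applied with parameters $\varepsilon_i,\mu_i,\delta_i,h_i,s_i,\sigma_i,Q_i$, the five inequalities of~\eqref{cond1} are checked by exploiting the geometric decays and the two conservation laws $h_i\Delta(Q_i)=h\Delta(Q_0)$ and $\mu_i\Delta(Q_i)=2^{-i}\mu\Delta(Q_0)$, and the crucial lower bound $s_i\geq s/2$ is reduced via~\eqref{eqQ0} and the $\mathrm{BR_M}$-condition to the convergence of the series $\sum_i\sigma_i$. Your observations on the conservation laws, on the reduction of the last condition of~\eqref{cond1} to $s/2\leq s_i$, and on the quadratic-type decay $\varepsilon_i/16=\varepsilon_{i+1}$, $\mu_i/4=\mu_{i+1}$ all match the paper's verification of~\eqref{condit}.
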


Let us emphasize that the implicit constants in the above proposition depend only on $n$ and are thus independent of $i \in \N$. 

\begin{proof}
  For $i=0$, we let $\mathcal{F}^0$ be the identity, $e^0:=e$,
  $A^0:=A$, $B^0:=B$, $R^0:=R$, $M^0:=M$ and there is nothing to
  prove. The general case follows by an easy induction. Indeed, assume
  that the statement holds true for some $i\in \N$ so that
  $H \circ \mathcal{F}^i$ is $(M,s_i)$-ultra-differentiable on the domain
  $\T^n \times D_{r_i,s_i}$. We want to apply
  Proposition~\ref{kamstep} to this Hamiltonian, with
  $\varepsilon=\varepsilon_i$, $\mu=\mu_i$, $r=r_i$, $s=s_i$, $h=h_i$,
  $\sigma=\sigma_{i}$ and $Q=Q_i$. First, by our choice of $Q_0$ and
  $\delta_0$ it is clear that $0<\sigma_{i}< 1$,
  $0 < \delta_i < r_i$, and $Q_i \geq n+2$. Then, recalling the definition of the constant $c_2$, we need to check that
  the conditions
  \begin{equation*}
  \begin{cases}
    \sqrt{\varepsilon_i} \leq \mu_i \leq h_i/2,
    \quad \sqrt{\varepsilon_i} \leq r_i, \quad h_i \PM \Delta(Q_i)^{-1}, \\ 
    r_i\mu_i \PM \delta_i
    \Delta(Q_i)^{-1},  \quad  (1+\eta)(2c_2)^{-1} \leq Q_is_iC(\sigma_i)^{-1} 
    \end{cases}
  \end{equation*}
  are satisfied. From the definition of $Q_i$ and $\sigma_i$ we have
  \begin{equation*}
    \Delta(Q_i) =\Delta(\Delta^{-1}(\Delta_i))=\Delta_i, \quad C(\sigma_i)=C(C^{-1}(c_2(1+\eta)^{-1}sQ_i))=c_2(1+\eta)^{-1}sQ_i 
  \end{equation*}
and as $2^{-i}r \leq r_i \leq r$, it is sufficient to check the conditions
  \begin{equation}\label{condit}
    \sqrt{\varepsilon_i} \leq \mu_i \leq h_i/2,
    \quad \sqrt{\varepsilon_i} \leq 2^{-i}r, \quad h_i \PM \Delta_i^{-1}, \quad r\mu_i \PM \delta_i
    \Delta_i^{-1},  \quad s/2 \leq s_i. 
  \end{equation}
  The last condition of~\eqref{condit} is satisfied, for all
  $i \in \N$, simply by the choice of $Q_0$ satisfying~\eqref{eqQ0}, as this implies~\eqref{sigmai}. As for the other four conditions of~\eqref{condit},
  using the fact that the sequences $\varepsilon_i$, $\mu_i$, $h_i$,
  $\Delta_i^{-1}$ and $\delta_i$ decrease at a geometric rate with
  respective ratio $1/16$, $1/4$, $1/2$, $1/2$ and $1/2$, it is clear
  that they are satisfied for any $i \in \N$ if and only if they are
  satisfied for $i=0$. The first three conditions of~\eqref{condit} for
  $i=0$ are nothing but~\eqref{cond2}. Moreover, using our choice of
  $\delta_0=r/4$, the fourth condition of~\eqref{condit} for $i=0$ reads
  $\mu \PM \Delta_0^{-1}$ and this also follows from~\eqref{cond2}.

Hence Proposition~\ref{kamstep} can be applied, and all the conclusions of the statement follow at once from the conclusions of Proposition~\ref{kamstep}.
\end{proof}

We can finally conclude the proof of Theorem~\ref{KAMparameter}, by showing that one can pass to the limit $i \rightarrow +\infty$ in Proposition~\ref{kamiter}.

\begin{proof}[Proof of Theorem~\ref{KAMparameter}]
Recall that we are given $\varepsilon>0$, $\mu>0$, $r>0$, $s>0$, $h>0$ and that we define the sequences $\varepsilon_i,\mu_i,\delta_i,h_i$ in~\eqref{defsuite1}, and then we chose $Q_0 \geq n+2$ satisfying~\eqref{eqQ0} to define the sequences $\Delta_i,Q_i$ in~\eqref{defsuite2} and $\sigma_i$ in~\eqref{defsuite3} and finally, $\hat{s}_i$, $s_i$ and $r_i$ were defined in~\eqref{defsuite4}. Moreover, we have 
\begin{equation}\label{limits}
\begin{cases}
\lim_{i \rightarrow +\infty}\varepsilon_i=\lim_{i \rightarrow +\infty}\mu_i=\lim_{i \rightarrow +\infty}h_i=0, \\ 
\lim_{i \rightarrow +\infty}r_i=r-\sum_{i \in \N}\delta_i=r/2, \quad \lim_{i \rightarrow +\infty}s_i=s\prod_{i \in \N}(1-\sigma_i)^{2n+1}\geq s/2 
\end{cases}
\end{equation}
and for later use, let us observe that the following series are convergent and can be made as small as one wishes thanks to condition~\eqref{cond0} of Theorem~\ref{KAMparameter}:
\begin{equation}\label{serie1}
\sum_{i=0}^{+\infty} s^{-1}C(\sigma_i)\mu_i \leq \sum_{i=0}^{+\infty} Q_i \mu_i = \sum_{i=0}^{+\infty} (\Psi(Q_i))^{-1}\Delta_i\mu_i \leq 2 (\Psi(Q_0))^{-1}\Delta_0 \mu =2Q_0\mu
\end{equation}
\begin{equation}\label{serie2}
\sum_{i=0}^{+\infty} \mu_i \leq 2\mu 
\end{equation}
\begin{equation}\label{serie3}
\sum_{i=0}^{+\infty} s^{-1}C(\sigma_i)\Psi(Q_i)\mu_i \leq \sum_{i=0}^{+\infty} \Delta_i \mu_i \leq 2\Delta_0 \mu=2 Q_0\Psi(Q_0) \mu 
\end{equation}
\begin{equation}\label{serie4}
\sum_{i=0}^{+\infty} \Psi(Q_i)\mu_i \leq \sum_{i=0}^{+\infty} Q_i^{-1}\Delta_i \mu_i \leq 2 Q_0^{-1}\Delta_0 \mu =2\Psi(Q_0)\mu
\end{equation}
\begin{equation}\label{serie5}
\sum_{i=0}^{+\infty} s^{-1}C(\sigma_i)\Psi(Q_i)\varepsilon_i \leq \sum_{i=0}^{+\infty} \Delta_i \varepsilon_i \leq 2\Delta_0 \varepsilon=2 Q_0\Psi(Q_0) \varepsilon. 
\end{equation}
Now the condition~\eqref{cond0} of Theorem~\ref{KAMparameter} implies that the condition~\eqref{cond2} of Proposition~\ref{kamiter} is satisfied; what we need to prove is that the sequences given by this Proposition~\ref{kamiter} do convergence in the Banach space of $(M,s/2)$-ultra-differentiable functions. Recall that $\mathcal{F}^0=(E^0,F^0,G^0,\varphi^0)$ is the identity, while for $i \geq 0$, 
\[ (E^{i+1},F^{i+1},G^{i+1},\varphi^{i+1})=\mathcal{F}^{i+1}=\mathcal{F}^{i} \circ \mathcal{F}_{i+1}=(E^{i},F^{i},G^{i},\varphi^{i}) \circ (E_{i+1},F_{i+1},G_{i+1},\varphi_{i+1})\]
from which one easily obtains the following inductive expressions:
\begin{equation}
\begin{cases}
E^{i+1}(\theta,\omega)=E_{i+1}(\theta,\omega)+E^{i}(\theta+E_{i+1}(\theta,\omega),\varphi_{i+1}(\omega)) \\
F^{i+1}(\theta,\omega)=F_{i+1}(\theta,\omega)+F^{i}(\theta+E_{i+1}(\theta,\omega),\varphi_{i+1}(\omega))\cdot (\mathrm{Id}+F_{i+1}(\theta,\omega)) \\
G^{i+1}(\theta,\omega)=(F^{i}(\theta+E_{i+1}(\theta,\omega),\varphi_{i+1}(\omega))+\mathrm{Id})\cdot G_{i+1}(\theta,\omega)+G^{i}(\theta+E_{i+1}(\theta,\omega),\varphi_{i+1}(\omega)) \\
\varphi^{i+1}=\varphi^{i} \circ \varphi_{i+1}.
\end{cases}
\end{equation} 
Let us first prove that the sequence $\varphi^i$ converges. We claim that for all $i \in \N$, we have
\[ |\nabla \varphi^{i}|_{s_{i}} \MP \prod_{l=0}^{i}(1+s^{-1}C(\sigma_l)\mu_l) \MP 1 \]
where the fact that the last product is bounded uniformly in $i \in \N$ follows from~\eqref{serie1}. For $i=0$, $\varphi^0=\mathrm{Id}$ and there is nothing to prove; for $i \in \N$ since $\varphi^{i+1}=\varphi^i \circ \varphi_{i+1}$ we have
\[ \nabla \varphi^{i+1}=\left(\nabla \varphi^{i} \circ \varphi_{i+1}\right) \cdot \nabla \varphi_{i+1} \]
so that using the estimate for $\varphi_{i+1}$ and $\nabla \varphi_{i+1}$ given in~\eqref{stepest1i}, Proposition~\ref{kamiter}, the claim follows by induction. Using this claim, and writing
\[ \varphi^{i+1}-\varphi^{i}=\varphi^i \circ \varphi_{i+1}-\varphi^i=\left(\int_0^1\nabla \varphi^i \circ (t \varphi_{i+1}+(1-t)\mathrm{Id})dt\right)\cdot (\varphi_{i+1}-\mathrm{Id}) \]
one finds
\[ |\varphi^{i+1}-\varphi^{i}|_{s_{i+1}} \MP |\varphi_{i+1}-\mathrm{Id}|_{s_{i+1}}, \]
and therefore
\[ |\varphi^{i+1}-\varphi^{i}|_{s_{i+1}} \MP \mu_i. \]
Using the convergence of~\eqref{limits} and~\eqref{serie2}, one finds that the sequence $\varphi^i$ converges to a trivial map
\[ \varphi^* : \{\omega_0\} \rightarrow D_h, \quad \varphi^*(\omega_0):=\omega_0^* \]
such that
\[ |\omega_0^*-\omega_0| \MP \mu. \]
Now let us define
\[ V_{i+1}(\theta,\omega):=(\theta+E_{i+1}(\theta,\omega),\varphi_{i+1}(\omega)), \quad V_{i+1}=(\mathrm{Id}+E_{i+1},\varphi_{i+1}) \]
and observe that since $\Psi(Q_i)\geq 1$, then the estimates for $E_{i+1}$, $\nabla E_{i+1}$, $\varphi_{i+1}$ and $\nabla \varphi_{i+1}$ given in Proposition~\ref{kamiter} implies that
\[ |V_{i+1}-\mathrm{Id}|_{s_{i+1}} \MP \Psi(Q_{i})\mu_{i}, \quad |\nabla V_{i+1}-\mathrm{Id}|_{s_{i+1}} \MP s^{-1}C(\sigma_i)\Psi(Q_i)\mu_i.  \]
Using these estimates, and the fact that $E^{i+1}$ can be written as
\[ E^{i+1}=E_{i+1}+E^i \circ V_{i+1} \]
we can proceed as before, using the convergence of~\eqref{serie3} to show first that
\[ |\nabla E^i|_{s_i} \MP \sum_{l=0}^i s^{-1}C(\sigma_l)\Psi(Q_l)\mu_l \MP 1 \]
and then
\[ |E^{i+1}-E^i|_{s_{i+1}} \MP |E_{i+1}|_{s_{i+1}} \MP \Psi(Q_i)\mu_i. \] 
Using the convergence of~\eqref{limits} and~\eqref{serie4}, this shows that $E^i$ converges to a map
\[ E^* : \T^n \times \{\omega_0\} \rightarrow \T^n \times D_h \]
such that
\[ |E^*|_{s/2} \MP \Psi(Q_0)\mu. \]
For the $F^i$, we do have the expression
\[ F^{i+1}=F_{i+1}+(F^i \circ V_{i+1})\cdot (\mathrm{Id}+F_{i+1}) \]
or alternatively
\[ F^{i+1}=(\mathrm{Id}+F^i \circ V_{i+1})\cdot F_{i+1}+ F^i \circ V_{i+1} \]
and thus
\[ F^{i+1}-F^i=(\mathrm{Id}+F^i \circ V_{i+1})\cdot F_{i+1}+ F^i \circ V_{i+1}-F^i. \]
As before, using the estimates on $F_{i+1}$ and $\nabla F_{i+1}$ given in Proposition~\ref{kamiter}, one shows that
\[ |\nabla F^i|_{s_i} \MP \sum_{l=0}^i s^{-2}C(\sigma_l)^2\Psi(Q_l)\mu_l \]
but however, here, the sum above is not convergent. Yet we do have
\[ sC(\sigma_i)^{-1}|\nabla F^i|_{s_i} \MP sC(\sigma_i)^{-1}\sum_{l=0}^i s^{-2}C(\sigma_l)^2\Psi(Q_l)\mu_l \MP \sum_{l=0}^i s^{-1}C(\sigma_l)\Psi(Q_l)\mu_l \MP 1  \]
from~\eqref{serie4} and using the fact that the estimate for $V_{i+1}$ can be written as
\[ |V_{i+1}-\mathrm{Id}|_{s_{i+1}} \MP sC(\sigma_i)^{-1}s^{-1}C(\sigma_i)\Psi(Q_{i})\mu_{i} \]
one obtains
\[ |F^i \circ V_{i+1}-F^i|_{s_{i+1}}  \MP s^{-1}C(\sigma_i)\Psi(Q_{i})\mu_{i}.  \]
By induction, one shows that 
\[ |F^i|_{s_i} \MP \sum_{l=0}^i s^{-1}C(\sigma_l)\Psi(Q_l)\mu_l \MP 1 \]
from which one obtains
\[ |\mathrm{Id}+F^i \circ V_{i+1}|_{s_i} \MP 1 \]
and as a consequence, 
\[ |F^{i+1}-F^i|_{s_{i+1}}  \MP s^{-1}C(\sigma_i)\Psi(Q_{i})\mu_{i}. \]
Using the convergence of~\eqref{limits} and~\eqref{serie3}, this shows that $F^i$ converges to a map
\[ F^* : \T^n \times \{\omega_0\} \rightarrow \T^n \times D_h \]
such that
\[ |F^*|_{s/2} \MP Q_0\Psi(Q_0)\mu. \]
For $G^i$, we have the expression
\[ G^{i+1}=(F^i \circ V_{i+1}+\mathrm{Id})\cdot G_{i+1}+G^i \circ V_{i+1} \]
and thus
\[ G^{i+1}-G^i=(F^i \circ V_{i+1}+\mathrm{Id})\cdot G_{i+1}+G^i \circ V_{i+1}-G^i. \]
Proceeding exactly as we did for $E^i$ and $F^i$, using the convergence of~\eqref{limits},~\eqref{serie3} and~\eqref{serie5}, one finds that $G^i$ converges to a map
\[ G^* : \T^n \times \{\omega_0\} \rightarrow \T^n \times D_h \]
such that
\[ |G^*|_{s/2} \MP Q_0\Psi(Q_0)\varepsilon. \]
In summary, the map $\mathcal{F}^i$ converges to a map
\begin{equation*}
\mathcal{F}^* : \T^n \times D_{r/2} \times \{\omega_0\} \rightarrow \T^n \times D_{r,h} 
\end{equation*}
which belongs to $\mathcal{G}$, of the form
\[ 
\begin{cases}
\mathcal{F}^*(\theta,I,\omega_0)=(\Phi^*_{\omega_0}(\theta,I),\omega_0^*), \\
\Phi_{\omega_0}^*(\theta,I)=(\theta+E^*(\theta),I+F^*(\theta)\cdot I +G^*(\theta))
\end{cases}
 \]
with the estimates
\begin{equation}\label{estF}
|E^*|_{s/2} \MP \Psi(Q_0)\mu, \quad 
|F^*|_{s/2} \MP Q_0\Psi(Q_0)\mu, \quad |G^*|_{s/2} \MP Q_0\Psi(Q_0)\varepsilon, \quad 
|\omega_0^*-\omega_0| \MP \mu.
\end{equation} 
Then from the estimates
\[ |A^i|_{s_i} \leq \varepsilon_i, \quad |B^i|_{s_i} \leq \mu_i, \]
given in~\eqref{stepest2i}, Proposition~\ref{kamiter}, and the convergence~\eqref{limits}, it follows that both $A^i$ and $B^i$ converge to zero. Next from the estimates
\begin{equation*}
\begin{cases}
  |e^{i+1}-e^{i} \circ \varphi_{i+1}|_{s_{i+1}} \leq |A^{i}|_{s_{i}}, \\
  |\nabla_I^2R^{i+1}-\nabla_I^2R^{i} \circ  \mathcal{F}_{i+1}|_{s_{i+1}}\MP \eta
  |F_{i+1}|_{s_{i+1}} 
\end{cases}
\end{equation*}
still given in~\eqref{stepest2i}, Proposition~\ref{kamiter}, one can
prove in the same way as we did before, that $e^i$
converges to a trivial map
\[ e^* : \{\omega_0\} \rightarrow D_h, \quad e^*(\omega_0):=e_0^* \]
such that
\begin{equation}\label{estFF}
|e_0^*-e_{\omega_0^*}| \MP \varepsilon 
\end{equation}
whereas $M^i$ converges to a map
\[ M^* : \T^n \times D_{r/2} \times \{\omega_0\} \rightarrow \T^n
\times D_{r,h} \] 
such that, setting $R^*(\theta,I)=M^*(\theta,I)I\cdot I$,
\begin{equation}\label{estFFFF}
|\nabla_I^2 R^*-\nabla_I^2 R_{\omega_0^*}|_{s/2} \MP \eta Q_0\Psi(Q_0)\mu.
\end{equation} 
Therefore we have
\[ H \circ \mathcal{F}^*(\theta,I,\omega_0)= H_{\omega_0^*} \circ \Phi_{\omega_0}^*(\theta,I)=e_0^*+\omega_0 \cdot I
+R^*(\theta,I), \]
which, together with the previous estimates~\eqref{estF},~\eqref{estFF} and~\eqref{estFFFF}, is what we wanted to prove.
\end{proof}

\subsection{Proof of Theorem~\ref{classicalKAM}}\label{sec:deduc1}

In this section, we show how Theorem~\ref{KAMparameter} implies Theorem~\ref{classicalKAM}, following~\cite{Pos01}.

\begin{proof}[Proof of Theorem~\ref{classicalKAM}]
For simplicity, let us change the notation in~\eqref{Ham1} to consider a Hamiltonian $H : \T^n \times D \rightarrow \R$ of the form
\begin{equation*}
\begin{cases}
H(q,p)= h(p) +\epsilon f(q,p), \\
\nabla h(0):=\omega_0 \in \R^n
\end{cases}  
\end{equation*}
where $H$ is $(M,s_0)$-ultra-differentiable for some $s_0>0$: concretely, we just replaced the variables $(\theta,I)$ by $(q,p)$, $\varepsilon$ by $\epsilon$ and $s$ by $s_0$. Recall that we may assume that $\omega_0$ is of the form
  \[\omega_0=(1,\bar{\omega}_0) \in \R^n, \quad \bar{\omega}_0 \in
  [-1,1]^{n-1}.\]
For $p_0 \in B$, we expand $h$ in a small neighborhood of $p_0$: writing $p=p_0+I$ for $I$ close to zero, we get
\[ h(p)=h(p_0)+ \nabla_p h(p_0)\cdot I + \int_{0}^{1}(1-t)\nabla_p^2
h(p_0 +tI)\cdot I^2 \, dt. \]
Similarly, we expand $\epsilon f$ with respect to $p$, in a small neighborhood of $p_0$:
\[ \epsilon f(q,p)=\epsilon f(q,p_0)+ \epsilon \nabla_p f(q,p_0)\cdot
I + \epsilon\int_{0}^{1}(1-t)\nabla_p^2 f(q,p_0 +tI) \cdot I^2\, dt. \]
Since $\nabla_p h : B \rightarrow \Omega$ is a diffeomorphism, instead
of $p_0$ we can use $\omega=\nabla_p h(p_0)$ as a new variable, and
letting $\nabla_\omega g:=(\nabla h)^{-1}$, we write
\[ h(p)=e(\omega)+ \omega\cdot I + R_h(I,\omega) \] 
with 
\[ e(\omega):=h(\nabla_\omega g(\omega)), \quad
R_h(I,\omega):=\int_{0}^{1}(1-t)\nabla_p^2 h(\nabla_\omega
g(\omega)+tI) \cdot I^2 \, dt\]
and also, letting $\theta=q$, 
\[ \epsilon f(q,p)=\epsilon \tilde{A}(\theta,\omega)+ \epsilon \tilde{B}(\theta,\omega)\cdot I + \epsilon R_f(\theta,I,\omega) \]
with
\[ \tilde{A}(\theta,\omega):=f(\theta,\nabla_\omega g(\omega)), \quad \tilde{B}(\theta,\omega):=\nabla_p f(\theta,\nabla_\omega g(\omega))\] 
and
\[ R_f(\theta,I,\omega):=\epsilon\int_{0}^{1}(1-t)\nabla_p^2
f(\theta,\nabla_\omega g(\omega) +tI) \cdot I^2\, dt.   \]
Finally, we can set
\[ A:=\epsilon \tilde{A}, \quad B:=\epsilon \tilde{B}, \quad
R:=R_h+\epsilon R_f  = M(\theta,I,\omega)\cdot I^2, \] 
so that $h+\epsilon f$ can be written as
\[ H(\theta,I,\omega)=e(\omega)+\omega \cdot
I+A(\theta,\omega)+B(\theta,\omega)\cdot I+R(\theta,I,\omega),\]
and we have
\[ \nabla_I^2 R(\theta,I,\omega)
=\nabla_I^2 h(\nabla_\omega
  g(\omega)+I)+\epsilon\nabla_I^2 f(\theta,\nabla_\omega g(\omega)+I).\]
By assumption, $h$ and $f$ are $(M,s_0)$-ultra-differentiable on
$\T^n \times B$, and since the space of ultra-differentiable functions is
closed under taking derivatives, products, composition and inversion
(by the assumptions \eqref{H1} and \eqref{H2}, up to restricting the parameter $s_0$, see \S~\ref{sec:udiff} for
the relevant estimates), we claim that we can find $s>0$, $r>0$,
$h>0$ and $\tilde{c}>0$ which are independent of $\epsilon$ such that $H$ is $(M,s)$-ultra-differentiable on the domain $\T^n \times D_{r,h}$ with the estimates
\[ |A|_{s} \leq \tilde{c}\epsilon, \quad |B|_{s} \leq
\tilde{c}\epsilon,\quad
|\nabla_I^2 R|_{s} \leq \tilde{c}.  \] 
We may set
\[ \varepsilon:=\tilde{c}\epsilon, \quad \mu:=\sqrt{\varepsilon}, \quad \eta:=\tilde{c} \]
and assuming $\epsilon$ small enough, we have $\tilde{c}\epsilon \leq \mu=\sqrt{\varepsilon}$. Thus we have
\[ |A|_{s} \leq \varepsilon, \quad |B|_{s} \leq \mu, \quad |\nabla_I^2 R|_{s} \leq \eta.    \]
Having fixed $s>0$ and $r>0$, we may choose $Q_0$ sufficiently large so that~\eqref{eqQ0} holds true, and then by further restricting first $h$, and then $\epsilon$, we may assume that the condition~\eqref{cond0} is satisfied. Theorem~\ref{KAMparameter} applies: there exist an $(M,s/2)$-ultra-differentiable embedding $\Upsilon_{\omega_0} : \T^n \rightarrow \T^n \times D_r$, defined by 
\[ \Upsilon_{\omega_0}(\theta):=\Phi_{\omega_0}(\theta,0)=(\theta+E^*(\theta),G^*(\theta))\] 
where $\Phi_{\omega_0}$ is given by Theorem~\ref{KAMparameter}, and a vector $\omega_0^* \in \R^n$ such that $\Upsilon_{\omega_0}(\T^n)$ is invariant by the Hamiltonian flow of $H_{\omega_0^*}$ and quasi-periodic with frequency $\omega_0$. Moreover, $\omega_0^*$ and $\Upsilon_{\omega_0}$ satisfy the estimates
\begin{equation*}
|\omega_0^*-\omega_0| \leq c \mu, \quad |E^*|_{s/2} \leq c \Psi(Q_0)\mu, \quad |G^*|_{s/2}\leq c Q_0\Psi(Q_0)\varepsilon 
\end{equation*}
for some large constant $c>1$. Since $h$ is non-degenerate, there exists $p_0^*$ such that $\nabla h (p_0^*)=\omega_0^*$ and, up to taking $c>1$ larger and recalling that $\mu=\sqrt{\varepsilon}$, the above estimates imply 
\begin{equation}\label{estFFF}
|p_0^*| \leq c \sqrt{\varepsilon}, \quad |E^*|_{s/2} \leq c \Psi(Q_0)\sqrt{\varepsilon}, \quad |G^*|_{s/2}\leq c Q_0\Psi(Q_0)\varepsilon.  
\end{equation}
Now observe that an orbit $(\theta(t),I(t))$ for the Hamiltonian $H_{\omega_0^*}$ corresponds to an orbit $(q(t),p(t))=(\theta(t),I(t)+p_0^*)$ for our original Hamiltonian. Hence, if we define $T : \T^n \times \R^n \rightarrow \T^n \times \R^n$ by $T(\theta,I)=(\theta,I+p_0^*)$ and 
\[ \Theta_{\omega_0} = T \circ \Upsilon_{\omega_0} : \T^n \rightarrow \T^n
\times \R^n, \quad
\Theta_{\omega_0}(\theta)=(\theta+E^*(\theta),G^*(\theta)+p_0^*) \]
then $\Theta_{\omega_0}$ is an $(M,s/2)$-ultra-differentiable torus embedding
such that $\Theta_{\omega_0}(\T^n)$ is invariant by the Hamiltonian
flow of $H$ and quasi-periodic with frequency $\omega_0$.  The
estimates on the distance between $\Theta_{\omega_0}$ and the trivial
embedding $\Theta_0$ follows directly from~\eqref{estFFF}, which
finishes the proof.
\end{proof}

\subsection{Proof of Theorem~\ref{KAMvector}}\label{sec:deduc2}

Now we show how Theorem~\ref{KAMvector} follows from Theorem~\ref{KAMparameter}.

\begin{proof}[Proof of Theorem~\ref{KAMvector}]
Consider the vector field $X=\omega_0+B \in \mathcal{U}_{s}(\T^n,\R^n)$ as in the statement. It can be trivially included into a parameter-depending vector field: given $h>0$, let $\hat{X} \in \mathcal{U}_{s}(\T^n \times D_h,\R^n)$ be such that  
\[ \hat{X}(\theta,\omega)=\hat{X}_\omega(\theta)=\omega+B(\theta), \quad \omega \in D_h, \quad \hat{X}_{\omega_0}=X.  \]
Now given any $r>0$, consider the Hamiltonian $H$ defined on $\T^n \times D_{r,h}$ by
\begin{equation}\label{Hamvector}
H(\theta,I,\omega)=H_\omega(\theta,I):=\omega\cdot I +B(\theta)\cdot I. 
\end{equation}
Clearly, for any parameter $\omega$, the torus $\T^n \times \{0\}$ is invariant by the Hamiltonian vector field $X_{H_\omega}$, and, upon identifying $\T^n \times \{0\}$ with $\T^n$, the restriction of $X_{H_\omega}$ to this torus coincides with $\hat{X}_\omega$.  

Now the Hamiltonian $H$ defined in~\eqref{Hamvector} is of the form~\eqref{HAM} with $\varepsilon=\eta=0$ (and $e=0$) and therefore for $\mu$ sufficiently small, Theorem~\ref{KAMparameter} applies: there exist a vector $\omega_0^* \in \R^n$ and an $(M,s/2)$-ultra-differentiable embedding
\[ \Phi^*_{\omega_0} : \T^n \times D_{r/2} \rightarrow \T^n \times D_r \] 
here of the form
\[ \Phi^*_{\omega_0}(\theta,I)=(\theta+E^*(\theta),I+F^*(\theta)\cdot I) \]
with the estimates
  \begin{equation*}
    |\omega_0^*-\omega_0| \leq c_3 \mu, \quad |E^*|_{s/2} \leq
    c_3 \Psi(Q_0)\mu, \quad |F^*|_{s/2}\leq c_3
    \Delta(Q_0)\mu   
  \end{equation*}
  and  such that 
\begin{equation}\label{relH}
H_{\omega_0^*} \circ \Phi_{\omega_0}^*(\theta,I)=\omega_0 \cdot I.
\end{equation}
The embedding $\Phi^*_{\omega_0}$ clearly leaves invariant the torus $\T^n \times \{0\}$ and induces a diffeomorphism of this torus that can be identified to $\Xi:=\mathrm{Id}+E^*$. Writing the equality~\eqref{relH} in terms of Hamiltonian vector fields, we have, upon restriction to the invariant torus and recalling that the restriction of $X_{H_\omega}$ coincides with $\hat{X}_\omega$, 
\[ \Xi^*(\hat{X}_{\omega_0^*})=\omega_0. \]    
But $\hat{X}_{\omega_0^*}=\hat{X}_{\omega_0}+\omega_0^*-\omega_0=X+\omega_0^*-\omega_0$
and therefore
\[ \Xi^*(X+\omega_0^*-\omega_0)=\omega_0 \]
which, together with the estimates on $\omega_0^*$ and $\Xi-\mathrm{Id}=E^*$, was the statement we wanted to prove.    
\end{proof}

\subsection{Proof of Theorem~\ref{destruction}}\label{sec:bessi}

The goal of this short section is to show how Theorem~\ref{destruction} follows directly from the work of Bessi in~\cite{Bes00}.

In Bessi, one starts with a non-resonant vector $\omega \in \R^n$ which is assumed to be ``exponentially Liouville" in the following sense: there exists $s_0>0$ and a sequence $k_j \in \Z^n$ with $|k_j| \rightarrow +\infty$ as $j \rightarrow +\infty$ for which
\begin{equation}\label{condBessi}
0<|k_j\cdot \omega| \leq e^{-s_0|k_j|}\tag{$C_{1,s_0}$}.
\end{equation}
Given this sequence of $k_j \in \Z^n$, one can find another sequence $\tilde{k}_j \in \Z^n$ such that for all $j \in \N$, $|\tilde{k}_j|\leq |k_j|$, $\tilde{k}_j \cdot k_j=0$ and $|\tilde{k}_j\cdot \omega| \geq c|\tilde{k}_j|$ for some constant $c>0$ independent of $j$. 

Then one defines the following sequence of Hamiltonians on $\R^{n}/(2\pi\Z^n) \times \R^n$ (which are similar to the Hamiltonian considered by Arnold in~\cite{Arn64}):
\begin{equation}\label{HamBessi}\tag{$H_{1,j,s}$}
\begin{cases}
H^{1,j}_{\varepsilon,\mu}(\theta,I):=\frac{1}{2}I\cdot I+F^{1,j}_{\varepsilon,\mu}(\theta), \; (\theta,I) \in \R^{n}/(2\pi\Z^n) \times \R^n \\
F^{1,j}_{\varepsilon,\mu}(\theta):=\varepsilon\nu_{1,j,s}(1-\cos(k_j\cdot\theta))(1+\mu\tilde{\nu}_{1,j,s}\cos(\tilde{k}_j\cdot\theta)) \\
0<\varepsilon \leq 1, \; 0 < \mu \leq 1, \; \nu_{1,j,s}:=e^{-s|k_j|}, \; \tilde{\nu}_{1,j,s}:=e^{-s|\tilde{k}_j|}.  
\end{cases}
\end{equation}
Observe that the only role of the sequences $\nu_{1,j,s}$ and $\tilde{\nu}_{1,j,s}$ is to ensure that the sequence of perturbations $F^{1,j}_{\varepsilon,\mu}$ satisfy, for all $j \in \N $ and all $0 \leq \mu \leq 1$:
\[ |F^{1,j}_{\varepsilon,\mu}|_s:=\sup_{\theta \in \C^n /(2\pi\Z^n), \; |\mathrm{Im}(\theta)\leq s|}|F^{1,j}_{\varepsilon,\mu}(\theta)|\leq 4\varepsilon. \]
In~\cite{Bes00}, Bessi proved the following theorem.

\begin{theorem}[Bessi]
Assume that $\omega \in \R^n$ satisfy~\eqref{condBessi}. Then, if $s_0>s$, for any $0 \leq \varepsilon \leq 1$, there exists $\mu_\varepsilon>0$ and $j_\varepsilon \in \N$ such that for any $0<\mu \leq \mu_\varepsilon$ and any $j \geq j_\varepsilon$, the Hamiltonian system defined in~\eqref{HamBessi} does not have any invariant torus $\mathcal{T}$ satisfying
\begin{itemize}
\item[$(i)$] $\mathcal{T}$ projects diffeomorphically to $\T^n$;
\item[$(i)$] There is a $C^1$ diffeomorphism between $\T^n$ and $\mathcal{T}$ which conjugates the flow on $\mathcal{T}$ to the linear flow on $\T^n$ of frequency $\omega$. 
\end{itemize}
\end{theorem}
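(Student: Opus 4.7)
The plan follows the \emph{Arnold--Bessi instability} scheme: produce an orbit of $H^{1,j}_{\varepsilon,\mu}$ with drift of order one in the action variable, along a transition chain of hyperbolic tori connected by heteroclinic orbits. Since any Lagrangian torus of frequency $\omega$ would separate phase space and block such a drift (by an Aubry--Mather-type barrier argument), this forces the non-existence of the invariant torus.

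I would start by decomposing the Hamiltonian~\eqref{HamBessi} as $H^{1,j}_{\varepsilon,\mu} = H_0 + V_1 + \mu W$ with $H_0 = \tfrac{1}{2}I\cdot I$, $V_1 = \varepsilon\nu_{1,j,s}(1-\cos(k_j\cdot\theta))$, and $W = \varepsilon\nu_{1,j,s}\tilde{\nu}_{1,j,s}(1-\cos(k_j\cdot\theta))\cos(\tilde{k}_j\cdot\theta)$. For $\mu=0$ the system is integrable: a pendulum in the direction of $k_j$ decoupled from $n-1$ free rotators. Over the top of the pendulum well there is an $(n-1)$-parameter family of partially hyperbolic invariant tori whose stable and unstable manifolds coincide along the pendulum separatrix. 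Persistence of normally hyperbolic invariant manifolds guarantees that this family survives for small $\mu>0$, but the stable and unstable manifolds generically split. A Poincar\'e--Melnikov computation identifies the leading-order splitting as the integral of $W$ along an unperturbed pendulum homoclinic: the oscillatory factor $\cos((\tilde{k}_j\cdot\omega)\tau)$ combines with the pre-factor $e^{-s|k_j|}$ from $\nu_{1,j,s}$, while competing error terms from the next-order expansion are controlled by $e^{-s_0|k_j|}$ under~\eqref{condBessi}. The hypothesis $s_0 > s$ is precisely what guarantees that the leading Melnikov term dominates, so the splitting is transverse for $j$ large and $\mu$ small.

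Given transverse splitting, I would concatenate heteroclinic excursions between neighboring tori into a transition chain, and shadow it via a standard shadowing lemma (or via Mather's variational construction) to produce the drifting orbit. The main obstacle, and the heart of Bessi's argument in~\cite{Bes00}, is the variational step: one must show that the shadowing orbit so constructed has action strictly smaller than the action that any hypothetical KAM torus of frequency $\omega$ would carry, and this comparison (set up via Mather's $\alpha$- and $\beta$-functions) is what rigorously rules out the torus. The sharpness of the Melnikov estimate under the exponentially Liouville condition~\eqref{condBessi} is essential both for the transverse splitting and for this action inequality to be effective; the whole mechanism is delicate precisely because every ingredient in the chain has exponentially small size in $|k_j|$, and the hierarchy $s_0>s$ is what keeps the leading terms separated from the remainders.
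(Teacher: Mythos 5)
The paper does not give a proof of this theorem at all: it is stated and cited as Bessi's result~\cite{Bes00}, and the paper's only contribution in this section is the subsequent modification (Theorem~\ref{BessiG}) to the ultra-differentiable setting. So what follows is an assessment of your sketch on its own merits.

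There is a genuine gap in the opening framing. You claim that a Lagrangian torus $\mathcal{T}$ of frequency $\omega$ ``would separate phase space and block such a drift.'' This is only true for $n=1$. For $n\geq 2$, a graph over $\T^n$ inside $\T^n\times\R^n$ has codimension $n\geq 2$, so its complement is connected; there is no topological obstruction to action drift, and indeed Arnold diffusion famously coexists with many surviving invariant tori. Consequently the entire transition-chain/shadowing machinery you invoke in the middle paragraph is both logically unsupported (the torus does not block the drift) and also not what Bessi does. Bessi~\cite{Bes00} never constructs a diffusing orbit nor uses NHIM persistence or a shadowing lemma; that belongs to his Arnold-diffusion papers~\cite{Bes96},~\cite{Bes97}, which address a different (and harder) problem.

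The actual mechanism is the one you only mention at the end: a direct variational comparison. By the Herman--Mather graph-and-minimizer theory, a $C^1$ Lagrangian graph invariant under a Tonelli flow and conjugate to the linear flow of frequency $\omega$ forces every orbit on it to be a time-averaged minimizer, so the corresponding Peierls barrier must vanish. Bessi exhibits competing trajectories inside the $k_j$-resonance layer whose action is strictly smaller, proving the barrier is positive and thus reaching a contradiction without ever producing a drifting orbit. The role of~\eqref{condBessi} with $s_0>s$ is then to ensure the frequency $\omega$ sits so close to the resonance $k_j\cdot I=0$ that the hypothetical torus would be trapped inside the pendulum layer (whose width is governed by $\sqrt{\varepsilon\,e^{-s|k_j|}}$), which is where the action comparison becomes effective. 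It is not, as you write, a matter of a leading Melnikov term dominating a next-order remainder in a splitting expansion.
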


It is clear that it is the regularity of the perturbation, here the analyticity which causes the exponential decay of the Fourier coefficients, that forces the condition~\eqref{condBessi}. If the perturbation is assumed to be only of class $C^r$ for some $r \in \N$, then~\eqref{condBessi} can be weakened to cover frequencies $\omega$ which are Diophantine with an exponent $\tau$ which is related to $r$ (this can be obtained from Bessi's work; one can find a better quantitative result in~\cite{CW13}, which also uses ideas of~\cite{Bes00}).

Here we would like to consider the case where the perturbation is $M$-ultra-differentiable, for a given sequence $M=(M_l)_{l \in \N}$; we will consider a slight modification of the family of Hamiltonians~\eqref{HamBessi} to a family of Hamiltonians~\eqref{HamBessiG} depending on $M$, which are still analytic but for which the perturbation are bounded and small only in a $M$-ultra-differentiable norm.

First we need to compute the $M$-norm of the function $P_k(\theta):=\cos(k\cdot \theta)$ for an arbitrary $k \in \Z^n$. Using the fact that $(l+1)^2 \leq 4^l$ for any $l \in \N$ and recalling the definition of the function $\Omega$ in~\eqref{Ofunction}, we have
\begin{equation*}
|P_k|_{s}=c\sup_{l \in \N}\frac{(l+1)^2s^l |k|^l}{M_l} \leq c \sup_{l \in \N}\frac{(4|k|s)^l}{M_l}=c\exp(\Omega(4|k|s)).
\end{equation*}
Now we introduce the following condition on the non-resonant vector $\omega \in \R^n$: there exists $s_0>0$ and a sequence $k_j \in \Z^n$ with $|k_j| \rightarrow +\infty$ as $j \rightarrow +\infty$ for which
\begin{equation}\label{condBessiG}\tag{$C_{M,s_0}$}
0<|k_j\cdot \omega| \leq \exp(-\Omega(4|k|s_0)).
\end{equation}
For the sequence $M=M_{1}$, this condition reduces to~\eqref{condBessi}, up to an unimportant factor $4$. Then we consider the following modified sequence of Hamiltonians:
\begin{equation}\label{HamBessiG}\tag{$H_{M,j,s}$}
\begin{cases}
H^{M,j}_{\varepsilon,\mu}(\theta,I):=\frac{1}{2}I\cdot I+F^{M,j}_{\varepsilon,\mu}(\theta), \; (\theta,I) \in \R^{n}/(2\pi\Z^n) \times \R^n \\
F^{M,j}_{\varepsilon,\mu}(\theta):=\varepsilon\nu_{M,j,s}(1-\cos(k_j\cdot\theta))(1+\mu\tilde{\nu}_{M,j,s}\cos(\tilde{k}_j\cdot\theta)) \\
0<\varepsilon,\mu \leq 1, \; \nu_{M,j,s}:=\exp(-\Omega(4|k_j|s_0)), \; \tilde{\nu}_{M,j,s}:=\exp(-\Omega(4|\tilde{k}_j|s_0)).  
\end{cases}
\end{equation}
With these choices of $\nu_{M,j,s}$ and $\tilde{\nu}_{M,j,s}$ we have that, for all $j \in \N $ and all $0 \leq \mu \leq 1$:
\[ |F^{M,j}_{\varepsilon,\mu}|_{s} \leq 4c\varepsilon. \]
The argument of Bessi goes exactly the same of way for this family of Hamiltonians~\eqref{HamBessiG} under the condition~\eqref{condBessiG}, up to the following minor point one has to check: recall from~\eqref{Omegalog} that $\Omega$ satisfies 
\[ \lim_{t \rightarrow +\infty} \frac{\Omega(t)}{\ln(t)}=+\infty \]
and thus, using the condition that $s_0>s$, one obtains
\[ \limsup_{j \rightarrow +\infty}\left(\exp(\Omega(4|k_j|s_0)-\Omega(4|k_j|s))\right)=+\infty \]
which is necessary for Bessi's argument to go through. So we have the following statement.

\begin{theorem}\label{BessiG}
Assume that $\omega \in \R^n$ satisfy~\eqref{condBessiG}. Then, if $s_0> s$, for any $0 \leq \varepsilon \leq 1$, there exists $\mu_\varepsilon>0$ and $j_\varepsilon \in \N$ such that for any $0<\mu \leq \mu_\varepsilon$ and any $j \geq j_\varepsilon$, the Hamiltonian system defined in~\eqref{HamBessiG} does not have any invariant torus $\mathcal{T}$ satisfying
\begin{itemize}
\item[$(i)$] $\mathcal{T}$ projects diffeomorphically to $\T^n$;
\item[$(i)$] There is a $C^1$ diffeomorphism between $\T^n$ and $\mathcal{T}$ which conjugates the flow on $\mathcal{T}$ to the linear flow on $\T^n$ of frequency $\omega$. 
\end{itemize}
\end{theorem}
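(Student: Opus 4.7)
The plan is to essentially reduce Theorem~\ref{BessiG} to Bessi's original argument in~\cite{Bes00}. The key point is that each Hamiltonian $H^{M,j}_{\varepsilon,\mu}$ is still a real-analytic trigonometric polynomial in $\theta$ (and quadratic in $I$), so Bessi's variational/Aubry--Mather destruction argument, which is inherently analytic, transplants wholesale to the present setting; what changes is only the quantitative scale of the parameters governing perturbation size, transverse oscillation and small divisor.

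First I would record the norm bound $|F^{M,j}_{\varepsilon,\mu}|_s \leq 4c\varepsilon$, which is immediate from the estimate $|\cos(k\cdot\theta)|_s \leq c\exp(\Omega(4|k|s))$ already derived in the excerpt, combined with the choices $\nu_{M,j,s} = \exp(-\Omega(4|k_j|s_0))$, $\tilde{\nu}_{M,j,s} = \exp(-\Omega(4|\tilde{k}_j|s_0))$ and the monotonicity of $\Omega$ together with $s < s_0$. I would then follow Bessi's proof step by step. Assuming for contradiction that an invariant torus $\mathcal{T}$ conjugated to a rotation of frequency $\omega$ existed, one writes the conjugating embedding as a small perturbation of the identity, uses the $\mu = 0$ problem as a Jacobi-type pendulum along the direction $k_j$ (quasi-integrable thanks to $\tilde{k}_j \cdot k_j = 0$), and then compares the Birkhoff/action computation along the separatrix to a Melnikov-type integral contributed by the $\mu\tilde{\nu}_{M,j,s}\cos(\tilde{k}_j\cdot\theta)$ factor. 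The resulting inequality produces a forced lower bound on $\mu$ that is incompatible with the freedom to choose $\mu_\varepsilon$ arbitrarily small, provided a certain ratio diverges along $j$. Each step in this chain involves only Fourier estimates and Cauchy-type bounds on $F^{M,j}_{\varepsilon,\mu}$, and translates verbatim under the substitution $e^{-s|k|} \leadsto \exp(-\Omega(4|k|s))$.

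The one genuinely substantive point to verify, as the authors indicate, is the divergence
\[ \limsup_{j \to +\infty} \exp(\Omega(4|k_j|s_0) - \Omega(4|k_j|s)) = +\infty, \]
playing the role of $\limsup_j e^{(s_0-s)|k_j|} = +\infty$ in the analytic case. I expect this to be the main (and essentially only) obstacle, and I would argue it by contradiction from \eqref{Omegalog}. Suppose $\Omega(ys_0) - \Omega(ys) \leq M$ for all sufficiently large $y$. Iterating this bound $n$ times at a base point $y_0$ gives $\Omega(y_0 s (s_0/s)^n) \leq \Omega(y_0 s) + nM$; letting $Y_n := y_0 s (s_0/s)^n$ and using $n = (\ln Y_n - \ln(y_0 s))/\ln(s_0/s)$, one reads off $\Omega(Y_n) \leq A + B \ln Y_n$ for constants $A, B$ independent of $n$, which contradicts $\Omega(y)/\ln(y) \to +\infty$. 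Since $|k_j| \to +\infty$ by assumption, the wanted divergence follows. Once this ingredient is in hand, Bessi's argument concludes that for any fixed $\varepsilon \in (0,1]$ one can find $\mu_\varepsilon > 0$ and $j_\varepsilon \in \N$ such that no $\omega$-invariant torus satisfying $(i)$ and $(ii)$ can exist for $\mu \leq \mu_\varepsilon$ and $j \geq j_\varepsilon$, which is the conclusion of Theorem~\ref{BessiG}.
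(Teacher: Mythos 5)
Your proposal follows exactly the route the paper takes: reduce to Bessi's analytic argument, record the ultra-differentiable norm bound $|F^{M,j}_{\varepsilon,\mu}|_s\leq 4c\varepsilon$, and isolate as the only substantive new point the divergence
\[
\limsup_{j\to+\infty}\exp\!\bigl(\Omega(4|k_j|s_0)-\Omega(4|k_j|s)\bigr)=+\infty,
\]
which replaces $\limsup_j e^{(s_0-s)|k_j|}=+\infty$. You go further than the paper in that you actually supply an argument for this divergence, whereas the paper simply asserts it from~\eqref{Omegalog}.

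There is, however, a small but genuine gap in the way you set up the contradiction. You assume ``$\Omega(ys_0)-\Omega(ys)\leq M$ for all sufficiently large $y$'', but the negation of the desired $\limsup$ statement is a priori only ``$\Omega(4|k_j|s_0)-\Omega(4|k_j|s)\leq M$ for all $j$'', i.e. boundedness along the sparse sequence $y=4|k_j|$, not along all $y$. Your telescoping iteration needs the bound at all the intermediate points $y_0(s_0/s)^m$, which the sequence $4|k_j|$ need not visit. The gap is closed by noting that $\psi(t):=\Omega(e^t)=\sup_{l}(lt-\ln M_l)$ is convex (it is a supremum of affine functions of $t$), hence $y\mapsto\Omega(ys_0)-\Omega(ys)=\psi(\ln y+\ln s_0)-\psi(\ln y+\ln s)$ is non-decreasing in $y$; consequently boundedness along any sequence $4|k_j|\to+\infty$ is equivalent to boundedness for all large $y$, and your iteration then applies. (In fact convexity alone already gives the conclusion directly: $\Omega(ys_0)-\Omega(ys)\geq(\ln s_0-\ln s)\,\psi'(\ln y+\ln s)$, and $\psi'\to+\infty$ because $\psi(t)/t\to+\infty$ by~\eqref{Omegalog}; so the difference tends to $+\infty$ along every sequence $y\to+\infty$, which is a cleaner way to finish.) With this amendment your argument is complete and correct.
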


Now Theorem~\ref{BessiG} implies Theorem~\ref{destruction}, as if $\omega$ satisfies~\eqref{conddestruction}, then it satisfies~\eqref{condBessiG} for some $s_0>0$ and it is sufficient to consider a Hamiltonian system as in~\eqref{HamBessiG} with $s<s_0$.

\section{Application to Hamiltonian normal forms and Nekhoroshev theory}\label{sec:Nek}

In this section, we give applications to Hamiltonian normal forms and Nekhoroshev theory: we prove Theorem~\ref{lineaire} and Theorem~\ref{difflineaire}, Theorem~\ref{nonlineaire}, Theorem~\ref{convex} and Theorem~\ref{diffconvex}, and finally Theorem~\ref{steep}.

In the special case where the Hamiltonians are Gevrey regular, proofs of all those results are contained in~\cite{MS02}, \cite{Bou13} and~\cite{Bou11}. Our purpose here is to give an extension to ultra-differentiable Hamiltonians; our crucial technical result will be a flexible normal form around a periodic frequency (Proposition~\ref{propperiodic} in \S~\ref{sec:periodic}), as all the results will be deduced from it.

As before, we shall write
$$u \MP v \quad (\mbox{respectively } u \PM v, \quad u \EP v, \quad u \PE v)$$
if, for some unimportant ``implicit" constant $c\geq 1$, we
have 
$$u \leq c v \quad (\mbox{respectively } cu \leq v, \quad u =c v, \quad cu = v).$$
Clearly, what we will consider as being unimportant depends on the precise setting, and it will be made clear at the beginning of each of the next sections.

\subsection{Periodic averaging}\label{sec:periodic}

Recall that a vector $v \in \R^n\setminus\{0\}$ is called periodic if there exists $t>0$ such that $tv \in \Z^n$; its period $T$ is then defined as the smallest such $t>0$. Let $v$ be a $T$-periodic vector, and let us denote by $L_v(I)=v\cdot I$ the linear integrable Hamiltonian with frequency $v$.

In this section, given positive real parameters $s,\delta,\mu,\rho,\nu$, we shall consider the following Hamiltonian: 
\begin{equation}\label{HamN}
\begin{cases}\tag{$\sharp$}
H(\theta,I)=L_v(I)+S(I)+R(I)+F(\theta,I), \\
H : \T^n \times D_{\delta} \rightarrow \R \\
|\nabla S|_{s} \leq \mu, \quad |\nabla R|_{s} \leq \rho, \quad |F|_{s} \leq \nu, \\
\eta:=\max\{\mu,\rho\},
\end{cases}
\end{equation}
where $D_{\delta} \subset \R^n$ is the ball of radius $\delta$ around the origin. 

The goal of this section is to prove the following result, in which implicit constants will depend only on $n$ and where $\{\,.\,\}$ denotes the usual Poisson bracket.

\begin{proposition}\label{propperiodic}
Let $H$ be as in~\eqref{HamN}, with $M$ satisfying \eqref{H1} and \eqref{H2} and $v$ $T$-periodic. There exists a constant $A \EP 1$ such that, for $\xi>1$, if
\begin{equation}\label{seuil1}
\nu \leq C(\kappa_\xi)^{-1}s\eta, \quad C^{-1}(s(AT\eta)^{-1}) \leq \kappa_\xi, \quad \kappa_\xi:=\ln \xi(24)^{-1}, 
\end{equation}
then there exists a $(M,s/\xi)$-ultra-differentiable symplectic transformation
\[ \Phi : \T^n \times D_{\delta/\xi} \rightarrow \T^n \times D_{\delta} \]
such that
\[ H\circ\Phi =L_v+S+R+\bar{F}+\hat{F} \]
with $\{\bar{F},L_v\}=0$ and the estimates
\begin{equation}\label{EST1}
|\Phi-\mathrm{Id}|_{s/\xi}\leq 2 T\nu, \quad |\bar{F}|_{s/\xi} \leq 2\nu, \quad |\hat{F}|_{s/\xi} \leq \nu\exp\left(-\kappa_\xi\left(C^{-1}(s(AT\eta)^{-1})\right)^{-1}\right). 
\end{equation}
Moreover, if $I$ is an integrable Hamiltonian such that $\{I,F\}=0$, then $\{I,\bar{F}\}=0$.
\end{proposition}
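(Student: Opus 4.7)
The plan is to iterate $N$ times a single \emph{periodic averaging} step that removes the non-resonant part of $F$, with $N$ and the geometric parameters chosen so that the accumulated width loss is precisely $s-s/\xi$ while the remainder decreases by a factor $\sim 2^{-N}$, producing the exponentially small bound on $\hat F$. The key feature is that, $v$ being $T$-periodic, the relevant cohomological equation can be solved \emph{without any loss of width}, so the scheme is of ``regular'' (linear) type rather than Newton (quadratic) type.

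\textbf{Single step.} Starting from $H_i = L_v + S + R + \bar F_i + F_i$ with $\{\bar F_i,L_v\}=0$ and $|F_i|_{s_i} \leq \nu_i$, I would decompose $F_i = [F_i]_v + (F_i - [F_i]_v)$ where
\[ [F_i]_v(\theta,I) := T^{-1}\int_0^T F_i(\theta+tv,I)\,dt \]
automatically commutes with $L_v$, and solve $\{\chi_i,L_v\}=F_i-[F_i]_v$ by the explicit Lochak formula
\[ \chi_i(\theta,I) := T^{-1}\int_0^T t\bigl(F_i-[F_i]_v\bigr)(\theta+tv,I)\,dt, \]
which enjoys the crucial loss-less bound $|\chi_i|_{s_i}\leq 2T\nu_i$. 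Composing with the time-$1$ Hamiltonian flow of $\chi_i$ (well defined via Proposition~\ref{flotF}) a Taylor expansion gives
\[ H_i \circ X_{\chi_i}^1 = L_v + S + R + \underbrace{(\bar F_i + [F_i]_v)}_{\bar F_{i+1}} + F_{i+1}, \]
where $F_{i+1}$ collects $\{S+R,\chi_i\}$, $\{\bar F_i+F_i,\chi_i\}$ and the second-order Lie integral. Using Corollary~\ref{corderivative}, Proposition~\ref{produit} and Proposition~\ref{composition}, the dominant contribution is $|\{S+R,\chi_i\}|_{s_{i+1}} \MP \eta T s_i^{-1} C(\sigma_i)\,\nu_i$, all other terms being quadratic in $\nu_i$.

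\textbf{Iteration and calibration.} I would fix $\sigma_i = \sigma := C^{-1}\bigl(s/(AT\eta)\bigr)$ (constant along the scheme) for a universal constant $A$ large enough that $\eta Ts^{-1}C(\sigma)=1/A$ renders the linear term $\leq \nu_i/4$; the first part of~\eqref{seuil1} then forces the quadratic term to be $\leq \nu_i/4$ as well, yielding the geometric decay $\nu_{i+1}\leq \nu_i/2$. With the width sequence $s_{i+1}:=s_i(1-\sigma)^3$ coming from Proposition~\ref{flotF} and $N:=\lfloor\kappa_\xi/(\sigma\ln 2)\rfloor$, the second part of~\eqref{seuil1} (which is exactly $\sigma\leq\kappa_\xi$) combined with $\kappa_\xi = \ln\xi/24$ guarantees $3N\sigma\leq \ln\xi$, hence $s_N \geq s(1-\sigma)^{3N} \geq s/\xi$; meanwhile $\nu_N\leq \nu\cdot 2^{-N}\leq \nu\exp(-\kappa_\xi/\sigma)$. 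Setting $\Phi := X_{\chi_0}^1\circ\cdots\circ X_{\chi_{N-1}}^1$, $\hat F := F_N$ and $\bar F := \sum_{i<N}[F_i]_v$, summation of the geometric series gives $|\Phi-\mathrm{Id}|_{s/\xi}\leq \sum 2T\nu_i \leq 2T\nu$ (up to the fixed factor $A$) and $|\bar F|_{s/\xi}\leq \sum \nu_i \leq 2\nu$.

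\textbf{Commutation property.} The ``moreover'' statement follows by induction on $i$: if $\{I,F_i\}=0$, the explicit integral formulas show at once that $\{I,[F_i]_v\}=\{I,\chi_i\}=0$; since $I$ is integrable one has $\{I,S+R\}=0$, and the Jacobi identity yields $\{I,F_{i+1}\}=0$, whence $\{I,\bar F\}=\sum_i\{I,[F_i]_v\}=0$. The main obstacle will be the careful bookkeeping of the smallness hypothesis of Proposition~\ref{flotF}, namely $|\chi_i|_{s_i}\leq (2n)^{-1}s_i^2\sigma C(\sigma)^{-1}$, throughout the iteration: this is the precise purpose of~\eqref{seuil1}, the first inequality calibrating $\nu$ against $s\eta/C(\kappa_\xi)$ and the second enforcing $\sigma\leq\kappa_\xi$, so that both the smallness for the flow and the confinement of the total width loss inside the allowance $\ln\xi$ are guaranteed simultaneously.
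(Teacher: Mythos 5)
Your iterative scheme has the right skeleton (solve the cohomological equation losslessly along the periodic flow, iterate an averaging step until the remainder is exponentially small, then sum a geometric series), and your treatment of the commutation property is correct. But there is a genuine gap in the calibration of the first step, and it is exactly the place where the paper departs from what you propose.

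You fix a single width loss $\sigma:=C^{-1}\bigl(s(AT\eta)^{-1}\bigr)$ for every step and claim that the first part of \eqref{seuil1}, $\nu\leq C(\kappa_\xi)^{-1}s\eta$, takes care of the smallness hypothesis of Proposition~\ref{flotF} (equivalently, of condition \eqref{seuil2}) at the very first step. Spelling out what that first step actually requires with your choice of $\sigma$: since $C(\sigma)=s/(AT\eta)$, the condition $C(\sigma)^2T\nu\PM s^2$ becomes $\nu\PM A^2T\eta^2$. On the other hand, \eqref{seuil1} only gives $\nu\leq C(\kappa_\xi)^{-1}s\eta$ and $T\eta\leq s/(AC(\kappa_\xi))$. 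Combining these two gives $A^2T\eta^2\leq A\,C(\kappa_\xi)^{-1}s\eta$, so $A^2T\eta^2$ is \emph{at most} a constant times the allowed size of $\nu$ — the bound runs in the wrong direction. Concretely, if $T\eta$ is made much smaller than $s/(AC(\kappa_\xi))$ while $\nu$ stays at the maximal value permitted by \eqref{seuil1}, the ratio $\nu/(A^2T\eta^2)$ blows up, and no universal constant $A$ can save the first step. The statement ``the first part of~\eqref{seuil1} then forces the quadratic term to be $\leq\nu_i/4$'' is therefore incorrect; the hypothesis controls $\nu$ in terms of $s\eta$, not in terms of $T\eta^2$.

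The paper resolves precisely this mismatch with a Neishtadt-type calibration that you omitted: the first averaging step is performed with the larger width loss $\sigma_1:=\kappa_\xi$, for which the smallness condition $C(\sigma_1)^2T\nu\PM s^2$ factors as $\nu\leq C(\kappa_\xi)^{-1}s\eta$ \emph{and} $C(\kappa_\xi)T\eta\PM s$, both of which do follow from \eqref{seuil1}; after this first step the remainder is already $\MP T\eta^2$, and the subsequent $m-1$ steps are performed with the small loss $\sigma_m:=\kappa_\xi/(m-1)\geq\sigma$, whose smallness condition $C(\sigma_m)T\eta\PM s$ is exactly the second inequality of \eqref{seuil1}. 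With constant $\sigma$ you cannot decouple the two hypotheses in this way, which is why the width-loss budget was split asymmetrically in the original argument. Everything else in your sketch — the choice of the number of steps, the geometric summation of $|\Phi-\mathrm{Id}|$ and $|\bar F|$, and the inductive argument for the Poisson commutation — is sound once this first-step calibration is fixed.
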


Proposition~\ref{propperiodic} will be proved by iterating a large number of times an elementary averaging step we now describe. Let us consider the following slightly more general setting:
\begin{equation}\label{HamNN}
\begin{cases}
H(\theta,I)=L_v(I)+S(I)+R(I)+G(\theta,I)+F(\theta,I), \\
H : \T^n \times D_{\delta} \rightarrow \R \\
|\nabla S|_{s} \leq \mu, \quad |\nabla R|_s \leq \rho, \quad |F|_{s} \leq \nu, \quad |G| \leq 2\nu , \quad \{G,L_v\}=0, \\
\eta:=\max\{\mu,\rho\}  
\end{cases}
\end{equation}
which reduces to~\eqref{HamN} when $G=0$. Given a $T$-periodic frequency $v \in \R^n\setminus \{0\}$ and a function $K : \T^n \times D_{\delta} \rightarrow \R$, let us define $[K]_v : \T^n \times D_{\delta} \rightarrow \R$ by
\[ [K]_v(\theta,I)=\int_{0}^{1}K(\theta+tTv,I)dt. \] 
Clearly, $[K]_v$ is the projection of $K$ onto the kernel of the operator $\{\,.\,,L_v\}$, that is $\{K,L_v\}=0$ if and only then $K=[K]_v$.

\begin{lemma}\label{lemmeriodic}
Let $H$ be as in~\eqref{HamNN}, with $M$ satisfying \eqref{H1} and \eqref{H2} and $v$ $T$-periodic. Given $0<\sigma<1$, assume that
\begin{equation}\label{seuil2}
C(\sigma)^2T\nu \PM s^2. 
\end{equation}
Then there exist a $(M,s(1-\sigma)^3)$-ultra-differentiable symplectic transformation
\[ \Psi : \T^n \times D_{\delta(1-\sigma)^3} \rightarrow \T^n \times D_{\delta} \]
such that
\[ H\circ\Psi =L_v+S+R+G+[F]_v+F^+ \]
with the estimates
\begin{equation}\label{EST2}
|\Psi-\mathrm{Id}|_{s(1-\sigma)^3}\leq T\nu, \quad |F^+|_{s(1-\sigma)^3} \MP (T\nu^2 C(\sigma)^2s^{-2}+T\eta\nu C(\sigma)s^{-1}).  
\end{equation}
Moreover, if $I$ is an integrable Hamiltonian such that $\{I,F\}=\{I,G\}=0$, then $\{I,[F]_v\}=\{I,F^+\}=0$.
\end{lemma}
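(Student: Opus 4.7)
The plan is the classical one-step averaging near a periodic orbit, adapted to the ultra-differentiable setting via the estimates of Section~\ref{sec:udiff}. First I would solve the cohomological equation $\{\chi, L_v\} = F - [F]_v$ explicitly, using the fact that $Tv \in \Z^n$. Integration by parts shows that
\[
\chi(\theta, I) := T \int_0^1 s\, F(\theta + sTv, I)\, ds
\]
satisfies $\{\chi, L_v\} = F - [F]_v$. Since the $(M,s)$-norm is translation-invariant on $\T^n$, one gets $|\chi|_s \leq T|F|_s/2 \leq T\nu/2$. Under \eqref{seuil2} (which in particular implies $T\nu \MP s^2 \sigma C(\sigma)^{-1}$, using $C(\sigma)\geq (e\sigma)^{-1}$), Proposition~\ref{flotF} of \S~\ref{sec:flows} applies to $\chi$ and produces its time-$1$ map $\Psi := \chi^1$, an $(M, s(1-\sigma)^3)$-ultra-differentiable symplectomorphism from $\T^n \times D_{\delta(1-\sigma)^3}$ to $\T^n \times D_{\delta(1-\sigma)^2}$, with $|\Psi - \mathrm{Id}|_{s(1-\sigma)^3} \leq |\chi|_s \leq T\nu$. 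This gives the first estimate of~\eqref{EST2}.

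Next I would expand $H \circ \Psi$ by the Hamiltonian Taylor formula. Writing $K \circ \chi^1 = K + \int_0^1 \{K,\chi\}\circ \chi^u\,du$ and using $\{L_v, \chi\} = [F]_v - F$, a simple manipulation (swapping the order of integration) gives
\[
H \circ \Psi = L_v + S + R + G + [F]_v + F^+,
\]
with
\[
F^+ = \int_0^1 (1-u)\{[F]_v-F,\chi\}\circ \chi^u\,du + \int_0^1 \{S+R+G+F,\chi\}\circ \chi^u\,du.
\]
Estimating $F^+$ is then a matter of combining the derivative bound (Corollary~\ref{corderivative}), the product bound (Proposition~\ref{produit}), and the composition bound (Proposition~\ref{composition}). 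For the first integrand, each factor of $\chi$ loses a width $(1-\sigma)$, bringing a factor $s^{-1}C(\sigma) T\nu$; doing this twice on the product $F\cdot \chi$ produces $\MP s^{-2}C(\sigma)^2 T\nu^2$. For the second, the terms $\{S,\chi\}$ and $\{R,\chi\}$ involve only $\nabla S, \nabla R$ (no loss) times $\partial_\theta \chi$, giving $\MP s^{-1}C(\sigma) T\eta\nu$; the term $\{G,\chi\}$ contributes $\MP s^{-2}C(\sigma)^2 T\nu^2$; and $\{F,\chi\}$ also gives $\MP s^{-2}C(\sigma)^2 T\nu^2$. The final composition with $\chi^u$ is another application of Proposition~\ref{composition} at the width level $s(1-\sigma)^3$, which is licit precisely because~\eqref{seuil2} makes $|\chi|_s$ much smaller than $s(1-\sigma)^2\sigma$. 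Collecting everything yields the second bound in~\eqref{EST2}.

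Finally, the claim about an integrable Hamiltonian $I$ such that $\{I,F\}=\{I,G\}=0$ follows from two observations. Since $I$ depends only on the action variable, $\{I, [F]_v\} = [\{I,F\}]_v = 0$, and since $\chi$ is built from $F$ by integration along the $v$-flow on the angles (which commutes with $\{I,\cdot\}$), one also has $\{I,\chi\} = 0$. Jacobi's identity then gives $\{I, \{K,\chi\}\} = \{\{I,K\},\chi\} + \{K, \{I,\chi\}\} = 0$ for $K \in \{F, [F]_v, S, R, G\}$, and since $\chi^u$ is the flow of a Hamiltonian Poisson-commuting with $I$, it preserves $I$: $\{I, L\circ \chi^u\} = \{I, L\}\circ \chi^u$. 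Applying this inside the integrals defining $F^+$ gives $\{I, F^+\}=0$. The only real subtlety is the bookkeeping of the width shrinkages $s \to s(1-\sigma) \to s(1-\sigma)^2 \to s(1-\sigma)^3$, and the verification that~\eqref{seuil2}, although quite mild, is simultaneously sufficient for the existence of the flow $\Psi$, the admissibility of every composition with $\chi^u$, and the correct magnitudes in the two terms of the $F^+$ bound; this is the only place where one must be careful, and it is the reason for the precise form~\eqref{seuil2} of the smallness assumption.
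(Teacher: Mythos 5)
Your proof is correct and follows essentially the same route as the paper: solve the cohomological equation by integrating along the periodic orbit, apply Proposition~\ref{flotF} to get the flow and the first estimate of~\eqref{EST2}, then expand $H\circ\Psi$ and estimate the remainder term by term using the derivative, product and composition estimates. The only (inessential) differences are your choice of generating function $\chi = T\int_0^1 s\,F(\theta+sTv,I)\,ds$, which differs from the paper's $Y = T\int_0^1 (F-[F]_v)(\theta+tTv,I)\,t\,dt$ by the kernel element $\tfrac{T}{2}[F]_v$ and hence yields an equally valid transformation with the same bounds, and your derivation of the formula for $F^+$ via a double integral and Fubini, which gives the same expression as the paper's Taylor-formula-with-integral-remainder argument.
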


\begin{proof}
Let us define $Y : \T^n \times D_{\delta} \rightarrow \R$ by
\[ Y(\theta,I)=T\int_0^1(F-[F]_v)(\theta+tTv,I)tdt. \]
It is easy to check, using an integration by part, that
\begin{equation}\label{cohoper}
\{Y,N_{v}\}=F-[F]_{v}=F+S+R+G-[F+S+R+G]_{v}
\end{equation}
where the second equality follows from the fact that $[S+R+G]_v=S+R+G$ since $S,R$ are integrable and $\{G,L_v\}=0$ by assumption. Moreover, we have the estimates
\begin{equation}\label{estflotY}
|Y|_s \leq \frac{T}{2}|F-[F]_v|_s \leq T|F|_s \leq T\nu.
\end{equation}
Recalling from~\eqref{Csigma} that $(e\sigma)^{-1} \leq C(\sigma)$, the inequalities~\eqref{seuil2} imply that~\eqref{smallF} is satisfied, and thus Proposition~\ref{flotF} can be applied: for any $t \in [0,1]$, the time-$t$ map $Y^t$ of the Hamiltonian flow of $Y$ belongs to $\mathcal{U}_{s(1-\sigma)^3}(\T^n \times D_{\delta(1-\sigma)^3},\T^n \times D_{\delta(1-\sigma)^2})$ and we have the estimate
\begin{equation}\label{estflotYt}
|Y^t-\mathrm{Id}|_{s(1-\sigma)^3} \leq t|Y|_{s}.
\end{equation} 
We set $\Psi=Y^1$: then~\eqref{estflotYt} and~\eqref{estflotY} gives the first estimate of~\eqref{EST2}. Now a well-known computation using Taylor's formula with integral remainder yields
\[ H \circ \Psi=L_v+S+R+G+[F]_v+F^+ \]
with
\[ F^+:=\int_0^1 \tilde{F}_t \circ Y^tdt, \quad \tilde{F}_t:=\{tF+(1-t)[F]_v+S+R+G,Y\}. \]
To estimate $F^+$, observe that $Y^t$ maps $\T^n \times D_{\delta(1-\sigma)^3}$ into $\T^n \times D_{\delta(1-\sigma)^2}$, so using~\eqref{estflotYt} and~\eqref{seuil2}, we can apply Proposition~\ref{composition} to get
\[ |F^+|_{s(1-\sigma)^3} \leq |\tilde{F}_t|_{s(1-\sigma)^2}.  \]
Now $\tilde{F}_t$, for any $t \in [0,1]$, can be easily estimated using Corollary~\ref{corderivative} and thus we get
\begin{eqnarray*}
|\tilde{F}_t|_{s(1-\sigma)^2} & \MP & (T\nu^2C(\sigma)^2s^{-2}+T\nu^2C(\sigma)^2s^{-2}+T\nu\mu C(\sigma)s^{-1} \\
&  & +T\nu\rho C(\sigma)s^{-1}+T\nu^2C(\sigma)^2s^{-2})  \\
& \MP & (T\nu^2C(\sigma)^2s^{-2}+T\nu\eta C(\sigma)s^{-1})
\end{eqnarray*}
as $\eta=\max\{\mu,\rho\}$ by definition. This gives the second estimate of~\eqref{EST2}.

It remains to prove the second part of the statement, so let $I$ be an integrable Hamiltonian such that $\{I,F\}=\{I,G\}=0$. Obviously, as $I$ is integrable we also have $\{I,L_v\}=0$, which gives $\{I,[F]_v\}=0$: indeed, letting $L_v^{tT}$ the $tT$-time of the Hamiltonian flow associated to $L_v$, as $\{I,L_v\}=0$ we have $I \circ L_v^{tT}=I$ hence 
\begin{eqnarray*}
\{I,[F]_v\} & = & \left\{I, \int_0^1 F \circ L_v^{tT} dt \right \} \\
& = & \int_0^1 \{I,  F \circ L_v^{tT} \}dt \\
& = & \int_0^1 \{I \circ L_v^{tT},F \circ L_v^{tT} \}dt \\
& = & \int_0^1 \{  I ,F  \} \circ L_v^{tT}dt \\
& = & 0.
\end{eqnarray*}
Using this, one obtains then $\{I,Y\}=0$ by a similar argument. Now using $\{I,Y\}=0$ and again the same argument, to show that $\{I,F^+\}=0$ one needs to show that $\{I,\tilde{F}_t\}=0$ for all $t \in [0,1]$: but this in turns follows from Jacobi identity, as each element in the bracket $\tilde{F}_t$ commutes with $I$. This concludes the proof. 
\end{proof}

To prove Proposition~\ref{propperiodic}, it remains to apply inductively Lemma~\ref{lemmeriodic} $m$ times, for some optimized choice of $m$, choosing the same $\sigma \sim m^{-1}$ at each step. Actually, to obtain sharper estimates, we will use a trick due to Neishtadt which consists in applying Lemma~\ref{lemmeriodic} a first time with $\sigma \sim 1$ large, and then $(m-1)$ times with $\sigma \sim m^{-1}$ uniformly small.  

\begin{proof}[Proof of Proposition~\ref{propperiodic}] 
Let $m \geq 1$ be an integer to be chosen below. We set $\sigma_1:=\ln \xi(24)^{-1}$ and for $m \geq 2$, $\sigma_m:=\ln \xi(24(m-1))^{-1}$. For all $0 \leq j \leq m$, we define
\[ \nu_j:=e^{-j}\nu, \quad \gamma_j:=\sum_{i=0}^{j-1}\nu_i,\] 
and
\[ s_j:=s(1-\sigma_1)^3(1-\sigma_m)^{3(j-1)}, \quad \delta_j:=\delta(1-\sigma_1)^3(1-\sigma_m)^{3(j-1)}.  \] 
We claim that if
\begin{equation}\label{seuil3}
\nu \leq C(\sigma_1)^{-1}s \eta, \quad T\eta \PM C(\sigma_m)^{-1} s 
\end{equation}
then for all $1 \leq j \leq m$, there exist a $(M,s_j)$-ultra-differentiable symplectic transformation
\[ \Phi_j : \T^n \times D_{\delta_j} \rightarrow \T^n \times D_{\delta} \]
such that
\[ H\circ\Phi_j =L_v+S+R+\bar{F}_j+\hat{F}_j \]
with $\{\bar{F}_j,L_v\}=0$ and the estimates
\begin{equation*}
|\Phi_j-\mathrm{Id}|_{s_j}\leq T\gamma_j, \quad |\bar{F}_j|_{s_j} \leq \gamma_j,\quad |\hat{F}_j|_{s_j} \MP \nu_{j-1}(T\eta C(\sigma_1)s^{-1}):=\chi_j.
\end{equation*}
Moreover, if $I$ is an integrable Hamiltonian such that $\{I,F\}=0$, then we have $\{I,\bar{F}_j\}=\{I,\hat{F}_j\}=0$. Observe that $\gamma_j < 2\nu$ and as $\sigma_1 \geq \sigma_m$, one can use~\eqref{seuil3} to ensure that $\chi_j \leq \nu_j$.

The statement follows from this claim: indeed, it suffices to choose the integer $m$ as
\[ \ln \xi(24)^{-1}\left(C^{-1}(s(AT\eta)^{-1})\right)^{-1} \leq m \leq 1+ \ln \xi(24)^{-1}\left(C^{-1}(s(AT\eta)^{-1})\right)^{-1}  \]
for a suitable constant $A\EP 1$. Clearly~\eqref{seuil1} gives~\eqref{seuil3} and $m \geq 1$, so we may set
\[ \Phi:=\Phi_m, \quad \bar{F}:=\bar{F}_m, \quad \hat{F}:=\hat{F}_m, \] 
observe that $s_m \geq s/\xi$, $\delta_m \geq \delta/\xi$, and thus
 \[ \Phi : \T^n \times D_{\delta/\xi} \rightarrow \T^n \times D_{\delta} \]
is a $(M,s/\xi)$-ultra-differentiable symplectic transformation such that
\[ H\circ\Phi =L_v+S+R+\bar{F}+\hat{F} \]
with $\{\bar{F},L_v\}=0$ and the estimates
\begin{equation*}
|\Phi-\mathrm{Id}|_{s/\xi}\leq 2T\nu, \quad |\bar{F}|_{s/\xi} \leq 2\nu, 
\end{equation*}
and
\[ |\hat{F}|_{s/\xi} \leq \nu\exp(-m)\leq \nu\exp(-\ln \xi(24)^{-1}\left(C^{-1}(s(AT\eta)^{-1})\right)^{-1}). \]

So it remains to prove the claim, and we will do it by induction on $j$. The second part of the claim is obvious by induction using the second part of Lemma~\ref{lemmeriodic}, so we will only prove the first part of the claim. For $j=1$,~\eqref{seuil3} implies that
\[ C(\sigma_1)^2T\nu \leq C(\sigma_1)T\eta s \PM s^2 \] 
and thus~\eqref{seuil2} is satisfied: Lemma~\ref{lemmeriodic} applies, with $\sigma=\sigma_1$ and $G=0$, and we set $\Phi_1:=\Psi$, $\bar{F}_1:=[F]_v$ and $\hat{F}_1=F^+$. Clearly, $\{\bar{F}_1,L_v\}=0$. As $\gamma_1=\nu_0=\nu$, we do have
\[ |\Phi_1-\mathrm{Id}|_{s_1}\leq T\gamma_1, \quad |\bar{F}_1|_{s_1} \leq \gamma_1. \] 
For $\hat{F}_1$, observe that from the first part of~\eqref{seuil3} one has
\[ T\nu^2 C(\sigma_1)^2s^{-2} \leq T\nu\eta C(\sigma_1)s^{-1}  \]
and thus
\[ |\hat{F}_1|_{s_1} \MP (T\nu^2 C(\sigma_1)^2s^{-2} +T\nu\eta C(\sigma_1)s^{-1}) \MP T\nu\eta C(\sigma_1)s^{-1}=\chi_1.  \]
Now assume that the statement is true for some $1 \leq j \leq m-1$, and let us prove it remains true for $2 \leq j+1 \leq m$. The inductive hypothesis gives a $(M,s_j)$-ultra-differentiable symplectic transformation
\[ \Phi_j : \T^n \times D_{\delta_j} \rightarrow \T^n \times D_{\delta} \]
such that
\[ H\circ\Phi_j =L_v+S+R+\bar{F}_j+\hat{F}_j \]
with $\{\bar{F}_j,L_v\}=0$ and the estimates
\begin{equation*}
|\Phi_j-\mathrm{Id}|_{s_j}\leq T\gamma_j, \quad |\bar{F}_j|_{s_j} \leq \gamma_j,\quad |\hat{F}_j|_{s_j} \MP \nu_{j-1}(T\eta C(\sigma_1)s^{-1}).
\end{equation*}
We wish to apply Lemma~\ref{lemmeriodic} to this Hamiltonian with $\sigma=\sigma_m$, $\bar{F}_j$ instead of $G$ and $\hat{F}_j$ instead of $F$: we have
\[ |\bar{F}_j| \leq \gamma_j \leq 2\nu \]
and, from the first part of~\eqref{seuil3}
\begin{equation}\label{Tchi}
T\chi_j \leq T\nu_{j-1} T\mu C(\sigma_1)s^{-1}\leq T\nu T\eta C(\sigma_1)s^{-1}\leq (T\eta)^2
\end{equation}
so that the second part of~\eqref{seuil3} implies~\eqref{seuil2}. Lemma~\ref{lemmeriodic} applies to give a transformation $\Psi_j$ such that, if define $\Phi_{j+1}:=\Phi_j \circ \Psi_j$, then  
\[ \Phi_{j+1} : \T^n \times D_{\delta_{j+1}} \rightarrow \T^n \times D_{\delta} \]
is an $(M,s_{j+1})$-ultra-differentiable symplectic transformation such that
\[ H\circ\Phi_{j+1}=L_v+S+R+\bar{F}_j+[\hat{F}_j]_{v}+\hat{F}_j^+ \]
and hence, setting $\bar{F}_{j+1}:=\bar{F}_j+[\hat{F}_j]_{v}$ and $\hat{F}_{j+1}:=\hat{F}_j^+$, we have
\[ H\circ\Phi_{j+1} =L_v+S+R+\bar{F}_{j+1}+\hat{F}_{j+1}. \]
Clearly $\{\bar{F}_{j+1},L_v\}=0$ and as $|\hat{F}_j|_{s_j} \leq \nu_j$, we have
\[ |\bar{F}_{j+1}|_{s_{j+1}} \leq |\bar{F}_j|_{s_j} + |[\hat{F}_j]_v|_{s_j} \leq \gamma_j+\nu_j=\gamma_{j+1}  \]
and, using Proposition~\ref{composition},
\[ |\Phi_{j+1}-\mathrm{Id}|_{s_{j+1}}\leq |\Phi_j-\mathrm{Id}|_{s_j}+|\Psi_j-\mathrm{Id}|_{s_{j+1}}\leq  T\gamma_j+T\nu_j=T\gamma_{j+1}. \]
For $\hat{F}_{j+1}$, using~\eqref{Tchi} and the second part of~\eqref{seuil3} we can estimate
\begin{eqnarray*}
|\hat{F}_{j+1}| & \MP & (T\chi_j^2C(\sigma_m)^2s^{-2}+T\eta\chi_j C(\sigma_m)s^{-1}) \\
& \MP & \chi_j(T^2\eta^2 C(\sigma_m)^2s^{-2}+T\eta C(\sigma_m)s^{-1})  \\
& \MP & \chi_j T\eta C(\sigma_m)s^{-1}  \\
& \leq & \chi_{j+1}.
\end{eqnarray*}
This proves the claim, and thus the proposition.
\end{proof}

\subsection{Stability in the linear case: proof of Theorem~\ref{lineaire}}\label{sec:linear}

The goal here is to prove Theorem~\ref{lineaire} and Corollary~\ref{corlineaire}; implicit constants will depend only on $n$, $\omega$ and the function $C$. 

Our proof will use Proposition~\ref{propperiodic} (which can be seen as Theorem~\ref{lineaire} in the special case where $d=1$) which we proved in \S~\ref{sec:periodic}. We will also use a slightly more general version of Proposition~\ref{dio} (\S~\ref{sec:approx}) which we now state and which is still contained in~\cite{BF13}.

\begin{proposition}\label{dio2}
Let $\omega=(\bar{\omega},0) \in \R^d \times \R^{n-d}$ with $\bar{\omega}$ a non-resonant vector. For any $Q\geq n+2$, there exist $d$ periodic vectors $v_1, \dots, v_d \in \R^d \times \{0\}$, of periods $T_1, \dots, T_d$, such that $T_1v_1, \dots, T_dv_d$ form a $\Z$-basis of $\Z^d \times \{0\}$ and for $j\in\{1,\dots,d\}$,
\[ |\omega_0-v_j|\MP(T_j Q)^{-1}, \quad 1 \leq T_j \MP \Psi(Q).\]
\end{proposition}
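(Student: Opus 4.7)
The plan is to reduce Proposition~\ref{dio2} directly to Proposition~\ref{dio}, applied in dimension $d$ to the non-resonant vector $\bar{\omega} \in \R^d$. The only subtlety lies in checking that the embedding $\R^d \hookrightarrow \R^d \times \R^{n-d}$ preserves all the relevant objects: periods, approximation estimates, and the $\Z$-basis property.

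First, we reduce to the normalized setting required by Proposition~\ref{dio}. By permuting and rescaling the first $d$ coordinates (which does not affect any of the conclusions, since $\Psi_\omega = \Psi_{\bar{\omega}}$ is invariant under such operations up to the implicit constant, and the notion of ``forming a $\Z$-basis of $\Z^d \times \{0\}$'' is preserved), we may assume $\bar{\omega} = (1, \bar{\bar{\omega}}) \in \R^d$ with $\bar{\bar{\omega}} \in [-1,1]^{d-1}$. Then Proposition~\ref{dio} applies: for any $Q \geq d+2$ (and a fortiori $Q \geq n+2$), there exist $d$ rational vectors $\bar{v}_1,\dots,\bar{v}_d \in \Q^d$, of denominators $T_1,\dots,T_d$, such that $T_1\bar{v}_1,\dots,T_d\bar{v}_d$ form a $\Z$-basis of $\Z^d$, together with the estimates
\[
|\bar{\omega}-\bar{v}_j| \MP (T_j Q)^{-1}, \quad 1 \leq T_j \MP \Psi_{\bar{\omega}}(Q) = \Psi(Q), \quad 1 \leq j \leq d.
\]

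Next, we lift the $\bar{v}_j$ to $\R^n$ by setting $v_j := (\bar{v}_j, 0) \in \R^d \times \R^{n-d}$. Since the last $n-d$ components of $v_j$ vanish, the period of $v_j$ as an element of $\R^n$ coincides with the denominator $T_j$ of $\bar{v}_j$ as an element of $\R^d$; in particular $v_j$ is $T_j$-periodic. The identification $\R^d \hookrightarrow \R^d \times \{0\}$ sends $\Z^d$ bijectively onto $\Z^d \times \{0\}$, so $T_1 v_1,\dots,T_d v_d$ form a $\Z$-basis of $\Z^d \times \{0\}$. Moreover
\[
|\omega - v_j| = |(\bar{\omega}-\bar{v}_j,0)| = |\bar{\omega}-\bar{v}_j| \MP (T_j Q)^{-1},
\]
which is the required approximation estimate.

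The main (and essentially only) obstacle is conceptual rather than technical: one must observe that everything in Proposition~\ref{dio} is compatible with the trivial embedding into a higher-dimensional space where the extra coordinates of $\omega$ vanish. Once this is recognized, no new Diophantine analysis is required, and the result follows immediately from the dimension-$d$ statement already proved in~\cite{BF13}.
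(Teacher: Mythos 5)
Your overall strategy — apply Proposition~\ref{dio} in dimension $d$ and then lift the resulting vectors to $\R^d \times \{0\}$ — is the right one; the lifting step is indeed immediate and clean, and the paper itself gives no proof of Proposition~\ref{dio2} (it merely points to~\cite{BF13}), so an actual argument is welcome. However, the reduction to the normalized case $\bar\omega = (1,\bar{\bar\omega})$ with $\bar{\bar\omega} \in [-1,1]^{d-1}$ is not quite as costless as you assert, and this is where your write-up has a gap.

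The issue is the rescaling. Permutation of coordinates is harmless: it sends $\Z^d$ to $\Z^d$ and preserves periods. But to normalize you must replace $\bar\omega$ by $\bar\omega/\lambda$ for some $\lambda = |\bar\omega|_\infty > 0$, and then Proposition~\ref{dio} produces rational vectors $\bar{v}'_j = (1, p_j/q_j)$, of integer denominators $q_j$, which approximate $\bar\omega/\lambda$, \emph{not} $\bar\omega$. If you use these $\bar{v}'_j$ directly, the estimate $|\bar\omega - \bar{v}'_j| \MP (T_j Q)^{-1}$ fails outright (the left-hand side is of order $|1-\lambda^{-1}|\,|\bar\omega|$, independent of $Q$). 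Your statement that the rescaling ``does not affect any of the conclusions'' therefore conflates the approximants of the normalized vector with approximants of the original one. To fix this, you must scale back: set $\bar{v}_j := \lambda\,\bar{v}'_j$. Then one checks, using $\Psi_{\bar\omega/\lambda} = \lambda\,\Psi_{\bar\omega}$, that $\bar{v}_j$ is a periodic vector of period $T_j := q_j/\lambda$ (in general irrational), that $T_j\bar{v}_j = q_j\bar{v}'_j \in \Z^d$ and these still form a $\Z$-basis, that $|\bar\omega - \bar{v}_j| = \lambda\,|\bar\omega/\lambda - \bar{v}'_j| \MP \lambda(q_j Q)^{-1} = (T_j Q)^{-1}$, and that $T_j \MP \Psi_{\bar\omega}(Q)$. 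This is exactly what is claimed, but it is not automatic; the $T_j$ produced this way are \emph{not} the integer denominators of Proposition~\ref{dio}, and one also only gets $T_j \geq 1/\lambda$ (i.e.\ $1 \MP T_j$, which is how the bound is in fact used in the proof of Proposition~\ref{proplineaire}) rather than literally $T_j \geq 1$ when $\lambda > 1$. Once these bookkeeping steps are added, your reduction is correct; as written, the crucial verification is missing.
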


Given any $\omega \in \R^n$, let us now define
\begin{equation}\label{moy2}
[H]_{\omega}(\theta,I):=\lim_{s \rightarrow +\infty}\frac{1}{s}\int_{0}^s H(\theta+t\omega,I)dt.
\end{equation}
When $\omega=(\bar{\omega},0) \in \R^d \times \R^{n-d}$ with $\bar{\omega}$ a non-resonant vector, clearly $[H]_{\omega}=[H]_d$ where
\[ [H]_d(\theta,I)=\int_{\T^d}H(\theta,I)d\theta_1\dots d\theta_d. \]
We then have the following result, which, as before, is slightly more general than Proposition~\ref{cordio2}, and is a consequence of the fact that $T_1v_1, \dots, T_dv_d$ form a $\Z$-basis of $\Z^d \times \{0\}$.

\begin{proposition}\label{cordio2}
Let $v_1,\dots,v_d \in \R^d \times \{0\}$ be the periodic vectors given by Proposition~\ref{dio2}, and $H$ a function defined on $\T^n \times D_{r}$. Then
\[ [H]_{v_1,\dots,v_d}=[H]_\omega=[H]_d \]
and $\{H,L_\omega\}=0$ if and only if $\{H,L_{v_j}\}=0$ for any $1 \leq j \leq d$.   
\end{proposition}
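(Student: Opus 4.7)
The plan is to work with the Fourier expansion
\[ H(\theta,I)=\sum_{k\in\Z^n}\hat{H}_k(I)e^{2\pi i k\cdot\theta} \]
and check that all three averages $[H]_{v_1,\dots,v_d}$, $[H]_\omega$ and $[H]_d$ project $H$ onto the \emph{same} set of Fourier modes, namely those with $k\in\{0\}^d\times\Z^{n-d}$. For an individual $T$-periodic vector $v$ with $Tv\in\Z^n$, a direct computation shows
\[ [H]_v(\theta,I)=\sum_k\hat{H}_k(I)e^{2\pi i k\cdot\theta}\int_0^1 e^{2\pi i t\, T k\cdot v}\,dt, \]
and since $Tk\cdot v\in\Z$ the integral equals $1$ when $k\cdot v=0$ and $0$ otherwise. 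Iterating, $[H]_{v_1,\dots,v_d}$ retains only the modes $k$ orthogonal to all of $v_1,\dots,v_d$. For $[H]_\omega$, the standard Birkhoff/Weyl argument gives $\lim_{s\to\infty}s^{-1}\int_0^s e^{2\pi i t\,k\cdot\omega}dt=\mathbbm{1}_{k\cdot\omega=0}$, so $[H]_\omega$ retains the modes with $k\cdot\omega=0$. Finally $[H]_d$ clearly retains the modes with $k_1=\cdots=k_d=0$.

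The key step is then the combinatorial/arithmetic identification of these three sets of indices. Writing $k=(\bar{k},k')\in\Z^d\times\Z^{n-d}$, the condition for $[H]_d$ is $\bar k=0$. Since $\omega=(\bar\omega,0)$ with $\bar\omega$ non-resonant, one has $k\cdot\omega=\bar k\cdot\bar\omega=0$ if and only if $\bar k=0$, which gives $[H]_\omega=[H]_d$. For the remaining equality, I would use the fact (from Proposition~\ref{dio2}) that $T_1v_1,\dots,T_dv_d$ is a $\Z$-basis of $\Z^d\times\{0\}$: a vector $k$ is orthogonal to every $v_j$ iff it is orthogonal to every $T_jv_j$, iff it annihilates $\Z^d\times\{0\}$, iff $\bar k=0$. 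Hence the retained index sets coincide and $[H]_{v_1,\dots,v_d}=[H]_d$.

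For the second assertion, I would observe that $\{H,L_u\}=u\cdot\nabla_\theta H$ has Fourier expansion $2\pi i\sum_k (u\cdot k)\hat{H}_k(I)e^{2\pi i k\cdot\theta}$, so $\{H,L_u\}=0$ is equivalent to $\hat{H}_k=0$ whenever $u\cdot k\neq 0$. Applying this with $u=\omega$, the condition becomes $\hat{H}_k=0$ unless $\bar k\cdot\bar\omega=0$, i.e., unless $\bar k=0$. Applying it instead simultaneously with $u=v_1,\dots,v_d$, the condition becomes $\hat{H}_k=0$ unless $k\cdot v_j=0$ for all $j$, which by the $\Z$-basis argument above is again $\bar k=0$. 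The two conditions thus define the same subspace of Fourier series, proving the equivalence.

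I do not expect any serious obstacle: the whole argument is the standard Fourier-series manipulation, and the only non-trivial ingredient (the $\Z$-basis property of the $T_jv_j$) is supplied by Proposition~\ref{dio2}. A minor bookkeeping point is that the Fourier series need only be interpreted formally or in $L^2$ since both sides of the claimed identities are well-defined pointwise for smooth $H$ and the projections are continuous operators, so no convergence subtlety arises.
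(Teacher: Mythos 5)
Your proof is correct, and it fills in precisely the Fourier-analytic argument that the paper only sketches (the paper cites~\cite{Bou13} and merely notes that the result is ``a consequence of the fact that $T_1v_1,\dots,T_dv_d$ form a $\Z$-basis of $\Z^d\times\{0\}$''). Your three ingredients are exactly right: the mode-by-mode computation showing each $[\cdot]_{v_j}$ is the Fourier projection onto $\{k\cdot v_j=0\}$ (using $T_jk\cdot v_j\in\Z$), the Weyl-type identification of $[\cdot]_\omega$ with the projection onto $\{k\cdot\omega=0\}$, and the observation that non-resonance of $\bar\omega$ plus the $\Z$-basis property force all three index sets to coincide with $\{\bar k=0\}$; the Poisson-bracket equivalence is then an immediate restatement, since $\{H,L_u\}=0$ is equivalent to the vanishing of $\hat H_k$ off $\{u\cdot k=0\}$. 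No gap.
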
 

Given positive real parameters $s,\rho,\nu$, we shall consider the following Hamiltonian: 
\begin{equation}\label{HamNNN}
\begin{cases}
H(\theta,I)=L_{\omega}(I)+R(I)+F(\theta,I), \\
H : \T^n \times D \rightarrow \R, \quad \omega=(\bar{\omega},0) \in \R^d \times \R^{n-d} \\
|\nabla R|_{s} \leq \rho, \quad |F|_{s} \leq \nu  
\end{cases}
\end{equation}
where $\bar{\omega}$ is non-resonant. In the special case where $R=0$, $\rho=0$ and $\nu=\varepsilon$, this is exactly a Hamiltonian $H$ as in~\eqref{Ham1} with $h=L_\omega$, which is the setting of Theorem~\ref{lineaire}. 

We are now ready to prove the following statement, which will easily imply Theorem~\ref{lineaire}. 

\begin{proposition}\label{proplineaire}
Let $H$ be as in~\eqref{HamNNN}, where $H$ is $(M,s)$-ultra-differentiable with $M$ satisfying \eqref{H1} and \eqref{H2} and $h=L_\omega$ with $\omega=(\bar{\omega},0) \in \R^d \times \{0\}$ and $\bar{\omega}$ non-resonant. For $Q \geq n+2$, assume that
\begin{equation}\label{seuillin}
\rho Q\Psi(Q) \PM 1, \quad \nu Q\Psi(Q)s^{-1} \PM 1, \quad \lambda sQ \SP 1, 
\end{equation}
for some $\lambda \PE 1$. Then there exists a $(M,s/2)$-ultra-differentiable symplectic transformation
\[ \Phi : \T^n \times D_{1/2} \rightarrow \T^n \times D \]
such that
\[ H\circ\Phi =L_\omega+R+\bar{F}+\hat{F} \]
where $\{\bar{F},L_\omega\}=0$ and with the estimates
\begin{equation*}
|\Phi-\mathrm{Id}|_{s/2}\MP \Psi(Q)\nu, \quad |\bar{F}|_{s/2} \MP \nu, \quad |\hat{F}|_{s/2} \MP \nu\exp\left(-\kappa_{2^{1/d}}\left(C^{-1}(\lambda sQ)\right)^{-1}\right). 
\end{equation*}
\end{proposition}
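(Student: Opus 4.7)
The proof is an iterative application of Proposition~\ref{propperiodic} over the $d$ periodic approximations of $\omega$ supplied by Proposition~\ref{dio2}. Apply Proposition~\ref{dio2} to $\bar\omega$ to obtain periodic vectors $v_1,\dots,v_d \in \R^d \times \{0\} \subset \R^n$ with periods $T_j \MP \Psi(Q)$, $|\omega-v_j| \MP (T_j Q)^{-1}$, such that $T_1 v_1,\dots,T_d v_d$ form a $\Z$-basis of $\Z^d \times \{0\}$; this last property is crucial in order to identify, via Proposition~\ref{cordio2}, iterated averagings over the $v_j$'s with the single $L_\omega$-averaging. The plan is to apply Proposition~\ref{propperiodic} exactly $d$ times, once for each $v_j$, always reducing the width by the same factor $\xi := 2^{1/d}$, so that after $d$ steps the total loss of width is exactly $s \mapsto s/2$. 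Set $\kappa := \kappa_\xi = \ln(2)/(24d)$.

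At step $j \in \{1,\dots,d\}$, the current Hamiltonian $L_\omega + R + F^{(j-1)}$ is rewritten in the form \eqref{HamN} adapted to $v_j$, by splitting $L_\omega = L_{v_j} + L_{\omega-v_j}$: the integrable term $S := L_{\omega-v_j}$ has $|\nabla S|_s = |\omega-v_j| \MP (T_j Q)^{-1}$, which combined with $\rho Q\Psi(Q) \PM 1$ gives $\eta_j := \max\{|\nabla S|_s, \rho\}$ with $T_j \eta_j \MP Q^{-1}$. The size hypothesis $\nu_{j-1} \leq C(\kappa)^{-1} s_{j-1} \eta_j$ in \eqref{seuil1} then follows from $\nu Q\Psi(Q) s^{-1} \PM 1$ using $s\eta_j \SP s(Q\Psi(Q))^{-1}$; the arithmetic hypothesis $C^{-1}(s_{j-1}(AT_j\eta_j)^{-1}) \leq \kappa$, equivalent to $s_{j-1}(AT_j\eta_j)^{-1} \SP C(\kappa)$, follows from $\lambda sQ \SP 1$ by choosing $\lambda \PE 1$ appropriately small in terms of $A$, $C$, $\kappa$. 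Since $|F^{(j)}|_{s_j} \leq |\bar F_j|_{s_j} + |\hat F_j|_{s_j} \leq 3 |F^{(j-1)}|_{s_{j-1}}$ and $d \leq n$ is fixed, the relevant bound on $\nu$ grows only by a bounded factor through the iteration, and both conditions remain uniformly valid. Proposition~\ref{propperiodic} then produces, at each step, a symplectic $\Psi_j$ with $|\Psi_j - \mathrm{Id}|_{s_j} \MP T_j \nu \MP \Psi(Q)\nu$ and a decomposition $L_\omega + R + \bar F_j + \hat F_j$ with $\{L_{v_j}, \bar F_j\} = 0$, $|\bar F_j|_{s_j} \MP \nu$, and $|\hat F_j|_{s_j} \MP \nu \exp(-\kappa/C^{-1}(\lambda sQ))$, where we used $s_{j-1}(AT_j\eta_j)^{-1} \SP \lambda sQ$ together with the monotonicity of $C^{-1}$.

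The most delicate point, which I regard as the main obstacle, is to guarantee that the final $\bar F := \bar F_d$ commutes with $L_\omega$ itself, not merely with $L_{v_d}$. This is obtained by repeated use of the \emph{moreover} clause of Proposition~\ref{propperiodic} together with an inductive book-keeping: assume by induction that at the beginning of step $j$ the current $\bar F_{j-1}$ commutes with $L_{v_1},\dots,L_{v_{j-1}}$; the part of the exponentially small remainder $\hat F_{j-1}$ which fails to commute with any of these previously obtained invariances is absorbed once and for all into the final $\hat F$, so that the input $F^{(j-1)}$ effectively fed into Proposition~\ref{propperiodic} at step $j$ commutes with each $L_{v_k}$ for $k<j$, and the moreover clause then yields $\bar F_j$ commuting with $L_{v_1},\dots,L_{v_j}$. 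Iterating up to $j=d$ and applying Proposition~\ref{cordio2} gives $\{L_\omega, \bar F_d\}=0$, and the absorbed contributions form only a geometric series of exponentially small terms, bounded by $\nu\exp(-\kappa/C^{-1}(\lambda sQ))$ up to a constant. Setting $\Phi := \Psi_1 \circ \cdots \circ \Psi_d$ and summing the estimates geometrically yields the announced bounds $|\Phi-\mathrm{Id}|_{s/2} \MP \Psi(Q)\nu$, $|\bar F|_{s/2} \MP \nu$, and $|\hat F|_{s/2} \MP \nu \exp(-\kappa_{2^{1/d}}/C^{-1}(\lambda sQ))$.
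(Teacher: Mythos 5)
Your proof takes essentially the same approach as the paper: apply Proposition~\ref{propperiodic} once per periodic vector $v_j$ supplied by Proposition~\ref{dio2}, reducing the width by $\xi = 2^{1/d}$ at each step, using the ``moreover'' clause to propagate $L_{v_i}$-commutation and then invoking Proposition~\ref{cordio2} to convert simultaneous commutation with all $L_{v_j}$ into commutation with $L_\omega$, while absorbing the exponentially small remainder into the final $\hat F$ before each new application. The one imprecision is the definition $\eta_j := \max\{|\nabla S|_s,\rho\}$ with exact norms: since neither $|\omega-v_j|$ nor $\rho$ is bounded below, your asserted inequality $\eta_j \SP (Q\Psi(Q))^{-1}$ (needed for the size hypothesis in~\eqref{seuil1}) does not follow; the paper avoids this by taking $\mu_j \EP (T_jQ)^{-1}$ as the \emph{chosen} bound on $|\nabla S_j|_s$ and checking $\rho \leq \mu_j$, so that $\eta_j \EP (T_jQ)^{-1}$ and both hypotheses of Proposition~\ref{propperiodic} are verifiable from~\eqref{seuillin}.
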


\begin{proof}[Proof of Theorem~\ref{lineaire}]
Here $R=0$, $\rho=0$ and $F=f$ with $\nu=\varepsilon$, it suffices to choose
\[ Q:=\Delta^{-1}\left(c_2s\varepsilon^{-1}\right), \quad c_2 \EP 1 \]
and to verify that choosing $\varepsilon$ sufficiently small allows indeed to apply Proposition~\ref{proplineaire}.
\end{proof}

\begin{proof}[Proof of Proposition~\ref{proplineaire}]
Recall that we are considering
\[ H=L_\omega+R+F. \]
Since $Q \geq n+2$, we can apply Proposition~\ref{dio2}: there exist $d$ periodic vectors $v_1, \dots, v_d$, of periods $T_1, \dots, T_d$, such that $T_1v_1, \dots, T_dv_d$ form a $\Z$-basis of $\Z^d \times \{0\}$ and for $j\in\{1,\dots,d\}$,
\[ |\omega-v_j|\MP(T_j Q)^{-1}, \quad 1 \MP T_j \MP \Psi_\omega(Q) .\] 
For $j\in\{1,\dots,d\}$, let us define 
\[ S_j=L_\omega-L_{v_j}, \quad \mu_j \EP (T_j Q)^{-1} \]
with a suitable implicit constant so that 
\[ |\nabla S_j|_{s}=c|\omega-v_j| \leq \mu_j.\] 
Note that $L_\omega=L_{v_j}+S_j$, and that $T_j\nu_j \EP Q^{-1}$ for any $1 \leq j \leq d$. Let us further choose $\xi:=2^{1/d}$ and define, for $1 \leq j \leq d$ 
\[ s_j:=s\xi^{-j}, \quad \delta_j:=\xi^{-j} \]
so that in particular $s_d=s/2$ and $\delta_d=1/2$. 

We claim that for all $1 \leq j \leq d$, there exists a $(M,s_j)$-ultra-differentiable symplectic map 
$$\Phi_j : \T^n \times D_{\delta_j} \longrightarrow \T^n \times D$$ 
such that 
\[ H\circ\Phi_j=L_\omega+R+\bar{F}_j+\hat{F}_j, \]
with $\{\bar{F}_j,L_{v_i}\}=0$ for any $1 \leq i \leq j$, and with the estimates
\begin{equation}
\begin{cases}
|\Phi_j-\mathrm{Id}|_{s_j} \MP \Psi(Q)\nu, \quad |\bar{F}_j|_{s_j} \MP \nu, \\
|\hat{F}_j|_{s_j} \MP \nu \exp(-\kappa_{2^{1/d}} \left(C^{-1}(\lambda sQ)\right)^{-1}).  
\end{cases}
\end{equation}
The proof of the proposition follows from this claim: it is sufficient to let 
\[ \Phi:=\Phi_d, \quad \bar{F}:=\bar{F}_d,\quad \hat{F}:=\hat{F}_d\] 
since from Corollary~\ref{cordio}, $\{\bar{F},L_{v_i}\}=0$ for $1 \leq i \leq d$ is equivalent to $\{\bar{F},L_{\omega}\}=0$.

Now let us prove the claim by induction on $1 \leq j \leq d$. For $j=1$, our Hamiltonian can be written as 
\[ H=L_{\omega}+R+F=L_{v_1}+S_1+R+F \]
and this is nothing but Proposition~\ref{propperiodic}, which can be applied with $\mu_1$ instead of $\mu$, because~\eqref{seuillin} implies that $\eta=\max\{\mu_1,\rho\}=\mu_1$ and~\eqref{seuil1}: indeed, since $T_1 \MP \Psi(Q)$ and $T_1\mu_1 \EP Q^{-1}$, the inequalities~\eqref{seuillin} imply
\[ \rho \leq \mu_1, \quad \nu \PM s\eta, \quad T\eta \PM s \]
which gives~\eqref{seuil1} for a proper choice of implicit constants. So now assume that the statement holds true for some $1 \leq j \leq d-1$, and let us prove that it is true for $2 \leq j+1 \leq d$. By the inductive assumption, there exists a $(M,s_j)$-ultra-differentiable symplectic map 
$$\Phi_j : \T^n \times D_{\delta_j} \longrightarrow \T^n \times D$$ 
such that 
\[ H\circ\Phi_j=L_\omega+R+\bar{F}_j+\hat{F}_j, \]
with $\{\bar{F}_j,L_{v_i}\}=0$ for any $1 \leq i \leq j$, and with the estimates
\begin{equation}
\begin{cases}
|\Phi_j-\mathrm{Id}|_{s_j} \MP \Psi(Q)\nu, \quad |\bar{F}_j|_{s_j} \MP \nu, \\
|\hat{F}_j|_{s_j} \MP \nu \exp(-\kappa_{2^{1/d}} \left(C^{-1}(\lambda sQ)\right)^{-1}).  
\end{cases}
\end{equation}
Let us consider the Hamiltonian
\[ \bar{H}_j:=H\circ\Phi_j-\hat{F}_j=L_\omega+R+\bar{F}_j  \]
which is $(M,s_j)$-ultra-differentiable on the domain $\T^n \times D_{\delta_j}$. It can also be written as 
\[ \bar{H}_j=L_{v_{j+1}}+S_{j+1}+R+\bar{F}_j  \]
and because of~\eqref{seuillin}, we can still apply Proposition~\ref{propperiodic} to this Hamiltonian with $\mu_{j+1}$ and a constant times $\nu$ instead of respectively $\mu$ and $\nu$ (and as before, $\eta=\mu_{j+1}$). Therefore, since $s_j/\xi=s_{j+1}$ and $\delta_j/\xi=\delta_{j+1}$, we can find a $(M,s_{j+1})$-ultra-differentiable symplectic transformation
\[ \bar{\Phi} : \T^n \times D_{\delta_{j+1}} \rightarrow \T^n \times D_{\delta_j} \]
such that
\[ \bar{H}_j\circ\bar{\Phi} =L_\omega+R+\bar{\bar{F}}_j+\widehat{\bar{F}_j} \]
with $\{\bar{\bar{F}}_j,L_{v_{j+1}}\}=0$ and the estimates
\begin{equation*}
|\bar{\Phi}-\mathrm{Id}|_{s_{j+1}}\MP T_{j+1}\nu, \quad |\bar{\bar{F}}_j|_{s_{j+1}} \MP \nu, \quad |\widehat{\bar{F}_j}|_{s_{j+1}} \MP \nu\exp(-\kappa_{2^{1/d}} \left(C^{-1}(\lambda sQ)\right)^{-1}). 
\end{equation*}
Moreover, for $1 \leq i \leq j$, since $L_{v_i}$ are integrable Hamiltonians for which $\{L_{v_i},\bar{F}_j\}=0$, then $\{L_{v_{i}},\bar{\bar{F}}_j\}=0$ and thus $\{\bar{\bar{F}}_j,L_{v_{i}}\}=0$ for $1 \leq i \leq j+1$. 
We may set
\[ \Phi_{j+1}:=\Phi_j \circ \bar{\Phi}, \quad \bar{F}_{j+1}:=\bar{\bar{F}}_j, \quad \hat{F}_{j+1}:=\widehat{\bar{F}_j}+\hat{F}_j \circ \bar{\Phi} \]
so that
\[ H \circ \Phi_{j+1}=L_\omega+R+\bar{F}_{j+1}+\hat{F}_{j+1}. \]
We already explained that $\{\bar{F}_{j+1},L_{v_{i}}\}=0$ for $1 \leq i \leq j+1$, and
\[ |\bar{F}_{j+1}|_{s_{j+1}} \MP \nu. \]
It is obvious that
\[ \Phi_{j+1} : \T^n \times D_{\delta_{j+1}} \rightarrow \T^n \times D_{\delta} \]
and using Proposition~\ref{composition} and the fact that $T_{j+1} \MP \Psi(Q)$, one has  
\[ |\Phi_{j+1}-\mathrm{Id}|_{s_{j+1}} \leq |\Phi_{j+1}-\mathrm{Id}|_{s_{j}}+|\bar{\Phi}-\mathrm{Id}|_{s_{j+1}}\MP \Psi(Q)\nu. \]
To conclude, using once again Proposition~\ref{composition} we have
\[ |\hat{F}_{j+1}|_{s_{j+1}} \leq |\widehat{\bar{F}_j}|_{s_{j+1}}+ |\hat{F}_{j}|_{s_{j}} \MP \nu \exp(-\kappa_{2^{1/d}} \left(C^{-1}(\lambda sQ)\right)^{-1}).   \] 
This proves that the statement holds true for $j+1$, which finished the induction and the proof.
\end{proof}  

Let us conclude by giving the proof of Corollary~\ref{corlineaire}.

\begin{proof}[Proof of Corollary~\ref{corlineaire}]
Since we are under the assumptions of Theorem~\ref{lineaire}, let us first consider the Hamiltonian in normal form 
\[ H\circ\Phi =h+\bar{f}+\hat{f} \]
where 
\[ \Phi : \T^n \times D_{1/2} \rightarrow \T^n \times D, \] 
$\{\bar{f},L_{\omega}\}=0$ and with the estimates
\begin{equation*}
\begin{cases}
|\Phi-\mathrm{Id}|_{s/2}\leq c_1\Psi(\Delta^{-1}(c_2s\varepsilon^{-1}))\varepsilon, \quad |\bar{f}|_{s/2} \leq c_1\varepsilon, \\
|\hat{f}|_{s/2} \leq c_1\varepsilon\exp(-c_3\left(C^{-1}(c_4s\Delta^{-1}(c_2s\varepsilon^{-1}))\right)^{-1}).
\end{cases}
\end{equation*}
Let $(\tilde{\theta}(t),\tilde{I}(t))$ be a solution of $H \circ \Phi$ with $\tilde{I}(0) \in D_{1/4}$. Since $\{\bar{f},L_{\omega}\}=0$ and $\omega=(\bar{\omega},0) \in \R^d \times \R^{n-d}$ with $\bar{\omega}$ non-resonant, we have $\partial_{\theta_j}\bar{f}=0$ for $1 \leq j \leq d$ and thus 
\[ |\Pi_d (\tilde{I}(t)-\tilde{I}(0))| \leq |t|\sup_{(\tilde{\theta},\tilde{I})\in \T^n \times D_{1/2}}|\partial_\theta \hat{f}(\tilde{\theta},\tilde{I})|  \]
for all $t$ as long as $\tilde{I}(t) \in D_{1/4}$. By Corollary~\ref{corderivative} and the above estimate on $\hat{f}$, we have
\[ \sup_{(\tilde{\theta},\tilde{I})\in \T^n \times D_{1/2}}|\partial_\theta \hat{f}(\tilde{\theta},\tilde{I})| \MP s^{-1}\varepsilon\exp(-c_3\left(C^{-1}(c_4s\Delta^{-1}(c_2s\varepsilon^{-1}))\right)^{-1}) \]
hence if we define
\[ T \PE rs\varepsilon^{-1}\exp(c_3\left(C^{-1}(c_4s\Delta^{-1}(c_2s\varepsilon^{-1}))\right)^{-1}) \]
we obtain
\[ |\Pi_d (\tilde{I}(t)-\tilde{I}(0))| \leq r, \quad |t|\leq T \]
since $r \leq 1/4$ and as long as $\tilde{I}(t) \in D_{1/2}$. Now coming back to the original Hamiltonian $H$, any solution $(\theta(t),I(t))$ with $I(0) \in D_{1/8}$ gives rise to a solution $(\tilde{\theta}(t),\tilde{I}(t))=\Phi^{-1}(\theta(t),I(t))$ of $H \circ \Phi$ with $\tilde{I}(0) \in 1/4$, and therefore, for $|t| \leq T$ and such that $I(t) \in D_{1/4}$, we obtain
\begin{eqnarray*}
|\Pi_d (I(t)-I(0))| & \leq & |\Pi_d (I(t)-\tilde{I}(t))|+ |\Pi_d (\tilde{I}(t)-\tilde{I}(0))| + |\Pi_d (\tilde{I}(0)-I(0))| \\
& \leq & r + 2c_1\Psi(\Delta^{-1}(c_2s\varepsilon^{-1}))\varepsilon \\
& \leq & 2r
\end{eqnarray*}
since we assumed $r \geq 2c_1\Psi(\Delta^{-1}(c_2s\varepsilon^{-1}))\varepsilon$. This concludes the proof.
\end{proof}

\subsection{Diffusion in the linear case: proof of Theorem~\ref{difflineaire}}\label{sec:difflinear}

In this section we shall give the proof of Theorem~\ref{difflineaire}.

\begin{proof}
For convenience, let us write 
\[ \omega=(\bar{\omega},0)=(1,\bar{\omega}_1, \dots,\bar{\omega}_{d-1},0) \in \R^d \times \R^{n-d} \] 
with $|\bar{\omega}_i| \leq 1$ for $1 \leq i \leq d-1$. Let us denote by $(p_j/q_j)_{j\in\N}$ the sequence of the convergents of $\bar{\omega}_1$ for instance. We have the classical inequalities
\begin{equation*}
(q_j+q_{j+1})^{-1}<|q_j\bar{\omega}_1-p_j|<q_{j+1}^{-1}, \quad j\in\N, 
\end{equation*}
and since $q_{j+1}>q_j$, this gives
\begin{equation}\label{est1}
(2q_{j+1})^{-1}<|q_j\bar{\omega}_1-p_j|<q_{j+1}^{-1}, \quad j\in\N. 
\end{equation}
Now by definition of $\Psi_\omega$, we obtain
\begin{equation}\label{est2}
q_{j+1}<\Psi_\omega(q_j)<2q_{j+1}.
\end{equation}
The perturbation $f_j$ will be of the form
\[ f_j(\theta,I)=f_j^1(I)+f_j^2(\theta), \quad (\theta,I)\in \T^n \times \R^n. \]
First, we choose $f_j^1(I)=v_j\cdot I-\omega \cdot I$, where $v_j=(1,p_j/q_j,\bar{\omega}_2,\dots,\bar{\omega}_{d-1},0)$. We set
\[ \varepsilon_j=|\bar{\omega}_1-p_j/q_j| \]
and observe that $\varepsilon_j$ tends to zero as $j$ tends to infinity. From the inequalities~\eqref{est1} and \eqref{est2}, recalling the definitions of $\Delta_\omega$ and $\Delta^*_\omega$, we have 
\begin{equation}\label{est4}
(2\Delta_\omega(q_j))^{-1} \leq \varepsilon_j \leq 2(\Delta_\omega(q_j))^{-1}, \quad \Delta_\omega^*((2\varepsilon_j)^{-1}) \leq q_j \leq \Delta_\omega^*(2\varepsilon_j^{-1}).
\end{equation}
Now by definition, $|\partial_I f_j^1|_{s}= c\varepsilon_j$. Then, if we let $k_j=(p_j,-q_j,0,\dots,0)\in \Z^n$, we define $f_j^2(\theta)=\varepsilon_j\mu_j \sin(2\pi k_j\cdot\theta)$
with $\mu_j$ to be chosen. Note that 
\begin{equation}\label{est5}
q_j \leq |k_j| \leq 2 q_j
\end{equation} 
since $|q_j|\geq|p_j|$ (as $|\bar{\omega}_1|\leq 1$). It is easy to estimate
\[ |\partial_\theta f_j^2|_{s} \leq c \varepsilon_j\mu_j2\pi|k_j|\exp\left(\Omega(8\pi |k_j| s)\right)  \]
and so we choose 
\[ \mu_j=(2\pi |k_j|)^{-1}\exp\left(-\Omega(8\pi |k_j| s)\right) \]
so that $|\partial_\theta f_j^2|_{s} \leq c\varepsilon_j$. Finally, we set $f_j=f^1_j+f^2_j$ and we have 
\[ |X_{f_j}|_s \leq |\partial_I f_j^1|_{s}+|\partial_\theta f_j^2|_{s} \leq 2c\varepsilon_j.  \]
Now we can write the Hamiltonian
\begin{eqnarray*}
H_j(\theta,I) & = & L_\omega(I)+f_j(\theta,I) \\
& = & \omega\cdot I+v_j\cdot I-\omega \cdot I+\varepsilon_j\mu_j \sin(2\pi k_j\cdot\theta) \\
& = & v_j\cdot I+\varepsilon_j\mu_j \sin(2\pi k_j\cdot\theta) 
\end{eqnarray*}
and as $k_j \cdot v_j=0$, the associated system is easily integrated:
\begin{equation*} 
\left\{  
\begin{array}{ccl}
\theta(t) & = & \theta_0+tv_j \quad [\Z^n]\\
 I(t) &= &I_0-t2\pi k_j\varepsilon_j\mu_j\cos(2\pi k_j.\theta_0).
\end{array}
\right. 
\end{equation*}
Choosing any solution $(\theta(t),I(t))$ with an initial condition $(\theta(0),I(0))$ for which $k_j\cdot \theta(0)$ is an integer, using the definition of $\mu_j$, we obtain
\[ |I(t)-I(0)|=|I_1(t)-I_1(0)|+|I_2(t)-I_2(0)|=|t|\varepsilon_j\exp\left(-\Omega(8\pi|k_j| s)\right). \]
Using~\eqref{est5} and~\eqref{est4} this gives
\[ |t|\varepsilon_j\exp\left(-\Omega(16\pi s\Delta_\omega^*(2\varepsilon_j^{-1}))\right) \leq |I(t)-I(0)| \leq |t|\varepsilon_j\exp\left(-\Omega(8\pi s\Delta_\omega^*((2\varepsilon_j)^{-1}))\right)   \]
which is what we wanted to prove, as $|I(t)-I(0)|=|\Pi_d(I(t)-I(0))|$.
\end{proof} 

\subsection{Stability in the non-linear case: proof of Theorem~\ref{nonlineaire}}\label{sec:nonlineaire}

The goal here is to prove Theorem~\ref{nonlineaire} and Corollary~\ref{cornonlineaire}; implicit constants will depend only on $n$, $\omega$, $|h|_s$ and the function $C$.

As before, Theorem~\ref{nonlineaire} will be an easy consequence of the following proposition, where we recall that given $\rho>0$, we let
\[ \sigma=\sigma_\rho : (\theta,I) \longmapsto (\theta,\rho I). \]

\begin{proposition}\label{propnonlineaire}
Let $H$ be as in~\eqref{Ham1}, where $H$ is $(M,s)$-ultra-differentiable with $M$ satisfying \eqref{H1} and \eqref{H2} and $\omega=(\bar{\omega},0) \in \R^d \times \R^{n-d}$ with $\bar{\omega}$ non-resonant. For $Q \geq n+2$ and $\sqrt{\varepsilon} \leq \rho \leq 1$, assume that
\begin{equation}\label{seuilnonlin}
\rho Q\Psi(Q)s^{-1} \PM 1, \quad \lambda s Q \SP 1, 
\end{equation}
for some $\lambda \PE 1$. Then there exists a $(M,s/4)$-ultra-differentiable symplectic transformation
\[ \Phi : \T^n \times D_{\rho/2} \rightarrow \T^n \times D_{\rho} \]
such that
\[ H\circ\Phi =h+\bar{f}+\hat{f} \]
where $\{\bar{f},L_\omega\}=0$ and with the estimates
\begin{equation*}
|\sigma^{-1} \circ \Phi \circ \sigma-\mathrm{Id}|_{s/2}\MP \Psi(Q)\varepsilon, \quad |\bar{f} \circ \sigma|_{s/2} \MP \varepsilon, \quad |\hat{f} \circ \sigma|_{s/2} \MP \varepsilon\exp\left(-\kappa_{2^{1/d}}\left(C^{-1}(\lambda sQ)\right)^{-1}\right). 
\end{equation*}
\end{proposition}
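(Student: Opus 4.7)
The plan is to reduce to the linear case already treated in Proposition~\ref{proplineaire} by means of the anisotropic scaling $\sigma_\rho(\theta,I) = (\theta,\rho I)$, apply that proposition, and conjugate back. Writing Taylor's formula $h(\rho p) = h(0) + \rho\,\omega\cdot p + \rho^2 h_\rho(p)$ with
\[
h_\rho(p) := \int_0^1 (1-t)\,\nabla^2 h(t\rho p)\cdot p^2\, dt,
\]
and setting $\tilde f(\theta,p) := f(\theta,\rho p)$, the rescaled Hamiltonian
\[
\tilde H(\theta,p) := \rho^{-1}\bigl(H\circ\sigma_\rho - h(0)\bigr)(\theta,p) = L_\omega(p) + \rho h_\rho(p) + (\varepsilon/\rho)\,\tilde f(\theta,p)
\]
is precisely of the form~\eqref{HamNNN} required by Proposition~\ref{proplineaire}, with $R := \rho h_\rho$ and $F := (\varepsilon/\rho)\tilde f$.

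Next I would bound the rescaled ingredients using the tools of Section~\ref{sec:udiff}. Since $\sigma_\rho$ sends the unit ball into itself and each $I$-derivative produces a factor $\rho^{|k_I|}\leq 1$, one gets $|\tilde f|_s \leq |f|_s \leq 1$, hence $|F|_s \leq \varepsilon/\rho$. For $h_\rho$, the identity $\nabla h_\rho(p) = \int_0^1 \nabla^2 h(t\rho p)\cdot p\,dt$ together with the Cauchy estimate (Corollary~\ref{corderivative}) applied to pass from $h$ to $\nabla^2 h$ at some intermediate width $s' < s$, the composition estimate (Proposition~\ref{composition}) for $\nabla^2 h\circ\sigma_{t\rho}$, and the product estimate (Corollary~\ref{corproduit}), yield $|\nabla h_\rho|_{s'}\MP 1$ with implicit constant depending on $|h|_s$; hence $|\nabla R|_{s'} \MP \rho$. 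The hypothesis $\rho Q\Psi(Q)s^{-1}\PM 1$ then controls $|\nabla R|_{s'}\cdot Q\Psi(Q)$, and combined with $\sqrt\varepsilon\leq\rho$ (which gives $\varepsilon/\rho\leq\rho$) it also controls $|F|_{s'}\cdot Q\Psi(Q)(s')^{-1}$; the third condition $\lambda s Q \SP 1$ is unchanged. So Proposition~\ref{proplineaire} applies to $\tilde H$ and produces a symplectomorphism $\Phi_0:\T^n\times D_{1/2}\to\T^n\times D$ with $\tilde H\circ\Phi_0 = L_\omega + \rho h_\rho + \bar F_0 + \hat F_0$, $\{\bar F_0,L_\omega\}=0$, and an exponentially small $\hat F_0$.

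I then define $\Phi := \sigma_\rho\circ\Phi_0\circ\sigma_\rho^{-1}$. Conjugation by $\sigma_\rho$ preserves symplecticity, because $\sigma_\rho^*(d\theta\wedge dI) = \rho\,d\theta\wedge dp$ rescales the standard form by a constant, and the two factors $\rho$ and $\rho^{-1}$ cancel. Unwinding the identities $\rho L_\omega\circ\sigma_\rho^{-1}=L_\omega$ and $\rho^2 h_\rho\circ\sigma_\rho^{-1}=h-h(0)-L_\omega$ yields
\[
H\circ\Phi = h + \bar f + \hat f, \qquad \bar f := \rho\bar F_0\circ\sigma_\rho^{-1}, \qquad \hat f := \rho\hat F_0\circ\sigma_\rho^{-1}.
\]
The relation $\{\bar f,L_\omega\}=0$ transfers from $\{\bar F_0,L_\omega\}=0$ since $L_\omega\circ\sigma_\rho^{-1}=\rho^{-1}L_\omega$ is a scalar multiple of $L_\omega$. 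Under the identifications $\bar f\circ\sigma_\rho = \rho\bar F_0$, $\hat f\circ\sigma_\rho=\rho\hat F_0$ and $\sigma_\rho^{-1}\circ\Phi\circ\sigma_\rho=\Phi_0$, the estimates of Proposition~\ref{proplineaire} then translate directly into those claimed here, the prefactor $\rho$ combining with $\nu=\varepsilon/\rho$ to produce $\varepsilon$.

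The main technical obstacle is the careful bookkeeping of widths: one factor is spent by the Cauchy estimate to bound $\nabla^2 h$ (and hence $\nabla h_\rho$), and another by Proposition~\ref{proplineaire} itself, and all intermediate uses of composition, derivative, and product estimates must fit within this budget to land on the stated $s/4$ regularity of $\Phi$ and the $s/2$ estimates of the rescaled objects. Equally delicate is the verification that the single smallness assumption $\rho Q\Psi(Q)s^{-1}\PM 1$ together with $\sqrt\varepsilon\leq\rho$ implies, at the intermediate width, all three smallness conditions of Proposition~\ref{proplineaire}; this is where the detailed technical work lies.
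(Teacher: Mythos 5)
Your proof follows essentially the same route as the paper's: rescale by $\sigma_\rho$, Taylor-expand $h$ to second order so the rescaled Hamiltonian lands in the form~\eqref{HamNNN} with $R=\rho h_\rho$ and $F=\rho^{-1}(f\circ\sigma_\rho)$, apply Proposition~\ref{proplineaire} at the reduced width (hence $s/4$), and scale back. The bookkeeping you flag at the end is exactly what the paper does — a single Cauchy estimate (Corollary~\ref{corderivative}) to get $|\nabla h_\rho|_{s/2}\MP s^{-1}$, then $\sqrt\varepsilon\le\rho$ to convert $\nu=\rho^{-1}\varepsilon\le\rho$ so that the single hypothesis $\rho Q\Psi(Q)s^{-1}\PM 1$ implies all three conditions of~\eqref{seuillin} — and one small remark: the estimate for $\sigma^{-1}\circ\Phi\circ\sigma-\mathrm{Id}$ that comes out of this argument is $\MP\Psi(Q)\rho^{-1}\varepsilon$ (no prefactor $\rho$ cancels here, since $\sigma^{-1}\circ\Phi\circ\sigma=\Phi_0$ exactly), matching the quantitative version in Theorem~\ref{nonlineaire}.
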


\begin{proof}[Proof of Theorem~\ref{nonlineaire}]
It suffices to choose
\[ Q:=\Delta^{-1}\left(c_2s\rho^{-1}\right), \quad c_2 \EP 1 \]
and to verify that choosing $\rho$ sufficiently small allows indeed to apply Proposition~\ref{proplineaire}.
\end{proof} 

\begin{proof}[Proof of Proposition~\ref{propnonlineaire}]
To analyze our Hamiltonian $H$ in the domain $\T^n \times D_\rho$, which is a neighborhood of size $\rho$ around the origin in action space, we rescale the action variables using the map
\[ \sigma=\sigma_\rho : (\theta,I) \longmapsto (\theta,\rho I) \]
which sends the domain $\T^n \times D$ onto $\T^n \times D_\rho$, the latter being included in $\T^n \times D$ since $\rho \leq 1$. Let 
\[ H'=\rho^{-1}(H\circ\sigma)\] 
be the rescaled Hamiltonian, so $H'$ is defined on $\T^n \times D$ and reads
\[ H'(\theta,I)=\rho^{-1}H(\theta,\rho I)=\rho^{-1}h(\rho I)+\rho^{-1}f(\theta,\rho I), \quad (\theta,I)\in \T^n \times D. \]
Without loss of generality, we assume that $h(0)=0$. Now, using Taylor's formula, we can expand $h$ around the origin to obtain
\begin{equation*}
h(\rho I)  = \rho\omega \cdot I+\rho^2\int_{0}^{1}(1-t)\nabla^2 h(t\rho I)(I,I) dt=\rho\omega\cdot I+\rho^2 h'(I) 
\end{equation*}
and so we can write
\begin{equation*}
\begin{cases}
H'=L_\omega+R+F, \\
R:=\rho h', \quad F:=\rho^{-1}(f\circ\sigma). 
\end{cases}
\end{equation*}
Now we know that $|f|_{s}\leq\varepsilon$, so that 
\[ |\rho^{-1}(f\circ\sigma)|_{s}\leq \rho^{-1}\varepsilon.\] Moreover, applying Corollary~\ref{corderivative} we have the estimate 
\[ |\nabla h'|_{s/2} \MP s^{-1} |h|_{s/2} \MP s^{-1}   \]
so eventually our Hamiltonian $H'$ reads
\begin{equation*}
\begin{cases}
H'=L_\omega+R+F, \\
R:=\rho h', \quad F:=\rho^{-1}(f\circ\sigma) \\
|R|_{s/2} \MP s^{-1}\rho, \quad |F|_{s/2} \leq \rho^{-1}\varepsilon. 
\end{cases}
\end{equation*}
It has exactly the form~\eqref{HamNNN} with $s/2$ instead of $s$, a constant times $s^{-1}\rho$ instead of $\rho$ and $\rho^{-1}\varepsilon$ instead of $\nu$; using the assumption that $\sqrt{\varepsilon} \leq \rho$, we have
\[ \nu=\rho^{-1}\varepsilon \leq \rho \]
and~\eqref{seuilnonlin} implies~\eqref{seuillin}. It follows that Proposition~\ref{proplineaire} can be applied, and it gives the existence of a $(M,s/4)$-ultra-differentiable symplectic transformation
\[ \Phi' : \T^n \times D_{1/2} \rightarrow \T^n \times D \]
such that
\[ H'\circ\Phi' =L_\omega+R+\bar{F}+\hat{F} \]
where $\{\bar{F},L_\omega\}=0$ and with the estimates
\begin{equation}\label{estim}
|\Phi'-\mathrm{Id}|_{s/4}\MP \Psi(Q)\rho^{-1}\varepsilon, \quad |\bar{F}|_{s/4} \MP \rho^{-1}\varepsilon, \quad |\hat{F}|_{s/4} \MP \rho^{-1}\varepsilon\exp(-\kappa_{2^{1/d}}\left(C^{-1}(\lambda sQ)\right)^{-1}). 
\end{equation}
Now, scaling back to our original coordinates, we define 
\[ \Phi=\sigma \circ \Phi' \circ \sigma^{-1} : \T^n \times D_{\rho/2} \rightarrow \T^n \times D_{\rho} \] 
so that 
\begin{eqnarray*}
H\circ\Phi & = & \rho H'\circ\Phi' \circ \sigma^{-1} \\
& = & \rho (L_\omega+ R +\bar{F}+\hat{F}) \circ \sigma^{-1} \\
& = & (\rho L_\omega+\rho^2 h') \circ \sigma^{-1} + \rho \bar{F}\circ \sigma^{-1} +\rho\hat{F}\circ \sigma^{-1}  
\end{eqnarray*}
where we used the definition of $R$. Observe that $(\rho L_\omega+\rho^2 h') \circ \sigma^{-1}=h$, so we may set
\[ \bar{f}=\rho\bar{F}\circ \sigma^{-1}, \quad \hat{f}=\rho\hat{F}\circ \sigma^{-1}, \]
and write
\[ H\circ\Phi=h+\bar{f}+\hat{f}. \]
Since $\{\bar{F},L_\omega\}=0$, $\{\bar{F} \circ \sigma^{-1},L_\omega\}=0$ and thus $\{\bar{f},L_\omega\}=0$. To conclude, observe that the estimates~\eqref{estim} can be written again as
\begin{equation*}
\begin{cases}
|\sigma^{-1} \circ \Phi \circ \sigma-\mathrm{Id}|_{s/4}\MP \Psi(Q)\rho^{-1}\varepsilon, \quad |\bar{f} \circ \sigma|_{s/4} \MP \varepsilon,\\
|\hat{f} \circ \sigma|_{s/4} \MP \varepsilon\exp\left(-\kappa_{2^{1/d}}\left(C^{-1}(\lambda sQ)\right)^{-1}\right) 
\end{cases}
\end{equation*}
which is exactly what we wanted to prove.
\end{proof}

Let us conclude by giving the proof of Corollary~\ref{cornonlineaire}, which is just a slight variation on the proof of Corollary~\ref{corlineaire}.

\begin{proof}[Proof of Corollary~\ref{cornonlineaire}]
Since we are under the assumptions of Theorem~\ref{nonlineaire}, let us first consider the Hamiltonian in normal form 
\[ H\circ\Phi =h+\bar{f}+\hat{f} \]
where 
\[ \Phi : \T^n \times D_{\rho/2} \rightarrow \T^n \times D_{\rho}, \] 
$\{\bar{f},L_{\omega}\}=0$ and with the estimates
\begin{equation*}
\begin{cases}
|\sigma^{-1} \circ \Phi \circ \sigma-\mathrm{Id}|_{s/4}\leq c_1\Psi(\Delta^{-1}(c_2s\varepsilon^{-1}))\rho^{-1}\varepsilon, \quad |\bar{f} \circ \sigma|_{s/4} \leq c_1\varepsilon, \\
|\hat{f} \circ \sigma|_{s/4} \leq c_1\varepsilon\exp(-c_3\left(C^{-1}(c_4s\Delta^{-1}(c_2s\rho^{-1}))\right)^{-1}).
\end{cases}
\end{equation*}
Decomposing $\Phi=(\Phi_\theta,\Phi_I)$ into its angle and action space components, and observing that $\partial_\theta (\hat{f} \circ \sigma)=(\partial_\theta\hat{f}) \circ \sigma$, one obtains from these last estimates and Corollary~\ref{corderivative} that
\[ |\Phi_I-\mathrm{Id}_I|_{s/4}\leq c_1\Psi(\Delta^{-1}(c_2s\varepsilon^{-1}))\varepsilon \]
and
\[ \sup_{(\tilde{\theta},\tilde{I})\in \T^n \times D_{\rho/2}}|\partial_\theta \hat{f}(\tilde{\theta},\tilde{I})| \MP s^{-1}\varepsilon\exp(-c_3\left(C^{-1}(c_4s\Delta^{-1}(c_2s\rho^{-1}))\right)^{-1}). \]
Let $(\tilde{\theta}(t),\tilde{I}(t))$ be a solution of $H \circ \Phi$ with $\tilde{I}(0) \in D_{\rho/4}$. Since $\{\bar{f},L_{\omega}\}=0$ and $\omega=(\bar{\omega},0) \in \R^d \times \R^{n-d}$ with $\bar{\omega}$ non-resonant, we have $\partial_{\theta_j}\bar{f}=0$ for $1 \leq j \leq d$ and thus 
\[ |\Pi_d (\tilde{I}(t)-\tilde{I}(0))| \leq |t|\sup_{(\tilde{\theta},\tilde{I})\in \T^n \times D_{\rho/2}}|\partial_\theta \hat{f}(\tilde{\theta},\tilde{I})|  \]
for all $t$ as long as $\tilde{I}(t) \in D_{\rho/4}$. So if we define
\[ T \PE rs\varepsilon^{-1}\exp(c_3\left(C^{-1}(c_4s\Delta^{-1}(c_2s\varepsilon^{-1}))\right)^{-1}) \]
we obtain
\[ |\Pi_d (\tilde{I}(t)-\tilde{I}(0))| \leq r, \quad |t|\leq T \]
since $r \leq \rho/4$ as long as $\tilde{I}(t) \in D_{\rho/2}$. Now coming back to the original Hamiltonian $H$, any solution $(\theta(t),I(t))$ with $I(0) \in D_{\rho/8}$ gives rise to a solution $(\tilde{\theta}(t),\tilde{I}(t))=\Phi^{-1}(\theta(t),I(t))$ of $H \circ \Phi$ with $\tilde{I}(0) \in D_{\rho/4}$, and therefore, for $|t| \leq T$ and such that $I(t) \in D_{\rho/4}$, we obtain
\begin{eqnarray*}
|\Pi_d (I(t)-I(0))| & \leq & |\Pi_d (I(t)-\tilde{I}(t))|+ |\Pi_d (\tilde{I}(t)-\tilde{I}(0))| + |\Pi_d (\tilde{I}(0)-I(0))| \\
& \leq & r + 2c_1\Psi(\Delta^{-1}(c_2s\varepsilon^{-1}))\varepsilon \\
& \leq & 2r
\end{eqnarray*}
since we assumed $r \geq 2c_1\Psi(\Delta^{-1}(c_2s\varepsilon^{-1}))\varepsilon$. This concludes the proof.
\end{proof}

\subsection{Stability in the quasi-convex case: proof of Theorem~\ref{convex} }\label{sec:convex}

The aim of this section is to prove Theorem~\ref{convex}; implicit constants will depend only on $n$, $|h|_s$ and the function $C$ (except at the very end of the proof where they will also depend on $l$ and $m$).  

To prove Theorem~\ref{convex}, we will have to establish several intermediate statements. The first one is a normal form for an arbitrary non-linear Hamiltonian in a neighborhood of a periodic frequency. In order not to assume that $h$ is a local diffeomorphism, the following definition will be useful in the sequel. For a fixed integrable Hamiltonian $h$ and for $\mu>0$, a point $I_1 \in D$ in action space is said to be $\mu$-close to be $T$-periodic if there exists a $T$-periodic vector $v \in \R^n\setminus\{0\}$ such that
\begin{equation}\label{closeperiodic}
|\nabla h(I_1)-v| \leq \mu.
\end{equation}
We will use the notation $D_r(I_1)$ to denote the ball of radius $r>0$ centered at the point $I_1$. We have the following proposition, where we define 
\[ \tau_1(\theta,I):=(\theta,I-I_1) \]
and we recall that, given $\rho>0$, 
\[ \sigma(\theta,I)=\sigma_\rho(\theta,I)=(\theta,\rho I). \]

\begin{proposition}\label{propconvex1}
Let $H$ be as in~\eqref{Ham1}, where $H$ is $(M,s)$-ultra-differentiable with $M$ satisfying \eqref{H1} and \eqref{H2}. Let $I_1 \in D_{3/4}$ be $\mu$-close to be $T$-periodic, and given $\rho>0$, assume that  
\begin{equation}\label{seuilconvex1}
\mu s \leq \rho, \quad \varepsilon \PM \rho^2, \quad T\rho \PM s^2, \quad \rho <1/4. 
\end{equation}
Then there exists a $(M,s/4)$-ultra-differentiable symplectic transformation
\[ \Phi : \T^n \times D_{\rho/2}(I_1) \rightarrow \T^n \times D_{\rho}(I_1) \]
such that
\[ H\circ\Phi =h+\bar{f}+\hat{f} \]
where $\{\bar{f},L_v\}=0$ and with the estimates
\begin{equation*}
\begin{cases}
|\tau_1^{-1} \circ \sigma^{-1} \circ \Phi \circ \sigma \circ \tau_1-\mathrm{Id}|_{s/4}\leq 2 T\rho^{-1}\varepsilon, \quad |\bar{f} \circ \sigma \circ \tau_1|_{s/4} \leq 2\varepsilon, \\ 
|\hat{f} \circ \sigma \circ \tau_1|_{s/4} \leq \varepsilon\exp\left(-\kappa_2\left(C^{-1}(s^2(AT\rho)^{-1})\right)^{-1}\right). 
\end{cases}
\end{equation*}
\end{proposition}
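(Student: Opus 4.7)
The plan is to reduce the proposition to Proposition~\ref{propperiodic} via an affine change of coordinates, by first translating the action origin to $I_1$ and then rescaling by $\rho$.

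First, I would translate. Set $\tau_1(\theta,I)=(\theta,I-I_1)$ and consider $H\circ\tau_1^{-1}$ on $\T^n\times D_\rho$ (which is contained in the domain of $H$ since $I_1\in D_{3/4}$ and $\rho<1/4$). Taylor-expanding $h$ around $I_1$ yields
\[
h(I_1+J)=h(I_1)+v\cdot J+S(J)+R_h(J),
\]
where $S(J):=(\nabla h(I_1)-v)\cdot J$ is linear with $|\nabla S|_s=|\nabla h(I_1)-v|\leq\mu$, and $R_h(J):=h(I_1+J)-h(I_1)-\nabla h(I_1)\cdot J$ is the quadratic remainder. Next, rescale via $\sigma=\sigma_\rho:(\theta,J)\mapsto(\theta,\rho J)$ and define
\[
H'=\rho^{-1}\bigl((H\circ\tau_1^{-1})\circ\sigma\bigr)-\rho^{-1}h(I_1)\quad\text{on}\quad \T^n\times D.
\]
A direct computation shows $H'=L_v+S+R'+F'$, with $S$ unchanged (being linear), $R'(J)=\rho^{-1}R_h(\rho J)$, and $F'(\theta,J)=\rho^{-1}\varepsilon f(\theta,I_1+\rho J)$. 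The norm bounds $|F'|_{s/2}\leq \rho^{-1}\varepsilon$ and $|\nabla S|_{s/2}\leq\mu$ are immediate. For $R'$, since $\nabla R'(J)=\nabla h(I_1+\rho J)-\nabla h(I_1)$, I would estimate $|\nabla R'|_{s/2}\MP s^{-1}\rho$ using Corollary~\ref{corderivative} applied to $h$ (twice, to reach $\nabla^2 h$) together with the scaling. This is the step that one must track carefully: the factor $s^{-1}$ coming out of the Cauchy estimate on $\nabla^2 h$ is exactly what produces the $s^2$ (rather than $s$) in the hypothesis $T\rho\PM s^2$.

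Now I would apply Proposition~\ref{propperiodic} to $H'$ on $\T^n\times D$ with the width $s/2$ and $\xi=2$ (so $\kappa_2=\ln 2/24$). The relevant parameters are $\nu:=\rho^{-1}\varepsilon$ and $\eta:=\max\{\mu,s^{-1}\rho\}=s^{-1}\rho$ (using $\mu s\leq\rho$). The two hypotheses of Proposition~\ref{propperiodic} become
\[
\rho^{-1}\varepsilon\leq C(\kappa_2)^{-1}(s/2)(s^{-1}\rho)\quad\text{and}\quad C^{-1}\!\bigl(s^2(2AT\rho)^{-1}\bigr)\leq\kappa_2,
\]
i.e.\ $\varepsilon\PM\rho^2$ and $T\rho\PM s^2$, which are exactly the standing hypotheses. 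Proposition~\ref{propperiodic} then produces an $(M,s/4)$-ultra-differentiable symplectic transformation $\Phi':\T^n\times D_{1/2}\to\T^n\times D$ with $H'\circ\Phi'=L_v+S+R'+\bar{F}'+\hat{F}'$, $\{\bar{F}',L_v\}=0$, and the three estimates of~\eqref{EST1}.

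Finally, I would conjugate back by setting
\[
\Phi:=\tau_1^{-1}\circ\sigma\circ\Phi'\circ\sigma^{-1}\circ\tau_1:\T^n\times D_{\rho/2}(I_1)\to\T^n\times D_\rho(I_1),
\]
which is symplectic (the scaling factor $\rho$ in the symplectic form is absorbed by the factor $\rho^{-1}$ in the definition of $H'$). Unwinding the rescaling gives $H\circ\Phi=h+\bar{f}+\hat{f}$ with $\bar{f}=\rho\,\bar{F}'\circ\sigma^{-1}\circ\tau_1$ and $\hat{f}=\rho\,\hat{F}'\circ\sigma^{-1}\circ\tau_1$; the Poisson-bracket relation $\{\bar{f},L_v\}=0$ follows from $\{\bar{F}',L_v\}=0$ because $\sigma^{-1}\circ\tau_1$ acts as an affine scaling in $I$ and leaves the $\theta$-derivatives intact. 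The three estimates in the statement follow by reading off the bounds on $\Phi'-\mathrm{Id}$, $\bar{F}'$, $\hat{F}'$ and multiplying by the appropriate power of $\rho$; in particular the argument of $C^{-1}$ in the bound on $\hat{f}\circ\sigma\circ\tau_1$ is $s^2(AT\rho)^{-1}$, as stated. The principal technical obstacle is the careful bookkeeping of the Cauchy estimate for $\nabla R'$ and verifying that the widths ($s\to s/2\to s/4$) and norms survive both the rescaling and the application of Proposition~\ref{propperiodic} without loss.
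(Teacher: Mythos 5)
Your proposal is correct and follows essentially the same route as the paper: translate to $I_1$, rescale by $\rho$, Taylor-expand $h$ to split off the linear part $L_v+S$ and quadratic remainder $R$, verify $\eta\asymp s^{-1}\rho$ and that~\eqref{seuilconvex1} implies~\eqref{seuil1}, apply Proposition~\ref{propperiodic} with $\xi=2$, then undo the rescaling. The only cosmetic difference is that the paper reduces to $I_1=0$ at the outset rather than carrying $\tau_1$ explicitly throughout.
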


\begin{proof}
Observe that since $\rho<1/4$ and $I_1 \in D_{3/4}$, then $D_{\rho}(I_1)$ is included in $D$. Without loss of generality, we may assume that $I_1=0$ so that $\tau_1$ is the identity, and we may also assume that $h(0)=0$. Let 
\[ H'=\rho^{-1}(H\circ\sigma)\] 
be the rescaled Hamiltonian, so $H'$ is defined on $\T^n \times D$ and reads
\[ H'(\theta,I)=\rho^{-1}H(\theta,\rho I)=\rho^{-1}h(\rho I)+\rho^{-1}f(\theta,\rho I), \quad (\theta,I)\in \T^n \times D. \]
Using Taylor's formula, we can expand $h$ around the origin to obtain
\begin{equation*}
h(\rho I)  = \rho\omega \cdot I+\rho^2\int_{0}^{1}(1-t)\nabla^2 h(t\rho I)(I,I) dt=\rho\omega\cdot I+\rho^2 h'(I) 
\end{equation*}
which can be written again as
\[ h(\rho I)=\rho v\cdot I+\rho(\omega-v)\cdot I+\rho^2 h'(I)  \]
and therefore
\begin{equation*}
\begin{cases}
H'=L_v+S+R+F, \\
S:=L_\omega-L_v, \quad R:=\rho h', \quad F:=\rho^{-1}(f\circ\sigma). 
\end{cases}
\end{equation*}
From the assumptions that $I_1$ is $\mu$-close to be $T$-periodic we know that 
\[ |\nabla S|_s \leq c\mu. \]
Then as $|f|_{s}\leq\varepsilon$, we have 
\[ |\rho^{-1}(f\circ\sigma)|_{s}\leq \rho^{-1}\varepsilon\] 
and moreover, applying Corollary~\ref{corderivative}, we have the estimate 
\[ |\nabla h'|_{s/2} \MP s^{-1}.   \]
Eventually our Hamiltonian $H'$ reads
\begin{equation*}
\begin{cases}
H'=L_\omega+S+R+F, \\
R:=\rho h', \quad S:=L_\omega-L_v, \quad F:=\rho^{-1}(f\circ\sigma) \\
|\nabla S|_{s/2} \leq c \mu, \quad |R|_{s/2} \MP s^{-1}\rho, \quad |F|_{s/2} \leq \rho^{-1}\varepsilon. 
\end{cases}
\end{equation*}
It has exactly the form~\eqref{HamN} with $s/2$ instead of $s$, $c\mu$ instead of $\mu$, a constant times $s^{-1}\rho$ instead of $\rho$ and $\rho^{-1}\varepsilon$ instead of $\nu$; one easily check that~\eqref{seuilconvex1} implies $\eta \EP \max\{\mu,s^{-1}\rho\} \EP s^{-1}\rho$ and~\eqref{seuil1}. It follows that Proposition~\ref{propperiodic} can be applied, and it gives, choosing $\delta=1$ and $\xi=2$, the existence of a $(M,s/4)$-ultra-differentiable symplectic transformation
\[ \Phi' : \T^n \times D_{1/2} \rightarrow \T^n \times D \]
such that
\[ H\circ\Phi' =L_v+S+\bar{F}+\hat{F} \]
with $\{\bar{F},L_v\}=0$ and the estimates
\begin{equation}\label{estim2}
\begin{cases}
|\Phi'-\mathrm{Id}|_{s/4}\leq 2 T\rho^{-1}\varepsilon, \quad |\bar{F}|_{s/4} \leq 2\rho^{-1}\varepsilon, \\ 
|\hat{F}|_{s/4} \leq \rho^{-1}\varepsilon\exp\left(-\kappa_2\left(C^{-1}(s^2(AT\rho)^{-1})\right)^{-1}\right). 
\end{cases}
\end{equation}
Now, scaling back to our original coordinates, we define 
\[ \Phi=\sigma \circ \Phi' \circ \sigma^{-1} : \T^n \times D_{\rho/2} \rightarrow \T^n \times D_{\rho} \] 
so that 
\begin{eqnarray*}
H\circ\Phi & = & \rho H'\circ\Phi' \circ \sigma^{-1} \\
& = & \rho (L_\omega+ R +\bar{F}+\hat{F}) \circ \sigma^{-1} \\
& = & (\rho L_\omega+\rho^2 h') \circ \sigma^{-1} + \rho \bar{F}\circ \sigma^{-1} +\rho\hat{F}\circ \sigma^{-1}  
\end{eqnarray*}
where we used the definition of $R$. Observe that $(\rho L_\omega+\rho^2 h') \circ \sigma^{-1}=h$, so we may set
\[ \bar{f}=\rho\bar{F}\circ \sigma^{-1}, \quad \hat{f}=\rho\hat{F}\circ \sigma^{-1}, \]
and write
\[ H\circ\Phi=h+\bar{f}+\hat{f}. \]
Since $\{\bar{F},L_\omega\}=0$, $\{\bar{F} \circ \sigma^{-1},L_\omega\}=0$ and thus $\{\bar{f},L_\omega\}=0$. To conclude, observe that the estimates~\eqref{estim2} can be written again as
\begin{equation*}
\begin{cases}
|\sigma^{-1} \circ \Phi \circ \sigma-\mathrm{Id}|_{s/4}\leq 2 T\rho^{-1}\varepsilon, \quad |\bar{f} \circ \sigma|_{s/4} \leq 2 \varepsilon,\\
|\hat{f} \circ \sigma|_{s/4} \leq \varepsilon\exp\left(-\kappa_2\left(C^{-1}(s^2(AT\rho)^{-1})\right)^{-1}\right)
\end{cases}
\end{equation*}
which is exactly what we wanted to prove.
\end{proof}

From the last proposition, it is easy to prove that the action variables of any solution starting close to $I_1$ have small variation for a long interval of time in the direction given by the periodic vector $v$. Using the quasi-convexity assumption on $h$, we will prove that the action variables have also small variation for a long interval of time in the directions transverse to $v$ but tangent to the energy level. Let us recall  that $h : D \rightarrow \R$ is $(l,m)$-quasi-convex, for positive constants $l$ and $m$, if for all $I \in D$, it satisfies 
\begin{equation*}
|\nabla h(I)|\geq l 
\end{equation*}
and at least of the inequalities
\begin{equation*}
|\nabla h(I)\cdot \xi|\geq l|\xi|, \quad |\nabla^2 h(I)\xi\cdot \xi| \geq m|\xi|^2
\end{equation*}
holds true for all $\xi \in \R^n$.

\begin{proposition}\label{propconvex2}
Under the assumptions of Proposition~\ref{propconvex1} and if
\begin{equation}\label{seuilconvex2}
\mu \PM m\rho, \quad \rho \PM l, \quad \varepsilon \PM m\rho^2,
\end{equation}
for any $I_0 \in D_{\mu}(I_1)$ and any solution $(\theta(t),I(t))$ of $H$ with $I(0)=I_0$, we have 
\[ |I(t)-I_0| \leq \rho, \quad |t| \PM  m\rho^2s\varepsilon^{-1}\exp(\kappa_2\left(C^{-1}(s^2(AT\rho)^{-1})\right)^{-1})\]
where $A  \EP 1$.
\end{proposition}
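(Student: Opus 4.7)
The plan is to combine the resonant normal form of Proposition~\ref{propconvex1} with two approximate conservation laws --- a momentum-type one along $v$ and energy conservation --- through the quasi-convexity of $h$, in a standard bootstrap argument. Applying Proposition~\ref{propconvex1} produces a symplectic transformation $\Phi : \T^n \times D_{\rho/2}(I_1) \to \T^n \times D_\rho(I_1)$ with $H \circ \Phi = h + \bar f + \hat f$, $\{\bar f, L_v\} = 0$, $|\bar f|_{s/4} \MP \varepsilon$ and $|\hat f|_{s/4} \MP \varepsilon\exp\bigl(-\kappa_2(C^{-1}(s^2(AT\rho)^{-1}))^{-1}\bigr)$. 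Rescaling the estimate on $\tau_1^{-1}\circ\sigma^{-1}\circ\Phi\circ\sigma\circ\tau_1 - \mathrm{Id}$ shows that $\Phi$ deviates from the identity, in the original coordinates, by at most a constant times $T\varepsilon$ on the action components, which under~\eqref{seuilconvex1}--\eqref{seuilconvex2} is $\MP \rho$. Hence a solution $(\theta(t), I(t))$ of $H$ starting at $I_0 \in D_\mu(I_1)$ lifts, as long as $I(t)$ remains in $D_\rho(I_1)$, to a solution $(\tilde\theta(t), \tilde I(t)) := \Phi^{-1}(\theta(t), I(t))$ of $H\circ\Phi$ with $\tilde I(0)$ close to $I_0$ and hence within $O(\mu)$ of $I_1$.

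I then bootstrap on $\xi(t) := \tilde I(t) - \tilde I(0)$. Let $T_\ast$ be the supremum of $t > 0$ such that $|\xi(\tau)| \leq \rho/2$ for all $\tau \in [0,t]$. On $[0,T_\ast]$, two approximate conservation laws hold. First, $\{L_v,\bar f\} = 0$ entails $\frac{d}{dt}(v\cdot\tilde I) = -v\cdot\nabla_\theta \hat f$, whose sup-norm is controlled by $s^{-1}|\hat f|_{s/4}$ via Corollary~\ref{corderivative}, so that
\[ |v\cdot\xi(t)| \MP |t|\,s^{-1}\varepsilon\exp\bigl(-\kappa_2(C^{-1}(s^2(AT\rho)^{-1}))^{-1}\bigr). \]
Second, conservation of $H\circ\Phi$ yields $|h(\tilde I(t)) - h(\tilde I(0))| \MP \varepsilon$; writing $\omega_\ast := \nabla h(\tilde I(0))$ and Taylor-expanding $h$ at $\tilde I(0)$, with a Cauchy-type bound on $\nabla^3 h$ from Corollary~\ref{corderivative},
\[ \bigl|\omega_\ast\cdot\xi + \tfrac{1}{2}\nabla^2 h(\tilde I(0))\xi\cdot\xi\bigr| \MP \varepsilon + |\xi|^3. \]
Since $I_1$ is $\mu$-close to $T$-periodic with frequency $v$ and $\tilde I(0) \in D_{2\mu}(I_1)$, one also has $|\omega_\ast - v| \MP \mu$.

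The $(l,m)$-quasi-convexity at $\tilde I(0)$ splits the analysis. In the linear-control case $|\omega_\ast\cdot\xi| \geq l|\xi|$, combining $|\omega_\ast\cdot\xi| \leq |v\cdot\xi| + \mu|\xi|$ with $\mu \PM m\rho \PM l$ absorbs $\mu|\xi|$ into $l|\xi|/2$ and yields $|\xi| \MP l^{-1}|t|s^{-1}\varepsilon\exp(-\kappa_2(C^{-1}(\cdots))^{-1})$, which remains below $\rho/2$ on the claimed interval. In the quadratic-control case $|\nabla^2 h\,\xi\cdot\xi| \geq m|\xi|^2$, the energy estimate combined with the bound on $|\omega_\ast\cdot\xi|$ gives
\[ \tfrac{m}{2}|\xi|^2 \MP |v\cdot\xi| + \mu|\xi| + \varepsilon + |\xi|^3. \]
For $|\xi| \leq \rho/2$, the assumption $\mu \PM m\rho$ absorbs $\mu|\xi|$ into $m|\xi|^2/8$, and the smallness of $\rho$ implied by~\eqref{seuilconvex1} absorbs $|\xi|^3$ similarly. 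The remaining $\tfrac{m}{4}|\xi|^2 \MP |v\cdot\xi| + \varepsilon$, together with $\varepsilon \MP m\rho^2$, shows $|\xi(T_\ast)| < \rho/2$ strictly unless $|t|$ saturates the announced time bound $\MP m\rho^2 s\varepsilon^{-1}\exp(\kappa_2(C^{-1}(\cdots))^{-1})$. Transferring back via $\Phi$ costs only an action error $\MP \rho$, establishing $|I(t) - I_0| \leq \rho$.

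The main obstacle is bookkeeping rather than a deep difficulty: one must verify that the assumptions~\eqref{seuilconvex1}--\eqref{seuilconvex2} simultaneously control (a) the passage from scaled to original coordinates so that $|\Phi - \mathrm{Id}| \MP \rho$, (b) the absorption of the cubic Taylor remainder $|\xi|^3$ into $m|\xi|^2$ in the quadratic-control case, and (c) the absorption of the error $\mu|\xi|$ coming from the periodic approximation of the frequency. Each of these is individually elementary, but their joint execution requires fixing the order of the various smallness thresholds and tracking implicit constants that now depend on $l$ and $m$ in addition to $n$, $|h|_s$ and the Cauchy function $C$.
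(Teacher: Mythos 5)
Your overall plan is the right one and matches the paper's: apply Proposition~\ref{propconvex1}, combine the momentum-type conservation along $v$ (from $\{\bar f,L_v\}=0$ and the exponentially small remainder $\hat f$) with energy conservation, and close with quasi-convexity via a bootstrap or exit-time argument. The paper's proof (following P\"oschel) works at the first exit time $T_e$ where $|\Delta I|=\rho/4$ and derives a contradiction. However, two of your steps have genuine gaps.

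\emph{The cubic Taylor remainder.} You expand $h$ to second order around $\tilde I(0)$, producing a cubic remainder that by Cauchy estimates is $\MP s^{-3}|h|_s\,|\xi|^3$, and then claim that "the smallness of $\rho$ implied by~\eqref{seuilconvex1}" absorbs it into $m|\xi|^2$. At $|\xi|\sim\rho$ this would require $\rho\PM ms^3$ (with constants depending only on $n$, $|h|_s$, $C$), and none of the hypotheses~\eqref{seuilconvex1}--\eqref{seuilconvex2} gives that; $\rho<1/4$ and $\varepsilon\PM m\rho^2$ do not control $\rho$ relative to $m$. The paper sidesteps this entirely by expanding $h$ only to \emph{first} order, with the second-order integral remainder $\int_0^1(1-\tau)\nabla^2 h(I(\tau))\Delta I\cdot\Delta I\,d\tau$, and applying quasi-convexity to $\nabla^2 h$ at the interpolated points $I(\tau)$. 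No cubic error appears, and the stated hypotheses suffice.

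\emph{The case split on quasi-convexity.} You split into the linear branch $|\omega_\ast\cdot\xi|\geq l|\xi|$ and the quadratic branch, but the linear branch does not close: you need $\mu\leq l/2$ to absorb $\mu|\xi|$, and your chain ``$\mu\PM m\rho\PM l$'' is not supported, since~\eqref{seuilconvex2} gives $\rho\PM l$, not $m\rho\PM l$; for general $m$ nothing forces $\mu<l$. The paper avoids the case split altogether by observing that the two normal-form bounds together rule out the linear alternative at the exit time: $\Pi_v\Delta I$ is controlled by $\hat f$, $\Pi_v^\perp\Delta I$ by the frequency shift ($|\nabla h(I_1)-v|\leq\mu$, $|\Delta I|\leq\rho$), giving $|\nabla h(I(\tau))\cdot\Delta I|\MP\rho|\Delta I|$; tuning the implicit constant in $\rho\PM l$ forces $|\nabla h(I(\tau))\cdot\Delta I|\leq l|\Delta I|$, so the quadratic alternative must hold at $I(\tau)$ in the direction $\Delta I$. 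Importing this observation both repairs the logic and removes the need for the linear branch. A smaller phrasing point: $\mu\PM m\rho$ does not absorb $\mu|\xi|$ into $\tfrac m8|\xi|^2$ "for all $|\xi|\leq\rho/2$"; the paper does this only at the exit point $|\Delta I|=\rho/4$, via $\mu|\Delta I|\leq C\mu^2/m+\tfrac m6|\Delta I|^2$ combined with $\mu^2/m\PM m\rho^2\EP m|\Delta I|^2$.
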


The proof below follows~\cite{Pos93}.

\begin{proof}
Let us define
\[ T_*\PE m\rho^2s\varepsilon^{-1}\exp(\kappa_2\left(C^{-1}(s(AT\rho)^{-1})\right)^{-1}).  \]
For any solution $(\tilde{\theta}(t),\tilde{I}(t))$ of $H \circ \Phi$ with $\tilde{I}(0) \in D_{2\mu}(I_1)$, let $T_e \in ]0,+\infty]$ be the time of first exit of the ball centered at $I_0$ of radius $\rho/4$; observe that $\mu \leq \rho/8$ can be arranged by the first inequality in~\eqref{seuilconvex2} so this is well-defined. We claim it is sufficient to prove that $T_* \leq T_e$. Indeed, in this case one would have
\[ |\tilde{I}(t)-\tilde{I}(0)| \leq \rho/4, \quad 0 \leq t \leq T_* \]
and as the image of $D_{2\mu}(I_1)$ (respectively $D_{\rho/2}(I_1)$) under $\Phi$ contains $D_{\mu}(I_1)$ (respectively is contained in $D_{\rho}(I_1)$), one would obtain that 
\[ |I(t)-I(0)| \leq \rho, \quad 0 \leq t \leq T_* \]
for any solution $(\theta(t),I(t))$ of $H$ with $I(0) \in D_{\mu}(I_1)$.

So it remains to prove the claim, and we will do this by contradiction assuming that $T:=\min\{T_e,T_*\}=T_e$. To simplify the notations, we remove the tilde from the notations and we consider a solution $(\theta(t),I(t))$ of $H \circ \Phi$ with $I(0) \in D_{2\mu}(I_1)$. Let us write 
\[\Delta I:=I(T)-I(0), \quad I(s):=I_0+s\Delta(I), \quad \omega(I(s)):=\nabla h(I(s)) \]
for $0 \leq s \leq 1$. Observe that since we are assuming $T=T_e$, then
\begin{equation}\label{contradiction}
|\Delta I|=\rho/4.
\end{equation}
Recalling that 
\[ |\nabla h(I_1)-v| \leq \mu \]
for some $T$-periodic $v$, we let $\Pi_v$ be the orthogonal projection onto the line generated by $v$ and $\Pi_v^\perp=\mathrm{Id}-\Pi_v$ be the orthogonal projection onto the orthogonal of $v$. Since $I(0) \in D_{2\mu}(I_1)$, we have 
\begin{equation}\label{boubou1}
|\omega(I(0))-\nabla h(I_1)| \MP \mu
\end{equation}
and using~\eqref{contradiction} and the fact that $\mu \leq \rho$, we obtain
\[ |\omega(I(s))-\nabla h(I_1)| \MP \rho.  \]
So from the last two inequalities
\begin{equation}\label{exact}
|\omega(I(s))-v| \MP \rho.
\end{equation}
Since $\Pi_v^\perp v=0$ we have
\[ |\Pi_v^\perp \omega(I(s))|=|\Pi_v^\perp(\omega(I(s)-v))| \MP \rho \]
and so
\begin{equation}\label{bout1}
|\omega(I(s))\cdot \Pi_v^\perp \Delta I| \MP |\Pi_v^\perp(\omega(I(s)-v))||\Pi_v^\perp \Delta I| \MP \rho |\Delta I|.
\end{equation}
Observe that for $s=0$, we have the better estimate
\begin{equation}\label{bout11}
|\omega(I(0))\cdot \Pi_v^\perp \Delta I| \MP \mu |\Delta I|.
\end{equation}
On the other hand, our Hamiltonian is of the form
\[ H \circ \Phi = h + \bar{f} + \hat{f}  \] 
with $\{\bar{f},L_v\}=0$, which implies that $\Pi_v \left(\partial_\theta \bar{f}\right)=0$ and thus
\[ |\omega(I(s))\cdot \Pi_v \Delta I| \leq \int_0^T |\omega(I(s))\cdot \partial_\theta \hat{f}| dt.  \]
Using Corollary~\ref{corderivative} one easily obtains an estimate of $\partial_\theta \hat{f}$ from the known estimate on $\hat{f}$, and, by the definition of $T$ with a proper implicit constant, we arrive at
\begin{equation}\label{bout2}
|\omega(I(s))\cdot \Pi_v \Delta I| < m\rho^2/96 =\left(m\rho/24\right)|\Delta I|
\end{equation}
where the last equality follows from~\eqref{contradiction}. From~\eqref{bout1} and~\eqref{bout2} we get
\[ |\omega(I(s))\cdot \Delta I| \MP \rho |\Delta I| \]
and by a proper choice of the implicit constant in the second condition of~\eqref{seuilconvex2}, we can ensure that
\begin{equation}\label{qconvex}
|\omega(I(s))\cdot \Delta I| \leq l|\Delta I|.
\end{equation}
To reach a contradiction, let $\Delta h:=h(I(T))-h(I(0))$ so that Taylor formula with integral remainder at the point $I(0)$ gives
\[ \Delta h=\omega(I(0))\cdot \Delta I+\int_0^1(1-s)\nabla^2 h(I(s))\Delta I \cdot \Delta I ds. \]
Since $h$ is $(l,m)$-convex, using~\eqref{qconvex} we know that
\[ |\nabla^2 h(I(s))\Delta I \cdot \Delta I | \geq m|\Delta I|^2 \]
and therefore
\begin{equation}\label{contra}
m|\Delta I|^2 \leq 2|\Delta h|+2|\omega(I(0))\cdot \Delta I| \leq 2|\Delta h|+2|\omega(I(0))\cdot \Pi_v^\perp \Delta I|+2|\omega(I(0))\cdot \Pi_v\Delta I|.
\end{equation}
Using the preservation of energy, the last part of~\eqref{seuilconvex2} and~\eqref{contradiction}, we have
\begin{equation}\label{contra1}
2|\Delta h|\MP \varepsilon < \frac{m}{3} \frac{\rho^2}{16}=\frac{m}{3}|\Delta I|^2.
\end{equation}
Next, from~\eqref{bout11} we obtain
\[ 2|\omega(I(0))\cdot \Pi_v^\perp \Delta I| \MP \mu |\Delta I| \leq C\frac{\mu^2}{m}+\frac{m}{6}|\Delta I|^2 \]
for some $C\EP 1$, and, using the first part of~\eqref{seuilconvex2} and~\eqref{contradiction}, we can ensure that
\begin{equation}\label{contra2}
2|\omega(I(0))\cdot \Pi_v^\perp \Delta I|<\frac{m}{6}|\Delta I|^2+\frac{m}{6}|\Delta I|^2 =\frac{m}{3}|\Delta I|^2.
\end{equation}
Finally, recall from~\eqref{bout2} that
\begin{equation}\label{contra3}
2|\omega(I(s))\cdot \Pi_v \Delta I| < m\rho^2/48 =\frac{m}{3}|\Delta I|^2.
\end{equation}
and thus~\eqref{contra}, \eqref{contra1}, \eqref{contra2} and~\eqref{contra3} yields
\[ m|\Delta I|^2 < m|\Delta I|^2 \]
which is absurd.

This proves the proposition for positive times, but for negative times the argument is completely similar so this concludes the proof.
\end{proof}

To finish the proof of Theorem~\ref{convex}, we just need the following lemma which is a direct application of Dirichlet's box principle.

\begin{lemma}\label{dirichlet}
Let $\omega \in \R^n\setminus\{0\}$, and $Q \geq 1$ a real number. Then there exists a $T$-periodic vector $v \in \R^n\setminus\{0\}$ such that
\[ |\omega-v|\leq (n-1)(TQ)^{-1}, \quad |\omega|^{-1} \leq T \leq n|\omega|^{-1}Q^{n-1}.  \]
\end{lemma}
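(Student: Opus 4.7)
The plan is to apply the classical Dirichlet simultaneous approximation theorem to the $n-1$ ratios $\omega_i/\omega_1$, after a preliminary normalization that puts the largest component in position $1$.

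First I would reduce to a convenient configuration: by permuting coordinates and flipping signs (neither operation affects being $T$-periodic, nor the norm $|\omega|=|\omega|_1$ used throughout the paper), I can assume $\omega_1 = \max_i |\omega_i| > 0$. This gives the two-sided comparison $|\omega|/n \leq \omega_1 \leq |\omega|$, which is what will produce the factor of $n$ in the upper bound on $T$.

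Next I would invoke Dirichlet's box principle: for the real numbers $\omega_2/\omega_1,\dots,\omega_n/\omega_1$ and the parameter $Q\geq 1$, there exist integers $q\in\{1,\dots,\lfloor Q^{n-1}\rfloor\}$ and $p_2,\dots,p_n\in\Z$ such that
\[
\left|q\,\frac{\omega_i}{\omega_1}-p_i\right|\leq \frac{1}{Q},\quad 2\leq i\leq n.
\]
Setting $T_0:=q/\omega_1$ and $v:=(q,p_2,\dots,p_n)/T_0\in\R^n$, one has $T_0 v=(q,p_2,\dots,p_n)\in\Z^n$, so $v$ is periodic; let $T\in (0,T_0]$ denote its actual (minimal) period. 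By construction $v_1=\omega_1$, while for $i\geq 2$,
\[
|v_i-\omega_i|=\frac{|p_i-q\omega_i/\omega_1|}{T_0}\leq \frac{1}{QT_0}\leq \frac{1}{QT},
\]
so summing yields $|v-\omega|\leq (n-1)/(TQ)$, which is the first bound.

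For the bounds on $T$, the upper bound follows immediately from $T\leq T_0 = q/\omega_1 \leq Q^{n-1}/\omega_1 \leq n|\omega|^{-1}Q^{n-1}$ thanks to the normalization above. For the lower bound, since $v$ is $T$-periodic, $T v_1 = T\omega_1$ is a positive integer, hence $T\omega_1\geq 1$, i.e.\ $T\geq 1/\omega_1\geq 1/|\omega|$. The argument is essentially entirely bookkeeping on top of Dirichlet; the only mildly delicate point I would take care to spell out is the distinction between the ``auxiliary'' denominator $T_0$ produced by the approximation and the genuine minimal period $T$ of $v$, but the monotonicity $T\leq T_0$ ensures that every estimate obtained for $T_0$ transfers (with the correct direction) to~$T$.
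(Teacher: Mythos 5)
Your proof is correct and follows essentially the same route as the paper: normalize so the largest-modulus component sits in position one and equals $|\omega|_\infty$, apply Dirichlet's simultaneous approximation to the remaining ratios, build $v$ from the resulting rational approximation, and translate the $l_\infty$–vs–$l_1$ comparison into the factor $n$ in the bound on $T$. The only cosmetic difference is how the minimal period is pinned down: the paper asks Dirichlet for a \emph{primitive} vector $p/q$, so that $T_0=q/|\omega|_\infty$ is already the exact period, whereas you allow a non-primitive output and instead observe that the genuine minimal period $T\leq T_0$ only improves the upper bound and the error estimate, while the lower bound $T\geq 1/\omega_1$ comes directly from $Tv_1\in\Z_{>0}$ — both devices close the same small gap.
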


\begin{proof}
Fix $Q \geq 1$. Up to a re-ordering of its component, we can write $\omega=|\omega|_{\infty}(\pm 1, x)$ for some $x \in \R^{n-1}$ and by Dirichlet's approximation theorem, there exists a primitive rational vector $p/q \in \Q^{n-1}$, such that
\[ |qx-p|_{\infty} \leq Q^{-1}, \quad 1 \leq q \leq Q^{n-1}. \]
The vector $v=|\omega|_{\infty}(\pm 1, p/q) \in \R^n$ is then $T$-periodic, for $T=|\omega|_{\infty}^{-1}q$, and we have
\[ |\omega-v|\leq T^{-1}|qx-p|, \quad |\omega|_{\infty}^{-1} \leq T \leq |\omega|_{\infty}^{-1}Q^{n-1}  \]
which implies
\[ |\omega-v|\leq  (n-1)(TQ)^{-1}, \quad |\omega|^{-1} \leq T \leq n|\omega|^{-1}Q^{n-1} \]
and this was the statement to prove.
\end{proof}

\begin{proof}[Proof of Theorem~\ref{convex}]
Let $Q \geq 1$ be a real number to be chosen below. Recall that we are considering $H$ as in~\eqref{Ham1} where $H$ is $(M,s)$-ultra-differentiable with $M$ satisfying \eqref{H1} and \eqref{H2} and $h$ is $(m,l)$-quasi-convex. To further simplify the exposition, implicit constants will now also depend on $l$ and $m$.

Consider any solution $(\theta(t),I(t))$ of $H$ with $I(0) \in D_{1/2}$. We can apply Lemma~\ref{dirichlet} to $\omega:=\nabla h(I(0))$ and find a $T$-periodic vector $\omega \in \R^n\setminus\{0\}$ such that
\[ |\omega-v|\MP (TQ)^{-1}, \quad 1 \PM  T \MP Q^{n-1}.  \] 
We set
\[ \mu \EP (TQ)^{-1} \]
so that $I(0)$ is $\mu$-close to be $T$-periodic. Then we set
\[ \rho \EP s\mu \EP s(TQ)^{-1} \]
and using the fact that $T \MP Q^{n-1}$,~\eqref{seuilconvex1} and~\eqref{seuilconvex2} are satisfied provided that
\begin{equation}\label{seuilconvex3}
Q^{2n}\varepsilon \PM s^2, \quad Qs \PS 1. 
\end{equation} 
Assuming~\eqref{seuilconvex3}, Proposition~\ref{propconvex1} and Proposition~\ref{propconvex2} apply, and we obtain the stability estimates
\[ |I(t)-I(0)| \leq \rho \MP sQ^{-1}, \] 
for
\[ |t| \PM  s\exp\left(\kappa_2\left(C^{-1}(aQs)\right)^{-1}\right), \quad a\PE 1.\]
To conclude, to fulfill~\eqref{seuilconvex3} we may choose
\[ Q  \PE (s^2\varepsilon^{-1})^{\frac{1}{2n}} \]
to finally reach
\[ |I(t)-I(0)| \leq \rho \MP s(s^{-2}\varepsilon)^{\frac{1}{2n}}, \] 
for
\[ |t| \PM  s\exp\left(\kappa_2\left(C^{-1}\left(a's(s^2\varepsilon^{-1})^{\frac{1}{2n}}\right)\right)^{-1}\right), \quad a'\PE 1\]
which was the statement to prove.
\end{proof}

\subsection{Diffusion in the quasi-convex case: proof of Theorem~\ref{diffconvex}}\label{sec:diffconvex}

The goal of this section is to give a proof of Theorem~\ref{diffconvex}, which is the construction of an unstable orbit assuming that we are working in a non-quasi analytic class of ultra-differentiable functions (which are characterized by the condition (H3), see \S~\ref{sec:Nekintro}).

The construction, which is due to Marco-Sauzin, is explained in details in~\cite{MS02} in the case where $M=M_\alpha$ with $\alpha>1$; we are aiming at showing that it still works in any non-quasi-analytic class of ultra-differentiable functions, and it requires only a minor modification (in the same way as Theorem~\ref{destruction} required a minor modification of~\cite{Bes00}). 

The interest is that when the sequence $M$ is matching, the
instability result is very close to the stability result given by
Theorem~\ref{convex}, and thus this shows that the latter result is
quite accurate.

From now on, we write $\A:=\T \times \R$ the infinite annulus. Given a potential function $U : \T \rightarrow \R$, we consider the following family of maps $\psi_q : \A \rightarrow \A$ defined by
\begin{equation}\label{maps1}
\psi_q(\theta,I)=\left(\theta +qI, I-q^{-1}U'(\theta + qI)\right), \quad (\theta,I)\in\A, 
\end{equation}
for $q \in \N^*$. If we require $U'(0)=-1$, for example if we choose 
\[ U(\theta)=-(2\pi)^{-1}\sin (2\pi\theta),\quad U'(\theta)=-\cos(2\pi\theta),\] 
then it is easy to see that $\psi_q(0,0)=(0,q^{-1})$ and by induction 
\begin{equation}\label{driftstand}
\psi_q^k(0,0)=(0,kq^{-1}) 
\end{equation}
for any $k \in \Z$. After $q$ iterations, the point $(0,0)$ drifts from the circle $I=0$ to the circle $I=1$ and it is bi-asymptotic to infinity, in the sense that the sequence $\left(\psi_q^k(0,0)\right)_{k \in \Z}$ is not contained in any semi-infinite annulus of $\A$. Clearly these maps are exact-symplectic, but obviously they have no invariant circles and so they cannot be ``close to integrable". However, we will use the fact that they can be written as a composition of time-one maps,
\begin{equation}\label{driftstand2}
\psi_q=\Phi^{q^{-1}U} \circ \left(\Phi^{\frac{1}{2}I^2} \circ \cdots \circ \Phi^{\frac{1}{2}I^2}\right) =\Phi^{q^{-1}U} \circ \left(\Phi^{\frac{1}{2}I^2}\right)^q, 
\end{equation}
to embed $\psi_q$ in the $q^{th}$-iterate of a near-integrable map of $\A^n$, for $n \geq 2$. To do so, we will use the following ``coupling lemma", in which the product of functions acting on separate variables is denoted by $\otimes$, \textit{i.e.}
\[ f \otimes g(x,x')=f(x)g(x') \quad  x \in \A^m , \; x' \in \A^{m'}. \] 

\begin{lemma}[Marco-Sauzin] \label{coupling}
Let $m,m' \geq 1$, $F : \A^m \rightarrow \A^m$ and $G : \A^{m'} \rightarrow \A^{m'}$ two maps, and $f : \A^m \rightarrow \R$ and $g : \A^{m'} \rightarrow \R$ two Hamiltonian functions generating complete vector fields. Suppose there is a point $a \in \A^{m'}$ which is $q$-periodic for $G$ and such that the following ``synchronisation" conditions hold:
\begin{equation}\label{sync}
g(a)=1, \quad dg(a)=0, \quad g(G^k(a))=0, \quad dg(G^k(a))=0, \tag{S}
\end{equation} 
for $1 \leq k \leq q-1$. Then the mapping
\[ \Psi=\Phi^{f \otimes g} \circ (F \times G) : \A^{m+m'} \longrightarrow \A^{m+m'} \]
is well-defined and for all $x \in \A^m$, 
\[ \Psi^q(x,a)=(\Phi^f \circ F^q(x),a). \]
\end{lemma}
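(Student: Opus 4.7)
The plan is to analyze the Hamiltonian vector field of $f \otimes g$ at points satisfying the synchronization conditions, and then iterate. In the product coordinates $(y, z) \in \A^m \times \A^{m'}$, the symplectic gradient of $f \otimes g$ splits as
\[ X_{f \otimes g}(y, z) = \bigl(g(z)\, X_f(y),\; f(y)\, X_g(z)\bigr). \]
I would first establish two elementary facts about the time-one map $\Phi^{f \otimes g}$:

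\textbf{(i)} If $z_0 \in \A^{m'}$ satisfies $g(z_0) = 0$ and $dg(z_0) = 0$, then $\Phi^{f \otimes g}(y, z_0) = (y, z_0)$ for every $y \in \A^m$.

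\textbf{(ii)} If $z_0 \in \A^{m'}$ satisfies $g(z_0) = 1$ and $dg(z_0) = 0$, then $\Phi^{f \otimes g}(y, z_0) = (\Phi^f(y), z_0)$ for every $y \in \A^m$.

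Both follow from the same mechanism: the assumption $dg(z_0) = 0$ gives $X_g(z_0) = 0$, so $z(t) \equiv z_0$ is the unique solution of the ODE on the second factor; the ODE on the first factor then reduces to $\dot y = g(z_0)\, X_f(y)$, which produces either the identity (case (i), when $g(z_0)=0$) or the time-one flow of $f$ (case (ii), when $g(z_0)=1$). The completeness hypothesis on $X_f$ and $X_g$ is used exactly here, to ensure that $\Phi^{f\otimes g}$ is defined at these points for the time interval $[0,1]$.

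With (i) and (ii) in hand, iterating $\Psi = \Phi^{f \otimes g} \circ (F \times G)$ from $(x, a)$ becomes mechanical. For $1 \leq k \leq q-1$, the synchronization assumption ensures $g(G^k(a)) = dg(G^k(a)) = 0$, so a straightforward induction based on (i) gives
\[ \Psi^k(x, a) = (F^k(x),\, G^k(a)), \quad 1 \leq k \leq q-1. \]
For the last step, applying $F \times G$ to $(F^{q-1}(x), G^{q-1}(a))$ yields $(F^q(x), G^q(a)) = (F^q(x), a)$ by $q$-periodicity, and then (ii) applied at $z_0 = a$ gives $\Psi^q(x, a) = (\Phi^f \circ F^q(x),\, a)$, which is the claim.

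There is no real obstacle; the whole design of the synchronization conditions \eqref{sync} is precisely to make the second factor alternate between the ``frozen'' state ($g=dg=0$, killing all motion) during the first $q-1$ iterations and the ``active'' state ($g=1, dg=0$, reproducing the flow of $f$ on the first factor) on the final iteration. The only mild point to articulate carefully is that $dg(z_0)=0$ is needed both to make $X_g(z_0)$ vanish and to guarantee uniqueness of the constant solution $z(t)\equiv z_0$ of the coupled ODE.
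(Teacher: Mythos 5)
The paper states this lemma as a quotation from Marco--Sauzin (attributed explicitly) and does not reproduce their proof, so there is no in-paper argument to compare against. Your proof is correct and is the natural one: you identify the splitting $X_{f\otimes g}(y,z) = \bigl(g(z)X_f(y),\,f(y)X_g(z)\bigr)$, deduce the two elementary freezing/activation facts from $dg(z_0)=0$, and iterate. One small imprecision worth fixing: you say $dg(z_0)=0$ is ``needed both to make $X_g(z_0)$ vanish and to guarantee uniqueness of the constant solution'' --- but since $dg(z_0)=0$ is \emph{equivalent} to $X_g(z_0)=0$, there is only one role; uniqueness of the frozen solution $z(t)\equiv z_0$ is a consequence of local Lipschitz continuity (Picard--Lindel\"of), not a separate use of the hypothesis. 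Also note that the well-definedness of $\Psi$ on all of $\A^{m+m'}$ requires completeness of $X_{f\otimes g}$, which does not follow formally from completeness of $X_f$ and $X_g$; this is immaterial for the dynamical identity $\Psi^q(x,a)=(\Phi^f\circ F^q(x),a)$ (the flow you actually use lives on the frozen slices, where existence for all time is clear), but one should either add a boundedness hypothesis or restrict the well-definedness claim to the orbit of $\{a\}$ if one wants a fully self-contained statement.
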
  

Therefore, if we set $m=1$, $F=\Phi^{\frac{1}{2}I_1^2}$ and $f=q^{-1}U$ in the coupling lemma, the $q^{th}$-iterate $\Psi^q$ will leave the submanifold $\A \times \{a\}$ invariant, and its restriction to this annulus will coincide with our ``unstable map" $\psi_q$. Hence, after $q^2$ iterations of $\Psi$, the $I_1$-component of the point $((0,0),a) \in \A^2$ will move from $0$ to $1$. 

The difficult part is then to find what kind of dynamics we can put on the second factor to apply this coupling lemma. Following \cite{MS02}, we introduce a new sequence of ``large" parameters $A_j \in \N^*$ for $j \in \N$ and in the second factor we consider a family of suitably rescaled penduli on $\A$ given by
\[ P_j(\theta_2,I_2)=\demi I_2^2 +A_j^{-2}V(\theta_2), \]
where $V(\theta)=-\cos 2\pi\theta$. We eventually define
\[ G_j=\Phi^{\frac{1}{2}(I_2^2+I_3^2+\cdots+I_{n}^2)+N_{j}^{-2}V(\theta_2)}. \]
In this case, the map $\Psi$ given by the coupling lemma is a perturbation of 
\[ \Phi^{h}=\Phi^{\frac{1}{2}(I_1^2+I_2^2+\cdots+I_{n}^2)} \] 
of the form $\Psi=\Phi^u \circ \Psi^{\tilde{h}+v}$, with 
\[ u=f \otimes g, \quad  v=A_j^{-2}V.\] 
But for the map $G_j$, due to the presence of the pendulum factor, it is possible to find a periodic orbit for $G_j$ with an irregular distribution: more precisely, a $q_j$-periodic point $a_j$ such that its distance to the rest of its orbit is of order $N_{j}^{-1}$, no matter how large $q_j$ is. 

Let us denote by $(p_j)_{j\geq 0}$ the ordered sequence of prime numbers and let us choose $A_j=p_jA_j'$ where $A_j'=1$ if $n=2$ and
\[ A_j'=p_{j-(n-3)}p_{j-(n-4)}\cdots p_{j}\in\N^*, \quad n \geq 3.\]
Our goal is to prove the following proposition.

\begin{proposition}\label{pertu} 
Let $n\geq 2$ and $s>0$. Then for all $j \in \N$, there exist a sequence of functions $g_j\in \mathcal{U}_s(\A^{n-1})$, a sequence of points $a_j\in\T^{n-1}$ and positive constants $c_1$ and $s'$ depending only on $s$ and $M$ such that if
\[ B_j=2\left[c_1c^{2(n-1)}A_j\exp(2\Omega(s'p_j))+1\right], \quad q_j=A_jB_j, \]
then $a_j$ is $q_j$-periodic for $G_j$ and $(g_j, G_j, a_j, q_j)$ satisfy the synchronization conditions (\ref{sync}): 
\begin{equation*}
g_j(a_j)=1, \quad dg_j(a_j)=0, \quad g_j(G_j^k(a_j))=0, \quad dg_j(G_j^k(a_j))=0, 
\end{equation*} 
for $1 \leq k \leq q_j-1$. Moreover, the estimate
\begin{equation}\label{estimgn} 
q_j^{-1}|g_j|_{s}\leq A_{j}^{-2}, 
\end{equation}
holds true.
\end{proposition}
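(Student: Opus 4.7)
The plan is to follow the Marco--Sauzin construction from~\cite{MS02} almost verbatim, substituting Gevrey bump functions by $M$-ultra-differentiable bump functions, whose existence and quantitative properties are ensured by the non-quasi-analyticity hypothesis \eqref{H3} through the Denjoy--Carleman theorem. The essence of the argument is unchanged; only the bump-function estimates need to be reworked in the $M$-scale.

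First I would exploit the product structure
\[
G_j = \Phi^{P_j} \times \Phi^{\frac{1}{2}I_3^2}\times\cdots\times \Phi^{\frac{1}{2}I_n^2}
\]
to construct the periodic point $a_j$. On the pendulum factor, a carefully chosen orbit close to (but inside of) a separatrix branch of $P_j$ provides a periodic point whose period for $\Phi^{P_j}$ is exactly $A_j$; coupling this with free rotators $\Phi^{\frac{1}{2}I_k^2}$ whose initial frequencies are tuned using the prime numbers $p_{j-(n-3)},\dots,p_j$ forces the resulting point $a_j\in\T^{n-1}$ to be $q_j$-periodic for $G_j$, with $q_j=A_jB_j$. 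The prime-number construction arithmetically guarantees that the orbit $\{G_j^k(a_j)\}_{1\leq k\leq q_j-1}$ spreads out so that, at every iterate, at least one of the coordinates $(\theta_k,I_k)$ for $k\geq 3$ lies at distance $\MP p_j^{-1}$ from the corresponding coordinate of $a_j$; this is the content of Proposition $4.1$ and the subsequent lemmas of~\cite{MS02}, whose proofs are purely arithmetic and carry over unchanged.

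The function $g_j$ is then built as a product $g_j=\prod_{k=2}^n\chi_k$ of localized bump functions, one in each coordinate pair $(\theta_k,I_k)$, each equal to $1$ at the $k$-th component of $a_j$ with vanishing differential, supported in a ball of radius $\EP p_j^{-1}$. The separation property above guarantees that, for every $1\leq k\leq q_j-1$, at least one factor $\chi_k$ vanishes (together with its differential) at $G_j^k(a_j)$, so the synchronisation conditions \eqref{sync} are automatic. It remains to estimate $|g_j|_s$. Using \eqref{H3}, the standard iterated-convolution construction of ultra-differentiable cut-off functions (cf.~\cite{Thi08}) yields, for any $r>0$, a function $\chi_r\in\mathcal{U}_s(\R)$ with $\chi_r(0)=1$, $\chi_r'(0)=0$ and $\chi_r\equiv 0$ outside $[-r,r]$, whose norm obeys
\[
|\chi_r|_{M,s}\MP \exp\bigl(\Omega(A/r)\bigr)
\]
for some constant $A$ depending only on $s$ and $M$ (the function $\Omega$ governs bump-function norms in exactly the same way it governs Fourier decay, cf.~\eqref{decay}). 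Setting $r\EP p_j^{-1}$ and using the product estimate of Proposition~\ref{produit} produces $|g_j|_s \MP \exp(2\Omega(s'p_j))$ for a suitable $s'>0$. Dividing by $q_j^{-1}=(A_jB_j)^{-1}$ and substituting the explicit value of $B_j$ furnishes the desired bound $q_j^{-1}|g_j|_s\leq A_j^{-2}$.

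The main obstacle is the quantitative bump-function estimate $|\chi_r|_{M,s}\MP \exp(\Omega(A/r))$, since in the existing literature this is stated either for the analytic Bruna-type construction ($M_\alpha$ with $\alpha>1$) or only qualitatively. The proof is the classical one: one writes $\chi_r$ as an infinite convolution of rescaled indicator functions with widths $r_l$ chosen so that $\sum_l r_l \leq r$ and $r_l \PE M_l/(l\,M_{l+1})$; summability holds precisely because of \eqref{H3}, and the derivatives are controlled term by term by partial products of the ratios $\mu_l$, which are encoded by $\Omega$. Once this lemma is in place, all other arithmetic and geometric estimates of~\cite{MS02} transpose without modification, and Proposition~\ref{pertu} follows.
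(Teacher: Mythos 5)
Your proposal diverges from the paper's construction in the central technical step, and the divergence is fatal.

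The paper does \emph{not} build $g_j$ out of bump functions with shrinking support $\MP p_j^{-1}$. It builds $g_j$ as a tensor product in which the synchronization zeros come from the \emph{analytic} trigonometric polynomials $\eta_p(\theta)=\left(\frac{1}{p}\sum_{l=0}^{p-1}\cos 2\pi l\theta\right)^2$ of Lemma~\ref{funct}, which vanish to second order at the rational points $k/p$ ($1\leq k\leq p-1$). The one genuine bump function in the construction, $\varphi_{s,\vartheta}$, has a \emph{fixed} support $]-\vartheta,\vartheta[$ independent of $j$, and its norm is just the constant $c_1$. This is the design that makes $|g_j|_s\MP\exp(\Omega(s'p_j))$ with $\Omega=\Omega_M$: each $\eta_p$ is a trigonometric polynomial of degree $\MP p$, so $\|\eta_p^{(l)}\|_\infty\MP (2\pi p)^l$, which leads to $|\eta_p|_s \leq c^2\exp(2\Omega(8\pi p s))$ via the defining supremum $\sup_l y^l/M_l = \exp(\Omega_M(y))$.

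Your approach replaces this by bump functions $\chi_r$ with $r\EP p_j^{-1}$, and you assert the estimate $|\chi_r|_{M,s}\MP\exp(\Omega(A/r))$. That assertion is false, for a structural reason. Any smooth function $\chi$ with $\chi(0)=1$ and $\mathrm{supp}\,\chi\subseteq[-r,r]$ vanishes to infinite order at $r$, so Taylor's formula with integral remainder gives $1=|\chi(0)|\leq \frac{r^l}{l!}\|\chi^{(l)}\|_\infty$, hence $\|\chi^{(l)}\|_\infty\geq l!\,r^{-l}$. Plugging this into the definition of the norm,
\[
|\chi_r|_s \geq c\,\sup_{l}\frac{(l+1)^2 s^l\, l!\, r^{-l}}{M_l} = c\,\sup_l\frac{(l+1)^2(s/r)^l}{N_l},
\]
where $N_l=M_l/l!$ is the sequence from~\eqref{sequences}. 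So the relevant growth function is $\Omega_N$, not $\Omega_M$, and $\Omega_N$ is strictly larger: for the Gevrey scale $M_l=l!^\alpha$ one has $\Omega_M(y)\asymp y^{1/\alpha}$ but $\Omega_N(y)\asymp y^{1/(\alpha-1)}$. In fact the situation is worse than a quantitative loss. Since \eqref{H1} forces $N_l\geq 1$, the supremum above is already infinite as soon as $s/r>1$ up to constants; tracing the iterated-convolution construction you invoke (with widths $a_l\EP r/\mu_l$ summing to $\MP r\sum 1/\mu_l$) shows that a bump function supported in $[-r,r]$ has \emph{infinite} $|\cdot|_{M,s}$-norm once $r$ drops below a threshold $r_0\EP s\sum_l \mu_l^{-1}$ depending only on $s$ and $M$. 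Since $p_j\to\infty$, your $\chi_{p_j^{-1}}$ would leave $\mathcal{U}_s(\A)$ entirely for $j$ large, so the functions $g_j$ you build would not even lie in the space required by the statement, let alone satisfy~\eqref{estimgn} with the prescribed $B_j$ (whose definition involves $\exp(2\Omega(s'p_j))$, the $\Omega_M$-bound, not an $\Omega_N$-bound). The analogy you draw with Fourier decay~\eqref{decay} is also misleading: that bound says a function with fixed $|\cdot|_s$-norm has small Fourier coefficients, which is the opposite direction from bounding the norm of a function whose geometric support is forced to be small.

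To summarize, the missing idea in your proposal is precisely the trigonometric-polynomial trick of~\cite{MS02}: use analytic functions of growing degree, which vanish \emph{exactly} at the rational points $k/p$ visited by the orbit, rather than bump functions of shrinking support. The non-quasi-analyticity hypothesis~\eqref{H3} is needed only once, to produce the single fixed-width bump $\varphi_{s,\vartheta}$ on the pendulum factor (whose role is just to kill the part of the orbit lying outside $]-\vartheta,\vartheta[$); all the $j$-dependent growth in the norm of $g_j$ comes from the degrees of the trigonometric polynomials and is governed by the correct function $\Omega=\Omega_M$.
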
  

We claim that to prove Theorem~\ref{diffconvex}, it is enough to justify Proposition~\ref{pertu}. Indeed, it is rather easy to obtain a discrete version of Theorem~\ref{diffconvex} for maps on $\A^{n}$, using the prime number Theorem which ensures that
\[ A_j^{\frac{1}{n-1}} \leq p_j \leq 2A_j^{\frac{1}{n-1}}   \]
for $j$ large enough, and hence, setting $\varepsilon_j=A_j^{-2}$
\[ \varepsilon_j^{-\frac{1}{2(n-1)}} \leq p_j \leq 2\varepsilon_j^{-\frac{1}{2(n-1)}}. \]
Then by an elementary suspension argument (using the existence of bump functions for non-quasi-analytic ultra-differentiable functions) one arrives at Theorem~\ref{diffconvex} with a quasi-convex integrable Hamiltonian in $n+1$ degrees of freedom. This is described in details in~\cite{MS02} for the Gevrey case and works exactly in the same way in our context, so we shall not give more details and concentrate on the proof of Proposition~\ref{pertu}. 

We first consider the simple pendulum
\[ P(\theta,I)=\demi I^2 + V(\theta), \quad (\theta,I)\in\A. \]
With our convention, the stable equilibrium is at $(0,0)$ and the unstable one is at $(0,1/2)$. Given any $B\in\N^*$, there is a unique point $b^B=(0,I_B)$ which is $B$-periodic for $\Phi^P$ (this is just the intersection between the vertical line $\{0\} \times \R$ and the closed orbit for the pendulum of period $B$). One can check that $I_B \in\, ]2,3[$ and as $B$ goes to infinity, $(0,I_B)$ tends to the point $(0,2)$ which belongs to the upper separatrix. Since $P_j(\theta,I)=\demi I^2 +A_j^{-2}V(\theta)$, then one can see that
\[ \Phi^{P_{j}}=\sigma_j^{-1} \circ \Phi^{A_{j}^{-1}P} \circ \sigma_j, \] 
where $\sigma_{j}(\theta,I)=(\theta,A_j I)$ is the rescaling by $A_j$ in the action components. Therefore the point $b_j^B=(0,A_{j}^{-1}I_{B})$ is $q_j$-periodic for $\Phi^{P_{j}}$, for $q_j=A_jB$. Let $(\Phi_t^{P})_{t \in \R}$ be the flow of the pendulum, and 
\[ \Phi_t^{P}(0,I_{B})=(\theta_{B}(t),I_{B}(t)). \]
The function $\theta_{B}(t)$ is analytic. The crucial observation is the following simple property of the pendulum 

\begin{lemma}
Let $\vartheta=-\demi +\frac{2}{\pi}\arctan e^{\pi} < \frac{1}{2}$. For any $B \in \N^*$,  
\[ \theta_{B}(t) \notin [-\vartheta,\vartheta],  \]
for $t\in[1/2,B-1/2]$.
\end{lemma}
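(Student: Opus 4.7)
The key observation is that $\vartheta$ is defined so that, on the separatrix of the pendulum $P$, the travel time from $\theta=0$ to $\theta=\vartheta$ equals exactly $1/2$. Since every rotational orbit strictly above the separatrix is strictly faster at each angle, the same travel time for the orbit through $(0,I_B)$ with $I_B>2$ will be strictly less than $1/2$.

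First, since $I_B>2$ the orbit through $(0,I_B)$ is rotational with energy $E_B := \tfrac{1}{2}I_B^2-1>1$; accordingly $\theta_B$ is strictly increasing in $t$ on the universal cover with $\theta_B(0)=0$ and $\theta_B(B)=1$. The orbit is invariant under the involution $(t,\theta,I)\mapsto(B-t,\,1-\theta,\,I)$ (combining time reversal with the $\theta\mapsto 1-\theta$ symmetry of $V$), which implies $\theta_B(B-t)=1-\theta_B(t)$. Hence it suffices to show that the entry time $\tau_B := \inf\{t>0 \mid \theta_B(t)=\vartheta\}$ satisfies $\tau_B<1/2$: monotonicity then gives $\theta_B(t)>\vartheta$ on $(1/2,B]$, and the symmetry immediately yields $\theta_B(t)<1-\vartheta$ on $[0,B-1/2)$, which is exactly the claim modulo $1$.

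Second, energy conservation $\tfrac12\dot\theta_B^2-\cos(2\pi\theta_B)=E_B$ converts this into the quadrature
\[
  \tau_B \;=\; \int_0^\vartheta \frac{d\theta}{\sqrt{2\bigl(E_B+\cos 2\pi\theta\bigr)}}.
\]
It is standard that the period $T(E)=\int_0^1 d\theta/\sqrt{2(E+\cos 2\pi\theta)}$ is a strictly decreasing function of $E\in(1,+\infty)$ diverging at $1^+$ and tending to $0$ at $+\infty$; in particular $(E_B)_{B\geq 1}$ is strictly decreasing with $E_B\to 1^+$. Using $\cos(2\pi\theta)=2\cos^2(\pi\theta)-1$, the integrand is, for every $\theta\in[0,\vartheta]$, strictly dominated by its pointwise $E\to 1^+$ limit $1/(2\cos\pi\theta)$, so
\[
  \tau_B \;<\; \tau_\infty \;:=\; \int_0^\vartheta \frac{d\theta}{2\cos\pi\theta}.
\]

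Finally, after the change of variables $u=\pi\theta$, the antiderivative $\int \sec u\,du=\ln|\sec u+\tan u|$ combined with the identity $\sec u+\tan u=\tan(\pi/4+u/2)$ yields
\[
  \tau_\infty \;=\; \frac{1}{2\pi}\,\ln\tan\!\Bigl(\tfrac{\pi}{4}+\tfrac{\pi\vartheta}{2}\Bigr).
\]
The defining relation $\vartheta=-\tfrac12+(2/\pi)\arctan e^\pi$ is exactly equivalent to $\tan(\pi/4+\pi\vartheta/2)=e^\pi$, giving $\tau_\infty=1/2$ and hence $\tau_B<1/2$, as required. There is no substantive obstacle in this argument: the content is entirely contained in the recognition that $\vartheta$ has been engineered to make the separatrix travel time to $\vartheta$ equal precisely $1/2$, after which the proof reduces to a monotonicity comparison of a first-integral quadrature and an elementary explicit integration.
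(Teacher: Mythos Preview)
Your proof is correct. The paper does not actually supply a proof of this lemma; it is stated as a ``simple property of the pendulum'' taken over from Marco--Sauzin~\cite{MS02}, so there is nothing to compare against. Your argument is exactly the natural one: the symmetry $\theta_B(B-t)=1-\theta_B(t)$ reduces the claim to bounding the travel time from $0$ to $\vartheta$ by $1/2$, the energy comparison $E_B>1$ dominates the quadrature by the separatrix integral $\tau_\infty=\int_0^\vartheta d\theta/(2\cos\pi\theta)$, and the explicit antiderivative together with the defining relation $\pi/4+\pi\vartheta/2=\arctan e^\pi$ gives $\tau_\infty=1/2$ on the nose. Two cosmetic remarks: the intermediate claim about $T(E)$ and $(E_B)$ being decreasing is not needed, since $I_B>2$ (hence $E_B>1$) is already granted by the text; and your half-open intervals $(1/2,B]$ and $[0,B-1/2)$ should be closed at $1/2$ and $B-1/2$, which the strict inequality $\tau_B<1/2$ indeed allows.
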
 

Hence no matter how large $B$ is, most of the points of the orbit of $b^B\in\A$ will be outside the set $\{-\vartheta \leq \theta \leq \vartheta\}\times \R$. The construction of a function that vanishes, as well as its first derivative, at these points, will be easily arranged by means of a function, depending only on the angle variables, with support in $\{-\vartheta \leq \theta \leq \vartheta\}$. 

As for the other points, it is convenient to introduce the function 
\[ \tau_{B} : [-\vartheta,\vartheta] \longrightarrow \, ]-1/2,1/2[ \] 
which is the analytic inverse of $\theta_{B}$. One can give an explicit formula for this map: 
\[ \tau_{B}(\theta)=\int_{0}^{\theta}\frac{d\varphi}{\sqrt{I_B^2-4\sin^2 \pi\varphi}}. \]
In particular, it is analytic and therefore it belongs to $\mathcal{U}_{s}([-\vartheta,\vartheta])$ for any sequence $M:=(M_l)_{l \in \N}$ and any $s>0$, and one can obtain the following estimate (see Lemma 2.3 in \cite{MS02} for a proof).

\begin{lemma}\label{lemmelambda}
For any sequence $M:=(M_l)_{l \in \N}$ and any $s>0$, 
\[ \Lambda=\Lambda(M,s)=\sup_{B\in\N^*}|\tau_B|_{s}<+\infty. \]
\end{lemma}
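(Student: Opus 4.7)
The plan is to establish a uniform holomorphic extension of $\tau_B$ to a complex strip around $[-\vartheta,\vartheta]$ whose width and sup bound do not depend on $B$, and then transfer this to a uniform bound on $|\tau_B|_{M,s}$ via Cauchy's inequalities.

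First I would pick $r \in (0, 1/2-\vartheta)$ small enough that $\sinh\pi r < \cos\pi\vartheta$, and set $V_r := \{\zeta \in \C : \mathrm{dist}(\zeta, [-\vartheta, \vartheta]) < r\}$. Writing $\varphi = x + iy$, the elementary identity $|\sin\pi\varphi|^2 = \sin^2\pi x + \sinh^2\pi y$ gives, on $V_r$,
\[
|\sin\pi\varphi|^2 \leq \sin^2\pi\vartheta + \sinh^2\pi r < 1.
\]
Because $I_B > 2$ for every $B \in \N^*$, this yields the key uniform estimate
\[
|I_B^2 - 4\sin^2\pi\varphi| \geq I_B^2 - 4|\sin\pi\varphi|^2 \geq 4(\cos^2\pi\vartheta - \sinh^2\pi r) =: \delta > 0.
\]
Hence $1/\sqrt{I_B^2 - 4\sin^2\pi\varphi}$ admits a single-valued holomorphic branch on the simply connected domain $V_r$ with modulus bounded by $\delta^{-1/2}$, and integrating along the straight segment from $0$ to $\theta$ extends $\tau_B$ holomorphically to $V_r$ with the $B$-independent bound
\[
\sup_{\varphi \in V_r} |\tau_B(\varphi)| \leq K := (\vartheta + r)\,\delta^{-1/2}.
\]

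Next I would apply Cauchy's inequality on the disk of radius $r$ centered at any $\theta \in [-\vartheta, \vartheta]$ to obtain
\[
|\partial^k \tau_B(\theta)| \leq K\, k!\, r^{-k}, \qquad k \in \N,\ \theta \in [-\vartheta,\vartheta],
\]
with $K$ and $r$ independent of $B$. Inserting into the definition of the norm,
\[
|\tau_B|_{M,s} \leq c K \sup_{k \in \N} \frac{(k+1)^2\, s^k\, k!}{r^k\, M_k},
\]
which depends only on $M$, $s$, $K$ and $r$, hence is uniform in $B$. Taking the supremum over $B \in \N^*$ gives the claimed finiteness of $\Lambda(M,s)$.

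The main obstacle lies in the first step: as $B \to \infty$ one has $I_B \to 2$, and the complex singularities $\pm 1/2 + i\,\mathrm{arccosh}(I_B/2)/\pi$ of the integrand collapse onto the real points $\pm 1/2$. The argument survives this degeneracy only because $[-\vartheta,\vartheta]$ stays at real distance at least $1/2-\vartheta > 0$ from $\pm 1/2$; calibrating $r$ so that $\sinh\pi r < \cos\pi\vartheta$ is precisely what guarantees that the strip $V_r$ never reaches the singularity locus, regardless of how small $I_B - 2$ becomes. Everything else is a routine application of Cauchy's inequalities.
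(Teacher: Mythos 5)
Your overall strategy --- extend $\tau_B$ holomorphically to a complex neighborhood of $[-\vartheta,\vartheta]$ of $B$-independent width, bound it there uniformly, and transfer the bound to $|\tau_B|_{M,s}$ via Cauchy inequalities --- is the correct one, and is essentially the argument behind the reference cited in the paper. But there is a genuine gap at the final step. You assert that
\[
\sup_{k\in\N}\frac{(k+1)^2 s^k k!}{r^k M_k}
\]
is finite because it ``depends only on $M$, $s$, $K$ and $r$''; that is not a proof, and the supremum is in fact \emph{not} finite for an arbitrary sequence $M$ and arbitrary $s>0$. Take $M_k=k!$, which satisfies the normalization $M_0=M_1=1$ as well as \eqref{H1} and \eqref{H2}: the supremum becomes $\sup_k (k+1)^2 (s/r)^k$, which diverges as soon as $s\ge r$. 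Moreover $r$ cannot be pushed past $1/2-\vartheta$, because the branch points of the integrand sit at $\pm\demi+i\,\mathrm{arccosh}(I_B/2)/\pi$ and converge to $\pm\demi$ as $B\to\infty$; so for $M=M_1$ and $s>1/2-\vartheta$ one actually has $\sup_B|\tau_B|_{M_1,s}=+\infty$, and the Lemma as literally worded cannot hold. What makes the statement true in the context where it is used is that it is invoked under the non-quasi-analyticity hypothesis \eqref{H3}. Together with \eqref{H1}, \eqref{H3} forces $\nu_l\to\infty$, hence $N_k=M_k/k!=\prod_{j<k}\nu_j$ grows faster than any geometric sequence, and therefore $(k+1)^2(s/r)^k/N_k\to 0$ for every $s>0$, making the supremum finite. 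You need to invoke such a growth condition on $M$ explicitly; without it, the last inequality in your proof is unjustified and, for general $M$, false.

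There is also a small technical slip in the first step: on the stadium $V_r=\{\mathrm{dist}(\zeta,[-\vartheta,\vartheta])<r\}$ one can have $|\mathrm{Re}\,\varphi|>\vartheta$, so the inequality $|\sin\pi\varphi|^2\le\sin^2\pi\vartheta+\sinh^2\pi r$ fails near the rounded ends of the stadium. Replace $V_r$ by the rectangle $\{|\mathrm{Re}\,\zeta|<\vartheta+r,\ |\mathrm{Im}\,\zeta|<r\}$ and impose $\sin^2\pi(\vartheta+r)+\sinh^2\pi r<1$, which is achievable for $r$ small because $\vartheta<\demi$; disks of radius $r$ centered on $[-\vartheta,\vartheta]$ lie inside this rectangle, so your uniform lower bound on $|I_B^2-4\sin^2\pi\varphi|$ and the subsequent Cauchy estimate go through unchanged.
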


Under the action of $\tau_{B}$, the points of the orbit of $b^B$ whose projection onto $\T$ belongs to $\{-\vartheta \leq \theta \leq \vartheta\}$ get equi-distributed, and we can use the following elementary lemma, which is the only place where the regularity of the function actually come into play.

\begin{lemma}\label{funct}
For $p \in \N^*$, the analytic function $\eta_p : \T \rightarrow \R$ defined by
\[ \eta_p(\theta)=\left(\frac{1}{p}\sum_{l=0}^{p-1}\cos 2\pi l\theta \right)^2 \]
satisfies
\[ \eta_p(0)=1, \quad \eta_p'(0)=0, \quad \eta_p(k/p)=\eta_p'(k/p)=0, \]
for $1 \leq k \leq p-1$, and 
\[ |\eta_p|_{s}\leq c^2\exp(2\Omega(8\pi p s)). \]
\end{lemma}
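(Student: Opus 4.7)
Set $\phi_p(\theta):=\frac{1}{p}\sum_{l=0}^{p-1}\cos(2\pi l\theta)$, so that $\eta_p=\phi_p^2$. The plan is to verify the four vanishing/normalization identities by elementary computation, and then to bound $|\eta_p|_s$ via the product estimate together with a direct estimate on $|\cos(2\pi l\theta)|_s$ analogous to the one used in Section~\ref{sec:bessi}.

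First I would handle the pointwise values. Trivially $\phi_p(0)=1$, hence $\eta_p(0)=1$. For $1\le k\le p-1$, the geometric sum $\sum_{l=0}^{p-1}e^{2\pi i lk/p}=0$ has vanishing real part, so $\phi_p(k/p)=0$ and therefore $\eta_p(k/p)=0$. Since $\eta_p'=2\phi_p\phi_p'$, the identities $\phi_p(k/p)=0$ immediately yield $\eta_p'(k/p)=0$ for $1\le k\le p-1$. At the origin, each term $\sin(2\pi l\cdot 0)=0$ gives $\phi_p'(0)=-\frac{1}{p}\sum_{l=0}^{p-1}2\pi l\sin(0)=0$, hence $\eta_p'(0)=2\phi_p(0)\phi_p'(0)=0$.

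Next I would estimate the $(M,s)$-norm. Since \eqref{H1} is in force, Proposition~\ref{produit} gives $|\eta_p|_s=|\phi_p^2|_s\le|\phi_p|_s^2$. By the triangle inequality, $|\phi_p|_s\le\frac{1}{p}\sum_{l=0}^{p-1}|\cos(2\pi l\theta)|_s$. For each $l$, the bound $|\partial^k\cos(2\pi l\theta)|\le(2\pi l)^k$, combined with $(k+1)^2\le 4^k$, gives
\[
|\cos(2\pi l\theta)|_s=c\sup_{k\in\N}\frac{(k+1)^2 s^k(2\pi l)^k}{M_k}\le c\sup_{k\in\N}\frac{(8\pi l s)^k}{M_k}=c\exp(\Omega(8\pi l s)).
\]
Since $\Omega$ is non-decreasing, each term is at most $c\exp(\Omega(8\pi p s))$, so $|\phi_p|_s\le c\exp(\Omega(8\pi p s))$, and squaring yields $|\eta_p|_s\le c^2\exp(2\Omega(8\pi p s))$, as claimed.

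There is no real obstacle: the vanishing identities are purely algebraic, and the norm bound uses only the product estimate of Section~\ref{sec:products} together with the factor-$(k+1)^2$ trick that makes $|\cos(2\pi l\theta)|_s$ directly comparable to $\exp(\Omega(8\pi l s))$. The only small point to remember is that we are using \eqref{H1} implicitly through Proposition~\ref{produit} when squaring $\phi_p$.
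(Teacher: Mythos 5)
Your proof is correct and follows essentially the same route as the paper: bound $|\xi_p|_s$ (your $\phi_p$) by $c\exp(\Omega(8\pi p s))$ using the $(k+1)^2\leq 4^k$ estimate, then square via the Banach-algebra property of Proposition~\ref{produit}. The only cosmetic difference is that you reach the intermediate derivative bound $|\xi_p^{(k)}|\leq(2\pi p)^k$ term-by-term through the triangle inequality on the $|\cdot|_s$-norm, whereas the paper bounds the derivatives of the whole sum $\xi_p$ directly; the paper also omits the (elementary) verification of the vanishing identities, which you correctly spell out.
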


\begin{proof}
Let
\[ \xi_p(\theta)=\frac{1}{p}\sum_{l=0}^{p-1}\cos 2\pi l\theta.  \]
For any $l \in \N$, we have 
\[ \sup_{\theta \in \T}|\xi_p^{(l)}| \leq (2\pi p)^l \]
hence
\begin{equation*}
|\xi_p|_{s}=c\sup_{l \in \N}\frac{(l+1)^2s^l (2\pi p)^l }{M_l} \leq c \sup_{l \in \N}\frac{(8\pi p s)^l}{M_l}= c\exp(\Omega(8\pi p s))
\end{equation*}
and therefore
\[ |\eta_p|_{s}\leq |\xi_p|_{s}^2 \leq c^2\exp(2\Omega(8\pi p s)). \]
\end{proof}

We can now conclude the proof of Proposition~\ref{pertu}.

\begin{proof}[Proof of Proposition~\ref{pertu}]
  Given $s>0$, consider a bump function
  $\varphi_{s,\vartheta}\in \mathcal{U}_s(\T)$ which vanishes
  identically outside $]-\vartheta,\vartheta[$ and let
  $c_1=|\varphi_{s,\vartheta}|$. We choose our function
  $g_j\in \mathcal{U}_{s}(\A^{n-1})$, depending only on the angle
  variables, of the form
\[ g_j=g_j^{(2)}\otimes \cdots \otimes g_j^{(n)}, \]
where
\[ g_j^{(2)}(\theta_2)=\eta_{p_{j}}(\tau_{B_j}(\theta_2))\varphi_{s,\vartheta}(\theta_2), \]
and
\[ g_j^{(i)}(\theta_i)=\eta_{p_{j-(n-i)}}(\theta_i), \quad 3\leq i \leq n.  \]
Now we choose our point $a_j=(a_j^{(2)},\dots,a_j^{(n)})\in\A^{n-1}$. We set
\[ a_j^{(2)}=b_j^{B_j}=(0,A_{j}^{-1}I_{B_j}),  \]
where
\[ B_j=2\left[c_1c^{2(n-1)}A_j\exp(2\Omega(s'p_j))+1\right] \]
and
\[ a_j^{(i)}=(0,p_{j-(n-i)}^{-1}), \quad 3\leq i \leq n. \]
Using the definition of $A_j$, it is not hard to check that $a_j$ is $q_j$-periodic for the map $G_j$.   

Now let us show that the synchronization conditions~(\ref{sync}) hold true, that is
\[ g_j(a_j)=1, \quad dg_j(a_j)=0, \quad g_j(G_j^k(a_j))=0, \quad dg_j(G_j^k(a_j))=0, \]
for $1\leq k\leq q_j-1$. Since $\varphi_{s,\vartheta}(0)=1$, then
\[ g_j(a_j)=g_j^{(2)}(0)\cdots g_j^{(n)}(0)=1 \]
and as $\varphi_{s,\vartheta}'(0)=0$, then 
\[ dg_j(a_j)=0. \]
To prove the other conditions, let us write $G_j^k(a_j)=(\theta_k,I_k)\in\A^{n-1}$, for $1\leq k\leq q_j-1$. 

If $\theta_k^{(2)}$ does not belong to $]-\vartheta,\vartheta[$, then $g_j^{(2)}$ and its first derivative vanish at $\theta_k^{(2)}$ because it is the case for $\varphi_{s,\vartheta}$, so 
\[ g_j(\theta_k)=dg_j(\theta_k)=0. \]
Otherwise, if $-\vartheta<\theta_k^{(2)}<\vartheta$, one can easily check that
\[ - \frac{A_j-1}{2}\leq k \leq \frac{A_j-1}{2} \]
and therefore
\[ \tau_{B_j}(\theta_k^{(2)})=\frac{k}{A_j},  \]
while
\[ \theta_k^{(i)}=\frac{k}{p_{j-(n-i)}}, \quad 3\leq i \leq n. \]
If $A_j'$ divides $k$, that is $k=k'A_j'$ for some $k'\in\Z$, then
\[ \tau_{B_j}(\theta_k^{(2)})=\frac{k}{A_j}=\frac{k'}{p_{j}} \]
and therefore, by Lemma~\ref{funct}, $\eta_{p_{j}}$ vanishes with its differential at $\theta_k^{(2)}$, and so does $g_j^{(2)}$. Otherwise, $A_j'$ does not divide $k$ and then, for $3\leq i \leq n$, at least one of the functions $\eta_{p_{j-(n-i)}}$ vanishes with its differential at $\theta_k^{(2)}$, and so does $g_j^{(i)}$. Hence in any case
\[ g_j(\theta_k)=dg_j(\theta_k)=0, \quad 1\leq k\leq q_j-1, \]
and the synchronization conditions~(\ref{sync}) are satisfied.

Now it remains to estimate the norm of the function $g_j$. First, by definition of $c_1$ and using Proposition~\ref{produit}, one finds 
\[ |g_j|_{s} \leq c_1 |\eta_{p_{j}}\circ\tau_{B_j}|_{s}|\eta_{p_{j-(n-3)}}|_{s} \cdots |\eta_{p_{j}}|_{s}.  \]
Then, using Lemma~\ref{lemmelambda}, Proposition~\ref{composition} and Lemma~\ref{funct} we obtain
\[ |\eta_{p_{j}}\circ\tau_{B_j}|_{s} \leq c^2\exp(2\Omega(s' p_j)) \]
for some $s'$ which depends only on $s$ and $\Lambda$. By Lemma~\ref{funct} for $3 \leq i \leq n$, since $p_{j-(n-i)} \leq p_j$, we have
\[ |\eta_{p_{j-(n-i)}}|_{s} \leq c^2\exp(2\Omega(s' p_j )) \]
and therefore
\[ |g_j|_{s} \leq c_1 c^{2(n-1)}\exp(2(n-1)\Omega(s' p_j )) \]
Finally, by definition of $B_j$ we obtain
\begin{equation*}
|g_j|_{s}\leq B_jA_j^{-1},
\end{equation*}
and as $q_j=A_jB_j$, we end up with
\[ q_j^{-1}|g_j|_{s}\leq A_j^{-2}. \]
This concludes the proof.
\end{proof}

\subsection{Stability in the steep case: proof of Theorem~\ref{steep}}\label{sec:steep}

The goal of this section is to prove Theorem~\ref{steep}; implicit constants will depend only on $n$, $|h|_s$, $l$, $L$, $\delta$, $p$ and the function $C$.

Given positive real parameters $s,\delta,\mu,\nu$, consider a Hamiltonian of the form 
\begin{equation}\label{HAMN}
\begin{cases}\tag{$\#$}
H(\theta,I)=L_v(I)+g(I)+f(\theta,I), \\
H : \T^n \times D_{\delta} \rightarrow \R \\
|\nabla g|_{s} \leq \rho, \quad |f|_{s} \leq \nu, \\
\end{cases}
\end{equation}
where $\{\,.\,\}$ denotes the usual Poisson bracket, $D_{\delta} \subset \R^n$ is the ball of radius $\delta$ around the origin. Observe that this is nothing but a special case of~\eqref{HamN}, up to a change of notations, and thus the following proposition is just a special case of Proposition~\ref{propperiodic}, written in a slightly different way using a real parameter $K \geq 1$.

\begin{proposition}\label{propMS}
Let $H$ be as in~\eqref{HAMN}, with $M$ satisfying \eqref{H1} and \eqref{H2} and $v$ a $T$-periodic vector. Given $K \geq 1$, assume that
\begin{equation}\label{seuil1MS}
\nu \PM s\rho, \quad T\rho \PM s, \quad KT\rho \PM 1.
\end{equation}
Then there exists a $(M,s/2)$-ultra-differentiable symplectic transformation
\[ \Psi : \T^n \times D_{\delta/2} \rightarrow \T^n \times D_{\delta} \]
such that
\[ H\circ\Psi =L_v+g+\bar{f}+\hat{f} \]
with $\{\bar{f},L_v\}=0$ and the estimates
\begin{equation}\label{EST1MS}
|\Psi-\mathrm{Id}|_{s/2}\leq 2 T\nu, \quad |\bar{f}|_{s/2} \leq 2\nu, \quad |\hat{f}|_{s/2} \leq \nu\exp\left(-\kappa_2\left(C^{-1}(Ks)\right)^{-1}\right). 
\end{equation}
Moreover, if $I$ is an integrable Hamiltonian such that $\{I,F\}=0$, then $\{I,\bar{F}\}=0$.
\end{proposition}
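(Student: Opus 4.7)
The plan is to deduce Proposition~\ref{propMS} directly from Proposition~\ref{propperiodic} of \S\ref{sec:periodic}, which is the technical core already established. Indeed, Proposition~\ref{propMS} is merely a repackaging: the Hamiltonian in~\eqref{HAMN} is precisely of the form~\eqref{HamN} with $S\equiv 0$ (hence $\mu=0$), $R:=g$ (so $|\nabla R|_{s}\leq\rho$), and $F:=f$. Consequently $\eta=\max\{\mu,\rho\}=\rho$. The moreover-clause on integrable commutators transfers verbatim, as do the equalities $\{\bar{f},L_v\}=0$ and the estimates on $\Psi-\mathrm{Id}$ and $\bar{f}$.

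The only non-trivial bookkeeping concerns how the three conditions in~\eqref{seuil1MS} translate into the hypotheses~\eqref{seuil1} of Proposition~\ref{propperiodic} applied with $\xi=2$, and how the remainder estimate is rewritten in terms of $K$. First I would fix once and for all $\kappa_2=\ln 2/24$, so that $C(\kappa_2)$ is an absolute constant; then the condition $\nu\PM s\rho$ with a sufficiently small implicit constant yields $\nu\leq C(\kappa_2)^{-1}s\rho=C(\kappa_2)^{-1}s\eta$, which is the first part of~\eqref{seuil1}. Next, since $C^{-1}$ is decreasing on $[1,+\infty)$, the inequality $C^{-1}(s(AT\rho)^{-1})\leq\kappa_2$ is equivalent to $s(AT\rho)^{-1}\geq C(\kappa_2)$, i.e.\ $T\rho\PM s$, which is exactly the second part of~\eqref{seuil1MS}. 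Thus Proposition~\ref{propperiodic} applies and delivers a symplectic transformation $\Psi\colon\T^n\times D_{\delta/2}\to\T^n\times D_{\delta}$ with
\[ H\circ\Psi = L_v + g + \bar{f} + \hat{f}, \quad \{\bar{f},L_v\}=0, \]
together with the bounds $|\Psi-\mathrm{Id}|_{s/2}\leq 2T\nu$, $|\bar{f}|_{s/2}\leq 2\nu$, and
\[ |\hat{f}|_{s/2} \leq \nu\exp\!\left(-\kappa_2\bigl(C^{-1}(s(AT\rho)^{-1})\bigr)^{-1}\right). \]

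The final step is to replace the argument $s(AT\rho)^{-1}$ of $C^{-1}$ by the cleaner $Ks$. This is where the third hypothesis $KT\rho\PM 1$ enters: choosing the implicit constant there to be $1/A$, one has $s(AT\rho)^{-1}\geq Ks$, and since $C^{-1}$ is decreasing while $(C^{-1}(\cdot))^{-1}$ occurs with a minus sign in the exponent, the bound on $|\hat{f}|_{s/2}$ only strengthens when the argument is replaced by $Ks$, giving
\[ |\hat{f}|_{s/2} \leq \nu\exp\!\left(-\kappa_2\bigl(C^{-1}(Ks)\bigr)^{-1}\right), \]
which is~\eqref{EST1MS}. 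No genuine obstacle is present: the only point requiring care is that the implicit constants hidden in the symbols $\PM$ in~\eqref{seuil1MS} must be chosen compatibly with the absolute constants $C(\kappa_2)$ and $A$ from Proposition~\ref{propperiodic}, but this is a routine adjustment.
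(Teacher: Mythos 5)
Your proposal is correct and follows exactly the route the paper intends: the paper explicitly introduces Proposition~\ref{propMS} as ``just a special case of Proposition~\ref{propperiodic}, written in a slightly different way using a real parameter $K \geq 1$,'' and leaves the verification to the reader. You have supplied precisely that verification, correctly identifying $S\equiv 0$, $R=g$, $F=f$, $\eta=\rho$, $\xi=2$, translating each of the three conditions in~\eqref{seuil1MS} into the two conditions of~\eqref{seuil1} plus the substitution $s(AT\rho)^{-1}\geq Ks$, and using the monotonicity of $C^{-1}$ (together with the double sign flip in the exponent) to pass from the bound with argument $s(AT\rho)^{-1}$ to the bound with argument $Ks$.
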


Next consider $T_i$-periodic vectors $v_i$ for every $1 \leq i \leq n$, and assume that they are linearly independent. For simplicity, let us just write $L_{i}(I)=v_i\cdot I$. Now consider the following Hamiltonian: 
\begin{equation}\label{HAMNN}
\begin{cases}\tag{$\#_j$}
H_j(\theta,I)=L_{j}(I)+g_j(I)+f(\theta,I), \\
H_j : \T^n \times D_{\delta_j} \rightarrow \R \\
|\nabla g_j|_{s_j} \leq \rho_j, \quad |f|_{s_j} \leq \nu
\end{cases}
\end{equation}
where we set $\delta_j=2^{j-1}$ and $s_j=s\delta_j$. 

\begin{proposition}\label{propMS2}
Let $1 \leq j \leq n$, $H_j$ be as in~\eqref{HAMNN} with $M$ satisfying \eqref{H1} and \eqref{H2}. Given $K \geq 1$, assume that
\begin{equation}\label{seuilMS2}
\begin{cases}
\nu \PM s\rho_i, \quad T_i\rho_i \PM s, \quad KT_i\rho_i \PM 1 \quad 1 \leq i \leq j, \\
|v_j-v_{j-1}| \MP \rho_{j-1}, \quad \rho_j \PM \rho_{j-1}, \quad j \geq 2. 
\end{cases} 
\end{equation}
Then there exist a $(M,s/2)$-ultra-differentiable symplectic transformation
\[ \Psi_j : \T^n \times D_{1/2} \rightarrow \T^n \times D_{\delta_j} \]
such that
\[ H_j\circ\Psi_j =L_j+g+\tilde{f}+f^+ \]
with $\{\tilde{f},L_i\}=0$ for $1 \leq i \leq j$ and the estimates
\begin{equation}\label{EST2MS}
|\Psi_j-\mathrm{Id}|_{s/2}\MP  \max_{1 \leq i \leq j}\{T_i\nu\}, \quad |\tilde{f}|_{s/2} \MP \nu, \quad |f^+|_{s/2} \MP \nu\exp\left(-\kappa_2\left(C^{-1}(Ks)\right)^{-1}\right). 
\end{equation}
Moreover, if $I$ is an integrable Hamiltonian such that $\{I,f\}=0$, then $\{I,\tilde{f}\}=0$.
\end{proposition}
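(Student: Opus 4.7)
The plan is to prove Proposition~\ref{propMS2} by induction on $j\in\{1,\dots,n\}$: at each step I peel off one periodic direction using Proposition~\ref{propMS} and propagate the commutation relations via its ``Moreover'' clause. The base case $j=1$ is Proposition~\ref{propMS} itself, applied with $v=v_1$, $T=T_1$, $\rho=\rho_1$, $s=s_1=s$, $\delta=\delta_1=1$; the hypothesis~\eqref{seuil1MS} matches exactly the $i=1$ part of~\eqref{seuilMS2}, and I set $\tilde f:=\bar f$, $f^+:=\hat f$.

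For the inductive step, suppose the proposition holds at level $j-1$ and consider $H_j=L_j+g_j+f$ on $\T^n\times D_{\delta_j}$ at width $s_j$. First I apply Proposition~\ref{propMS} to $H_j$ with $v=v_j$, $T=T_j$, $\rho=\rho_j$, $s=s_j$, $\delta=\delta_j$: since $s_j\ge s$ and $\delta_j\ge 1$, the level-$j$ inequalities of~\eqref{seuilMS2} imply~\eqref{seuil1MS}. Because $\delta_j/2=\delta_{j-1}$ and $s_j/2=s_{j-1}$ this produces a symplectic map $\Phi_j:\T^n\times D_{\delta_{j-1}}\to\T^n\times D_{\delta_j}$ with
\[ H_j\circ\Phi_j=L_j+g_j+\bar f_j+\hat f_j,\qquad \{\bar f_j,L_j\}=0, \]
and the corresponding bounds on $\bar f_j$, $\hat f_j$, and $\Phi_j-\mathrm{Id}$. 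I then rewrite $L_j=L_{j-1}+(v_j-v_{j-1})\cdot I$ and set $g'_{j-1}(I):=g_j(I)+(v_j-v_{j-1})\cdot I$, so that $L_j+g_j=L_{j-1}+g'_{j-1}$; the monotonicity $|\,\cdot\,|_{s_{j-1}}\le|\,\cdot\,|_{s_j}$ together with $|v_j-v_{j-1}|\MP\rho_{j-1}$ and $\rho_j\MP\rho_{j-1}$ yields $|\nabla g'_{j-1}|_{s_{j-1}}\MP\rho_{j-1}$. The Hamiltonian $\tilde H_{j-1}:=L_{j-1}+g'_{j-1}+\bar f_j$ thus has the form~\eqref{HAMNN} at level $j-1$ with perturbation of norm $\le 2\nu$, and the remaining inequalities in~\eqref{seuilMS2} at indices $1,\dots,j-1$ hold up to harmless constants.

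Applying the inductive hypothesis to $\tilde H_{j-1}$ yields $\Psi_{j-1}:\T^n\times D_{1/2}\to\T^n\times D_{\delta_{j-1}}$ with $\tilde H_{j-1}\circ\Psi_{j-1}=L_{j-1}+g'_{j-1}+\tilde f'+f^{++}$, where $\{\tilde f',L_i\}=0$ for $1\le i\le j-1$. Crucially, $L_j$ is integrable with $\{L_j,\bar f_j\}=0$, so the inductive ``Moreover'' clause gives also $\{\tilde f',L_j\}=0$. I then compose
\[ \Psi_j:=\Phi_j\circ\Psi_{j-1},\quad \tilde f:=\tilde f',\quad f^+:=f^{++}+\hat f_j\circ\Psi_{j-1}, \]
so that $H_j\circ\Psi_j=L_j+g_j+\tilde f+f^+$ with $\{\tilde f,L_i\}=0$ for all $1\le i\le j$. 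The norm estimates~\eqref{EST2MS} follow from Proposition~\ref{composition} applied to the compositions, noting that $|\hat f_j\circ\Psi_{j-1}|_{s/2}\le|\hat f_j|_{s_{j-1}}$ does not harm the remainder bound because $s_j\ge s$ and $C^{-1}$ is decreasing. The ``Moreover'' clause of the proposition then propagates by applying, in turn, the ``Moreover'' clauses of Proposition~\ref{propMS} and of the inductive hypothesis.

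The main technical obstacle will be the bookkeeping: verifying at each pass that the halving of widths $s_j\to s_{j-1}=s_j/2$ and of domains $\delta_j\to\delta_{j-1}=\delta_j/2$ aligns exactly with the scales built into the statement, and that the slight inflation of the perturbation ($\nu\to 2\nu$) and of the integrable gradient ($\rho_j\to C\rho_{j-1}$) at each step can be absorbed into the implicit constants of the level-$(j-1)$ hypothesis. Once this bookkeeping is in place, the proposition follows by a straightforward $n$-fold iteration.
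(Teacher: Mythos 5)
Your proof is correct and follows essentially the same route as the paper's: induction on $j$, first applying Proposition~\ref{propMS} to peel off the $j$-th periodic direction, then rewriting $L_j+g_j=L_{j-1}+g_{j-1}$ (your $g'_{j-1}$) and invoking the inductive hypothesis, with the commutation $\{\tilde f',L_j\}=0$ supplied by the ``Moreover'' clause applied to the integrable Hamiltonian $L_j$. The bookkeeping of widths $s_j/2=s_{j-1}$ and domains $\delta_j/2=\delta_{j-1}$ is exactly what makes the iteration align, as you anticipated.
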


\begin{proof}
For $j=1$, this is nothing but Proposition~\ref{propMS} (setting $\tilde{f}=\bar{f}$ and $f^+=\hat{f}$) so we may argue by induction. Thus we assume that the statement is true for some $1 \leq j-1 \leq n-1$, and we need to show that it remains true for $2 \leq j \leq n$.

To the Hamiltonian $H_j=L_j+g_j+f$ we can apply Proposition~\ref{propMS} (with $L_j,g_j,s_j,\delta_j,\rho_j$ instead of $L_{v},g,s,\delta,\rho$) as~\eqref{EST2MS} implies~\eqref{EST1MS}: since $s_j/2=s_{j-1}$ and $\delta_j/2=\delta_{j-1}$, we find a $(M,s_{j-1})$-ultra-differentiable symplectic transformation $\Psi^{j} : \T^n \times D_{\delta_{j-1}} \rightarrow \T^n \times D_{\delta_j}$ such that
\[ H_j\circ\Psi^j=L_j+g_j+\bar{f}+\hat{f} \]
with $\{\bar{f},L_j\}=0$ and with the estimates
\[ |\Psi^j-\mathrm{Id}|_{s_{j-1}}\MP T_j\nu, \quad |\bar{f}|_{s_{j-1}} \MP \nu, \quad |\hat{f}|_{s_{j-1}} \MP \nu\exp\left(-\kappa_2\left(C^{-1}(Ks)\right)^{-1}\right).  \] 
Moreover, if $I$ is an integrable Hamiltonian such that $\{f,I\}=0$, then $\{\bar{f},I\}=0$. 

Then observe that we can write
\[ H_j\circ\Psi^j-\hat{f}=L_j+g_j+\bar{f}=L_{j-1}+g_{j-1}+\bar{f}, \quad g_{j-1}:=L_j-L_{j-1}+g_j,\]
where 
\[ |\bar{f}|_{s_{j-1}}\MP \nu\] 
and, using~\eqref{seuilMS2}, 
\[ |\nabla g_{j-1}|_{s_{j-1}}\leq |\nabla (L_j-L_{j-1})|_{s_{j-1}}+|\nabla g_{j}|_{s_{j-1}} \MP \rho_{j-1}+\rho_j \MP \rho_{j-1}.\]
Thus we can apply our inductive assumption (with $\bar{f}$ instead of $f$), and there exists a $(M,s/2)$-ultra-differentiable symplectic transformation 
\[ \Psi_{j-1} : \T^n \times D_{1/2} \rightarrow \T^n \times D_{\delta_{j-1}}\] 
such that
\[ (H\circ\Psi^j-\hat{f})\circ\Psi_{j-1}=L_{j-1}+g_{j-1}+\tilde{\bar{f}}+\bar{f}^+=L_j+g_j+\tilde{\bar{f}}+\bar{f}^+\]
with $\{\tilde{\bar{f}},L_i\}=0$ for $1 \leq i \leq j-1$ and the estimates
\[ |\Psi_{j-1}-\mathrm{Id}|_{s/2} \MP \max_{1 \leq i \leq j-1}\{T_i\nu\} , \quad |\tilde{\bar{f}}|_{s/2} \MP \nu, \quad |\bar{f}^+|_{s/2} \MP \nu\exp\left(-\kappa_2\left(C^{-1}(Ks)\right)^{-1}\right).  \]
Moreover, if $I$ is an integrable Hamiltonian such that $\{\bar{f},I\}=0$, then $\{\tilde{\bar{f}},I\}=0$. We may then set
\[ \Psi_j:=\Psi^j \circ \Psi_{j-1}, \quad \tilde{f}:=\tilde{\bar{f}}, \quad f^+:=\bar{f}^++\hat{f}\circ \Psi_{j-1}. \]
Obviously we have 
\[ |\tilde{f}|_{s/2} \MP \nu. \]
Using Proposition~\ref{composition}, we get
\[ |\Psi_{j}-\mathrm{Id}|_{s/2} \leq |\Psi^{j}-\mathrm{Id}|_{s_{j-1}} + |\Psi_{j-1}-\mathrm{Id}|_{s/2} \MP \max_{1 \leq i \leq j}\{T_i\nu\} \]
but also
\[ |f^+|_{s/2} \leq |\bar{f}^+|_{s/2}+|\hat{f}\circ \Psi_{j-1}|_{s/2} \leq |\bar{f}^+|_{s/2}+|\hat{f}|_{s_{j-1}} \MP \nu\exp\left(-\kappa_2\left(C^{-1}(Ks)\right)^{-1}\right).  \]
From the first part of the inductive assumption, we know that $\{\tilde{f},L_i\}=0$ for $1 \leq i \leq j-1$, but since $\{\bar{f},L_j\}=0$, from the second part of the inductive assumption (applied to $S=L_j$) we obtain $\{\tilde{f},L_j\}=0$ and hence $\{\tilde{f},L_i\}=0$ for $1 \leq i \leq j$. This proves the first part of the statement, but the second part is obvious, as $\{f,I\}=0$ implies $\{\bar{f},I\}=0$ which implies $\{\tilde{f},I\}=0$.            
\end{proof}

Now let us come back to a Hamiltonian as in~\eqref{Ham1}. For any $1 \leq j \leq n$, let $I_j \in D_{1/2}$, and as before, we still consider a $T_j$-periodic vector $v_j$ and a positive parameter $\rho_j$.

\begin{proposition}\label{propMS3}
Let $1 \leq j \leq n$ and $H$ be as in~\eqref{Ham1}, and given $K \geq 1$, assume that
\begin{equation}\label{seuilMS3}
\begin{cases}
|\nabla h(I_j)-v_j|\leq \rho_j, \quad \rho_j \PM 1, \\
\varepsilon \PM \rho_i^2, \quad T_i\rho_i \PM s^2, \quad KT_i\rho_i \PM 1, \quad 1 \leq i \leq j, \\
|\omega_j-\omega_{j-1}| \PM \rho_{j-1}, \quad \rho_j \PM \rho_{j-1}, \quad 2\leq j. 
\end{cases} 
\end{equation}
Then there exists a smooth symplectic transformation
\[ \Phi_j : \T^n \times D_{\rho_j/2}(I_j) \rightarrow \T^n \times D_{\delta_j\rho_j}(I_j) \]
such that
\[ H\circ\Phi_j=h+f'+f^* \]  
with $\{f',L_i\}=0$ for $1 \leq i \leq j$ and with the estimate 
\begin{equation}\label{estMS3}
\begin{cases}
|\Pi_I\Phi_j-\mathrm{Id}_I|_{C^0(\T^n \times D_{\rho_j/2}(I_j))} \MP \max_{1 \leq i \leq j}\{T_i\} \varepsilon, \\ 
|\partial_\theta f^*|_{C^0(\T^n \times D_{\rho_j/2}(I_j))} \MP \varepsilon s^{-1}\exp\left(-\kappa_2\left(C^{-1}(Ks)\right)^{-1}\right).
\end{cases}
\end{equation}
\end{proposition}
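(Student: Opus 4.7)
The strategy is to reduce Proposition~\ref{propMS3} to Proposition~\ref{propMS2} by first localizing $H$ around $I_j$ and then rescaling the action variables. Set $\tau_j(\theta,I):=(\theta,I-I_j)$ and $\sigma(\theta,I):=(\theta,\rho_j I)$; then $\tau_j^{-1}\circ\sigma$ sends $\T^n\times D_{\delta_j}$ into $\T^n\times D_{\rho_j\delta_j}(I_j)$, which is contained in the domain $\T^n\times D$ of $H$ once $\rho_j\delta_j\leq 1/2$, a condition ensured by $\rho_j\PM 1$ since $\delta_j=2^{j-1}\leq 2^{n-1}$ and $I_j\in D_{1/2}$. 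Define
\[H'(\theta,I):=\rho_j^{-1}\bigl(H\circ\tau_j^{-1}\circ\sigma(\theta,I)-h(I_j)\bigr).\]
Since $\sigma$ contracts the action direction ($\rho_j\leq 1$), $H'$ is $(M,s)$-ultra-differentiable on $\T^n\times D_{\delta_j}$. A Taylor expansion of $h$ around $I_j$ yields the decomposition
\[H'=L_{v_j}+g_j+f',\quad g_j(I)=(\nabla h(I_j)-v_j)\cdot I+\rho_j\tilde h_j(I),\quad f'(\theta,I)=\rho_j^{-1}\varepsilon f(\theta,I_j+\rho_j I),\]
with $\tilde h_j(I)=\int_0^1(1-t)\nabla^2 h(I_j+t\rho_j I)\cdot I^2\,dt$.

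Next I verify the hypotheses of Proposition~\ref{propMS2}. The bound $|\nabla h(I_j)-v_j|\leq\rho_j$ gives $|\nabla g_j|_s\MP\rho_j$, while $|f'|_s\leq\rho_j^{-1}\varepsilon=:\nu$. To match the indexed setup of Proposition~\ref{propMS2}, I identify the internal width $s_{\mathrm{prop}}=s/\delta_j$ (so that $s_{\mathrm{prop}}\delta_j=s$) and the internal cutoff $K_{\mathrm{prop}}=K\delta_j$ (so that $C^{-1}(K_{\mathrm{prop}}s_{\mathrm{prop}})=C^{-1}(Ks)$ appears in the final estimate). Since $\delta_j\leq 2^{n-1}$ is bounded, the hypotheses \eqref{seuilMS3} imply the conditions \eqref{seuilMS2}, all extra $\delta_j$ and $s$ factors being absorbed into implicit constants (depending on $n$, $s$, $|h|_s$, $C$). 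Applying Proposition~\ref{propMS2} produces a symplectic $\Psi_j:\T^n\times D_{1/2}\to\T^n\times D_{\delta_j}$ with $H'\circ\Psi_j=L_{v_j}+g_j+\tilde f+f^+$, $\{\tilde f,L_i\}=0$ for $1\leq i\leq j$, and the ultra-differentiable estimates $|\Psi_j-\mathrm{Id}|\MP\max_i T_i\,\nu$ and $|f^+|\MP\nu\exp\bigl(-\kappa_2(C^{-1}(Ks))^{-1}\bigr)$.

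Finally, set $\Phi_j:=\tau_j^{-1}\circ\sigma\circ\Psi_j\circ\sigma^{-1}\circ\tau_j$, which maps $\T^n\times D_{\rho_j/2}(I_j)$ into $\T^n\times D_{\rho_j\delta_j}(I_j)$. A direct computation shows that on composing $H\circ\Phi_j$, the terms $\rho_j L_{v_j}$ and $\rho_j g_j$ together with the constant $h(I_j)$ reassemble, via Taylor's formula, into $h(I)$; hence $H\circ\Phi_j=h+f'_{\star}+f^*$ with $f'_{\star}=\rho_j\,\tilde f\circ\sigma^{-1}\circ\tau_j$ and $f^*=\rho_j\,f^+\circ\sigma^{-1}\circ\tau_j$. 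The commutation $\{f'_{\star},L_i\}=0$ follows because $L_i$ is linear (the translation commutes and the scaling produces only a global $\rho_j$ factor in $\partial_\theta$). From
\[\Pi_I\Phi_j(\theta,I)-I=\rho_j\bigl(\Pi_I\Psi_j-\mathrm{Id}_I\bigr)\bigl(\theta,\rho_j^{-1}(I-I_j)\bigr),\]
one obtains $|\Pi_I\Phi_j-\mathrm{Id}_I|_{C^0}\MP\rho_j\cdot\max_i T_i\,\nu=\max_i T_i\,\varepsilon$. The bound on $|\partial_\theta f^*|_{C^0}$ follows by applying Corollary~\ref{corderivative} to $f^+$ (at a cost of a factor $s^{-1}$) and passing to the $C^0$ norm; the rescaling factor $\rho_j\cdot\rho_j^{-1}$ cancels, producing $|\partial_\theta f^*|_{C^0}\MP s^{-1}\varepsilon\exp\bigl(-\kappa_2(C^{-1}(Ks))^{-1}\bigr)$.

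The main technical obstacle is the careful bookkeeping of the $\delta_j$ and $s$ factors generated by the rescaling: one must verify that the conditions \eqref{seuilMS2} genuinely follow from \eqref{seuilMS3} after the identification $s_{\mathrm{prop}}=s/\delta_j$ and $K_{\mathrm{prop}}=K\delta_j$. This is precisely the reason the implicit constants throughout are allowed to depend on $n$, $|h|_s$, $l$, $L$, $\delta$, $p$, and the function $C$.
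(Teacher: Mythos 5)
Your proof is correct and follows essentially the same route as the paper's: translate and rescale the actions by $\sigma_j(\theta,I)=(\theta,\rho_j I)$ (the paper streamlines this by assuming $I_j=0$ without loss of generality), Taylor-expand $h$, cast the result as a Hamiltonian of the form \eqref{HAMNN}, apply Proposition~\ref{propMS2}, and scale back. The only (cosmetic) looseness is in the width bookkeeping after the Taylor expansion: the paper records $|\nabla h_j|_{s/2}\MP s^{-1}$, hence $|\nabla g_j|_{s/2}\MP s^{-1}\rho_j$, whereas you write $|\nabla g_j|_s\MP\rho_j$; both are absorbed into the implicit constants and do not affect the final statement.
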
 

Let us point out that one can obtain better (anisotropic) estimates such as those obtained in Proposition~\ref{propnonlineaire} or Proposition~\ref{propconvex1}; yet only the estimates~\eqref{estMS3} will be useful in the sequel. 

\begin{proof}
Observe that the assumptions $\rho_j \PM 1$ ensures that $D_{\delta_j\rho_j}(I_j)$ is contained in $D$.  Without loss of generality, we may assume that $I_j=0$ and $h(0)=0$, and we set $\omega:=\nabla h(0)$. Let $\sigma_j(\theta,I):=(\theta,\rho_j I)$: expanding $h \circ \sigma_j$ at order $2$ around $I=0$ we have
\[ h \circ \sigma_j(I)=\rho_j\omega\cdot I+\rho_j^2h_j(I), \quad I \in D_{\delta_j}, \]
with $|\nabla h_j|_{s/2} \MP |\nabla h|_{s/2} \MP s^{-1}$. Now consider $H_j:=\rho_j^{-1}H \circ \sigma_j$; it can be written as
\[ H_j(\theta,I)=\omega\cdot I+\rho_j h_j(I)+\rho_j^{-1}f\circ \sigma_j(\theta,I)=\omega_j\cdot I+g_j(I)+f_j \]
with 
\[ g_j(I):=(\omega-v_j)\cdot I+\rho_jh_j(I), \quad f_j(I):\rho_j^{-1}f\circ \sigma_j(\theta,I)\]
satisfying the estimates
\[ |g_j|_{s/2} \leq s^{-1}\rho_j, \quad |g_j|_{s/2} \leq \rho_j^{-1}\varepsilon.  \] 
To this Hamiltonian we can then apply Proposition~\ref{propMS2}, with $s/2$ instead of $2^{j-1}s$, $s^{-1}\rho_j$ instead of $\rho_j$ and $\nu=\rho_j^{-1}\varepsilon$, as~\eqref{seuilMS3} implies~\eqref{seuilMS2}: for $s'\PE s$, we obtain a $(M,s')$-ultra differentiable symplectic transformation 
\[ \Psi_j : \T^n \times D_{1/2} \rightarrow \T^n \times B_{\delta_j}\] 
such that
\[ H_j\circ\Psi_j=L_j+g_j+\tilde{f}+f^+ \]
with $\{\tilde{f},L_i\}=0$ for $1 \leq i \leq j$ and $\Psi_j$ and $f^+$ satisfying the estimates~\eqref{EST2MS} with $s^{-1}\rho_j$ instead of $\rho_j$ and $\nu=\rho_j^{-1}\varepsilon$. We can eventually set
\[ \Phi_j:=\sigma_j\circ \Psi_j \circ \sigma_j^{-1}, \quad f':=\rho_j\tilde{f}\circ \sigma_j^{-1}, \quad f^*:=\rho_jf^+\circ \sigma_j^{-1}. \]
One easily check that $H\circ\Phi_j=h+f'+f^*$, that $\{f',L_i\}=0$ for $1 \leq i \leq j$ and the wanted estimates~\eqref{estMS3} on $\Phi_j$ and $f^*$ follows easily from the estimates on $\Psi_j$ and $f^+$ given by~\eqref{EST2MS}.   
\end{proof}

For $1 \leq j \leq n$, let $\Lambda_j \subseteq \R^n$ be the real subspace orthogonal to $v_1,\dots, v_j$: since these vectors are linearly independent, $\Lambda_j$ has dimension $n-j$. Let also $\Pi_{\Lambda_j}$ (respectively $\Pi_{\Lambda_j^\perp}$) be the orthogonal projection onto $\Lambda_j$ (respectively $\Lambda_j^\perp$).

\begin{proposition}\label{dicho}
Let $H_j:=H \circ \Phi_j$ be the Hamiltonian given by Proposition~\ref{propMS3}, and let $(\theta(t),I(t))$ be a solution of the system associated to $H_j$ with $I(0) \in D_{\rho_j/4}(I_j)$. Then for $1 \leq j \leq n-1$, we have the following dichotomy:
\begin{itemize}
\item[$(1)$] either $I(t) \in D_{\rho_j/2}(I(0))$ for $0 \leq t \leq s\exp\left(\kappa_2\left(C^{-1}(Ks)\right)^{-1}\right)$,
\item[$(2)$] or there exists a time $0<t^+ \leq s\exp\left(\kappa_2\left(C^{-1}(Ks)\right)^{-1}\right)$ for which $|I(t^+)-I(0)|=\rho_j/2$ and 
\[ |I(t)-I(0)|\leq \rho_j/2, \quad |\Pi_{\Lambda_j^\perp}(I(t)-I(0))| \MP \varepsilon, \quad 0 \leq t \leq t^+.  \]  
\end{itemize}
For $j=n$, we have $I(t) \in D_{\rho_n/2}(I(0))$ for $0 \leq t \leq s\exp\left(-\kappa_2\left(C^{-1}(Ks)\right)^{-1}\right)$.
\end{proposition}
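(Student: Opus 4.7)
The plan is to exploit the commutation relations $\{f',L_i\}=0$ for $1\le i\le j$ provided by Proposition~\ref{propMS3}. Since $L_i(I)=v_i\cdot I$ does not depend on $\theta$, these relations are equivalent to $v_i\cdot\partial_\theta f'\equiv 0$. Hamilton's equations for $H_j=h+f'+f^*$ then give
\[ \frac{d}{dt}\bigl(v_i\cdot I(t)\bigr) = -v_i\cdot\partial_\theta f'(\theta(t),I(t)) - v_i\cdot\partial_\theta f^*(\theta(t),I(t)) = -v_i\cdot\partial_\theta f^*(\theta(t),I(t)), \]
so the only contribution to the drift of $v_i\cdot I$ comes from the exponentially small remainder $f^*$. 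Using the $C^0$-estimate on $\partial_\theta f^*$ in~\eqref{estMS3} and the fact that $|v_i|\MP 1$ (since $v_i$ is close to $\nabla h(I_i)$ which is uniformly bounded), integration yields
\[ \bigl|v_i\cdot(I(t)-I(0))\bigr| \MP t\, s^{-1}\varepsilon\exp\bigl(-\kappa_2(C^{-1}(Ks))^{-1}\bigr) \]
for $1\le i\le j$, as long as the orbit remains in the domain where $H_j$ is defined.

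Next, since $\Lambda_j^\perp$ is spanned by the linearly independent vectors $v_1,\dots,v_j$ — and one checks from the construction underlying Proposition~\ref{propMS3} that this basis is uniformly well-conditioned, so that control of the individual linear forms $I\mapsto v_i\cdot I$ translates quantitatively into control of the orthogonal projection onto $\Lambda_j^\perp$ — the estimates above combine into
\[ |\Pi_{\Lambda_j^\perp}(I(t)-I(0))| \MP t\, s^{-1}\varepsilon\exp\bigl(-\kappa_2(C^{-1}(Ks))^{-1}\bigr). \]
For times $t\le s\exp(\kappa_2(C^{-1}(Ks))^{-1})$ the right-hand side reduces to $\MP\varepsilon$, which is the bound required by case~$(2)$.

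To obtain the dichotomy, I would introduce the first exit time $t^+\in]0,+\infty]$ of $I(t)$ from $D_{\rho_j/2}(I(0))$. If $t^+$ exceeds $s\exp(\kappa_2(C^{-1}(Ks))^{-1})$, we are in case~$(1)$; otherwise the orbit stays in $D_{\rho_j/2}(I(0))$ throughout $[0,t^+]$, so the projection estimate applies there and $|I(t^+)-I(0)|=\rho_j/2$ by continuity, placing us in case~$(2)$. For $j=n$ one has $\Lambda_n=\{0\}$, so $\Pi_{\Lambda_n^\perp}=\mathrm{Id}$ and the estimate controls the full drift $|I(t)-I(0)|\MP\varepsilon$; the smallness assumption $\varepsilon\PM\rho_n^2$ in~\eqref{seuilMS3} then forces $|I(t)-I(0)|\le\rho_n/2$ throughout the stated time window, so case~$(1)$ always holds.

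The main technical obstacle will be bookkeeping at the boundary of the domain: Proposition~\ref{propMS3} controls $\partial_\theta f^*$ only on $\T^n\times D_{\rho_j/2}(I_j)$, whereas the dichotomy is phrased in terms of exit from the nested ball $D_{\rho_j/2}(I(0))$ with $I(0)\in D_{\rho_j/4}(I_j)$. A short bootstrap (or a slight readjustment of the radii $\rho_j/2$, $\rho_j/4$ by universal factors) suffices to ensure that the orbit remains in the validity region of $f^*$ up to the first exit time $t^+$, closing the argument. The other point requiring care — the quantitative linear independence of $v_1,\dots,v_j$ used in passing from individual estimates on $v_i\cdot I$ to an estimate on $\Pi_{\Lambda_j^\perp}$ — will follow from the way these periodic vectors approximate $\nabla h(I_1),\dots,\nabla h(I_j)$ combined with a non-degeneracy input extracted from the steepness hypothesis on $h$.
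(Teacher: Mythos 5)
Your overall architecture matches the paper's: introduce the first exit time $t^+$ of $I(t)$ from $D_{\rho_j/2}(I(0))$, dichotomize on whether $t^+$ exceeds $s\exp\left(\kappa_2\left(C^{-1}(Ks)\right)^{-1}\right)$, and exploit the commutation relations $\{f',L_i\}=0$ to bound the drift of $I$ transverse to $\Lambda_j$. The treatment of $j=n$ via $\varepsilon\PM\rho_n$ is also correct, and the remark about the domain radii is a fair observation about the paper's bookkeeping.

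However, there is a genuine gap in the middle step. You go from the scalar bounds on $v_i\cdot(I(t)-I(0))$, $1\le i\le j$, to a bound on $|\Pi_{\Lambda_j^\perp}(I(t)-I(0))|$ by invoking a ``uniform well-conditioning'' of the basis $v_1,\dots,v_j$, which you say follows from the construction. This is not available, and would in fact fail quantitatively at the scale required: in the proof of Theorem~\ref{steep} the radii $\rho_j$ decrease geometrically (indeed polynomially in $\varepsilon$, with $\rho_j\MP\rho_{j-1}^p$), and the only transversality one can extract from steepness is $|\Pi_{\Lambda_{j-1}}v_j|\PS\rho_{j-1}^p$, which degenerates as $\varepsilon\to 0$. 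So the Gram matrix of $v_1,\dots,v_j$ is badly conditioned, and passing from bounds on the coordinates $v_i\cdot I$ to a bound on $\Pi_{\Lambda_j^\perp}I$ would cost a polynomial loss in $\varepsilon$ that is not acceptable (the conclusion needs $\MP\varepsilon$, not a weaker power).

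The fix, and what the paper actually does, avoids the individual coordinates $v_i\cdot I$ entirely. The relations $v_i\cdot\partial_\theta f'\equiv 0$ for $1\le i\le j$ say precisely that $\partial_\theta f'\in\{v_1,\dots,v_j\}^\perp=\Lambda_j$, which is by definition the kernel of the orthogonal projection $\Pi_{\Lambda_j^\perp}$. Hence
\[ \Pi_{\Lambda_j^\perp}\dot I = -\Pi_{\Lambda_j^\perp}\partial_\theta(f'+f^*) = -\Pi_{\Lambda_j^\perp}\partial_\theta f^*, \]
and since an orthogonal projection is a contraction, $|\Pi_{\Lambda_j^\perp}\dot I|\le|\partial_\theta f^*|$. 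Integrating up to $t^+\le s\exp\left(\kappa_2\left(C^{-1}(Ks)\right)^{-1}\right)$ and using the $C^0$ estimate on $\partial_\theta f^*$ from~\eqref{estMS3} gives $|\Pi_{\Lambda_j^\perp}(I(t)-I(0))|\MP\varepsilon$ directly, with no reference to the conditioning of the $v_i$. Replace your conditioning argument with this observation and the proof closes.
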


\begin{proof}
  First assume $1 \leq j \leq n-1$. Let $t^+>0$ be the time of first
  exit of $I(t)$ from $D_{\rho_j/2}(I(0))$. Then either
  $t^+ > s\exp\left(\kappa_2\left(C^{-1}(Ks)\right)^{-1}\right)$, in
  which case $(1)$ clearly holds true, or
  $t^+ \leq s\exp\left(\kappa_2\left(C^{-1}(Ks)\right)^{-1}\right)$,
  in which case we will show that $(2)$ holds true. The facts that
  $|I(t^+)-I(0)|=\rho_j/2$ and $|I(t)-I(0)|\leq \rho_j/2$ for
  $0 \leq t \leq t^+$ are obvious. Then since $H\circ\Phi_j=h+f'+f^*$
  with $\{f',L_i\}=0$ for $1 \leq i \leq j$, we have
  $\partial_\theta f' \in \Lambda_j$, and thus for
  $0 \leq t \leq t^+$,
  \[ |\Pi_{\Lambda_j^\perp}(I(t)-I(0))| \leq |t| |\partial_\theta
  f^*|_{C^0(\T^n \times B_{\rho_j}(I_j))} \MP t^+ \varepsilon
  s^{-1}\exp\left(-\kappa_2\left(C^{-1}(Ks)\right)^{-1}\right) \MP
  \mu.  \]
  For $j=n$, since $\{f',L_j\}=0$ for $1 \leq j \leq n$, the
  Hamiltonian $f'$ is integrable and the last estimate shows that
  $t^+ \geq s\exp\left(\kappa_2\left(C^{-1}(Ks)\right)^{-1}\right)$;
  then the conclusion follows from $\mu \PM \rho_n$.
\end{proof}

Proposition~\ref{propMS3} and Proposition~\ref{dicho}, which consist
respectively in the construction of normal forms and their dynamical
consequences, constitute the analytical part of the proof. The
geometrical part will be based on two propositions. The first one is
Proposition~\ref{dirichlet}, which we already used
in \S~\ref{sec:convex} and is a mere consequence of Dirichlet's box
principle. The second one is a consequence of the steepness property
of $h$; the statement below is a special case of the lemma on ``almost
plane curves" of Nekhoroshev, stated in \cite{Nek77} and proved in
\cite{Nek79} (our case below corresponds to ``plane curves").

\begin{proposition}\label{nekho}
  Let $h : G \rightarrow \R$ be a smooth function which is
  $(l,L,\delta,p)$-steep, and such that
  $\sup_{I \in G}|\nabla^2 h (I)| \MP 1$. Let
  $\gamma : [0,t^*] \rightarrow \R^n$ be a continuous curve, $\lambda$
  a proper affine subspace of $\R^n$ and $\varrho>0$. Assume that
\begin{itemize}
\item[$(i)$] for all $t \in [0,t^*]$, $\gamma(t) \in \lambda
$;
\item[$(ii)$] for all $t \in [0,t^*]$, $|\gamma(0)- \gamma(t)|\leq \varrho$ and $|\gamma(0)-\gamma(t^*)|=\varrho$;
\item[$(iii)$] the ball $\{I \in \R^n \; | \; |I-\gamma(0)|\leq \varrho\}$ is contained in $G$;
\item[$(iv)$] $\varrho \PM 1$,
\end{itemize}
then there exists a time $\tilde{t} \in [0,t^*]$ such that 
\[ ||\Pi_\Lambda \nabla h(\gamma(\tilde{t}))|| \PS \varrho^p, \]
where $\Lambda$ is the vector space associated to $\lambda$.
\end{proposition}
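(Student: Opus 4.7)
The plan is to extract the conclusion directly from the steepness inequality applied at the base point $\gamma(0)$, combined with an intermediate value argument along $\gamma$. The statement is the ``plane curve'' special case of Nekhoroshev's geometric lemma, and in this case the analytical apparatus collapses: since $\gamma$ genuinely lies in $\lambda$, no smoothing of the curve onto $\lambda$ is needed, and the Hessian bound $|\nabla^2 h|\MP 1$ (inherited from the ``almost plane curves'' version) plays no role.

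First I would record the elementary identity $\nabla h_\lambda(I)=\Pi_\Lambda\nabla h(I)$ valid for every $I\in\lambda$, where $h_\lambda$ denotes the restriction of $h$ to $\lambda$ and $\Lambda$ is the direction of $\lambda$. By hypothesis (iii) the point $\gamma(0)$ lies in $G$, and $\Lambda$ has dimension some $j\in\{1,\dots,n-1\}$ since $\lambda$ is a proper affine subspace. Absorbing $\delta$ into the implicit constant of (iv), I may assume $\varrho\leq\delta$, so the steepness inequality at $I=\gamma(0)$ with $\xi=\varrho$ produces some $\eta^*\in[0,\varrho]$ with
\[
|\nabla h_\lambda(I')-\nabla h_\lambda(\gamma(0))|>L\varrho^{p}
\]
for \emph{every} $I'\in\lambda$ satisfying $|I'-\gamma(0)|=\eta^*$.

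Next I would apply the intermediate value theorem to the continuous nonnegative scalar function $t\mapsto|\gamma(t)-\gamma(0)|$ on $[0,t^*]$: by hypothesis (ii) it takes the values $0$ at $t=0$ and $\varrho$ at $t=t^*$, hence attains the intermediate value $\eta^*$ at some $t_{\eta^*}\in[0,t^*]$. The point $I':=\gamma(t_{\eta^*})$ lies in $\lambda$ by hypothesis (i) and satisfies $|I'-\gamma(0)|=\eta^*$, so invoking the steepness bound just established at this particular $I'$ yields
\[
|\Pi_\Lambda\nabla h(\gamma(t_{\eta^*}))-\Pi_\Lambda\nabla h(\gamma(0))|>L\varrho^{p}.
\]
The triangle inequality then forces at least one of $|\Pi_\Lambda\nabla h(\gamma(t_{\eta^*}))|$ or $|\Pi_\Lambda\nabla h(\gamma(0))|$ to exceed $L\varrho^p/2$, and the corresponding element of $\{0,t_{\eta^*}\}$ provides the desired $\tilde{t}$, with $L/2$ absorbed into $\PS$.

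The only subtlety worth flagging is conceptual rather than technical: the ``$\max$--$\min$'' structure of the steepness definition produces a uniform lower bound along the \emph{entire} sphere-slice of $\lambda$ at some radius $\eta^*$, and this is precisely what compensates for the one-dimensionality of $\gamma$---we have no freedom to choose \emph{which} point of that sphere $\gamma$ crosses, but continuity guarantees that \emph{some} crossing occurs, and steepness guarantees that the bound holds there. Consequently no obstacle arises, and the argument above is essentially a one-line deduction from the definition once unpacked; the only place where one must be attentive is the passage between the scalar norm conventions on $\nabla h$ used elsewhere in the paper and the Euclidean bound implicit in the steepness definition, a constant-factor matter that is again absorbed in $\PS$.
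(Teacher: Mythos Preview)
Your proof is correct. Note, however, that the paper does not supply its own proof of this proposition: it is quoted as the ``plane curve'' special case of Nekhoroshev's lemma on almost plane curves, with a citation to \cite{Nek77} (statement) and \cite{Nek79} (proof). So there is no in-paper argument to compare against.

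Your approach is the natural direct one for the plane-curve case, and it is worth recording why it is so short: the $\max_{\eta}\min_{I'}$ structure of the steepness definition hands you an entire sphere-slice of $\lambda$ (at some radius $\eta^*\in(0,\varrho]$) on which the gradient increment exceeds $L\varrho^p$, and the intermediate value theorem guarantees the curve crosses that slice. The general ``almost plane'' version in \cite{Nek79} must contend with a curve that merely stays close to $\lambda$, which is where the Hessian bound $|\nabla^2 h|\MP 1$ enters to control the error introduced by projecting $\gamma$ onto $\lambda$; as you correctly observe, that control is superfluous here. Two minor remarks: first, $\eta^*>0$ necessarily (the minimum at $\eta=0$ vanishes), so the IVT step is non-degenerate; second, the case $\dim\Lambda=0$ is excluded not by the word ``proper'' but by condition~(ii), since a curve in a point cannot satisfy $|\gamma(0)-\gamma(t^*)|=\varrho>0$.
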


We can now conclude the proof of Theorem~\ref{steep}.

\begin{proof}[Proof of Theorem~\ref{steep}]
For $1 \leq j \leq n$, let us set
\[ a_j:=(np)^{n-j}, \quad Q_j \PE \varepsilon^{-\frac{1}{2na_j}} \]
and
\[ K \PE Q_1 \PE \varepsilon^{-\frac{1}{2na_1}}. \]
Given any solution $(\theta(t),I(t))$ of the Hamiltonian system associated to $H$ with $I(0) \in D_{1/2}$, we claim that
\[ |I(t)-I(0)|\MP Q_1^{-1}, \quad |t|\leq s\exp\left(\kappa_2\left(C^{-1}(Ks)\right)^{-1}\right)\]
provided that 
\begin{equation}\label{SE}
\varepsilon \PM 1, \quad Q_1^{-1} \PM s^2. 
\end{equation}
which holds true if $\varepsilon$ is sufficiently small.

In view of the definitions of $Q_1$, $K$ and the fact that $a_1=a$, this claim clearly implies the statement we want to prove for positive times; for negative times the argument is entirely similar. So it remains to prove the claim, and we will do this by an algorithm that stops after at most $n$ steps. Let us write
\[ \tau_K:=s\exp\left(\kappa_2\left(C^{-1}(Ks)\right)^{-1}\right). \]

For the first step, we set $I_1:=I(0)$ and we apply Proposition~\ref{dirichlet} to $\omega:=\nabla h(I_1)$ with $Q=Q_1$: we find a $T_1$-periodic vector $v_1 \in \R^n\setminus\{0\}$ such that
\[ |\nabla h(I_1)-v_1|\MP (T_1Q_1)^{-1}, \quad 1 \MP T_1 \MP Q_1^{n-1}.  \]
Let us set $\rho_1 \EP (T_1Q_1)^{-1}$. The conditions~\eqref{SE} imply~\eqref{seuilMS3} for $j=1$ and hence Proposition~\ref{propMS3} can be applied: let $H_1:=H \circ \Phi_1$ be the new Hamiltonian, and let us denote by $(\theta^1(t),I^1(t))$ its solutions. Consider the solution for which $\Phi_1((\theta^1(0),I^1(0)))=(\theta(0),I(0))$: in view of the estimate on $\Phi_1$ given in Proposition~\ref{propMS3}, $I^1(0) \in D_{\rho_1/4}(I_1)$ and Proposition~\ref{dicho} gives a dichotomy:
\begin{itemize}
\item[$(1)$] either $I^1(t) \in D_{\rho_1/2}(I^1(0))$ for $0 \leq t \leq \tau_K$,
\item[$(2)$] or there exists a time $0<t_1^+\leq \tau_K$ for which $|I^1(t_1^+)-I^1(0)|=\rho_1/2$ and 
\[ |I^1(t)-I^1(0)|\leq \rho_1/2, \quad |\Pi_{\Lambda_1^\perp}(I^1(t)-I^1(0))| \MP \varepsilon, \quad 0 \leq t \leq t_1^+.  \]
\end{itemize}  
If the first alternative holds true, the claim is proved since this implies
\[ |I(t)-I(0)|\leq 2\rho_1 \MP Q_1^{-1}, \quad 0 \leq t \leq \tau_K. \]
If the second alternative holds true, we will show how the algorithm moves to the second step. Consider the curve
\[ \gamma_1(t):=I^1(0)+\Pi_{\Lambda_1}\left(I^1(t)-I^1(0)\right). \]
Since $\varepsilon \PM \rho_1$, we can ensure that
\[ |\gamma_1(t_1^+)-\gamma_1(0)|=|\Pi_{\Lambda_1}\left(I^1(t_1^+)-I^1(0)\right)| \geq |I^1(t_1^+)-I^1(0)| - |\Pi_{\Lambda_1^\perp}(I^1(t_1^+)-I^1(0))|\geq\rho_1/4  \]
and therefore we can find a time $0<t_1^*<t_1^+$ for which
\begin{equation*}
\begin{cases}
|\gamma_1(t)-\gamma_1(0)|\leq \rho_1/4, \quad 0 \leq t \leq t_1^*, \\ 
|\gamma_1(t_1^*)-\gamma_1(0)|=\rho_1/4.
\end{cases}
\end{equation*}
This curve $\gamma_1$, on the interval $[0,t_1^*]$, satisfies the assumption of Proposition~\ref{nekho} with $\lambda:=I^1(0)+\Lambda_1$, $\varrho=\rho_1/4$ and therefore we can find a time $\tilde{t}_1 \in [0,t_1^*]$ such that 
\[ |\Pi_{\Lambda_1} \nabla h(\gamma_1(\tilde{t}_1))| \PS \rho_1^p. \]
Moreover,
\[ |\gamma_1(\tilde{t}_1)-I^1(\tilde{t}_1)|=|\Pi_{\Lambda_1^\perp}\left(I^1(\tilde{t}_1)-I^1(0)\right)|\MP \varepsilon \]
and since $\varepsilon \PM \rho_1^{p}$ as one easily check, we find that
\[ |\Pi_{\Lambda_1} \nabla h(I^1(\tilde{t}_1))| \PS \rho_1^p. \]
Then, using the estimate on $\Phi_1$ given in Proposition~\ref{propMS3} and the fact that $T_1\varepsilon \PM \rho_1^{p}$, one also has
\[ |I(\tilde{t}_1)-I^1(\tilde{t}_1)| \PM \rho_1^{p}  \]
and thus
\[ |\Pi_{\Lambda_1} \nabla h(I(\tilde{t}_1))| \PS \rho_1^p. \]
Now we set $I_2:=I(\tilde{t}_1)$, and we apply again Proposition~\ref{dirichlet} to $\omega:=\nabla h(I_2)$ with $Q=Q_2$: we find a $T_2$-periodic vector $v_2 \in \R^n\setminus\{0\}$ such that
\[ |\nabla h(I_2)-v_2|\MP (T_2Q_2)^{-1}, \quad 1 \MP T_2 \MP Q_2^{n-1}.  \]
Let us set $\rho_2 \EP (T_2Q_2)^{-1}$. To check that $v_2$ is linearly independent from $v_1$, observe that
\[ \rho_2 \MP Q_2^{-1} \PM Q_1^{-np} \PM \rho_1^p \]
and therefore, by a proper choice of implicit constants, one can ensure that
\[ |\Pi_{\Lambda_1} v_2|\geq |\Pi_{\Lambda_1} \nabla h(I_2))|-|\Pi_{\Lambda_1}\left(\nabla h(I_2)-v_2\right)| \geq |\Pi_{\Lambda_1} \nabla h(I_2))|-|\nabla h(I_2)-v_2|>0. \]
Clearly, we have $\rho_2 \PM \rho_1$ and it is straightforward to check that $|v_1-v_2| \MP \rho_1$. Then conditions~\eqref{SE} imply~\eqref{seuilMS3} for $j=2$, and once again, we can apply Proposition~\ref{propMS3} and Proposition~\ref{dicho} leading to a new dichotomy: if the first alternative holds true, the claim is proved while if the second alternative holds true, we move to the next step. If the algorithm lasts $n-1$ steps, then only possibility (1) in Proposition~\ref{dicho} can hold and this concludes the proof of the claim.  
\end{proof} 

\appendix

\section{On the moderate growth condition}
\label{sec:MG}

Recall from \S~\ref{sec:gintro} that the sequence $M$ has
\emph{moderate growth} if
\begin{equation}
  \label{MG}%
  \sup_{l,j \in \N}
  \left(\frac{M_{l+j}}{M_lM_j}\right)^{\frac{1}{l+j}} < +\infty \tag{MG}
\end{equation}
Here we compare this condition to \eqref{H2}, which requires the sequence $\left(\frac{\ln \mu_l}{l}\right)_{l\geq 1}$ to converge to zero as $l$ goes to infinity; in particular, \eqref{H2} is satisfied if $\left(\frac{\ln \mu_l}{\ln l}\right)_{l\geq 2}$ is bounded.  

\begin{lemma}
  \label{lm:MG}%
  If conditions \eqref{H1} and \eqref{MG} hold, positive sequences
  $\left(\frac{\ln M_l}{l \ln l}\right)_{l\geq 2}$ and
  $\left(\frac{\ln \mu_l}{\ln l}\right)_{l\geq 2}$ are bounded away
  from $0$ and $+\infty$ and, in particular, \eqref{H2} is satisfied.
\end{lemma}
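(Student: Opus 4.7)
The plan is to separate lower bounds (which follow from \eqref{H1} alone) from upper bounds (where \eqref{MG} does the work), and then deduce \eqref{H2} from the upper bound on $\ln\mu_l/\ln l$.

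For the lower bounds, I would first note that \eqref{H1} together with the normalization $N_0=N_1=1$ gives $\nu_l \geq \nu_0=1$ for all $l$, hence $\mu_l=(l+1)\nu_l\geq l+1$ and $N_l\geq 1$, so $M_l\geq l!$. By Stirling $\ln M_l \geq \ln l! = l\ln l(1+o(1))$, so $\ln M_l/(l\ln l)$ stays above some positive constant for $l\geq 2$. Likewise $\ln\mu_l/\ln l \geq \ln(l+1)/\ln l \to 1$, bounded below.

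For the upper bounds, let $B:=\sup_{l,j}(M_{l+j}/(M_lM_j))^{1/(l+j)} < +\infty$ be the constant from \eqref{MG}. The crucial inequality is obtained by taking $j=l$:
\[ M_{2l} \leq B^{2l}M_l^2. \]
Setting $u_l:=\ln M_l/l$, this reads $u_{2l}\leq u_l + \ln B$, so starting from $u_1=0$ and iterating we get $u_{2^k}\leq k\ln B$; rewritten for $n=2^k$ this is $\ln M_n/(n\ln n)\leq \ln B/\ln 2$. To pass to arbitrary $n$, I would use that $(M_l)$ is non-decreasing (which follows from \eqref{H1} since $\nu_l\geq 1$ makes $N_l$ non-decreasing, and then $M_l=l!N_l$ is non-decreasing too); thus for $2^{k-1}<n\leq 2^k$ one has $M_n\leq M_{2^k}$, and the bound $\ln M_n/(n\ln n)\leq K$ for some constant $K$ depending only on $B$ follows by routine comparison of $k$ with $\log_2 n$. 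This is the one real obstacle: the naive use of \eqref{MG} with $j=1$ yields only $\mu_l\leq B^{l+1}$, i.e., $\ln M_l=O(l^2)$, which is much too weak; the doubling trick replaces one factor of $B$ per step by one per doubling, giving the correct $l\ln l$ growth.

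The upper bound on $\ln\mu_l/\ln l$ then follows by exploiting log-convexity: since $N_l=\prod_{k=0}^{l-1}\nu_k$ and $\nu$ is non-decreasing, for $m=\lfloor l/2\rfloor$ we get
\[ N_l \geq \prod_{k=m}^{l-1}\nu_k \geq \nu_m^{\,l-m}\geq \nu_m^{\,l/2}, \]
so $\nu_m\leq N_l^{2/l}\leq M_l^{2/l}$. Taking logarithms and using the bound $\ln M_l\leq Kl\ln l$ yields $\ln\nu_m\leq 2K\ln l\leq 2K(\ln 2+\ln m)$, so $\ln\nu_m/\ln m$ is bounded; since $\ln\mu_m=\ln(m+1)+\ln\nu_m$, the same is true for $\ln\mu_m/\ln m$. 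Finally, \eqref{H2} is immediate: writing $\ln\mu_l/l=(\ln\mu_l/\ln l)\cdot(\ln l/l)$, the first factor is bounded and the second tends to zero.
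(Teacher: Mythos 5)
Your proof is correct and follows essentially the same strategy as the paper's: restrict \eqref{eq:MG} to $j=l$ to get the doubling inequality, iterate dyadically from $M_1=1$ to control $M_{2^k}$, interpolate to general indices, and then bound $\mu$ (equivalently $\nu$) by expressing a block of the sequence as a product and using its monotonicity. The only inessential variations are that you interpolate via mere monotonicity of $(M_l)$ rather than its log-convexity, and run the second estimate through $N_l,\nu_l$ rather than $M_{2^{k+1}},\mu_l$ as the paper does; both choices work and neither changes the substance of the argument.
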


\begin{proof}
  As noticed in \S~\ref{sec:udiffintro}, \eqref{H1} implies 
  \[M_l \geq l!, \quad \mu_l \geq l+1 \quad (\forall l \in \N),\]
  which shows that both sequences of the statement are lower bounded.

  Now, assume that $(M_l)$ has moderate growth. In restriction to
  $j=l$, \eqref{eq:MG} reduces to
  \[M_{2l}\leq A^{l}M_l^2\]
  for some $A \in [1,+\infty[$ independant of $l\in \N$. Using that
  $M_1=1$, by induction we see that 
  \begin{equation}
    \label{eq:2k}
    M_{2^k} \leq A^{k2^{k-1}} \quad (k \in \N).
  \end{equation}
  As already noticed, \eqref{H1} implies that $M$ itself is
  log-convex, hence, if $2^k \leq l < 2^{k+1}$,
  \[M_l \leq M_{2^k}^{\frac{2^{k+1}-l}{2^k}}
  M_{2^{k+1}}^{\frac{l-2^k}{2^k}} \leq \left( M_{2^k} M_{2^{k+1}}
  \right)^{\frac{l}{2^k}} .\] 
  Using~\eqref{eq:2k}, if $k\geq 1$,
  \[M_l \leq A^{lk/2 + l(k+1)} \leq A^{5lk/2},\]
  whence
  \[M_l \leq \left(A^{\frac{5}{2\ln 2}}\right)^{l \ln l} \quad (\forall
  l \geq 2),\]
  and the sequence $\frac{\ln M_l}{l \ln l}$ is bounded. 

  Let us now see why $\rho_l := \frac{\ln \mu_l}{\ln l}$ is upper
  bounded. For any given $l \in \N$, let $k = k_l \in \N$ be such that
  $2^{k-1} \leq l < 2^k$. By the definition~\eqref{sequences} of
  $\mu$,
  \[M_{2^{k+1}} = M_{2^k} \mu_{2^k} \cdots \mu_{2^{k+1}-1}.\]
  Because of \eqref{H1}, the sequence $(M_{2^k})$ is larger than $1$
  and the sequence $\mu$ is increasing, so
  \[M_{2^{k+1}} \geq (\mu_{2^k})^{2^k} \geq (\mu_{l})^{2^k} = e^{2^k\rho_l \ln l}.\] 
  Using~\eqref{eq:2k}, we get
  \[\rho_l \ln l \leq (k+1)\ln A,\]
and as
\[ (k+1) \leq \frac{\ln l}{\ln 2}+2 \]
we have
\[ \rho_l \leq \ln A\left(\frac{1}{\ln 2}+\frac{2}{\ln l}\right) \quad (\forall l \geq 1),\]
which shows that $(\rho_l)$ must be bounded. 
\end{proof}

\textit{Acknowledgements. } The authors have benefited
from partial funding from the ANR project BeKAM (Beyond KAM Theory). 

\addcontentsline{toc}{section}{References}
\bibliographystyle{amsalpha}
\bibliography{Udiff6}

\end{document}